\documentclass[reqno,11pt,letterpaper]{amsart}
\usepackage[proof]{sdmacros}

\newcommand{\llangle}{\langle\!\langle}
\newcommand{\rrangle}{\rangle\!\rangle}
\DeclareMathOperator{\so}{\mathfrak{so}}

\title[The Ruelle zeta function at zero for nearly hyperbolic 3-manifolds]%
{The Ruelle zeta function at zero\\ for nearly hyperbolic 3-manifolds}
\author{Mihajlo Ceki\'c}
\email{mihajlo.cekic@math.uzh.ch}
\curraddr{Institut f\"ur Mathematik, Universit\"at Z\"urich, Winterthurerstrasse 190, CH-8057 Z\"urich, Switzerland}
\address{Laboratoire de Math\'ematiques d'Orsay, Universit\'e Paris-Saclay, CNRS, 91405 Orsay, France}
\author{Benjamin Delarue}
\email{bdelarue@math.upb.de}
\address{Institut f\"ur Mathematik, Universit\"at Paderborn, Paderborn, Germany; formerly known as Benjamin Küster}
\author{Semyon Dyatlov}
\email{dyatlov@math.mit.edu}
\address{Department of Mathematics, Massachusetts Institute of Technology, Cambridge, MA 02139}
\author{Gabriel P. Paternain}
\email{g.p.paternain@dpmms.cam.ac.uk}
\address{Department of Pure Mathematics and Mathematical Statistics,
University of Cambridge,
Cambridge CB3 0WB, UK}

\begin{document}

\begin{abstract}
We show that for a generic conformal metric perturbation of a compact hyperbolic 3-manifold~$\Sigma$ with Betti number $b_1$, the order of vanishing of the Ruelle zeta function at zero equals $4-b_1$, while in the hyperbolic case it is equal to $4-2b_1$. This is in contrast to
the 2-dimensional case where the order of vanishing is a topological invariant.
The proof uses the microlocal approach to dynamical zeta functions,
giving a geometric description of generalized Pollicott--Ruelle resonant differential forms at~0 in the hyperbolic case and using first variation for the perturbation.
To show that the first variation is generically nonzero we introduce a new identity
relating pushforwards of products of resonant and coresonant 2-forms on the sphere bundle $S\Sigma$ with harmonic 1-forms on~$\Sigma$.
\end{abstract}

\maketitle

\addtocounter{section}{1}
\addcontentsline{toc}{section}{1. Introduction}

Let $(\Sigma,g)$ be a compact connected oriented 3-dimensional Riemannian manifold of negative sectional curvature.
The Ruelle zeta function
\begin{equation}
\label{e:ruelle-zeta}
\zeta_{\mathrm R}(\lambda)=\prod_{\gamma}\big(1-e^{i\lambda T_{\gamma}}\big),\quad
\Im\lambda\gg 1
\end{equation}
is a converging product for $\Im\lambda$ large enough and continues meromorphically to $\lambda\in\mathbb C$
as proved by Giulietti--Liverani--Pollicott~\cite{giuletti-liverani-pollicott-13} and Dyatlov--Zworski~\cite{dyatlov-zworski-16}.
Here the product is taken over all primitive closed geodesics~$\gamma$ on~$(\Sigma,g)$ and
$T_{\gamma}$ is the length of $\gamma$.

In this paper we study the order of vanishing of $\zeta_{\mathrm R}$ at $\lambda=0$, defined
as the unique integer $m_{\mathrm R}(0)$ such that
$\lambda^{-m_{\mathrm R}(0)}\zeta_{\mathrm R}(\lambda)$ is holomorphic and nonzero at~0. Our main result is
\begin{theo}
\label{thm:main}
Let $(\Sigma,g_H)$ be a compact connected oriented hyperbolic 3-manifold and $b_1(\Sigma)$
be the first Betti number of~$\Sigma$. Then:

1. For $(\Sigma,g_H)$ we have $m_{\mathrm R}(0)=4-2b_1(\Sigma)$.

2. There exists an open and dense set $\mathscr O\subset C^\infty(\Sigma;\mathbb R)$
such that for any $\mathbf b\in\mathscr O$, there exists $\varepsilon>0$
such that for any $\tau\in (-\varepsilon,\varepsilon)\setminus \{0\}$
and $g_\tau:=e^{-2\tau\mathbf b}g_H$,
the manifold $(\Sigma,g_\tau)$ has $m_{\mathrm R}(0)=4-b_1(\Sigma)$.
\end{theo}
Part~1 of Theorem~\ref{thm:main} was proved by Fried~\cite[Theorem~3]{Fried86}
using the Selberg trace formula. The novelty is part~2, which says that
\emph{for generic conformal perturbations of the hyperbolic metric the order
of vanishing of $\zeta_{\mathrm R}$ equals $4-b_1(\Sigma)$.}
In particular, when $b_1(\Sigma)>0$ (fulfilled in many cases, in particular for mapping tori over pseudo-Anosov maps \cite[Theorem 13.4]{Farb-Margalit-12}), $m_{\mathrm R}(0)$ is not topologically invariant.
Theorem~\ref{thm:main} is the first result on instability of the order of vanishing of $\zeta_{\mathrm R}$ at~0 for Riemannian metrics. It is in contrast to the 2-dimensional case, where Dyatlov--Zworski~\cite{dyatlov-zworski-17} showed that $m_{\mathrm R}(0)=b_1(\Sigma)-2$
for \emph{any} compact connected oriented negatively curved surface $(\Sigma,g)$,
and is complementary to a recent breakthrough on the (acyclic) \emph{Fried conjecture} by Dang--Guillarmou--Rivi\`ere--Shen~\cite{dang-guillarmou-riviere-shen-20}, see~\S\ref{s:history} below.

A result similar to Theorem~\ref{thm:main} holds for contact perturbations of $S\Sigma$, see Theorem~\ref{t:pertnondeg-contact} in~\S\ref{s:general-perturb}.

\subsection{Outline of the proof}

We now outline the proof of Theorem~\ref{thm:main}. We use the microlocal approach to Pollicott--Ruelle resonances and dynamical zeta functions, which we review here~-- see~\S\ref{sec:contactflows} for details and~\S\ref{s:history} for a historical overview. Let $M=S\Sigma$ be the sphere bundle of $(\Sigma,g)$ and $X\in C^\infty(M;TM)$ be the generator of the geodesic flow. The geodesic flow is a \emph{contact flow}, i.e. there exists a 1-form $\alpha\in C^\infty(M;T^*M)$ such that $\iota_X\alpha=1$, $\iota_Xd\alpha=0$, and $\alpha\wedge d\alpha\wedge d\alpha$ is a nonvanishing volume form. When $g$ has negative curvature, the geodesic flow is \emph{Anosov}, i.e. the tangent spaces $T_\rho M$ decompose into a direct sum of the flow, unstable, and stable subspaces. Denote by $E_u^*$, $E_s^*$ the dual unstable/stable subbundles of the cotangent bundle $T^* M$, that is, $E_u^*$, $E_s^*$ are the annihilators of unstable/stable plus flow directions; these define closed conic subsets of~$T^*M$.

Define the spaces of \emph{resonant $k$-forms at~0}
\begin{equation}
  \label{e:Res-k-intro}
\Res^k_0:=\{u\in \mathcal D'(M;\Omega^k)\mid \iota_X u=0,\ \mathcal L_X u=0,\
\WF(u)\subset E_u^*\}.
\end{equation}
Here $\Omega^k$ is the (complexified) bundle of $k$-forms,
$\mathcal L_X=d\iota_X+\iota_Xd$ is the Lie derivative with respect to~$X$,
and for any distribution $u\in\mathcal D'(M;\Omega^k)$ we denote by $\WF(u)\subset T^*M\setminus 0$ the \emph{wavefront set} of $u$,
see for instance~\cite[Chapter~8]{hoermander-03}. The wavefront set condition
makes $\Res^k_0$ into a finite dimensional space, which is a consequence of the interpretation of $\Res_0^k$ as the eigenspace at~0 of the operator $P_{k,0}:=-i\mathcal L_X$ acting on certain anisotropic Sobolev spaces tailored to the flow (see \cite[Theorem~1.7]{faure-sjostrand-11} and~\cite[Lemma~2.2]{dyatlov-zworski-17}). We similarly define the spaces of \emph{generalized resonant $k$-forms at~0}
$$
\Res^{k,\ell}_0:=\{u\in\mathcal D'(M;\Omega^k)\mid\iota_X u=0,\ \mathcal L_X^\ell u=0,\
\WF(u)\subset E_u^*\},\quad
\Res^{k,\infty}_0:=\bigcup_{\ell\geq 1} \Res^{k,\ell}_0.
$$
The \emph{semisimplicity} condition for $k$-forms states that $\Res^{k,\infty}_0=\Res^k_0$, which means that the operator $P_{k,0}$ has no nontrivial Jordan blocks at~0.
We also have the dual spaces of \emph{generalized coresonant $k$-forms at~0},
replacing $E_u^*$ with $E_s^*$ in the wavefront set condition:
$$
\Res^{k,\ell}_{0*}:=\{u_*\in\mathcal D'(M;\Omega^k)\mid\iota_X u_*=0,\ \mathcal L_X^\ell u_*=0,\
\WF(u)\subset E_s^*\}.
$$
Since $E_u^*\cap E_s^*=\{0\}$, wavefront set calculus makes it possible to define
$u\wedge u_*$ as a distributional differential form as long as
$\WF(u)\subset E_u^*$, $\WF(u_*)\subset E_s^*$.

The order of vanishing of the Ruelle zeta function at~0 can be expressed as the alternating sum of the dimensions of the spaces of generalized resonant $k$-forms, see~\eqref{e:alternator}:
$$
m_{\mathrm R}(0)=\sum_{k=0}^4 (-1)^k\dim \Res^{k,\infty}_0.
$$
Thus the problem reduces to understanding the spaces $\Res^{k,\infty}_0$ for $k=0,1,2,3,4$.
The proof of Theorem~\ref{thm:main} computes their dimensions, listed in the table below, from which the formulas for $m_{\mathrm R}(0)$ follow immediately. See
Theorem~\ref{t:hyperbolic} in~\S\ref{sec:reshyp} for the hyperbolic case
and Theorem~\ref{t:general-perturber} in~\S\ref{s:general-perturb}, as well as~\S\ref{s:main-proof}, for the case of generic perturbations.
\def\mystrut{\vrule height13pt depth7pt width0pt}
\begin{center}
\begin{tabular}{|l|l|l|}
\hline
\textbf{Dimension of} & \textbf{Hyperbolic} & \textbf{Perturbation} \\
\hline\hline
\mystrut $\Res^0_0=\Res^{0,\infty}_0$ & $1$ & $1$ \\
\hline
\mystrut $\Res^1_0=\Res^{1,\infty}_0$ & $2b_1(\Sigma)$ & $b_1(\Sigma)$ \\
\hline
\mystrut $\Res^2_0$ & $b_1(\Sigma)+2$ & $b_1(\Sigma)+2$ \\
\hline
\mystrut $\Res^{2,2}_0=\Res^{2,\infty}_0$ & $2b_1(\Sigma)+2$ & $b_1(\Sigma)+2$ \\
\hline
\mystrut $\Res^3_0=\Res^{3,\infty}_0$ & $2b_1(\Sigma)$ & $b_1(\Sigma)$ \\
\hline
\mystrut $\Res^4_0=\Res^{4,\infty}_0$ & $1$ & $1$ \\
\hline
\end{tabular}
\end{center}
Note that the semisimplicity condition holds for $k=0,1,3,4$ in both
the hyperbolic case and for generic perturbations. However, semisimplicity
fails for $k=2$ in the hyperbolic case (assuming $b_1(\Sigma)>0$), and it is restored for generic perturbations. Also, since $b_2(M) = b_1(\Sigma) + 1$ (see \eqref{e:bettiGysin}), we may interpret the dimension of $\Res_0^2$ in the perturbed case as the `topological part' coming from the bijection with $H^2(M)$ and the extra invariant form $d\alpha$.

The cases $k=0,4$ of the above table are well-known: the semisimplicity condition holds
and $\Res^0_0$, $\Res^4_0$ are spanned by $1$, $d\alpha\wedge d\alpha$,
see Lemma~\ref{l:0-forms}.
One can also see that the map $u\mapsto d\alpha\wedge u$ gives an isomorphism
from $\Res^{1,\ell}_0$ to $\Res^{3,\ell}_0$.
Thus it remains to understand the spaces $\Res^{k,\infty}_0$ for $k=1,2$ and this is where the situation gets more complicated.

The spaces $\Res^k_0\cap \ker d$ of resonant states that are closed forms play a distinguished
role in our argument. Similarly to~\cite{dyatlov-zworski-17} we introduce linear
maps $\pi_k$ from $\Res^k_0\cap\ker d$ to the de Rham cohomology groups
$H^k(M;\mathbb C)$, see~\eqref{e:pi-k-def}. We show that the map $\pi_1$ is an isomorphism,
see Lemma~\ref{l:1-forms-ish}. This gives the dimension of the space of \emph{closed} forms in $\Res^1_0$:
since $b_1(M)=b_1(\Sigma)$,
$$
\dim(\Res^1_0\cap\ker d)=b_1(\Sigma).
$$
In the hyperbolic case, the other $b_1(\Sigma)$-dimensional space of \emph{non-closed} forms in $\Res^1_0$ is obtained by rotating the closed forms by $\pi/2$ in the dual unstable space,
see~\S\ref{s:hyp-1-forms}. This rotation commutes with the geodesic flow because the geodesic flow is conformal on the stable/unstable spaces, see~\eqref{e:sasaki-conformal}. This additional symmetry, which is only present in the hyperbolic case, is related to the presence
of a closed 2-form $\psi\in C^\infty(M;\Omega^2)$ which is invariant under the geodesic flow
and is not a multiple of $d\alpha$, see~\S\ref{s:hyp-psi-2}. The space $\Res^2_0$ is spanned by $d\alpha$, $\psi$, and the differentials $du$ where $u$ are the non-closed forms
in $\Res^1_0$, see~\S\ref{s:hyp-2-forms}. We also show in~\S\ref{s:hyp-2-forms}
that each $du\in d(\Res^1_0)$ lies in the range of $\mathcal L_X$,
producing $b_1(\Sigma)$ Jordan blocks for the operator $P_{2,0}$.

In the case of the perturbation $g_\tau=e^{-2\tau\mathbf b}g_H$, we use first variation techniques and make the following
\emph{nondegeneracy assumption} (see~\S\ref{s:main-proof}): for the spaces $\Res^1_0,\Res^1_{0*}$
and the contact form $\alpha$ defined using the hyperbolic metric~$g_H$,
and denoting by $\pi_\Sigma:M=S\Sigma\to\Sigma$ the projection map, we assume that
\begin{equation}
  \label{e:pairing-intro}
\begin{gathered}
(du,du_*)\mapsto \int_M (\pi_\Sigma^*\mathbf b)\alpha\wedge du\wedge du_*\quad\text{defines a nondegenerate pairing}\\
\text{on}\quad d(\Res^1_0)\times d(\Res^1_{0*}).
\end{gathered}
\end{equation}
Under the assumption~\eqref{e:pairing-intro}, we show that the non-closed 1-forms in $\Res^1_0$ move away once $\tau$ becomes nonzero (i.e. they turn into generalized resonant
states for nonzero Pollicott--Ruelle resonances), see~\S\ref{s:general-perturber}.
Thus for $0<|\tau|<\varepsilon$ all the resonant 1-forms are closed and we get $\dim\Res^1_0=b_1(\Sigma)$. Further analysis shows that semisimplicity is restored for $k=2$ and $\dim\Res^2_0=b_1(\Sigma)+2$.

It remains to show that the nondegeneracy assumption~\eqref{e:pairing-intro}
holds for a generic choice of the conformal factor~$\mathbf b\in C^\infty(\Sigma;\mathbb R)$.
The difficulty here is that $\mathbf b$ can only depend on the point in $\Sigma$ and not on elements
of~$S\Sigma$ which is where $\alpha\wedge du\wedge du_*$ lives. We reduce~\eqref{e:pairing-intro} to the following statement on nontriviality of pushforwards (see Proposition~\ref{l:nondeg-metric}):
for each real-valued resonant 1-form for the hyperbolic metric $u\in \Res^1_0$ we have
\begin{equation}
  \label{e:pairing-intro-2}
du\neq 0\quad\Longrightarrow\quad
\pi_{\Sigma*}^{}(\alpha\wedge du\wedge \mathcal J^*(du))\neq 0.
\end{equation}
Here $\mathcal J:(x,v)\mapsto (x,-v)$ is the antipodal map on $M=S\Sigma$
and $\pi_{\Sigma*}^{}$ is the pushforward of differential $k$-forms on $M$
to $(k-2)$-forms on $\Sigma$ obtained by integrating along the fibers, see~\eqref{e:forms-pf}.

The statement~\eqref{e:pairing-intro-2} concerns resonant 1-forms for the hyperbolic metric~$g=g_H$,
which are relatively well-understood. However, it is complicated by the fact
that $\pi_{\Sigma*}^{}(\alpha\wedge du\wedge \mathcal J^*(du))$ is merely a distribution,
so we cannot hope to show it is nonzero by evaluating its value at some point.
Instead we pair it with functions in $C^\infty(\Sigma)$ which have to be chosen carefully so that we can compute the pairing. More precisely, we prove the following identity
(Theorem~\ref{t:main-identity} in~\S\ref{s:main-identity}):
\begin{equation}
  \label{e:pairing-intro-3}
Q_4F=-\textstyle{1\over 6}\Delta_g|\sigma|_g^2\quad\text{where}\quad
\pi_{\Sigma*}^{}(\alpha\wedge du\wedge\mathcal J^*(du))=F\,d\vol_g.
\end{equation}
Here $d\vol_g$ is the volume form on $(\Sigma,g)$,
$\Delta_g$ is the Laplace--Beltrami operator,
$Q_4:\mathcal D'(\Sigma)\to C^\infty(\Sigma)$ is a naturally defined
smoothing operator, and
$$
\sigma:=\pi_{\Sigma*}^{}(d\alpha\wedge u)\in C^\infty(\Sigma;T^*\Sigma)
$$
is proved to be a nonzero harmonic 1-form on $(\Sigma,g_H)$. The identity~\eqref{e:pairing-intro-3} implies the nontriviality statement~\eqref{e:pairing-intro-2}:
if $F=0$ then $|\sigma|_g^2$ is constant, but hyperbolic 3-manifolds
do not admit harmonic 1-forms of nonzero constant length as shown in Appendix~\ref{sec:appC}. This finishes the proof of Theorem~\ref{thm:main}.

If one is interested instead in conformal perturbations of the contact form $\alpha$, then
one needs to show that $\alpha\wedge du\wedge du_*$ is not identically~0
assuming that $u\in\Res^1_0$, $u_*\in\Res^1_{0*}$ and $du\neq 0$, $du_*\neq 0$.
The latter follows
from the full support property for Pollicott--Ruelle resonant states proved by Weich~\cite{weich-17}. See Theorem~\ref{t:pertnondeg-contact} in~\S\ref{s:general-perturb}
for details.

We finally note that it would have been possible to introduce a flat unitary twist in our discussion.
Namely, we can consider a Hermitian vector bundle over $\Sigma$ endowed with a unitary flat connection $A$.
Resonant spaces can be defined using the operator $d_{A}$ and the holonomy of $A$ provides a way to twist the Ruelle zeta function as well, we refer to~\cite{cekic-paternain-19} for details. We do not pursue this extension here in order to simplify the presentation.

\subsection{A conjecture}

Theorem~\ref{thm:main} can be interpreted as follows: the hyperbolic metric
has non-closed resonant states due to the extra symmetries, and by destroying these
symmetries we make all resonant states closed. We thus make the following
conjecture about generic contact Anosov flows:
\begin{conj}\label{conj:1}
Let $M$ be a compact $2n+1$ dimensional manifold and $\alpha$
a contact 1-form on $M$ such that the corresponding flow is Anosov
with orientable stable/unstable bundles.
Define the spaces $\Res^k_0$, $0\leq k\leq 2n$, by~\eqref{e:Res-k-intro}
and let $\pi_k:\Res^k_0\cap\ker d\to H^k(M;\mathbb C)$ be defined by~\eqref{e:pi-k-def}.
Then for a generic choice of $\alpha$ we have:
\begin{enumerate}
\item the semisimplicity condition holds in all degrees $k=0,\dots,2n$;
\item $d(\Res^k_0)=0$ for all $k=0,\dots,2n$;
\item for $k=0,\dots,n$ the map $\pi_k$ is onto, $\ker\pi_k=d\alpha\wedge \Res^{k-2}_0$,
and $\dim\ker\pi_k=\dim\Res^{k-2}_0$.
\end{enumerate}
Denoting by $b_k(M)$ the $k$-th Betti number of~$M$, we then have
\begin{equation}
  \label{e:Res-k-conj}
\dim\Res^k_0=\sum_{j=0}^{\lfloor k/2\rfloor} b_{k-2j}(M),\quad
0\leq k\leq n;\quad \dim\Res^{2n-k}_0=\dim\Res^k_0
\end{equation}
and the order of vanishing of the Ruelle zeta function at~0 is given by
(see~\cite[(2.5)]{dyatlov-zworski-16})
\begin{equation}
  \label{e:conj}
m_{\mathrm R}(0)=\sum_{k=0}^{2n}(-1)^{k+n}\dim\Res^k_0=\sum_{k=0}^n(-1)^{k+n}(n+1-k)b_k(M).
\end{equation}
\end{conj}
The proof of part~2 of Theorem~\ref{thm:main} (see Theorem~\ref{t:general-perturber} in~\S\ref{s:general-perturb}, as well as~\S\ref{s:main-proof}) shows that Conjecture~\ref{conj:1} holds for $n=2$
and geodesic flows of generic nearly hyperbolic metrics
(while the conjecture is stated for generic metrics that do not have to be nearly hyperbolic).
Moreover, \cite{dyatlov-zworski-17}
shows that Conjecture~\ref{conj:1} holds for $n=1$ and any contact Anosov flow.

Note that the conditions (1) and (2) of Conjecture~\ref{conj:1} imply (3). Indeed,
by the work of Dang--Rivi\`ere~\cite[Theorem~2.1]{Dang-Riviere-Topology} the cohomology of the complex
$(\Res^{k,\infty},d)$, with $\Res^{k,\infty}$ defined in~\eqref{e:res-k-infty} below
with $\lambda_0:=0$, is isomorphic to the de Rham cohomology of~$M$
(with the isomorphism mapping each closed form in $\Res^{k,\infty}$
to its cohomology class). By~\eqref{e:res-0-split}
and the semisimplicity condition~(1), we have
$\Res^{k,\infty}=\Res^k_0\oplus (\alpha\wedge \Res^{k-1}_0)$.
By condition~(2), we have $d(u+\alpha\wedge v)=d\alpha\wedge v$
for all $u\in\Res^k_0$, $v\in\Res^{k-1}_0$. If $k\leq n$,
then $d\alpha\wedge:\Res^{k-1}_0\to \Res^{k+1}_0$ is injective,
so $\Res^{k,\infty}\cap\ker d=\Res^k_0$ and $d(\Res^{k-1,\infty})=d\alpha\wedge\Res^{k-2}_0$.
This gives condition~(3).

Note also that for $n=2$ the set of contact forms satisfying Conjecture~\ref{conj:1}
is open in $C^\infty(M;TM)$.
Indeed, by the perturbation theory discussed in~\S\ref{s:general-perturber},
more specifically~\eqref{e:perprez-2}, if we take a sufficiently small perturbation
of a contact form satisfying Conjecture~\ref{conj:1}, then
$\dim\Res^{1,\infty}_0\leq b_1(M)$ and $\dim\Res^{2,\infty}_0\leq b_2(M)+1$.
By Lemma~\ref{l:1-forms-ish} we see that semisimplicity holds for $k=1$
and $d(\Res^1_0)=0$. Then Lemma~\ref{l:1-means-2} together with Lemma~\ref{l:0-forms}
give all the conclusions of Conjecture~\ref{conj:1}.
A similar argument might work in the case of higher $n$.
Thus the main task in proving the conjecture is to show that (1) and (2) hold on a dense set
of contact forms.


One can make a similar conjecture for geodesic flows of generic negatively curved
compact orientable $n+1$-dimensional Riemannian manifolds~$(\Sigma,g)$, with $M=S\Sigma$.
In particular, if $n=2m$ is even, then $\Sigma$ is odd dimensional and thus has Euler characteristic~0.
By the Gysin exact sequence we have $b_k(M)=b_k(\Sigma)$
for $0\leq k<n$ and $b_n(M)=b_n(\Sigma)+b_0(\Sigma)$. Moreover,
by Poincar\'e duality we have $b_k(\Sigma)=b_{n+1-k}(\Sigma)$. Thus~\eqref{e:conj} becomes
$$
m_{\mathrm R}(0)=b_0(\Sigma)+\sum_{k=0}^m (-1)^k(2m+1-2k)b_k(\Sigma).
$$
This is in contrast to the hyperbolic case, where by~\cite[Theorem~3]{Fried86}
$$
m_{\mathrm R}(0)=\sum_{k=0}^m (-1)^k(2m+2-2k)b_k(\Sigma).
$$
Note that we only expect Conjecture~\ref{conj:1} to hold for generic flows/metrics rather than, say,
all non-hyperbolic metrics: for $n=2$ the proof of Theorem~\ref{thm:main} uses first variation
which by the Implicit Function Theorem suggests that there is a `singular submanifold' of metrics passing
through the hyperbolic metric on which Conjecture~\ref{conj:1} fails.

\subsection{Previous work} 
\label{s:history}

The treatment of Pollicott--Ruelle resonances of an Anosov flow as eigenvalues of the generator of the flow
on anisotropic Banach and Hilbert spaces has been developed by many authors, including Baladi~\cite{baladi-05}, Baladi--Tsujii~\cite{baladi-tsuji-07},
Blank--Keller--Liverani~\cite{BKL_02},
Butterley--Liverani~\cite{butterley-car-07}, Gou\"ezel--Liverani~\cite{gou-car-06},
and Liverani~\cite{Liverani1,Liverani2}
(some of the above papers considered the related setting of Anosov maps).
In this paper we use the microlocal approach to dynamical resonances,
introduced by Faure--Sj\"ostrand~\cite{faure-sjostrand-11} and developed further by Dyatlov--Zworski~\cite{dyatlov-zworski-16};
see also Faure--Roy--Sj\"ostrand~\cite{faure-roy-sjostrand}, Dyatlov--Guillarmou~\cite{dyatlov-guillarmou}, as well as Dang--Rivi\`ere~\cite{Dang-Riviere-19} and Meddane~\cite{Meddane-21} for the treatment of Morse--Smale and Axiom A flows.

The study of the relation of the vanishing order $m_{\mathrm R}(0)$
to the topology of the underlying manifold~$M$ has a long history,
going back to the works of Fried~\cite{Fried0,Fried86} for geodesic flows
on hyperbolic manifolds. The paper~\cite{Fried86}
also related the leading coefficient of $\zeta_{\mathrm R}$ at~0
to \emph{Reidemeister torsion}, which is a topological invariant
of~$M$. It considered the more general setting of a twisted zeta function
corresponding to a unitary representation. One advantage of such twists
is that one can choose the representation so that the twisted de Rham complex
is \emph{acyclic}, i.e. has no cohomology, and then one expects $\zeta_{\mathrm R}$ to be holomorphic and nonvanishing at~0.

In~\cite[p.~66]{Fried87} Fried conjectured a formula relating the Reidemeister torsion
with the value $\zeta_{\mathrm R}(0)$ for geodesic flows on all compact locally homogeneous manifolds with acyclic representations. Fried's conjecture was proved by Shen~\cite{Shen-Fried} for compact locally symmetric reductive manifolds, following earlier contributions
by Bismut~\cite{Bismut} and Moscovici--Stanton~\cite{Moscovici-Stanton}.
The abovementioned works~\cite{Fried0,Fried86,Bismut,Moscovici-Stanton,Shen-Fried}
used representation theory and Selberg trace formulas, which do not extend beyond
the class of locally symmetric manifolds.

In recent years much progress has been made on understanding
the relation between the behavior of $\zeta_{\mathrm R}$
at~0, as well as the dimensions of $\Res^{k,\ell}_0$, with topological invariants for general (not locally symmetric) negatively curved Riemannian
manifolds and Anosov flows:
\begin{itemize}
\item Dyatlov--Zworski~\cite{dyatlov-zworski-17} computed $m_{\mathrm R}(0)$
for any contact Anosov flow in dimension~3 with orientable stable/unstable bundles,
including geodesic flows on compact oriented negatively curved surfaces;
\item Dang--Rivi\`ere~\cite[Theorem 2.1]{Dang-Riviere-Topology} showed that the chain complex $(\Res^{\bullet, \infty}, d)$, where $\Res^{k,\infty}=\Res^{k,\infty}(0)$ is defined in~\eqref{e:res-k-l} below, is homotopy equivalent to the usual de Rham complex and hence their cohomologies agree.
One can see that Conjecture~\ref{conj:1} is compatible with this result, using~\eqref{e:res-0-split} and the fact that $(d\alpha \wedge)^k: \Omega_0^{n-k} \to\Omega_0^{n + k}$ is a bundle isomorphism for $0\leq k\leq n$;
\item Hadfield~\cite{Hadfield} showed a result similar to~\cite{dyatlov-zworski-17}
for geodesic flows on negatively curved surfaces with boundary;
\item Dang--Guillarmou--Rivi\`ere--Shen~\cite{dang-guillarmou-riviere-shen-20}
computed $\dim\Res^{k,\infty}_0$ for hyperbolic 3-manifolds
and proved Fried's formula relating $\zeta_{\mathrm R}(0)$ to Reidemeister torsion
for nearly hyperbolic 3-manifolds in the acyclic case; see also Chaubet--Dang~\cite{chaubet-dang-19};
\item K\"uster--Weich~\cite{kuster-weich} obtained several results on geodesic
flows on compact hyperbolic manifolds and their perturbations, in particular showing that
$\dim\Res^1_0=b_1(\Sigma)$ when $\dim\Sigma\neq 3$;
\item Ceki\'c--Paternain~\cite{cekic-paternain-19} studied
volume preserving Anosov flows in dimension 3, giving the first example
of a situation where $m_{\mathrm R}(0)$ jumps under perturbations of the flow and
thus is not topologically invariant;
\item Borns-Weil--Shen~\cite{Borns-Weil-Shen-21} proved a result similar to~\cite{dyatlov-zworski-17}
for nonorientable stable/unstable bundles.
\end{itemize}
Our Theorem~\ref{thm:main} gives a jump in $m_{\mathrm R}(0)$ for
geodesic flows on 3-manifolds and indicates that the situation for the hyperbolic case
is different from that in the case of generic metrics.
We stress that it is more difficult to obtain results for generic metric perturbations
(such as Theorem~\ref{thm:main}) than for generic perturbations of contact forms (such as Theorem~\ref{t:pertnondeg-contact}
in~\S\ref{s:general-perturb}) due to the more restricted nature of metric perturbations.

One of our main technical results (Theorem~\ref{t:main-identity}) bears (limited) similarities to known pairing formulas for Patterson--Sullivan distributions such as those established by
Anantharaman--Zelditch~\cite{Anantharaman-Zelditch-07},
Hansen--Hilgert--Schr\"oder~\cite{Hansen-Hilgert-Schroder},
Dyatlov--Faure--Guillarmou~\cite{dyatlov-faure-guillarmou-15},
and Guillarmou--Hilgert--Weich~\cite{guillarmou-hilgert-weich-18}.
We briefly discuss this in the Remark after Theorem~\ref{t:main-identity}.

\subsection{Structure  of the paper}

\begin{itemize}
\item \S\ref{sec:contactflows} discusses contact Anosov flows on 5-manifolds
and sets up the scene for the rest of the paper.
In particular, it introduces Pollicott--Ruelle resonances, (co-)resonant states, dynamical
zeta functions, de Rham cohomology, and geodesic flows.
It also proves various general lemmas about the maps $\pi_k$ and semisimplicity.
\item \S\ref{sec:reshyp} gives a complete description of generalized resonant states at~0 for hyperbolic 3-manifolds, proving part~1 of Theorem~\ref{thm:main}.
The approach in this section is geometric, as opposed to the algebraic
route taken in~\cite{Fried86} and~\cite{dang-guillarmou-riviere-shen-20}.
\item \S\ref{s:general-perturb} discusses contact perturbations
of geodesic flows on hyperbolic 3-manifolds. It proves Theorem~\ref{t:general-perturber}
which is a general perturbation statement using the nondegeneracy
condition~\eqref{e:pairing-intro}, as well as Theorem~\ref{t:pertnondeg-contact}
on generic contact perturbations. It also gives the proof of part~2 of Theorem~\ref{thm:main},
relying on the key identity~\eqref{e:pairing-intro-3}.
\item \S\ref{s:main-identity} contains the proof of the identity~\eqref{e:pairing-intro-3}
(stated in Theorem~\ref{t:main-identity}),
using a change of variables, a regularization procedure, and the results
of~\S\ref{sec:reshyp}.
\item Finally, Appendix~\ref{sec:appC} gives a proof of the fact
that hyperbolic 3-manifolds have no nonzero harmonic 1-forms of constant length.
\end{itemize}

\section{Contact 5-dimensional flows}\label{sec:contactflows}

In this section we study general contact Anosov flows on 5-dimensional manifolds. Some of the statements below apply to non-contact Anosov flows and to other dimensions, however we use the setting of 5-dimensional contact flows for uniformity of presentation.

\subsection{Contact Anosov flows}
\label{s:contact-anosov}

Assume that $M$ is a compact connected 5-dimensional $C^\infty$ manifold 
and $\alpha\in C^\infty(M;T^*M)$ is a contact 1-form on~$M$, namely
$$
d\vol_\alpha:=\alpha\wedge d\alpha\wedge d\alpha\neq 0\quad\text{everywhere}.
$$ 
We fix the orientation on $M$ by requiring that $d\vol_\alpha$ be positively oriented.
Let $X\in C^\infty(M;TM)$ be the associated Reeb field, that is the unique vector field satisfying
\begin{equation}
  \label{e:reeb-field}
\iota_X \alpha=1,\quad
\iota_X d\alpha=0.
\end{equation}
Note that this immediately implies (where $\mathcal L_X$ denotes the Lie derivative)
$$
\mathcal L_X\alpha=d\iota_X\alpha+\iota_Xd\alpha=0.
$$
We assume that the flow generated by $X$,
$$
\varphi_t:=e^{tX}:M\to M,
$$
is an \emph{Anosov flow}, namely there exists a continuous flow/unstable/stable decomposition of the tangent spaces to $M$,
\begin{equation}
  \label{e:Anosov-split}
T_\rho M=E_0(\rho)\oplus E_u(\rho)\oplus E_s(\rho),\quad
\rho\in M,\quad
E_0(\rho):=\mathbb R X(\rho)
\end{equation}
and there exist constants $C,\theta>0$ and a smooth norm $|\bullet|$ on the fibers of $TM$ such that for
all $\rho\in M$, $\xi\in T_\rho M$, and $t$
\begin{equation}
  \label{e:Anosov}
|d\varphi_t(\rho)\xi|\leq Ce^{-\theta|t|}\cdot |\xi|\quad\text{if}\quad\begin{cases}
t\leq 0,& \xi\in E_u(\rho)\quad\text{or}\\
t\geq 0,& \xi\in E_s(\rho).\end{cases}
\end{equation}
The flow/unstable/stable decomposition gives rise to the dual decomposition of the cotangent spaces to $M$,
\begin{equation}
  \label{e:anosov-dual}
\begin{aligned}
T_\rho^* M=E_0^*(\rho)\oplus E_u^*(\rho)\oplus E_s^*(\rho),&\quad
E_0^*:=(E_u\oplus E_s)^\perp,\\
E_u^*:=(E_0\oplus E_u)^\perp,&\quad
E_s^*:=(E_0\oplus E_s)^\perp.
\end{aligned}
\end{equation}
Since $\mathcal L_X\alpha=0$, we see from~\eqref{e:Anosov} that $\alpha|_{E_u\oplus E_s}=0$ and thus
$$
E_0^*=\mathbb R\alpha.
$$
Since $\alpha$ is a contact form and $d\alpha$ vanishes on $E_u\times E_u$ and on $E_s\times E_s$
(as follows from~\eqref{e:Anosov} and the fact that $\mathcal L_Xd\alpha=0$),
we have $\dim E_u=\dim E_s=2$.

\subsubsection{Bundles of differential forms}
We define the vector bundles over $M$
\begin{equation}
  \label{e:Omega-k-def}
\Omega^k:=\wedge^k (T^*M),\quad
\Omega^k_0:=\{\omega\in\Omega^k\mid \iota_X\omega=0\}\simeq \wedge^k(E_u^*\oplus E_s^*).
\end{equation}
Note that smooth sections of $\Omega^k$ are differential $k$-forms on~$M$.

We use the de Rham cohomology groups
\begin{equation}
  \label{e:de-rham}
H^k(M;\mathbb C):={\{u\in C^\infty(M;\Omega^k)\mid du=0\}\over \{dv\mid v\in C^\infty(M;\Omega^{k-1})\}}.
\end{equation}
Unless otherwise stated, we will always take $\Omega^k$ to be complexified. We define the Betti numbers
$$
b_k(M):=\dim H^k(M;\mathbb C).
$$
Since $M$ is connected and by Poincar\'e duality we have
$$
b_0(M)=1,\quad
b_k(M)=b_{5-k}(M).
$$
The bundles $\Omega^k$ and $\Omega^k_0$ are related as follows:
$$
\Omega^k\simeq \Omega^k_0\oplus \Omega^{k-1}_0
$$
with the canonical isomorphism and its inverse given by
\begin{equation}
  \label{e:0-split}
u\mapsto (u-\alpha\wedge\iota_X u,\iota_X u),\quad
(v,w)\mapsto v+\alpha\wedge w.
\end{equation}
Denote by $d\alpha\wedge$ the map $u\mapsto d\alpha\wedge u$ and by $d\alpha\wedge^2$ the map
$u\mapsto d\alpha\wedge d\alpha\wedge u$, then we have linear isomorphisms (as both maps are injective and image and domain have the same dimension)
\begin{equation}
  \label{e:d-alpha-wedge}
d\alpha\wedge:\Omega^1_0\to \Omega^3_0,\quad
d\alpha\wedge^2:\Omega^0_0\to \Omega^4_0.
\end{equation}
We also have a nondegenerate bilinear pairing between sections of $\Omega^k_0$ and $\Omega^{4-k}_0$ given by
\begin{equation}\label{e:main-pairing}
u\in C^\infty(M;\Omega^k_0),\
u_*\in C^\infty(M;\Omega^{4-k}_0)\ \mapsto\
\llangle u,u_*\rrangle:=\int_M \alpha\wedge u\wedge u_*
\end{equation}
which in particular identifies the dual space to $L^2(M;\Omega^k_0)$ with $L^2(M;\Omega^{4-k}_0)$.
If $A:C^\infty(M;\Omega^k_0)\to \mathcal D'(M;\Omega^k_0)$ is a continuous operator,
where $\mathcal D'$ denotes the space of distributions, then its \emph{transpose operator} is
the unique operator $A^T:C^\infty(M;\Omega^{4-k}_0)\to \mathcal D'(M;\Omega^{4-k}_0)$ satisfying
\begin{equation}
\label{e:transpose}
\llangle Au,u_*\rrangle=\llangle u,A^Tu_*\rrangle\quad\text{for all}\quad
u\in C^\infty(M;\Omega^k_0),\ u_*\in C^\infty(M;\Omega^{4-k}_0).
\end{equation}

\subsection{Geodesic flows}
\label{s:geodesic-flows}
A large class of examples of contact Anosov flows is given by geodesic flows on negatively curved manifolds,
which is the setting of the main results of this paper. More precisely, assume that $(\Sigma,g)$ is a compact connected
oriented 3-dimensional Riemannian manifold. Define $M$
to be the sphere bundle of $\Sigma$ and let $\pi_\Sigma$ be the canonical projection:
$$
M:=S\Sigma=\{(x,v)\in T\Sigma\colon |v|_g=1\},\quad
\pi_\Sigma:M\to\Sigma.
$$
Define the \emph{canonical}, or \emph{tautological}, 1-form $\alpha$ on $M$ as follows: for all $\xi\in T_{(x,v)}M$,
\begin{equation}
  \label{e:geodesic-alpha}
\langle \alpha(x,v),\xi\rangle=\langle v,d\pi_\Sigma(x,v)\xi\rangle_{g}.
\end{equation}
Then $\alpha$ is a contact form, the corresponding flow $\varphi_t$ is the geodesic flow, and
$d\vol_\alpha$ is the standard Liouville volume form up to a constant, see for instance~\cite[\S1.3.3]{paternain-99}.
If the metric $g$ has negative sectional curvature, then the flow $\varphi_t$ is Anosov, see for instance~\cite[Theorem~3.9.1]{klingenberg-95}.

We have the time reversal involution
\begin{equation}
  \label{e:J-def}
\mathcal J:M\to M,\quad \mathcal J(x,v)=(x,-v)
\end{equation}
which is an orientation reversing diffeomorphism satisfying
\begin{equation}\label{eq:J-prop}
\mathcal J^*\alpha=-\alpha,\quad
\mathcal J^*X=-X,\quad
\varphi_t\circ \mathcal J=\mathcal J\circ\varphi_{-t}
\end{equation}
and the differential of $\mathcal J$ maps $E_0,E_u,E_s$ into $E_0,E_s,E_u$.

\subsubsection{Horizontal and vertical spaces}
\label{s:hv-spaces}

Recall from~\eqref{e:Anosov-split} that
an Anosov flow induces a splitting of the tangent bundle $TM$ into the flow, unstable, and stable subbundles.
For geodesic flows there is another splitting, into \emph{horizontal and vertical subbundles},
which we briefly review here. See~\cite[\S1.3.1]{paternain-99} for more details.

Let $(x,v)\in M=S\Sigma$. The vertical space at $(x,v)$ is the tangent space to the fiber~$S_x\Sigma$:
$$
\mathbf V(x,v):=\ker d\pi_\Sigma(x,v)\ \subset\ T_{(x,v)}M.
$$
To define a complementary horizontal subspace of $T_{(x,v)}M$, we use the metric. The
\emph{connection map} of the metric is the unique bundle homomorphism $\mathcal K:TM\to T\Sigma$
covering the map~$\pi_\Sigma$ such that for any curve on $M$ written as
$$
\rho(t)=(x(t),v(t)),\quad
x(t)\in \Sigma,\quad
v(t)\in S_{x(t)}\Sigma
$$
we have
\begin{equation}\label{e:connection-map}
	\mathcal K(\rho(t))\dot\rho(t)=\mathbf D_t v(t)\ \in\ T_{x(t)}\Sigma
\end{equation}
where $\mathbf D_tv(t)$ denotes the Levi--Civita covariant derivative of the vector
field $v(t)$ along the curve $x(t)$ (see e.g.\ \cite[Proposition 2.2]{do-Carmo-92} for a precise definition). Note that since $d_t\langle v(t),v(t)\rangle_g=0$, the range
of $\mathcal K(x,v)$ is $g$-orthogonal to $v$.

We now define the horizontal space as
$$
\mathbf H(x,v):=\ker \mathcal K(x,v)\ \subset\ T_{(x,v)}M.
$$
We have the splitting
$$
T_{(x,v)}M=\mathbf H(x,v)\oplus \mathbf V(x,v),\quad
\dim\mathbf H(x,v)=3,\quad
\dim\mathbf V(x,v)=2
$$
and the isomorphisms (here $\{v\}^\perp$ is the $g$-orthogonal complement of $v$ in $T_x\Sigma$)
$$
d\pi_\Sigma(x,v):\mathbf H(x,v)\to T_x\Sigma,\quad
\mathcal K(x,v):\mathbf V(x,v)\to \{v\}^\perp
$$
which together give the following isomorphism $T_{(x,v)}M\to T_x\Sigma\oplus \{v\}^\perp$:
\begin{equation}
  \label{e:hv-map}
\xi\mapsto (\xi_H,\xi_V),\quad
\xi_H=d\pi_\Sigma(x,v)\xi,\quad
\xi_V=\mathcal K(x,v)\xi.
\end{equation}
We use the map~\eqref{e:hv-map} to identify $T_{(x,v)}M$ with $T_x\Sigma\oplus\{v\}^\perp$.

Under the identification~\eqref{e:hv-map}, the contact form $\alpha$ and its differential satisfy
(see~\cite[Proposition~1.24]{paternain-99})
\begin{equation}
  \label{e:alpha-hv}
\begin{aligned}
\alpha(x,v)(\xi)&=\langle\xi_H,v\rangle_g,\\
d\alpha(x,v)(\xi,\eta)&=\langle \xi_V,\eta_H\rangle_g-\langle\xi_H,\eta_V\rangle_g.
\end{aligned}
\end{equation}
Using the splitting~\eqref{e:hv-map}, we define the \emph{Sasaki metric} $\langle \bullet,\bullet\rangle_S$ on $M$ as follows:
\begin{equation}\label{e:sasaki-metric}
\langle \xi,\eta\rangle_S:=\langle\xi_H,\eta_H\rangle_g+\langle\xi_V,\eta_V\rangle_g.
\end{equation}
We finally remark that the generator $X$ of the geodesic flow has the following form under the isomorphism~\eqref{e:hv-map}:
\begin{equation}
  \label{e:X-hv}
X(x,v)_H=v,\quad X(x,v)_V=0.
\end{equation}

\subsubsection{De Rham cohomology of the sphere bundle}
\label{s:de-rham-sphere}

We now describe the de Rham cohomology of $M=S\Sigma$ in terms of the cohomology of $\Sigma$.
To relate the two, we use the pullback operators
$$
\pi_\Sigma^*:C^\infty(\Sigma;\Omega^k)\to C^\infty(M;\Omega^k),\quad
0\leq k\leq 3
$$
and the pushforward operators defined by integrating along the fibers of $S\Sigma$
\begin{equation}
  \label{e:forms-pf}
\pi_{\Sigma*}^{}:C^\infty(M;\Omega^k)\to C^\infty(\Sigma;\Omega^{k-2}),\quad
2\leq k\leq 5.
\end{equation}
Here the orientation on each fiber $S_x\Sigma$ is induced by the orientation on~$\Sigma$:
if $v,v_1,v_2$ is a positively oriented orthonormal basis of $T_x\Sigma$,
then the vertical vectors corresponding to $v_1,v_2$ form a positively oriented basis of $T_v(S_x\Sigma)$.
The pushforward operation can be characterized as follows: if $X_1,\dots,X_{k-2}$
are vector fields on $\Sigma$ and $\widetilde X_1,\dots,\widetilde X_{k-2}$ are vector
fields on $M$ projecting to $X_1,\dots,X_{k-2}$ under $d\pi_\Sigma$, then
for any $\omega\in C^\infty(M;\Omega^k)$ and $x\in\Sigma$
$$
\pi_{\Sigma*}^{}\omega(x)(X_1,\dots,X_{k-2})=\int_{S_x\Sigma} \iota_{\widetilde X_{k-2}}\dots\iota_{\widetilde X_1}\omega.
$$
Another characterization of $\pi_{\Sigma*}^{}$ is that for any $\omega\in C^\infty(M;\Omega^k)$
and any compact $k-2$ dimensional oriented submanifold with boundary $Y\subset\Sigma$, we have
\begin{equation}
  \label{e:pushforward-integral}
\int_{\pi_\Sigma^{-1}(Y)}\omega=\int_Y\pi_{\Sigma*}^{}\omega.
\end{equation}
Here the orientation on $\pi_\Sigma^{-1}(Y)$ is induced by the orientation on~$Y$.
If $Y=\Sigma$ is the entire base manifold, then the orientation on $\pi_\Sigma^{-1}(\Sigma)=S\Sigma$ featured in~\eqref{e:pushforward-integral}
is \emph{opposite} to the usual orientation on $M=S\Sigma$, induced by $d\vol_\alpha=\alpha\wedge d\alpha\wedge d\alpha$.
In fact, using~\eqref{e:alpha-hv} we can compute that
\begin{equation}
  \label{e:pushforward-volume}
\pi_{\Sigma*}^{}d\vol_\alpha=-8\pi d\vol_g
\end{equation}
where $d\vol_g$ is the volume form on~$\Sigma$ induced by~$g$ and the choice of orientation,
by applying $d\vol_\alpha$ to the vectors $X=(v,0)$, $(v_1,0)$, $(v_2,0)$, $(0,v_1)$, $(0,v_2)$
written using the horizontal/vertical decomposition~\eqref{e:hv-map}, where
$v,v_1,v_2$ is a positively oriented $g$-orthonormal basis on~$\Sigma$.

The pushforward map has the following properties (see for instance~\cite[Propositions~6.14.1 and~6.15]{bott-tu-82} for the related case
of vector bundles):
\begin{align}
  \label{e:pfm-1}
d\pi_{\Sigma*}^{}&=\pi_{\Sigma*}^{}d,\\
  \label{e:pfm-2}
\pi_{\Sigma*}^{}\big(\omega_1\wedge (\pi_\Sigma^*\omega_2)\big)&=(\pi_{\Sigma*}^{}\omega_1)\wedge\omega_2.
\end{align}
Note that the maps $\pi_{\Sigma*}^{}$, $\pi_\Sigma^*$ can also be defined on distributional
forms. For $\pi_{\Sigma*}^{}$ this follows from the fact that pushforward is always well-defined
on distributions as long as the fibers are compact and for the pullback~$\pi_\Sigma^*$ this follows from the fact that
$\pi_\Sigma$ is a submersion~\cite[Theorem~6.1.2]{hoermander-03}.

Since the map $\mathcal J$ defined in~\eqref{e:J-def} is an orientation reversing
diffeomorphism of the fibers of $S\Sigma$, we also have
\begin{equation}
  \label{e:J-push-forward}
\pi_{\Sigma*}^{}(\mathcal J^*\omega)=-\pi_{\Sigma*}^{}\omega.
\end{equation}

Since pullbacks commute with the differential $d$, and by \eqref{e:pfm-1}, the operations $\pi_\Sigma^*,\pi_{\Sigma*}^{}$ induce maps on de Rham cohomology, which we denote
by the same letters:
$$
\pi_\Sigma^*:H^k(\Sigma;\mathbb C)\to H^k(M;\mathbb C),\quad
\pi_{\Sigma*}^{}:H^k(M;\mathbb C)\to H^{k-2}(\Sigma;\mathbb C).
$$
From the Gysin exact sequence (see for instance~\cite[Proposition~14.33]{bott-tu-82}, where
the Euler class is zero since $\Sigma$ is three-dimensional; alternatively one
can use K\"unneth formulas and the fact that every compact orientable 3-manifold is parallelizable)
we have isomorphisms
\begin{equation}
  \label{e:Gysin-iso}
\pi_\Sigma^*:H^1(\Sigma;\mathbb C)\to H^1(M;\mathbb C),\quad
\pi_{\Sigma*}^{}:H^4(M;\mathbb C)\to H^2(\Sigma;\mathbb C)
\end{equation}
and the exact sequences
\begin{gather}
  \label{e:exaseq-1}
0\to H^2(\Sigma;\mathbb C)\xrightarrow{\pi_\Sigma^*} H^2(M;\mathbb C)\xrightarrow{\pi_{\Sigma*}^{}} H^0(\Sigma;\mathbb C)\to 0,\\
  \label{e:exaseq-2}
0\to H^3(\Sigma;\mathbb C)\xrightarrow{\pi_\Sigma^*} H^3(M;\mathbb C)\xrightarrow{\pi_{\Sigma*}^{}} H^1(\Sigma;\mathbb C)\to 0.
\end{gather}
In particular, we get formulas for the Betti numbers of the sphere bundle~$M$:
\begin{equation}
  \label{e:bettiGysin}
b_0(M)=b_5(M)=1,\quad
b_1(M)=b_4(M)=b_1(\Sigma),\quad
b_2(M)=b_3(M)=b_1(\Sigma)+1.
\end{equation}

\subsection{Pollicott--Ruelle resonances}
\label{section:PRresonances}

We now review the theory of Pollicott--Ruelle resonances in the present setting.
Define the first order differential operators
$$
\begin{aligned}
P_k&\,:=-i\mathcal L_X:C^\infty(M;\Omega^k)\to C^\infty(M;\Omega^k),\\
P_{k,0}&\,:=-i\mathcal L_X:C^\infty(M;\Omega^k_0)\to C^\infty(M;\Omega^k_0).
\end{aligned}
$$
Note that $P_{k,0}$ is the restriction of $P_k$ to $C^\infty(M;\Omega^k_0)$ which is 
the space of all $u\in C^\infty(M;\Omega^k)$ which satisfy $\iota_Xu=0$.

For $\lambda\in\mathbb C$ with $\Im\lambda$ large enough, the integral
\begin{equation}
  \label{e:Rk-uhp}
R_k(\lambda):=i\int_0^\infty e^{i\lambda t}e^{-itP_k}\,dt:L^2(M;\Omega^k)\to L^2(M;\Omega^k)
\end{equation}
converges and defines a bounded operator on $L^2$ which is holomorphic in $\lambda$.
Here the evolution group $e^{-itP_k}$ is given by
$e^{-itP_k}u=\varphi_{-t}^*u$.
It is straightforward to check that $R_k(\lambda)$ is the $L^2$-resolvent of $P_k$:
\begin{equation}
  \label{e:resolver}
R_k(\lambda)=(P_k-\lambda)^{-1}:L^2(M;\Omega^k)\to L^2(M;\Omega^k),\quad
\Im\lambda\gg 1
\end{equation}
where we treat $P_k$ as an unbounded operator on $L^2$ with domain
$\{u\in L^2(M;\Omega^k)\mid P_ku\in L^2(M;\Omega^k)\}$
and $P_ku$ is defined in the sense of distributions.

\subsubsection{Meromorphic continuation}

Since $\varphi_t$ is an Anosov flow, the resolvent $R_k(\lambda)$ admits a meromorphic continuation
$$
R_k(\lambda):C^\infty(M;\Omega^k)\to \mathcal D'(M;\Omega^k),\quad
\lambda\in\mathbb C,
$$
see for instance~\cite[\S3.2]{dyatlov-zworski-16} and~\cite[Theorems~1.4, 1.5]{faure-sjostrand-11}.
The proof of this continuation shows that $R_k(\lambda)$ acts on certain anisotropic Sobolev spaces
adapted to the stable/unstable decompositions, see e.g.~\cite[\S3.1]{dyatlov-zworski-16};
this makes it possible to compose the operator $R_k(\lambda)$ with itself.
Instead of introducing these spaces here, we use the spaces of distributions
\begin{equation}
  \label{e:D-prime-Gamma}
\mathcal D'_\Gamma(M;\Omega^k):=\{u\in\mathcal D'(M;\Omega^k)\mid \WF(u)\subset \Gamma\}
\end{equation}
where $\Gamma\subset T^*M\setminus 0$ is a closed conic set and $\WF(u)$ denotes the
wavefront set of a distribution~$u$. These spaces come with a natural sequential topology,
see~\cite[Definition~8.2.2]{hoermander-03}.

We have the
wavefront set property of~$R_k(\lambda)$ proved
in~\cite[(3.7)]{dyatlov-zworski-16}:
\begin{equation}
  \label{e:R-k-WFset}
\WF'(R_k(\lambda))\ \subset\ \mathscr W:=\Delta(T^*M)\cup \Upsilon_+\cup(E_u^*\times E_s^*)
\end{equation}
where $\Delta(T^*M)\subset T^*M\times T^*M$ is the diagonal and
$\Upsilon_+=\{(\varphi_t(x),d\varphi_t(x)^{-T}\xi,x,\xi)\mid t\geq 0,\ \xi(X(x))=0\}$; for an operator $B:C^\infty(M) \to \mathcal{D}'(M)$ with Schwartz kernel $K_B \in \mathcal{D}'(M \times M)$, we denote $\WF'(B) = \{(x, \xi, y, -\eta) \mid (x, \xi, y, \eta) \in \WF(K_B)\} \subset T^*(M \times M)$. The Schwartz kernel of~$R_k(\lambda)$ is meromorphic in~$\lambda$ with values in $\mathcal D'_{\mathscr W'}$ where $\mathscr W':=\{(x,\xi,y,-\eta)\mid (x,\xi,y,\eta)\in\mathscr W\}$. By the wavefront set calculus~\cite[Theorem~8.2.13]{hoermander-03}
and since $E_u^*\cap E_s^*=0$, $R_k(\lambda)$ defines
a meromorphic family of continuous operators 
\begin{equation}
  \label{e:mapper}
R_k(\lambda):\mathcal D'_{E_u^*}(M;\Omega^k)\to\mathcal D'_{E_u^*}(M;\Omega^k)
\end{equation}
where we view $E_u^*\subset T^*M$ as a closed conic subset
and define $\mathcal D'_{E_u^*}$ by~\eqref{e:D-prime-Gamma}.

Note that differential operators (in particular, $d,\iota_X,\mathcal L_X$) define continuous maps
on the regularity classes~$\mathcal D'_{E_u^*}$. We have
\begin{equation}
  \label{e:invertor}
R_k(\lambda)(P_k-\lambda)u=(P_k-\lambda)R_k(\lambda)u=u\quad\text{for all }
u\in\mathcal D'_{E_u^*}(M;\Omega^k).
\end{equation}
For $\Im\lambda\gg 1$ and $u\in C^\infty(M;\Omega^k)$ this follows from~\eqref{e:resolver};
the general case follows from here by analytic continuation and since $C^\infty$ is dense
in $\mathcal D'_{E_u^*}$.

We also have the commutation relations
\begin{equation}
  \label{e:commuter}
dR_k(\lambda)u=R_{k+1}(\lambda)du,\quad
\iota_X R_k(\lambda)u=R_{k-1}(\lambda)\iota_Xu\quad\text{for all}\quad u\in\mathcal D'_{E_u^*}(M;\Omega^k).
\end{equation}
As with~\eqref{e:invertor} it suffices to consider the case
$\Im\lambda\gg 1$ and $u\in C^\infty(M;\Omega^k)$, in which~\eqref{e:commuter}
follows from~\eqref{e:Rk-uhp} and the fact that $d$ and $\iota_X$ commute with $\varphi_{-t}^*$.

The poles of the family of operators $R_k(\lambda)$ are called \emph{Pollicott--Ruelle resonances}
on $k$-forms. At each pole $\lambda_0\in\mathbb C$ we have an expansion
(see for instance~\cite[(3.6)]{dyatlov-zworski-16})
\begin{equation}
  \label{e:merex}
R_k(\lambda)=R^H_k(\lambda;\lambda_0)-\sum_{j=1}^{J_k(\lambda_0)}{(P_k-\lambda_0)^{j-1}\Pi_k(\lambda_0)\over (\lambda-\lambda_0)^j}
\end{equation}
where $R^H_k(\lambda;\lambda_0):\mathcal D'_{E_u^*}(M;\Omega^k)\to \mathcal D'_{E_u^*}(M;\Omega^k)$ is a family of operators holomorphic in a neighborhood of~$\lambda_0$,
$J_k(\lambda_0)\geq 1$ is an integer, 
and $\Pi_k(\lambda_0):\mathcal D'_{E_u^*}(M;\Omega^k)\to \mathcal D'_{E_u^*}(M;\Omega^k)$
is a finite rank operator commuting with $P_k$ and such that $(P_k-\lambda_0)^{J_k(\lambda_0)}\Pi_k(\lambda_0)=0$.

Taking the expansions of~\eqref{e:commuter} at $\lambda_0$ we see that
\begin{equation}
  \label{e:commuter-Pi}
d\Pi_k(\lambda_0)=\Pi_{k+1}(\lambda_0)d,\quad
\iota_X \Pi_k(\lambda_0)=\Pi_{k-1}(\lambda_0)\iota_X.
\end{equation}

\subsubsection{Resonant states}
\label{s:resonant-states}

The range of the operator $\Pi_k(\lambda_0)$ is equal to the space of \emph{generalised resonant states} (see for instance~\cite[Proposition~3.3]{dyatlov-zworski-16})
\begin{equation}
  \label{e:res-k-infty}
\Res^{k,\infty}(\lambda_0):=\bigcup_{\ell\geq 1}\Res^{k,\ell}(\lambda_0)
\end{equation}
where we define
\begin{equation}
  \label{e:res-k-l}
\Res^{k,\ell}(\lambda_0):=\{u\in \mathcal D'_{E_u^*}(M;\Omega^k)\mid (P_k-\lambda_0)^\ell u=0\}.
\end{equation}
We define the \emph{algebraic multiplicity} of $\lambda_0$ as a resonance on $k$-forms by
\begin{equation}
  \label{e:m-k-def}
m_k(\lambda_0):=\rank \Pi_k(\lambda_0)=\dim\Res^{k,\infty}(\lambda_0).
\end{equation}
The \emph{geometric multiplicity} is the dimension of the space
of \emph{resonant states}
$$
\Res^k(\lambda_0):=\Res^{k,1}(\lambda_0)=\{u\in\mathcal D'_{E_u^*}(M;\Omega^k)\mid (P_k-\lambda_0)u=0\}.
$$
We say a resonance $\lambda_0$ of $P_k$ is \emph{semisimple} if the algebraic and geometric multiplicities coincide,
that is $\Res^{k,\infty}(\lambda_0)=\Res^k(\lambda_0)$. This is equivalent to saying
that $J_k(\lambda_0)=1$ in~\eqref{e:merex}. Another equivalent definition of semisimplicity is
\begin{equation}
  \label{e:semi-simple}
u\in\mathcal D'_{E_u^*}(M;\Omega^k),\ (P_k-\lambda_0)^2u=0\quad\Longrightarrow\quad
(P_k-\lambda_0)u=0.
\end{equation}
We note that the operators $\Pi_k(\lambda_0)$ are idempotent. In fact,
applying the Laurent expansion~\eqref{e:merex} at $\lambda_0$ to $u\in\Res^{k,\ell}(\lambda_1)$ and using the identity $R_k(\lambda)u=-\sum_{j=0}^{\ell-1}(\lambda-\lambda_1)^{-j-1}(P_k-\lambda_1)^ju$ we see that
\begin{equation}
  \label{e:Pi-k-idem}
\Pi_k(\lambda_0)\Pi_k(\lambda_1)=\begin{cases}
\Pi_k(\lambda_0)&\text{if }\lambda_1=\lambda_0,\\
0&\text{if }\lambda_1\neq \lambda_0.
\end{cases}
\end{equation}

\subsubsection{Operators on the bundles $\Omega^k_0$}
\label{s:omega-k-0}

The above constructions apply equally as well to the operators $P_{k,0}$
(except that the operator $d$ does not preserve sections of $\Omega^k_0$,
so the first commutation relation in~\eqref{e:commuter-Pi} does not hold, and the
second one is trivial); we denote the resulting objects
by
$$
R_{k,0}(\lambda),\
J_{k,0}(\lambda_0),\
R^H_{k,0}(\lambda;\lambda_0),\
\Pi_{k,0}(\lambda_0),\
\Res^{k,\ell}_0(\lambda_0),\
m_{k,0}(\lambda_0).
$$
Under the isomorphism~\eqref{e:0-split} the operator $P_k$ is conjugated to $P_{k,0}\oplus P_{k-1,0}$.
Therefore~\eqref{e:0-split} gives an isomorphism
\begin{equation}
  \label{e:res-0-split}
\Res^{k,\ell}(\lambda_0)\simeq \Res^{k,\ell}_0(\lambda_0)\oplus \Res^{k-1,\ell}_0(\lambda_0).
\end{equation}
Moreover, we get for all $u\in\mathcal D'_{E_u^*}(M;\Omega^k)$
\begin{equation}
\label{e:Pi-k-split}
\Pi_k(\lambda_0)u=\Pi_{k,0}(\lambda_0)(u-\alpha\wedge \iota_X u)
+\alpha\wedge \Pi_{k-1,0}(\lambda_0)\iota_X u.
\end{equation}
Since $\mathcal L_Xd\alpha=0$,
the operations~\eqref{e:d-alpha-wedge} give rise to linear isomorphisms
\begin{equation}
\label{e:isom-1}
d\alpha\wedge:\Res^{1,\ell}_0(\lambda_0)\to \Res^{3,\ell}_0(\lambda_0),\quad
d\alpha\wedge^2:\Res^{0,\ell}_0(\lambda_0)\to \Res^{4,\ell}_0(\lambda_0)
\end{equation}
which in particular give the equalities
\begin{equation}
  \label{e:isom-2}
m_{1,0}(\lambda_0)=m_{3,0}(\lambda_0),\quad
m_{0,0}(\lambda_0)=m_{4,0}(\lambda_0).
\end{equation}

\subsubsection{Transposes and coresonant states}
\label{s:transposes}

Since $\mathcal L_X\alpha=0$ and $\int_M \mathcal L_X\omega=0$ for any 5-form~$\omega$, we have
\begin{equation}\label{e:P-transpose}
	(P_{k,0})^T=-P_{4-k,0}, \quad k = 0, 1, 2, 3, 4,
\end{equation}
where the transpose is defined using the pairing $\llangle\bullet,\bullet\rrangle$,
see~\eqref{e:transpose}.
Thus the transpose of the resolvent $(R_{k,0}(\lambda))^T$ is the meromorphic continuation
of the resolvent corresponding to the vector field $-X$; the latter generates an Anosov flow
with the unstable and stable spaces switching roles compared to the ones for~$X$. Similarly to~\eqref{e:mapper} we have
\begin{equation}\label{eq:restranspose}
(R_{k,0}(\lambda))^T:\mathcal D'_{E_s^*}(M;\Omega^{4-k}_0)\to \mathcal D'_{E_s^*}(M;\Omega^{4-k}_0)
\end{equation}
where $\mathcal D'_{E_s^*}$ is the space of distributional sections with wavefront set contained in $E_s^*$.
Same applies to the transposes of the operators $R^H_{k,0}(\lambda;\lambda_0)$ and $\Pi_{k,0}(\lambda_0)$
appearing in~\eqref{e:merex}. The range of $(\Pi_{k,0}(\lambda_0))^T$ is the space
of \emph{generalised coresonant states}
$\Res^{4-k,\infty}_{0*}(\lambda_0)$ where
$$
\begin{aligned}
\Res^{k,\infty}_{0*}(\lambda_0)&\,:=\bigcup_{\ell\geq 1}\Res^{k,\ell}_{0*}(\lambda_0),\\
\Res^{k,\ell}_{0*}(\lambda_0)&\,:=\{u_*\in \mathcal D'_{E_s^*}(M;\Omega^k_0)\mid (P_{k,0}+\lambda_0)^\ell u_*=0\}.
\end{aligned}
$$
The space of \emph{coresonant states} is defined as
$$
\Res^k_{0*}(\lambda_0):=\Res^{k,1}_{0*}(\lambda_0)=\{u_*\in \mathcal D'_{E_s^*}(M;\Omega^k_0)\mid
(P_{k,0}+\lambda_0)u_*=0\}.
$$
Similarly to~\eqref{e:isom-1} we have the isomorphisms
\begin{equation}
  \label{e:isom-1*}
d\alpha\wedge:\Res^{1,\ell}_{0*}(\lambda_0)\to \Res^{3,\ell}_{0*}(\lambda_0),\quad
d\alpha\wedge^2:\Res^{0,\ell}_{0*}(\lambda_0)\to \Res^{4,\ell}_{0*}(\lambda_0).
\end{equation}
In the special case when $\varphi_t$ is a geodesic flow with the time reversal map $\mathcal J$ defined in~\eqref{e:J-def}, 
the pullback operator $\mathcal J^*$ gives an isomorphism between
$\mathcal D'_{E_u^*}(M;\Omega^k_0)$ and $\mathcal D'_{E_s^*}(M;\Omega^k_0)$.
Moreover, $\mathcal J^*P_{k,0}=-P_{k,0}\mathcal J^*$.
This gives rise to isomorphisms between the spaces of generalised resonant and coresonant states
\begin{equation}
\label{e:J-star-def}
\mathcal J^*:\Res^{k,\ell}_0(\lambda_0)\to \Res^{k,\ell}_{0*}(\lambda_0).
\end{equation}

\subsubsection{Coresonant states and pairing}
Since $E_u^*$ and $E_s^*$ intersect only at the zero section, we can define the product
$u\wedge u_*\in \mathcal D'(M;\Omega^4_0)$ and thus the pairing
$\llangle u,u_*\rrangle$ for any $u\in\mathcal D'_{E_u^*}(M;\Omega^k_0)$,
$u_*\in \mathcal D'_{E_s^*}(M;\Omega^{4-k}_0)$, see~\cite[Theorem~8.2.10]{hoermander-03}. Note that this pairing is nondegenerate since both $\mathcal D'_{E_u^*}$ and $\mathcal D'_{E_s^*}$ contain $C^\infty$,
and the transpose formula~\eqref{e:transpose} still holds
since $C^\infty$ is dense in $\mathcal D'_{E_u^*}$ and in $\mathcal D'_{E_s^*}$. In particular, we have
a pairing
\begin{equation}
  \label{e:pairing-res}
u\in\Res^{k,\infty}_0(\lambda_0),\
u_*\in\Res^{4-k,\infty}_{0*}(\lambda_0)
\quad\mapsto\quad
\llangle u,u_*\rrangle\in\mathbb C.
\end{equation}
This pairing is nondegenerate. Indeed, assume that $u\in\Res^{k,\infty}_0(\lambda_0)$
and $\llangle u,u_*\rrangle=0$ for all $u_*\in\Res^{4-k,\infty}_{0*}(\lambda_0)$.
Since $\Res^{4-k,\infty}_{0*}(\lambda_0)$ is the range of $(\Pi_{k,0}(\lambda_0))^T$,
we have
$$
0=\llangle u,(\Pi_{k,0}(\lambda_0))^T\varphi\rrangle=\llangle\Pi_{k,0}(\lambda_0)u,\varphi\rrangle
=\llangle u,\varphi\rrangle\quad\text{for all }
\varphi\in C^\infty(M;\Omega^{4-k}_0)
$$
where the last equality follows from the fact that $\Pi_{k,0}(\lambda_0)^2=\Pi_{k,0}(\lambda_0)$
and $u$ is in the range of $\Pi_{k,0}(\lambda_0)$. It follows that $u=0$.
Similarly one can show that if $\llangle u,u_*\rrangle=0$ for some $u_*\in\Res^{4-k,\infty}_{0*}(\lambda_0)$
and all $u\in\Res^{k,\infty}_0(\lambda_0)$, then $u_*=0$.

Consider the operators on finite dimensional spaces
\begin{align}
  \label{e:fdop-1}
P_{k,0}-\lambda_0&:\Res^{k,\infty}_0(\lambda_0)\to \Res^{k,\infty}_0(\lambda_0),\\
  \label{e:fdop-2}
-P_{4-k,0}-\lambda_0&:\Res^{4-k,\infty}_{0*}(\lambda_0)\to \Res^{4-k,\infty}_{0*}(\lambda_0)
\end{align}
which are transposes of each other with respect to the pairing~\eqref{e:pairing-res}.
The kernels of $\ell$-th powers of these operators are $\Res^{k,\ell}_0(\lambda_0)$ and $\Res^{4-k,\ell}_{0*}(\lambda_0)$, thus
(using the isomorphisms~\eqref{e:isom-1*})
\begin{equation}
  \label{e:same-dim}
\dim\Res^{k,\ell}_0(\lambda_0)=\dim\Res^{4-k,\ell}_{0*}(\lambda_0)=\dim\Res^{k,\ell}_{0*}(\lambda_0).
\end{equation}
We now give a solvability result for the operators~$P_{k,0}$. It follows from
the Fredholm property of these operators on anisotropic Sobolev spaces
but we present instead a proof using the Laurent expansion~\eqref{e:merex}.
\begin{lemm}
    \label{l:solver}
Assume that $w\in \mathcal D'_{E_u^*}(M;\Omega^k_0)$. Then the equation
\begin{equation}
  \label{e:solvereq}
(P_{k,0}-\lambda_0)u=w,\quad
u\in\mathcal D'_{E_u^*}(M;\Omega^k_0)
\end{equation}
has a solution if and only if $w$ satisfies the condition
\begin{equation}
\label{e:solver}
\llangle w,u_*\rrangle=0\quad\text{for all}\quad u_*\in\Res^{4-k}_{0*}(\lambda_0).
\end{equation}
\end{lemm}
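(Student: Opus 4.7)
The plan is to establish the two directions separately: necessity will be an immediate transposition argument, while sufficiency will come from decomposing $w$ relative to the spectral projector $\Pi_{k,0}(\lambda_0)$ and solving each piece by a different method.

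For the easy direction, suppose $(P_{k,0}-\lambda_0)u=w$ and let $u_*\in\Res^{4-k}_{0*}(\lambda_0)$. Since $\WF(u)\subset E_u^*$ and $\WF(u_*)\subset E_s^*$ intersect only at the zero section, the pairing is well defined, and using $(P_{k,0})^T=-P_{4-k,0}$ from \S\ref{s:transposes} I would compute
$$
\llangle w,u_*\rrangle=\llangle (P_{k,0}-\lambda_0)u,u_*\rrangle=\llangle u,-(P_{4-k,0}+\lambda_0)u_*\rrangle=0.
$$

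For sufficiency, I would split $w=w_1+w_2$ with $w_1:=\Pi_{k,0}(\lambda_0)w\in\Res^{k,\infty}_0(\lambda_0)$ and $w_2:=w-w_1$, noting that $\Pi_{k,0}(\lambda_0)w_2=0$ by the idempotency~\eqref{e:Pi-k-idem}. To solve for $w_2$, I would first derive from the Laurent expansion~\eqref{e:merex} the operator identity
$$
(P_{k,0}-\lambda)R^H_{k,0}(\lambda;\lambda_0)=\mathrm{Id}-\Pi_{k,0}(\lambda_0),\qquad \lambda\text{ near }\lambda_0,
$$
which follows by writing $(P_{k,0}-\lambda)=(P_{k,0}-\lambda_0)-(\lambda-\lambda_0)$, applying it to the singular part of~\eqref{e:merex}, and telescoping using $(P_{k,0}-\lambda_0)^{J_{k,0}(\lambda_0)}\Pi_{k,0}(\lambda_0)=0$. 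Specializing at $\lambda=\lambda_0$ and setting $u_2:=R^H_{k,0}(\lambda_0;\lambda_0)w_2$ then yields $(P_{k,0}-\lambda_0)u_2=w_2$, with $u_2\in\mathcal D'_{E_u^*}(M;\Omega^k_0)$ by the mapping property of $R^H_{k,0}$.

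The remaining piece $w_1$ is handled by finite-dimensional linear algebra. The nilpotent operator $N:=(P_{k,0}-\lambda_0)|_{\Res^{k,\infty}_0(\lambda_0)}$ has transpose $N^T=-(P_{4-k,0}+\lambda_0)|_{\Res^{4-k,\infty}_{0*}(\lambda_0)}$ with respect to the nondegenerate pairing~\eqref{e:pairing-res}, and $\ker N^T=\Res^{4-k}_{0*}(\lambda_0)$ by definition. Writing any such $u_*$ as $\Pi_{k,0}(\lambda_0)^T u_*$ shows that $\llangle w_2,u_*\rrangle=\llangle\Pi_{k,0}(\lambda_0)w_2,u_*\rrangle=0$, so the hypothesis transfers to $w_1$. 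The Fredholm alternative in finite dimensions then produces $u_1\in\Res^{k,\infty}_0(\lambda_0)$ with $Nu_1=w_1$, and $u:=u_1+u_2$ solves~\eqref{e:solvereq}. The only delicate step is the telescoping identity for the singular Laurent coefficients; once that bookkeeping is in place, the rest reduces either to transposition or to a finite-dimensional Fredholm argument.
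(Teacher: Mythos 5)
Your proof is correct and follows essentially the same route as the paper: the easy direction by transposition, and the hard direction by splitting $w$ with the spectral projector $\Pi_{k,0}(\lambda_0)$, solving the off-spectral part via the holomorphic part $R^H_{k,0}$ of the Laurent expansion, and treating the piece inside $\Res^{k,\infty}_0(\lambda_0)$ by a finite-dimensional Fredholm-alternative argument using the nondegenerate pairing~\eqref{e:pairing-res}. Your telescoping derivation of $(P_{k,0}-\lambda)R^H_{k,0}(\lambda;\lambda_0)=\mathrm{Id}-\Pi_{k,0}(\lambda_0)$ is just a slightly different bookkeeping of the same computation the paper performs by matching the constant Laurent coefficient of~\eqref{e:invertor}, leading to the same identity~\eqref{eq:resid}.
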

\begin{proof}
First of all, if~\eqref{e:solvereq} has a solution $u$, then for each
$u_*\in\Res^{4-k}_{0*}(\lambda_0)$ we have
$$
\llangle w,u_*\rrangle=\llangle (P_{k,0}-\lambda_0)u,u_*\rrangle=-\llangle u,(P_{4-k,0}+\lambda_0)u_*\rrangle=0,
$$
that is the condition~\eqref{e:solver} is satisfied.

Now, assume that $w$ satisfies the condition~\eqref{e:solver};
we show that~\eqref{e:solvereq} has a solution.
We start with the special case when $w\in\Res^{k,\infty}_0(\lambda_0)$.
We use the pairing~\eqref{e:pairing-res} to identify the
dual space to $\Res^{k,\infty}_0(\lambda_0)$ with $\Res^{4-k,\infty}_{0*}(\lambda_0)$.
By~\eqref{e:solver}, $w$ is annihilated by the kernel
of the operator~\eqref{e:fdop-2}. Therefore
$w$ is in the range of the operator~\eqref{e:fdop-1},
that is~\eqref{e:solvereq} has a solution $u\in\Res^{k,\infty}_0(\lambda_0)$.

We now consider the case of general $w$ satisfying~\eqref{e:solver}. Taking the constant term in the Laurent expansion of the identity~\eqref{e:invertor} at $\lambda=\lambda_0$, we obtain
\begin{equation}\label{eq:resid}
(P_{k,0}-\lambda_0)R_{k,0}^H(\lambda_0;\lambda_0)w=w-\Pi_{k,0}(\lambda_0)w.
\end{equation}
We have $\Pi_{k,0}(\lambda_0)w\in \Res^{k,\infty}_0(\lambda_0)$ and it satisfies~\eqref{e:solver},
thus~\eqref{e:solvereq} has a solution with this right-hand side. Writing $w = \Pi_{k, 0}(\lambda_0)w + \big(\Id - \Pi_{k, 0}(\lambda_0)\big)w$, we may take as $u$ the sum of
this solution and $R_{k,0}^H(\lambda_0;\lambda_0)w$.
\end{proof}
Lemma~\ref{l:solver} implies the following criterion for semisimplicity:
\begin{lemm}
  \label{l:ss-cores}
The semisimplicity condition~\eqref{e:semi-simple} holds for the operator~$P_{k,0}$ if and only if
the restriction of the pairing~\eqref{e:pairing-res} to~$\Res^k_0(\lambda_0)\times\Res^{4-k}_{0*}(\lambda_0)$
is nondegenerate.
\end{lemm}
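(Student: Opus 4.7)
The plan is to reduce both implications to finite-dimensional linear algebra on the generalized (co)resonant spaces, using the solvability criterion from Lemma~\ref{l:solver} for one direction and the transpose structure of the operators~\eqref{e:fdop-1},~\eqref{e:fdop-2} for the other. The key facts I will exploit throughout are that the pairing $\llangle \cdot, \cdot \rrangle$ restricted to $\Res^{k,\infty}_0(\lambda_0) \times \Res^{4-k,\infty}_{0*}(\lambda_0)$ is already nondegenerate (noted just before~\eqref{e:fdop-1}, as a consequence of $\Pi_{k,0}(\lambda_0)^2 = \Pi_{k,0}(\lambda_0)$), and that $\dim \Res^k_0(\lambda_0) = \dim \Res^{4-k}_{0*}(\lambda_0)$ by~\eqref{e:same-dim}, so one-sided nondegeneracy of the restricted pairing already implies the full nondegeneracy.

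For the direction ($\Leftarrow$), I would assume the restricted pairing is nondegenerate and take any $u \in \mathcal D'_{E_u^*}(M;\Omega^k_0)$ with $(P_{k,0} - \lambda_0)^2 u = 0$. Set $w := (P_{k,0} - \lambda_0) u$, so that $w \in \Res^k_0(\lambda_0)$. Since the equation $(P_{k,0} - \lambda_0) v = w$ admits $v = u$ as a solution, Lemma~\ref{l:solver} forces $\llangle w, u_* \rrangle = 0$ for every $u_* \in \Res^{4-k}_{0*}(\lambda_0)$. The nondegeneracy assumption then yields $w = 0$, which is precisely the semisimplicity criterion~\eqref{e:semi-simple}.

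For the direction ($\Rightarrow$), I would argue that semisimplicity for $P_{k,0}$ is equivalent to the finite-dimensional operator~\eqref{e:fdop-1} being identically zero on $\Res^{k,\infty}_0(\lambda_0)$. Since~\eqref{e:fdop-2} is its transpose under the nondegenerate pairing on generalized states, it must vanish as well, so $\Res^{4-k,\infty}_{0*}(\lambda_0) = \Res^{4-k}_{0*}(\lambda_0)$. In particular, the restriction of $\llangle \cdot, \cdot \rrangle$ to $\Res^k_0(\lambda_0) \times \Res^{4-k}_{0*}(\lambda_0)$ then coincides with the full pairing on generalized (co)resonant states, which is nondegenerate.

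The only substantive point is the transfer of semisimplicity from $P_{k,0}$ to $-P_{4-k,0}$ on the coresonant side, but this is a routine linear-algebra consequence of the transpose relation together with nondegeneracy of the pairing on generalized states; no additional microlocal input beyond Lemma~\ref{l:solver} and~\eqref{e:same-dim} is required. I expect the cleanest write-up to handle the two directions in this order, since the $(\Leftarrow)$ direction is essentially a one-line application of the solvability lemma and sets up the natural language for the symmetric $(\Rightarrow)$ argument.
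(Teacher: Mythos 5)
Your proof is correct. The paper proves both directions at once with a single chain of equivalences: semisimplicity is recast as $\Res^k_0(\lambda_0)\cap\mathrm{Range}(P_{k,0}-\lambda_0)=\{0\}$, Lemma~\ref{l:solver} translates this into the statement that no nonzero $w\in\Res^k_0(\lambda_0)$ pairs to zero against all of $\Res^{4-k}_{0*}(\lambda_0)$, and the equal-dimension fact~\eqref{e:same-dim} then upgrades this one-sided statement to full nondegeneracy. Your $(\Leftarrow)$ direction is essentially the ``if'' half of that chain in contrapositive form. Your $(\Rightarrow)$ direction, however, takes a genuinely different route: rather than reuse Lemma~\ref{l:solver}, you observe that semisimplicity is equivalent to the finite-dimensional operator~\eqref{e:fdop-1} vanishing, transport this vanishing to its transpose~\eqref{e:fdop-2} via the nondegenerate pairing on generalized states to conclude $\Res^{4-k,\infty}_{0*}(\lambda_0)=\Res^{4-k}_{0*}(\lambda_0)$, and hence identify the restricted pairing with the full nondegenerate pairing~\eqref{e:pairing-res}. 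This buys you a self-contained argument for full (two-sided) nondegeneracy that never actually needs~\eqref{e:same-dim} (which you list as an ingredient in your preamble but do not use), and it makes explicit that semisimplicity passes to the coresonant side; the paper's single iff-chain is more economical in length but leaves the one-sided-to-full nondegeneracy upgrade implicit. Both are valid.
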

\begin{proof}
The condition~\eqref{e:semi-simple} is equivalent to saying that the intersection
of $\Res^k_0(\lambda_0)$ with the range of the operator $P_{k,0}-\lambda_0:\mathcal D'_{E_u^*}(M;\Omega^k_0)\to\mathcal D'_{E_u^*}(M;\Omega^k_0)$ is trivial; that is, for each $w\in\Res^k_0(\lambda_0)\setminus \{0\}$
the equation~\eqref{e:solvereq} has no solution. By Lemma~\ref{l:solver}, this is equivalent to
saying that $w$ does not satisfy the condition~\eqref{e:solver}, i.e. there exists
$v\in \Res^{4-k}_{0*}(\lambda_0)$ such that $\llangle w,v\rrangle\neq 0$. This is equivalent
to the nondegeneracy condition of the present lemma.
\end{proof}

\subsubsection{Zeta functions}
We now discuss dynamical zeta functions. We assume that the unstable/stable bundles $E_u,E_s$
are orientable (the non-orientable case is covered by \cite{Borns-Weil-Shen-21}); this is true for the case of geodesic flows on orientable manifolds
as follows from the fact that the vertical bundle trivially intersects the weak unstable bundle $\mathbb{R}X \oplus E_u$ (see \cite[Lemma B.1]{giuletti-liverani-pollicott-13}).

We say $\gamma:[0,T_\gamma]\to M$ is a \emph{closed trajectory} of the flow~$\varphi_t$ of period~$T_\gamma>0$ if $\gamma(t)=\varphi_t(\gamma(0))$ and $\gamma(T_\gamma)=\gamma(0)$. We identify closed trajectories
obtained by shifting $t$.
The \emph{primitive period} of a closed trajectory, denoted by $T_\gamma^\sharp$, is
the smallest positive $t>0$ such that $\gamma(t)=\gamma(0)$.
We say $\gamma$ is a \emph{primitive} closed trajectory if $T_\gamma=T_\gamma^\sharp$.

Define the \emph{linearised Poincar\'e map} $\mathcal P_\gamma:=d\varphi_{-T_\gamma}(\gamma(0))|_{E_u\oplus E_s}$.
We have $\det\mathcal P_\gamma=1$ since the restriction of $d\alpha\wedge d\alpha$ to $E_u\oplus E_s$ is a $\varphi_t$-invariant
nonvanishing 4-form.
Since $\varphi_t$ is an Anosov flow, the map $I-\mathcal P_\gamma$ is invertible
(in fact $\mathcal P_\gamma$ has no eigenvalues on the unit circle). 

For $0\leq k\leq 4$, define the zeta function
\begin{equation}
  \label{e:zeta-k}
\zeta_k(\lambda):=\exp\bigg(-\sum_{\gamma}{T_\gamma^\sharp \tr(\wedge^k\mathcal P_\gamma)e^{i\lambda T_\gamma}
\over T_\gamma \det(I-\mathcal P_\gamma)}\bigg),\quad
\Im\lambda\gg 1
\end{equation}
where the sum is over all the closed trajectories $\gamma$.
The series in~\eqref{e:zeta-k} converges for sufficiently large $\Im\lambda$, see e.g.~\cite[\S2.2]{dyatlov-zworski-16}.

The zeta function $\zeta_k$ continues holomorphically to $\lambda\in\mathbb C$ and for each $\lambda_0\in\mathbb C$,
the multiplicity of $\lambda_0$ as a zero of $\zeta_k$ is equal to $m_{k,0}(\lambda_0)$, the algebraic multiplicity
of $\lambda_0$ as a resonance of the operator $P_{k,0}$ defined similarly to~\eqref{e:m-k-def}~-- see~\cite[\S4]{dyatlov-zworski-16}
for the proof.

By Ruelle's identity (see e.g.~\cite[(2.5)]{dyatlov-zworski-16}) the Ruelle zeta function defined in~\eqref{e:ruelle-zeta}
factorizes as follows:
$$
\zeta_{\mathrm R}(\lambda)={\zeta_0(\lambda)\zeta_2(\lambda)\zeta_4(\lambda)\over \zeta_1(\lambda)\zeta_3(\lambda)}.
$$
Using~\eqref{e:isom-2} we see that the order of vanishing of the function $\zeta_{\mathrm R}$ at $\lambda_0$ is equal to
\begin{equation}
  \label{e:alternator}
m_{\mathrm R}(\lambda_0)=
\sum_{k=0}^4 (-1)^k m_{k,0}(\lambda_0)=2m_{0,0}(\lambda_0)-2m_{1,0}(\lambda_0)+m_{2,0}(\lambda_0).
\end{equation}

\subsection{Resonance at~0}
\label{subsec:resatzero}

This paper focuses on the resonance at~$0$, which is why we henceforth put $\lambda_0:=0$ unless
stated otherwise. For instance we write
$$
R^H_{k,0}(\lambda):=R^H_{k,0}(\lambda;0),\quad
\Pi_{k,0}:=\Pi_{k,0}(0),\quad
\Res^{k,\ell}_0:=\Res^{k,\ell}_0(0).
$$
Our main goal is to study the order of vanishing of the Ruelle zeta function at~0, which by~\eqref{e:alternator}
is equal to
$$
m_{\mathrm R}(0)=2m_{0,0}(0)-2m_{1,0}(0)+m_{2,0}(0),\quad
m_{k,0}(0)=\dim\Res^{k,\infty}_0.
$$
Since $\mathcal L_X=d\iota_X+\iota_Xd$, the space of resonant states at 0 for the operator $P_{k,0}$ is
\begin{equation}
  \label{e:res-k-0}
\Res^k_0=\{u\in\mathcal D'_{E_u^*}(M;\Omega^k)\mid \iota_X u=0,\ \iota_X du=0\}.
\end{equation}
In particular, the exterior derivative defines an operator
$d:\Res^k_0\to \Res^{k+1}_0$. (Unfortunately this is no longer true for the spaces
of generalised resonant states $\Res^{k,\ell}_0$ with $\ell\geq 2$,
since $d$ does not necessarily map these to the kernel of $\iota_X$.)

\subsubsection{0-forms and 4-forms}
We first analyze the resonance at~0 for the operators $P_{0,0}$ and $P_{4,0}$. The following
regularity result is a special case of~\cite[Lemma~2.3]{dyatlov-zworski-17} (see also \cite[Lemma 4]{faure-roy-sjostrand} for a similar statement in the case of Anosov maps):
\begin{lemm}
\label{l:regres}
Assume that
$$
u\in\mathcal D'_{E_u^*}(M;\mathbb C),\quad
Xu\in C^\infty(M;\mathbb C),\quad
\Re\langle Xu,u\rangle_{L^2(M;d\vol_\alpha)}\leq 0.
$$
Then $u\in C^\infty(M;\mathbb C)$.
\end{lemm}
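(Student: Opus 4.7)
The plan is a microlocal argument: exploit that $E_u^*$ is a radial source for the Hamiltonian flow of the principal symbol of $P=-iX$ and apply the corresponding radial-source estimate, with the hypothesis $\Re\langle Xu,u\rangle\le 0$ providing the a priori $L^2$ bound needed to trigger that estimate. Throughout, $\langle Xu,u\rangle$ is read as the distributional pairing of the smooth function $Xu$ against $u$, which is well-defined without any a priori $L^2$ control.

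The principal symbol of $P$ is $p(\rho,\xi)=\langle\xi,X(\rho)\rangle$, whose characteristic set equals $E_u^*\oplus E_s^*$, and elliptic regularity combined with $Xu\in C^\infty$ already yields $\WF(u)\subset E_u^*\oplus E_s^*$. The wavefront hypothesis $\WF(u)\subset E_u^*$ then restricts the possible singularities of $u$ to a conic neighborhood of $E_u^*$. From the Anosov bound~\eqref{e:Anosov} and the description of $E_u^*$ as the annihilator of $E_0\oplus E_u$ in~\eqref{e:anosov-dual}, covectors in $E_u^*$ are expanded at an exponential rate by the Hamiltonian flow of $p$ as $t\to+\infty$, which makes $E_u^*$ a radial source in the sense of Faure--Sj\"ostrand~\cite{faure-sjostrand-11} and Dyatlov--Zworski~\cite{dyatlov-zworski-16}. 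Their radial-source estimate says: provided $u$ lies microlocally in some Sobolev space $H^{s_0}$ near $E_u^*$ with $s_0$ below an explicit threshold determined by the expansion rate and $Pu$ is microlocally smooth there, $u$ must itself be microlocally smooth; iterating in $s$ then upgrades this to $u\in C^\infty$.

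The remaining ingredient is the $L^2$ a priori bound near $E_u^*$. Here I would use that $\mathcal L_X\alpha=0$ and $\mathcal L_X d\alpha=0$ yield $\mathcal L_X d\vol_\alpha=0$, so that $2\Re\langle Xu,u\rangle=\int_M X(|u|^2)\,d\vol_\alpha=0$ holds whenever $u$ is smooth. For the distributional $u$ at hand, I would regularize by a family $u_\varepsilon=\Psi_\varepsilon u$ of self-adjoint pseudodifferential smoothers concentrated near $E_u^*$ in the anisotropic scale adapted to the flow, expand $2\Re\langle X u_\varepsilon, u_\varepsilon\rangle = \langle[X,\Psi_\varepsilon^*\Psi_\varepsilon]u,u\rangle + o(1)$, and observe that the commutator on the right has positive principal symbol precisely because $E_u^*$ is a radial source, while the lower-order errors are controlled by $Xu\in C^\infty$ together with the microlocal smoothness of $u$ off $E_u^*$. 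Combining the positivity of the commutator with the hypothesis $\Re\langle Xu,u\rangle\le 0$ forces uniform $L^2$-control on $u_\varepsilon$, hence an $L^2$ bound on $u$ in a conic neighborhood of $E_u^*$. The main obstacle is executing the regularization rigorously within the anisotropic Sobolev framework of Faure--Sj\"ostrand, arranging $\Psi_\varepsilon$ so that both the sign of the commutator's principal symbol and the smallness of the error terms survive the limit $\varepsilon\to 0$; this is essentially the content of~\cite[Lemma~2.3]{dyatlov-zworski-17}. Once the $L^2$ bound is in hand, the radial-source estimate bootstraps $u$ to $H^s$ for every $s$, giving $u\in C^\infty$.
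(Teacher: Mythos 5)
The paper does not prove this lemma; it simply cites \cite[Lemma~2.3]{dyatlov-zworski-17}, and your sketch is essentially a reconstruction of the architecture of that cited proof (commutator/regularization near the radial source $E_u^*$, with the sign condition $\Re\langle Xu,u\rangle\le 0$ supplying the a priori control that the source estimate otherwise demands). So you have the right route. Two small corrections to the bookkeeping: the high-regularity radial estimate at a source requires the a priori regularity $s_0$ to lie \emph{above} the threshold, not below (below-threshold a priori regularity is free for any distribution and would render the assumption vacuous); and the sign of $\sigma([X,\Psi_\varepsilon^*\Psi_\varepsilon])=H_p(\sigma(\Psi_\varepsilon^*\Psi_\varepsilon))$ near $E_u^*$ is actually \emph{negative} for the standard choice of $\Psi_\varepsilon$ with symbol decreasing in $|\xi|$, since $E_u^*$ being a source means $H_p|\xi|>0$ there — which is in fact the sign you want, since $\langle[X,\Psi_\varepsilon^2]u,u\rangle=-2\Re\langle\Psi_\varepsilon Xu,\Psi_\varepsilon u\rangle$ is bounded below as $\varepsilon\to0$ by the hypothesis, so a negative commutator symbol traps $\|B_\varepsilon u\|_{L^2}$ from above. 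One could also remark that in the actual argument of \cite[Lemma~2.3]{dyatlov-zworski-17} the commutator step is run at every Sobolev order rather than producing an $L^2$ bound that is then fed into a separate source estimate, but that is a matter of packaging rather than substance.
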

Using Lemma~\ref{l:regres} we show the following statement similar to~\cite[Lemma~3.2]{dyatlov-zworski-17} (we note that it straightforwardly generalizes to other dimensions, which was known already to \cite[Corollary 2.11]{Liverani1}):
\begin{lemm}
  \label{l:0-forms}
The semisimplicity condition~\eqref{e:semi-simple} holds at $\lambda_0=0$ for the operators $P_{0,0}$, $P_{4,0}$ and
$$
m_{0,0}(0)=m_{4,0}(0)=1.
$$
Moreover, $\Res^0_0=\Res^0_{0*}$ is spanned by the constant function 1 and $\Res^4_0=\Res^4_{0*}$ is spanned
by the form $d\alpha\wedge d\alpha$.
\end{lemm}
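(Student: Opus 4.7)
The core of the argument is to identify $\Res^0_0$ and $\Res^0_{0*}$ with the scalars, after which everything else follows formally from the isomorphisms~\eqref{e:isom-1},~\eqref{e:isom-1*} and the semisimplicity criterion in Lemma~\ref{l:ss-cores}.

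First I would show $\Res^0_0 = \mathbb{C}\cdot 1$. Take $u\in\Res^0_0$: then $Xu = 0$ and $\WF(u)\subset E_u^*$. Apply Lemma~\ref{l:regres} with the hypotheses $Xu = 0\in C^\infty$ and $\Re\langle Xu,u\rangle_{L^2} = 0$ to promote $u$ to an element of $C^\infty(M;\mathbb{C})$. A smooth $X$-invariant function on the compact connected manifold $M$ must be constant, since the contact Anosov flow $\varphi_t$ preserves the smooth volume $d\vol_\alpha$ and is therefore topologically transitive; on a dense orbit $u$ takes a single value, and continuity propagates it to all of $M$. The same reasoning, with $X$ replaced by $-X$ (which swaps the roles of $E_u^*$ and $E_s^*$ but leaves the flow invariant measure and the transitivity statement intact), yields $\Res^0_{0*} = \mathbb{C}\cdot 1$.

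Next I would transfer the 0-form statements to 4-forms via the isomorphism $d\alpha\wedge^2:\Res^{0,\ell}_0\to\Res^{4,\ell}_0$ from~\eqref{e:isom-1} together with its coresonant analog~\eqref{e:isom-1*}. This immediately produces $\Res^4_0 = \mathbb{C}\cdot d\alpha\wedge d\alpha = \Res^4_{0*}$ and $\dim\Res^{4,\ell}_0 = \dim\Res^{0,\ell}_0$ for every $\ell\geq 1$, so semisimplicity for $P_{4,0}$ will follow from that for $P_{0,0}$.

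To establish semisimplicity for $P_{0,0}$ at~$0$ I would invoke Lemma~\ref{l:ss-cores}: the restricted pairing $\Res^0_0\times\Res^4_{0*}\to\mathbb{C}$ reduces to multiplication by $\llangle 1,d\alpha\wedge d\alpha\rrangle = \int_M d\vol_\alpha \neq 0$ and is therefore nondegenerate. Hence $\Res^{0,\infty}_0 = \Res^0_0 = \mathbb{C}\cdot 1$ and $m_{0,0}(0) = m_{4,0}(0) = 1$, completing the proof. The only step that genuinely goes beyond the microlocal framework developed earlier in the section is the transitivity input used to conclude that smooth $X$-invariant functions on $M$ are constant; this is the principal (though classical) obstacle, and it is the reason the argument requires the connectedness of $M$ and not merely the Anosov structure.
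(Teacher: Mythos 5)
Your proof is correct, but it takes a genuinely different route from the paper's at two points. First, to conclude that a smooth $X$-invariant function is constant you invoke topological transitivity via Anosov's theorem on ergodicity of smooth-volume-preserving Anosov flows, a correct but global and comparatively heavy input. The paper instead reasons locally from the contraction estimates~\eqref{e:Anosov}: since $\mathcal L_X(du)=0$, the smooth invariant $1$-form $du$ must annihilate $E_u$ and $E_s$ (its pairings with unstable and stable vectors decay to zero under the flow), so $du\in E_0^*=\mathbb R\alpha$; together with $\langle du,X\rangle=Xu=0$ and $\langle\alpha,X\rangle=1$ this forces $du=0$, and connectedness then gives constancy. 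That argument stays entirely inside the microlocal/linear-algebraic framework already set up in the section and needs no ergodic theory. Second, for semisimplicity you invoke the pairing criterion of Lemma~\ref{l:ss-cores} with $\llangle 1,d\alpha\wedge d\alpha\rrangle=\vol_\alpha(M)\neq 0$, whereas the paper argues directly: if $X^2u=0$ then $Xu\in\Res^0_0$ is a constant, and $\int_M(Xu)\,d\vol_\alpha=0$ forces that constant to vanish. Your pairing route is slicker and stylistically consistent with how semisimplicity is handled later (cf.\ Lemma~\ref{l:c-psi-3}), while the paper's is more elementary; both are valid. The transfer to $4$-forms via $d\alpha\wedge^2$ and the treatment of coresonant states by swapping $X\mapsto -X$ are the same in both arguments.
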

\begin{proof}
We only give the proof for 0-forms (i.e. functions); the case
of 4-forms follows from here using the isomorphisms~\eqref{e:isom-1}, \eqref{e:isom-1*}.

Assume that $u\in\Res^0_0$. Then $Xu=0$, so Lemma~\ref{l:regres} implies that $u\in C^\infty(M;\mathbb C)$.
Thus the differential $du\in C^\infty(M;\Omega^1)$ is invariant under the flow $\varphi_t$;
the stable/unstable decomposition~\eqref{e:anosov-dual} gives that $du\in E_0^*$ at every point. Together
with the equation $Xu=0$, this implies that $du=0$ and thus (since $M$ is connected) $u$ is constant.
We have shown that $\Res^0_0$ is spanned by the function~1; applying the above argument to~$-X$
we see that $\Res^0_{0*}$ is spanned by~1 as well.

To show the semisimplicity condition~\eqref{e:semi-simple}, assume
that $u\in\mathcal D'_{E_u^*}(M;\mathbb C)$ satisfies $X^2u=0$.
Then $Xu\in\Res^0_0$, so $Xu$ is constant.
Together with the identity $\int_M (Xu)\,d\vol_\alpha=0$
this gives $Xu=0$ as needed.
\end{proof}

\subsubsection{Closed forms}
We now study resonant states which are closed, that is elements of the space
$$
\Res^k_0\cap\ker d=\{u\in\mathcal D'_{E_u^*}(M;\Omega^k)\mid \iota_X u=0,\ du=0\}.
$$
We use a special case of~\cite[Lemma~2.1]{dyatlov-zworski-17} which shows that de Rham cohomology
in the spaces $\mathcal D'_{E_u^*}(M;\Omega^k)$ is the same as the usual de Rham cohomology
defined in~\eqref{e:de-rham}:
\begin{lemm}
  \label{l:hodge}
Assume that $u\in\mathcal D'_{E_u^*}(M;\Omega^k)$ and $du\in C^\infty(M;\Omega^{k+1})$.
Then there exist $v\in C^\infty(M;\Omega^k)$, $w\in \mathcal D'_{E_u^*}(M;\Omega^{k-1})$
such that $u=v+dw$.
\end{lemm}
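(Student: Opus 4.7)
The plan is to transfer the problem to standard Hodge theory on $M$. Fix any smooth Riemannian metric on $M$ and let $\delta$ denote the associated codifferential and $\Delta = d\delta + \delta d$ the Hodge Laplacian on $\Omega^k$. Since $\Delta$ is elliptic on the compact manifold $M$, I take a classical pseudodifferential Green operator $G$ of order $-2$ together with the finite-rank smoothing projector $H$ onto harmonic $k$-forms, satisfying $\Delta G = G\Delta = I - H$ as operators on $\mathcal D'(M;\Omega^k)$. Both $G$ and $H$ commute with $d$ and $\delta$. Applying the distributional Hodge identity $u = Hu + \Delta G u$ and expanding $\Delta G = d\delta G + \delta dG = dG\delta + \delta Gd$ gives
$$
u = Hu + dG\delta u + \delta G\,du.
$$

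I then set $w := G\delta u$ and $v := Hu + \delta G\,du$, so that $u = v + dw$ holds automatically, and the two remaining regularity claims follow by a short check. For $v \in C^\infty(M;\Omega^k)$: the range of $H$ consists of smooth harmonic forms, and the pseudodifferential operator $\delta G$ preserves smoothness, so $\delta G\,du$ is smooth by the hypothesis $du \in C^\infty$. For $w \in \mathcal D'_{E_u^*}(M;\Omega^{k-1})$: the codifferential $\delta$ is a differential operator, so $\WF(\delta u) \subset \WF(u) \subset E_u^*$; then pseudolocality of $G$ gives
$$
\WF(w) = \WF(G\delta u) \subset \WF(\delta u) \subset E_u^*.
$$
The case $k=0$ is degenerate: the term $w$ is vacuous and the decomposition reduces to $u = Hu + \delta G\,du$, which is smooth as before.

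I do not anticipate any serious obstacle; the only nontrivial ingredient is the extension of the Hodge decomposition from $L^2$ to distributional forms, but this is classical and built into the pseudodifferential construction of $G$ (the identity $\Delta G u = u - Hu$ holds for all $u \in \mathcal D'$). No input from the flow is needed: the argument uses only the wavefront bookkeeping that pseudodifferential and differential operators never enlarge $\WF(u)$. In fact, the same proof shows that distributional de Rham cohomology computed in $\mathcal D'_\Gamma(M;\Omega^\bullet)$ for \emph{any} closed conic $\Gamma \subset T^*M \setminus 0$ agrees with the smooth one, which is the structural reason the lemma holds.
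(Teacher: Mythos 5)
Your proof is correct and is the same Hodge-theoretic argument used in the cited reference \cite[Lemma~2.1]{dyatlov-zworski-17}, which the paper does not reproduce here. One small terminological note: the wavefront-set bound $\WF(G\delta u)\subset\WF(\delta u)$ is usually called \emph{microlocality} of pseudodifferential operators (``pseudolocality'' refers to singular supports), but the content is right.
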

Similarly to~\cite[\S3.3]{dyatlov-zworski-17} we introduce the linear map
\begin{equation}
  \label{e:pi-k-def}
\begin{gathered}
\pi_k:\Res^k_0\cap\ker d\to H^k(M;\mathbb C),\quad
\pi_k(u)=[v]_{H^k}
\\\text{where}\quad u=v+dw,\quad
v\in C^\infty(M;\Omega^k),\quad
w\in \mathcal D'_{E_u^*}(M;\Omega^{k-1}).
\end{gathered}
\end{equation}
Here $v,w$ exist by Lemma~\ref{l:hodge}. To show that the map $\pi_k$ is well-defined,
assume that $u=v+dw=v'+dw'$ where $v,v'\in C^\infty(M;\Omega^k)$ and $w,w'\in\mathcal D'_{E_u^*}(M;\Omega^{k-1})$.
Then $d(w-w')=v'-v\in C^\infty(M;\Omega^k)$, thus by Lemma~\ref{l:hodge}
we may write $w-w'=w_1+dw_2$ where $w_1\in C^\infty(M;\Omega^{k-1})$,
$w_2\in \mathcal D'_{E_u^*}(M;\Omega^{k-2})$.
Then $v'-v=dw_1$ where $w_1$ is smooth, so $[v]_{H^k}=[v']_{H^k}$.

Similar arguments apply to the spaces $\Res^{k}_{0*}\cap \ker d$ of closed coresonant $k$-forms;
we denote the corresponding maps by
$$
\pi_{k*}:\Res^{k}_{0*}\cap\ker d\to H^k(M;\mathbb C).
$$
From Lemma~\ref{l:0-forms} we see that $\pi_0$ is an isomorphism
and hence by \eqref{e:isom-1} that $\pi_4=0$. 

We now establish several properties of the spaces $\Res^k_0\cap\ker d$ and the
maps $\pi_k$; some of these are extensions of the results of~\cite[\S3.3]{dyatlov-zworski-17}.
\begin{lemm}
  \label{l:ker-pi-k}
The kernel of $\pi_k$ satisfies
$$
d(\Res^{k-1}_0)\subset\ker \pi_k\subset d(\Res^{k-1,\infty}).
$$
\end{lemm}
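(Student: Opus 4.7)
The plan is to prove the two inclusions separately. The first (``trivial'') inclusion is essentially unwinding the definition of $\pi_k$, while the second leverages the commutation of $d$ with the spectral projector $\Pi_{k-1}(0)$ acting on the full form bundle $\Omega^{k-1}$ (not on $\Omega^{k-1}_0$).

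For the first inclusion $d(\Res^{k-1}_0) \subset \ker \pi_k$, take $u \in \Res^{k-1}_0$. Observe that $du \in \Res^k_0 \cap \ker d$: closedness is automatic, the condition $\iota_X(du)=0$ is part of the definition~\eqref{e:res-k-0} of $\Res^{k-1}_0$, and $\iota_X d(du)=0$ holds trivially. Now in the decomposition required by \eqref{e:pi-k-def} we can take $v = 0 \in C^\infty(M;\Omega^k)$ and $w = u \in \mathcal{D}'_{E_u^*}(M;\Omega^{k-1})$, whence $\pi_k(du) = [0]_{H^k} = 0$.

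For the second inclusion $\ker \pi_k \subset d(\Res^{k-1,\infty})$, let $u \in \Res^k_0 \cap \ker d$ with $\pi_k(u)=0$. By Lemma~\ref{l:hodge} write $u = v + dw$ with $v \in C^\infty(M;\Omega^k)$ and $w \in \mathcal{D}'_{E_u^*}(M;\Omega^{k-1})$. Since $[v]_{H^k}=0$, there is $v' \in C^\infty(M;\Omega^{k-1})$ with $v = dv'$, so $u = d\tilde w$ for $\tilde w := v' + w \in \mathcal{D}'_{E_u^*}(M;\Omega^{k-1})$. Set $w^\sharp := \Pi_{k-1}(0)\tilde w$, which lies in $\Res^{k-1,\infty}$ by definition of the spectral projector. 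The commutation relation \eqref{e:commuter-Pi} gives
$$
dw^\sharp \,=\, d\,\Pi_{k-1}(0)\tilde w \,=\, \Pi_k(0)\,d\tilde w \,=\, \Pi_k(0)\,u.
$$
Since $u \in \Res^k_0$ satisfies $\iota_X u = 0$ and $du=0$, we have $\mathcal L_X u = d\iota_Xu + \iota_X du = 0$, so $u \in \Res^{k,1}(0) \subset \Res^{k,\infty}(0)$; by the idempotent property of $\Pi_k(0)$ on its range (see~\eqref{e:Pi-k-idem}) this forces $\Pi_k(0)u = u$. Hence $u = dw^\sharp \in d(\Res^{k-1,\infty})$.

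The only point of care is to distinguish $\Pi_{k-1}(0)$ (acting on $\Omega^{k-1}$, range $\Res^{k-1,\infty}$) from $\Pi_{k-1,0}(0)$ (acting on $\Omega^{k-1}_0$): we must use the former because $d$ does not preserve $\ker \iota_X$, so the commutation $d\Pi = \Pi d$ is only available at the level of the full form bundle. This is precisely why the right-hand inclusion lands in $d(\Res^{k-1,\infty})$ rather than in the smaller space $d(\Res^{k-1,\infty}_0)$, matching the statement of the lemma.
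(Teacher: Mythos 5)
Your proof is correct and follows essentially the same route as the paper: the first inclusion is immediate from the definition of $\pi_k$, and the second uses Lemma~\ref{l:hodge} to write $u=d\tilde w$, then applies the spectral projector $\Pi_k=\Pi_k(0)$ (which fixes $u$ since $u\in\Res^k_0\subset\Res^{k,\infty}(0)$) and the commutation relation~\eqref{e:commuter-Pi} to conclude $u=d\Pi_{k-1}\tilde w\in d(\Res^{k-1,\infty})$. Your closing remark on why one must use $\Pi_{k-1}$ on the full bundle $\Omega^{k-1}$ rather than $\Pi_{k-1,0}$ is a correct and useful explanation of why the right-hand side is $d(\Res^{k-1,\infty})$ and not the smaller $d(\Res^{k-1,\infty}_0)$.
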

\begin{proof}
The first containment is immediate. For the second one, assume that $u\in\Res^k_0\cap \ker d$
and $\pi_k(u)=0$. Then $u=v+dw$ where $v\in C^\infty(M;\Omega^k)$ satisfies $[v]_{H^k}=0$
and $w\in\mathcal D'_{E_u^*}(M;\Omega^{k-1})$. We have $v=d\zeta$ for some $\zeta\in C^\infty(M;\Omega^{k-1})$
and by~\eqref{e:commuter-Pi}
$$
u=\Pi_ku=\Pi_k d(\zeta+w)=d\Pi_{k-1}(\zeta+w).
$$
Therefore $u\in d(\Res^{k-1,\infty})$.
\end{proof}
%
We note that the case $k = 0$ of the following lemma holds trivially.
\begin{lemm}
  \label{l:exact-onto}
Assume that for some $k$ all the coresonant states in $\Res^{5-k}_{0*}$ are exact forms. Then the map $\pi_k$ is onto.
\end{lemm}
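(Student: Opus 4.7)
The plan is to reduce surjectivity of $\pi_k$ to solvability of a single transport equation via Lemma~\ref{l:solver}, with the exactness hypothesis on $\Res^{5-k}_{0*}$ verifying the solvability condition.

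First I would fix a class in $H^k(M;\mathbb{C})$ and pick a closed smooth representative $v\in C^\infty(M;\Omega^k)$, and then look for $u\in\Res^k_0\cap\ker d$ of the form
$$
u := v + dw,\qquad w\in\mathcal{D}'_{E_u^*}(M;\Omega^{k-1}_0),
$$
so that by the definition of $\pi_k$ one automatically has $\pi_k(u)=[v]_{H^k}$. Closedness $du=0$ and the wavefront containment $\WF(u)\subset E_u^*$ are automatic, while the requirement $\iota_X u=0$ reduces via the Cartan formula and $\iota_X w=0$ to the single equation
$$
P_{k-1,0}\,w = i\,\iota_X v,
$$
whose right-hand side lies in $C^\infty(M;\Omega^{k-1}_0)$ since $\iota_X(\iota_X v)=0$.

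Next I would invoke Lemma~\ref{l:solver}: this equation admits a solution with $\WF\subset E_u^*$ precisely when $\llangle \iota_X v,u_*\rrangle=0$ for every $u_*\in\Res^{5-k}_{0*}$. To evaluate the pairing, decompose $v = v_0+\alpha\wedge\iota_X v$ with $v_0:=v-\alpha\wedge\iota_X v\in C^\infty(M;\Omega^k_0)$. Pointwise $v_0\wedge u_*$ lies in $\wedge^5(E_u^*\oplus E_s^*)$, which vanishes because $E_u^*\oplus E_s^*$ has rank four; hence $v_0\wedge u_*=0$ as a distributional $5$-form and
$$
\llangle \iota_X v,u_*\rrangle = \int_M \alpha\wedge\iota_X v\wedge u_* = \int_M v\wedge u_*.
$$

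Finally, the exactness hypothesis yields $u_* = dw_*$ for some distributional form $w_*$. Because $v$ is smooth, the product $v\wedge w_*$ is a well-defined distributional $4$-form on the closed manifold $M$, and the distributional Stokes theorem gives
$$
\int_M v\wedge u_* = \int_M v\wedge dw_* = (-1)^{k+1}\int_M dv\wedge w_* = 0,
$$
since $dv=0$. Lemma~\ref{l:solver} then supplies the desired $w$, and $u=v+dw\in\Res^k_0\cap\ker d$ satisfies $\pi_k(u)=[v]_{H^k}$, proving surjectivity.

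The key step is the pointwise cancellation $v_0\wedge u_*=0$, which converts the a priori awkward pairing into a genuine de Rham pairing $\int_M v\wedge u_*$ against the smooth closed form $v$; after that, the exactness assumption closes the argument via Stokes, and no further wavefront analysis is required beyond what is already packaged in Lemma~\ref{l:solver}.
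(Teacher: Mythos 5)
Your proof is correct and follows the same route as the paper's: reduce surjectivity to solving $\mathcal L_X w = -\iota_X v$ via Lemma~\ref{l:solver}, convert the solvability pairing $\llangle\iota_X v,u_*\rrangle$ to the de Rham pairing $\int_M v\wedge u_*$, and conclude by Stokes using $dv=0$ and exactness of $u_*$. The only cosmetic difference is that you justify the equality $\int_M\alpha\wedge(\iota_X v)\wedge u_* = \int_M v\wedge u_*$ by the explicit pointwise vanishing of $(v-\alpha\wedge\iota_X v)\wedge u_*$ in $\wedge^5(E_u^*\oplus E_s^*)=0$, whereas the paper notes that the two $5$-forms have the same $\iota_X$; both give the same thing.
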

\begin{proof}
Take arbitrary $v\in C^\infty(M;\Omega^k)$ such that $dv=0$.
We will construct $u\in\Res^k_0\cap\ker d$ such that $\pi_k(u)=[v]_{H^k}$ by putting
$$
u:=v+dw\quad\text{for some}\quad w\in\mathcal D'_{E_u^*}(M;\Omega^{k-1}_0).
$$
Such $u$ is automatically closed, so we only need to choose $w$ so that
$\iota_Xu=0$, that is
\begin{equation}
  \label{e:solvent}
\iota_Xdw=\mathcal L_Xw=-\iota_X v
\end{equation}
where the first equality is immediate because $\iota_X w=0$.

To solve~\eqref{e:solvent}, we use Lemma~\ref{l:solver}. It suffices to check that
the condition~\eqref{e:solver} holds:
$$
\llangle \iota_X v,u_*\rrangle=0\quad\text{for all}\quad u_*\in\Res^{5-k}_{0*}.
$$
We compute
$$
\llangle\iota_X v,u_*\rrangle=\int_M \alpha\wedge(\iota_X v)\wedge u_*
=\int_M v\wedge u_*=0.
$$
Here in the second equality we used that $\iota_X u_*=0$ (thus $\iota_X$ of the 5-forms on both
sides are the same) and in the last equality
we used that $v$ is closed and, by the assumption of the lemma, $u_*$ is exact.
\end{proof}
%
\begin{lemm}
\label{l:1-forms-ish}
The maps $\pi_1,\pi_{1*}$ are isomorphisms, in particular
$$
\dim(\Res^1_0\cap\ker d)=\dim(\Res^1_{0*}\cap\ker d)=b_1(M).
$$
\end{lemm}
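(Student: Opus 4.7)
The plan is to reduce the statement to a direct combination of the three preceding lemmas. Lemma~\ref{l:exact-onto} will yield surjectivity of $\pi_1$, and Lemma~\ref{l:ker-pi-k} together with Lemma~\ref{l:0-forms} will yield injectivity; the same strategy applies verbatim to $\pi_{1*}$ using the coresonant analogs of Lemmas~\ref{l:ker-pi-k} and~\ref{l:exact-onto} (which the paper has signaled are proved by the same arguments with $X$ replaced by $-X$ and $E_u^*$ by $E_s^*$).

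For surjectivity of $\pi_1$, I would apply Lemma~\ref{l:exact-onto} with $k=1$: its hypothesis requires every element of $\Res^4_{0*}$ to be exact. Lemma~\ref{l:0-forms} says $\Res^4_{0*}$ is one-dimensional and spanned by $d\alpha\wedge d\alpha=d(\alpha\wedge d\alpha)$, which is manifestly exact, so the hypothesis holds and $\pi_1$ is onto. For injectivity, Lemma~\ref{l:ker-pi-k} gives $\ker\pi_1\subset d(\Res^{0,\infty})$. I would then identify $\Res^{0,\infty}$ with $\Res^{0,\infty}_0$ via the splitting~\eqref{e:res-0-split} (the $\iota_X$-summand would live in $\Res^{-1,\ell}_0$, which is trivial), and invoke Lemma~\ref{l:0-forms} to conclude that $\Res^{0,\infty}_0=\Res^0_0=\mathbb C$. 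Thus $d(\Res^{0,\infty})=\{0\}$ and $\ker\pi_1=0$, so $\pi_1$ is an isomorphism.

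The identical argument, applied to the coresonant side, shows $\pi_{1*}$ is an isomorphism: surjectivity reduces to showing $\Res^4_0$ consists of exact forms, which again holds since Lemma~\ref{l:0-forms} identifies $\Res^4_0$ with $\mathbb C\cdot d\alpha\wedge d\alpha$; injectivity reduces to showing $d(\Res^{0,\infty}_{0*})=0$, again from Lemma~\ref{l:0-forms}. Combining the two isomorphisms with the Betti number identity $b_1(M)=\dim H^1(M;\mathbb C)$ recorded in~\eqref{e:bettiGysin} yields the claimed dimension equality. The only nuance in the proof is to keep track of the subscripts: one must distinguish $\Res^{0,\infty}$ (no $\iota_X$ constraint) from $\Res^{0,\infty}_0$, although for $k=0$ these happen to coincide. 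Beyond this bookkeeping there is no genuine obstacle, since the heavy lifting has already been carried out in Lemmas~\ref{l:0-forms},~\ref{l:ker-pi-k}, and~\ref{l:exact-onto}.
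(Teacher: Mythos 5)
Your proof is correct and matches the paper's argument essentially step for step: surjectivity via Lemma~\ref{l:exact-onto} combined with the exactness of $d\alpha\wedge d\alpha$ from Lemma~\ref{l:0-forms}, and injectivity via Lemma~\ref{l:ker-pi-k} combined with $\Res^{0,\infty}=\Res^0_0=\mathbb C$. The extra bookkeeping you do in identifying $\Res^{0,\infty}$ with $\Res^{0,\infty}_0$ is implicit in the paper (for $k=0$ the $\iota_X$ constraint is vacuous), and is not a deviation of substance.
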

\begin{proof}
We only consider the case of $\pi_1$, with $\pi_{1*}$ handled similarly.
To show that $\pi_1$ is one-to-one, we use Lemma~\ref{l:ker-pi-k}
and the fact that $\Res^{0,\infty}=\Res^0_0$ consists of constant functions
by Lemma~\ref{l:0-forms}.
To show that $\pi_1$ is onto, it suffices to use Lemma~\ref{l:exact-onto}:
by Lemma~\ref{l:0-forms}, the space $\Res^4_{0*}$ is spanned by $d\alpha\wedge d\alpha=d(\alpha\wedge d\alpha)$.
\end{proof}
%
\begin{lemm}
  \label{l:res-3-closed}
We have $d(\Res^3_0)=d(\Res^3_{0*})=0$.
\end{lemm}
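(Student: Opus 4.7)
The plan is to reduce the statement for $\Res^3_0$ to the vanishing of a single integral, exploiting the Lefschetz-type isomorphism~\eqref{e:isom-1}. Given $u \in \Res^3_0$, I would write $u = d\alpha \wedge \tilde u$ with $\tilde u \in \Res^1_0$, so that $du = d\alpha \wedge d\tilde u \in \Res^4_0$. Since Lemma~\ref{l:0-forms} identifies $\Res^4_0$ with the one-dimensional span of $d\alpha \wedge d\alpha$, we must have $du = c \cdot d\alpha \wedge d\alpha$ for some scalar $c$, and because $\int_M \alpha \wedge d\alpha \wedge d\alpha = \mathrm{Vol}_\alpha(M) \neq 0$, it will suffice to show that $\int_M \alpha \wedge du = 0$.

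The key observation is a purely fiberwise identity: all three factors of $d\alpha \wedge d\alpha \wedge \tilde u$ are annihilated by $\iota_X$, so they take values in $\Omega^\bullet_0 \cong \wedge^\bullet(E_u^* \oplus E_s^*)$, and since $\dim(E_u^* \oplus E_s^*) = 4$ one has $\Omega^5_0 = 0$ by~\eqref{e:Omega-k-def}. Therefore $d\alpha \wedge d\alpha \wedge \tilde u \equiv 0$ as a distributional $5$-form, regardless of the regularity of $\tilde u$. The Leibniz rule
\[
d(\alpha \wedge d\alpha \wedge \tilde u) = d\alpha \wedge d\alpha \wedge \tilde u - \alpha \wedge d\alpha \wedge d\tilde u
\]
then gives $\alpha \wedge du = \alpha \wedge d\alpha \wedge d\tilde u = -d(\alpha \wedge d\alpha \wedge \tilde u)$, an exact distributional top form. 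Integrating by parts closes the argument, forcing $c = 0$ and hence $du = 0$.

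The coresonant statement $d(\Res^3_{0*}) = 0$ follows by the same argument with~\eqref{e:isom-1*} in place of~\eqref{e:isom-1}, using that $\Res^4_{0*}$ is also spanned by $d\alpha \wedge d\alpha$ (combine Lemma~\ref{l:0-forms} with~\eqref{e:isom-1*}). The only delicate point in either case is checking that Stokes' formula applies to the distributional $5$-form $d(\alpha \wedge d\alpha \wedge \tilde u)$: since $\tilde u \in \mathcal D'_{E_u^*}(M;\Omega^1)$ and both $d$ and wedge product against smooth forms are continuous operations in this topology, one can pair with the constant function $1$ and move $d$ across to get zero. This is the main technical step, but it is routine within the framework of~\S\ref{section:PRresonances} and not a genuine obstacle.
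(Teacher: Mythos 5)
Your proof is correct and is essentially the paper's argument: both reduce to showing $\int_M \alpha\wedge du=0$ via integration by parts and the vanishing of the $5$-form $\alpha\wedge d\alpha\wedge\tilde u=\alpha\wedge u$ (equivalently $d\alpha\wedge u$ after Stokes), which is killed by $\iota_X$ and hence zero. The detour through $\tilde u=(d\alpha\wedge)^{-1}u$ is not needed for the first step (that $du\in\Res^4_0$ follows directly from $u\in\Res^3_0$) but is harmless, and the handling of the distributional integration by parts is the same standard density argument the paper relies on implicitly.
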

\begin{proof}
We only consider the case of $\Res^3_0$, with $\Res^3_{0*}$ handled similarly.
Assume that $u\in\Res^3_0$. Then $du\in\Res^4_0$, so by Lemma~\ref{l:0-forms} we have
$du=cd\alpha\wedge d\alpha$ for some constant $c$. It remains to use that
$$
c\int_M d\vol_\alpha=\int_M \alpha\wedge du=\int_M d\alpha\wedge u=0
$$
where in the second equality we integrated by parts and in the third equality we used
that $\iota_X(d\alpha\wedge u)=0$, thus $d\alpha\wedge u=0$.
\end{proof}
We also have the following nondegeneracy result for the pairing between closed resonant and coresonant forms
when $k=1$:
\begin{lemm}
  \label{l:pairing-k-1}
The pairing induced by $\llangle\bullet,\bullet\rrangle$ on $(\Res^1_0\cap\ker d)\times(d\alpha\wedge(\Res^1_{0*}\cap\ker d))$ is nondegenerate.
\end{lemm}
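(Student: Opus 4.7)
The plan is to rewrite the pairing as a Poincar\'e duality pairing on $M$ via an auxiliary map $\Phi$, and then deduce nondegeneracy from Lemma~\ref{l:1-forms-ish} together with the bijectivity of $\Phi$.

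First, for any $u_*\in\Res^1_{0*}\cap\ker d$ the distributional $4$-form $\alpha\wedge d\alpha\wedge u_*$ is closed: using $du_*=0$,
\begin{equation*}
d(\alpha\wedge d\alpha\wedge u_*)=(d\alpha)^2\wedge u_*=0,
\end{equation*}
the vanishing because $(d\alpha)^2\wedge u_*$ is a $5$-form annihilated by $\iota_X$ (since $\iota_Xd\alpha=0$ and $\iota_Xu_*=0$). Applying the analogue of Lemma~\ref{l:hodge} for wavefront sets contained in $E_s^*$, we can write $\alpha\wedge d\alpha\wedge u_*=\eta+d\beta$ with $\eta\in C^\infty(M;\Omega^4)$ closed and $\beta\in\mathcal D'_{E_s^*}(M;\Omega^3)$, and define a linear map $\Phi(u_*):=[\eta]_{H^4}\in H^4(M;\mathbb C)$, well-defined by the same argument as for $\pi_k$ in~\eqref{e:pi-k-def}.

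For $u\in\Res^1_0\cap\ker d$, Lemma~\ref{l:hodge} gives $u=v+dw$ with $v\in C^\infty$ closed and $w\in\mathcal D'_{E_u^*}$, so $\pi_1(u)=[v]_{H^1}$. Rewriting $\alpha\wedge u\wedge d\alpha\wedge u_*=-u\wedge(\alpha\wedge d\alpha\wedge u_*)$ (from $\alpha\wedge u=-u\wedge\alpha$ and the commutativity of $d\alpha$) and eliminating cross-terms via Stokes, where $\int_M dw\wedge\eta=\int_M v\wedge d\beta=\int_M dw\wedge d\beta=0$ (using $d\eta=0$, $dv=0$, and $d^2=0$ respectively), one obtains the key identity
\begin{equation*}
\llangle u,d\alpha\wedge u_*\rrangle=-\int_M v\wedge\eta=-\langle\pi_1(u),\Phi(u_*)\rangle_{\mathrm{PD}},
\end{equation*}
where $\langle\cdot,\cdot\rangle_{\mathrm{PD}}$ denotes the Poincar\'e duality pairing on $M$. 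By Lemma~\ref{l:1-forms-ish} applied to $\pi_1$ and $\pi_{1*}$, we have $\dim(\Res^1_{0*}\cap\ker d)=b_1(M)=b_4(M)=\dim H^4(M;\mathbb C)$ (the last equality by Poincar\'e duality). Combined with the nondegeneracy of Poincar\'e duality, the identity reduces nondegeneracy of the pairing in the statement to showing that $\Phi$ is an isomorphism, which by the dimension equality amounts to $\Phi$ being injective.

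The main obstacle is proving injectivity of $\Phi$. Supposing $\Phi(u_*)=0$, one has $\alpha\wedge d\alpha\wedge u_*=d\gamma$ for some $\gamma\in\mathcal D'_{E_s^*}(M;\Omega^3)$; applying $\iota_X$ and decomposing $\gamma=\gamma_0+\alpha\wedge\gamma_1$ with $\iota_X\gamma_0=\iota_X\gamma_1=0$ one extracts
\begin{equation*}
d\alpha\wedge u_*=\mathcal L_X\gamma_0-(d\gamma_1)_0\in\mathcal D'_{E_s^*}(M;\Omega^3_0).
\end{equation*}
The transpose of Lemma~\ref{l:solver} and the isomorphism $d\alpha\wedge\colon\Res^{1,\ell}_{0*}\to\Res^{3,\ell}_{0*}$ would then express $u_*$ as lying in the range of $\mathcal L_X$ acting on $\mathcal D'_{E_s^*}(M;\Omega^1_0)$; together with $u_*\in\ker\mathcal L_X$ and careful handling of the Jordan structure of $\Res^{1,\infty}_{0*}$, this forces $u_*=0$.
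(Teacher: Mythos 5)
Your key identity
\begin{equation*}
\llangle u,d\alpha\wedge u_*\rrangle=-\langle\pi_1(u),\Phi(u_*)\rangle_{\mathrm{PD}}
\end{equation*}
is correct (the cross-terms vanish by Stokes as you say), and combined with the bijectivity of $\pi_1$ and Poincar\'e duality it does reduce the lemma to injectivity of $\Phi$. But this is not a reduction to something simpler: since the spaces have matching dimension $b_1(M)=b_4(M)$, injectivity of $\Phi$ is \emph{equivalent} to the nondegeneracy you are trying to prove. You have reformulated the lemma, not proved it. The final paragraph, which is where the actual content would have to go, is not a proof: the equation $d\alpha\wedge u_*=\mathcal L_X\gamma_0-(d\gamma_1)_0$ has the extra $(d\gamma_1)_0$ term, so you cannot directly invoke Lemma~\ref{l:solver} to exhibit $d\alpha\wedge u_*$ as $\mathcal L_X(\text{something})$; and even if you could, to conclude $u_*=0$ from ``$u_*\in\ker\mathcal L_X\cap\ran\mathcal L_X$'' you would need to know $\Res^{1,\infty}_{0*}=\Res^1_{0*}$, i.e.\ semisimplicity of $P_{1,0}$ at $0$. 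But in the paper that semisimplicity is \emph{deduced from} Lemma~\ref{l:pairing-k-1} (via Lemma~\ref{l:ss-cores} and Lemma~\ref{l:1-means-2}), so this route is circular.

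The paper's argument is genuinely different in kind: rather than reducing to a topological pairing, it computes the pairing explicitly. Writing $u=\pi_1^{-1}([v])=v+df$ and $u_*=\pi_{1*}^{-1}([\overline v])=\overline{v+dg}$ with $Xf=Xg=-\iota_Xv$, repeated integration by parts collapses the pairing to $-\Re\langle Xf,f\rangle_{L^2(M;d\vol_\alpha)}$. The a priori regularity result Lemma~\ref{l:regres} then shows that if this real part is $\geq 0$ then $f$ is smooth, whence $u$ is a smooth flow-invariant $1$-form annihilating $X$ and so $u=0$ by the Anosov splitting, contradicting $v$ not exact. This sign-definiteness on the ``diagonal'' is the input that cannot be obtained purely from cohomological bookkeeping; it is precisely what fills the gap your argument leaves open.
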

\begin{proof}
We show the following stronger statement: for each closed but not exact $v\in C^\infty(M;\Omega^1)$,
\begin{equation}
  \label{e:pairing-k-1}
\Re \llangle \pi_1^{-1}([v]_{H^1}),d\alpha\wedge\pi_{1*}^{-1}([\overline v]_{H^1})\rrangle < 0.
\end{equation}
Here we used that the map $\pi_1$ is an isomorphism, as shown in Lemma \ref{l:1-forms-ish}. We have
$$
\pi_1^{-1}([v]_{H^1})=v+df,\quad
\pi_{1*}^{-1}([\overline v]_{H^1})=\overline{v+dg}
$$
where $f\in\mathcal D'_{E_u^*}(M;\mathbb C),g\in\mathcal D'_{E_s^*}(M;\mathbb C)$
satisfy
\begin{equation}
  \label{e:idator}
Xf=Xg=-\iota_X v.
\end{equation}
We compute
$$
\begin{aligned}
\Re\llangle \pi_1^{-1}([v]_{H^1}),d\alpha\wedge\pi_{1*}^{-1}([\overline v]_{H^1})\rrangle&=\Re\int_M \alpha\wedge d\alpha\wedge(v+df)\wedge (\overline{v+dg})\\
&=\Re\int_M \alpha\wedge d\alpha\wedge(df\wedge \overline v+v\wedge d\overline g+df\wedge d\overline g)\\
&=\Re\int_M d\alpha\wedge d\alpha\wedge(f\overline v-\overline g v-\overline g df)\\
&=\Re\int_M \big(f\iota_X \overline v-\overline g\iota_X v-(Xf)\overline{g}\big)d\vol_\alpha\\
&=-\Re\langle Xf,f\rangle_{L^2(M;d\vol_\alpha)}.
\end{aligned}
$$
Here in the second line we used that $\Re (v\wedge \overline v)=0$.
In the third line we integrated by parts and used that $dv=0$.
In the fourth line we used that $\iota_X d\alpha=0$
(the 5-forms under the integral are equal as can be seen by taking $\iota_X$ of both sides).
In the last line we used the identity~\eqref{e:idator}.

Thus, if~\eqref{e:pairing-k-1} fails, we have
$\Re\langle Xf,f\rangle_{L^2(M;d\vol_\alpha)}\leq 0$ which by Lemma~\ref{l:regres}
implies that $f\in C^\infty(M;\mathbb C)$ and thus $u:=\pi_1^{-1}([v]_{H^1})$ lies in
$\Res^1_0\cap C^\infty(M;\Omega^1)$.
Now the fact that $u$ is invariant under the flow $\varphi_t$
and the stable/unstable decomposition~\eqref{e:anosov-dual} imply
that $u\in E_0^*$ at each point, and the fact that $\iota_X u=0$ then gives $u=0$.
This shows that $v$ is exact, giving a contradiction.
\end{proof}
We finally give the following result in the case when all forms in $\Res^1_0$ are closed:
\begin{lemm}
  \label{l:1-means-2}
Assume that $\Res^1_0$ consists of closed forms, i.e. $d(\Res^1_0)=0$.
Then:

1. The semisimplicity condition~\eqref{e:semi-simple} holds at $\lambda_0=0$ for the operators $P_{1,0}$ and~$P_{3,0}$.

2. $d(\Res^2_0)=0$, $\pi_2$ is onto, and $\ker \pi_2$ is spanned by $d\alpha$.

3. $m_{1,0}(0)=m_{3,0}(0)=b_1(M)$, $\dim \Res^2_0=b_2(M)+1$, and $\pi_3=0$.
\end{lemm}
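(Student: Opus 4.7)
The plan is to bootstrap from the hypothesis $d(\Res^1_0)=0$ in three stages: first establish semisimplicity in degrees~$1$ and~$3$ via the nondegenerate pairing of Lemma~\ref{l:pairing-k-1}; then deduce $d(\Res^2_0)=0$ by a Stokes-type computation; finally identify $\ker\pi_2$ and compute all dimensions. For the first stage, Lemma~\ref{l:1-forms-ish} gives $\dim(\Res^1_0\cap\ker d)=b_1(M)$, so the hypothesis forces $\dim\Res^1_0=b_1(M)$. Combined with the identity $\dim\Res^1_{0*}=\dim\Res^1_0$ from~\eqref{e:same-dim} and Lemma~\ref{l:1-forms-ish} applied to $\pi_{1*}$, this in turn yields $d(\Res^1_{0*})=0$. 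Via the isomorphism~\eqref{e:isom-1*} we then have $\Res^3_{0*}=d\alpha\wedge\Res^1_{0*}=d\alpha\wedge(\Res^1_{0*}\cap\ker d)$, so Lemma~\ref{l:pairing-k-1} implies the pairing on $\Res^1_0\times\Res^3_{0*}$ is nondegenerate; Lemma~\ref{l:ss-cores} then gives semisimplicity at~$0$ for $P_{1,0}$, and~\eqref{e:isom-1} transports it to~$P_{3,0}$.

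For the second stage, I would take any $u\in\Res^2_0$ and any $u_*\in\Res^1_{0*}$ (with $u_*$ closed by the first stage) and exploit the distributional identity
\[d(\alpha\wedge u\wedge u_*)=d\alpha\wedge u\wedge u_*-\alpha\wedge du\wedge u_*,\]
which is well-defined because $E_u^*\cap E_s^*=\{0\}$ and $du_*=0$. Integrating over $M$ via distributional Stokes, and observing that every $5$-form $\omega$ on the $5$-manifold $M$ satisfies $\omega=\alpha\wedge\iota_X\omega$, I get $d\alpha\wedge u\wedge u_*=0$ since $\iota_X$ annihilates each of its factors. Hence $\llangle du,u_*\rrangle=0$ for every $u_*\in\Res^1_{0*}$, and since $du\in\Res^3_0$ and the pairing on $\Res^3_0\times\Res^1_{0*}$ is nondegenerate by semisimplicity and Lemma~\ref{l:ss-cores}, $du=0$.

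For the remainder of parts~2 and~3, note that $d(\Res^1_{0*})=0$ implies every $d\alpha\wedge u_*\in\Res^3_{0*}$ equals $d(\alpha\wedge u_*)$, so $\Res^3_{0*}$ consists of exact forms and Lemma~\ref{l:exact-onto} gives $\pi_2$ onto. Lemma~\ref{l:ker-pi-k} traps $\ker\pi_2$ between $d(\Res^1_0)=0$ and $d(\Res^{1,\infty})$; combining~\eqref{e:res-0-split}, Lemma~\ref{l:0-forms}, and the established semisimplicity in degree~$1$ shows $\Res^{1,\infty}=\Res^1_0\oplus\mathbb{C}\alpha$, so $d(\Res^{1,\infty})=\mathbb{C}\,d\alpha$; since $d\alpha\in\Res^2_0$ is itself exact, $\ker\pi_2=\mathbb{C}\,d\alpha$. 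Part~3 then follows at once: $m_{1,0}(0)=\dim\Res^1_0=b_1(M)$, $m_{3,0}(0)=b_1(M)$ by~\eqref{e:isom-2}, $\dim\Res^2_0=1+b_2(M)$ from the short exact sequence induced by $\pi_2$, and $\pi_3=0$ because any $d\alpha\wedge v\in\Res^3_0$ (with $v\in\Res^1_0$ closed) equals $d(\alpha\wedge v)$. The main subtlety is the Stokes computation in stage two, where the wavefront set calculus (rooted in $E_u^*\cap E_s^*=\{0\}$) is needed both to make sense of the distributional wedge products and to justify $\int_Md(\alpha\wedge u\wedge u_*)=0$ by duality with the constant function~$1$.
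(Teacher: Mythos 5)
Your proposal is correct and follows essentially the same route as the paper. Stage~1 matches the paper verbatim, Stage~3 (onto-ness of $\pi_2$ via Lemma~\ref{l:exact-onto}, identification of $\ker\pi_2$ via Lemma~\ref{l:ker-pi-k} and $\Res^{1,\infty}=\Res^1_0\oplus\mathbb C\alpha$, and the dimension counts) also matches. The only cosmetic difference is in Stage~2: you pair $du\in\Res^3_0$ directly against $u_*\in\Res^1_{0*}$ and invoke nondegeneracy via semisimplicity of $P_{3,0}$ together with Lemma~\ref{l:ss-cores}, whereas the paper writes $d\zeta=d\alpha\wedge u$ for $u\in\Res^1_0$ and applies Lemma~\ref{l:pairing-k-1} directly — since the two pairings $\llangle d\zeta,u_*\rrangle$ and $\llangle u,d\alpha\wedge u_*\rrangle$ are equal and the semisimplicity in part~1 is itself deduced from Lemma~\ref{l:pairing-k-1}, this is the same underlying computation phrased one level more abstractly.
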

\Remark Lemma~\ref{l:1-means-2} does not provide full information on the resonance at~0
since it does not prove the semisimplicity condition for the operator $P_{2,0}$, and only \emph{assumes} that resonant forms $\Res_0^1$ are closed (in fact we will see that $d(\Res_0^1) \neq 0$ and $P_{2, 0}$ is not semisimple in the hyperbolic case when $b_1(M)>0$, see \S \ref{sec:reshyp}).
\begin{proof}
1. Since $\dim(\Res^1_0\cap \ker d)=\dim(\Res^1_{0*}\cap\ker d)$ by
Lemma~\ref{l:1-forms-ish},
and $\dim \Res^1_0=\dim \Res^1_{0*}$ by~\eqref{e:same-dim}, we have $d(\Res^1_{0*})=0$.
By~\eqref{e:isom-1*} we have $\Res^3_{0*}=d\alpha\wedge \Res^1_{0*}$.
Now Lemma~\ref{l:pairing-k-1} shows that $\llangle\bullet,\bullet\rrangle$ defines
a nondegenerate pairing on $\Res^1_0\times \Res^3_{0*}$, which by Lemma~\ref{l:ss-cores}
shows that
the semisimplicity condition~\eqref{e:semi-simple} 
holds at $\lambda_0=0$ for the operator
$P_{1,0}$. By~\eqref{e:isom-1} semisimplicity holds for $P_{3,0}$ as well.

\noindent 2. We first show that $\Res^2_0$ consists of closed forms. Assume that $\zeta\in \Res^2_0$,
then $d\zeta\in\Res^3_0$. By~\eqref{e:isom-1}, $d\zeta=d\alpha\wedge u$ for some $u\in \Res^1_0$.
Take arbitrary $u_*\in\Res^1_{0*}$. Then
\begin{equation}
  \label{e:mclean}
\llangle u,d\alpha\wedge u_*\rrangle=\int_M \alpha\wedge d\zeta\wedge u_*\\
=\int_M d\alpha\wedge \zeta\wedge u_*=0 
\end{equation}
Here in the second equality we integrate by parts and use that~$du_*=0$;
in the last equality we use that $\iota_X$ applied to the 5-form under the integral is equal to~0.
Now by Lemma~\ref{l:pairing-k-1} we have $u=0$, which means that $d\zeta=0$ as needed.

Next, by Lemma~\ref{l:ker-pi-k} we have $\ker \pi_2\subset d(\Res^{1,\infty})$. By~\eqref{e:res-0-split}, Lemma~\ref{l:0-forms}, and the fact
that $\Res^{1,\infty}_0=\Res^1_0$ we have $\Res^{1,\infty}=\Res^1_0\oplus \mathbb C\alpha$.
Since $d(\Res^1_0)=0$ and $d\alpha\in\ker \pi_2$, we see that $\ker \pi_2$ is spanned by $d\alpha$.

Finally, to show that $\pi_2$ is onto, it suffices to use Lemma~\ref{l:exact-onto}:
since all elements of $\Res^1_{0*}$ are closed, all elements of $\Res^3_{0*}=d\alpha\wedge \Res^1_{0*}$
are exact.

\noindent 3. This follows immediately from the above statements and Lemma~\ref{l:1-forms-ish}. To show that $\pi_3=0$
we note that $\Res^3_0=d\alpha\wedge \Res^1_0$ consists of exact forms.
\end{proof}

\subsubsection{Summary} We now briefly summarize the contents of this section. Lemma \ref{l:ss-cores} will often be used to interpret the semisimplicity condition \eqref{e:semi-simple} via the more tractable nondegeneracy of the pairing \eqref{e:main-pairing}. Next, Lemma \ref{l:0-forms} provides us with a definitive understanding of $\Res_0^{0, \infty}$ and $\Res_0^{4, \infty}$, which by the isomorphisms \eqref{e:isom-1*} reduces the problem to studying $\Res_0^{1, \infty}$ and $\Res_0^{2, \infty}$. As Theorem \ref{thm:main} shows, this is a complicated question, but Lemma \ref{l:1-forms-ish} says that $\Res_0^1 \cap \ker d$ is `stably topological', that is, it is always mapped isomorphically by~$\pi_1$ to $H^1(M)$. Moreover, if one can show $d(\Res_0^1) = 0$, Lemma \ref{l:1-means-2} shows that semisimplicity for $1$-forms is valid, which will be used in the perturbed picture in \S \ref{s:general-perturb}. Under the same assumption, we also know that $\Res_0^2$ is spanned by the `topological part' $\pi_2^{-1}(H^2(M))$ and the form $d\alpha$. Thus, to compute \eqref{e:alternator} it suffices to study conditions under which forms in $\Res_0^1$ are closed, and semisimplicity conditions for $P_{2, 0}$. This will be done in two steps: in \S \ref{sec:reshyp} we will first develop a detailed understanding when $\varphi_t$ is the geodesic flow of a hyperbolic $3$-manifold, and later in \S \ref{s:general-perturb} we will study the perturbed picture.

\section{Resonant states for hyperbolic 3-manifolds}
\label{sec:reshyp}

In this section we study in detail the Pollicott--Ruelle resonant states at~0 for geodesic flows on hyperbolic 3-manifolds. The theorem below summarizes the main results. Here
$\Res^k_0=\Res^{k,1}_0$ are the spaces of resonant $k$-forms, $\Res^{k,\ell}_0$
are the spaces of generalized resonant $k$-forms (see~\S\ref{subsec:resatzero}),
and $\pi_k:\Res^k_0\cap\ker d\to H^k(M;\mathbb C)$ are the maps defined in~\eqref{e:pi-k-def}.
The maps $\pi_\Sigma^*$, $\pi_{\Sigma*}^{}$ are defined in~\S\ref{s:de-rham-sphere}.
\begin{theo}
\label{t:hyperbolic}
Let $M=S\Sigma$ where $\Sigma$ is a hyperbolic 3-manifold
and $\varphi_t$ be the geodesic flow on $\Sigma$. Then:

1. There exists a 2-form $\psi\in C^\infty(M;\Omega^2_0)$ which is closed
but not exact, $\pi_{\Sigma*}^{}(\psi)=-4\pi$, and $\psi$ is invariant under
$\varphi_t$.

2. $\Res^1_0=\mathcal C\oplus\mathcal C_\psi$ is $2b_1(\Sigma)$-dimensional where $\mathcal C:=\Res^1_0\cap\ker d$
is $b_1(\Sigma)$-dimensional and $\mathcal C_\psi$
is another $b_1(\Sigma)$-dimensional space characterized by the identity
$d\alpha\wedge\mathcal C_\psi=\psi\wedge\mathcal C$.

3. The semisimplicity
condition~\eqref{e:semi-simple} holds at $\lambda_0=0$
for the operators $P_{1,0}$ and~$P_{3,0}$.

4. $\Res^2_0=\mathbb Cd\alpha\oplus \mathbb C\psi\oplus d\mathcal C_\psi$
is $b_1(\Sigma)+2$-dimensional and consists of closed forms. The
map $\pi_2$ has kernel $\mathbb Cd\alpha\oplus d\mathcal C_\psi$ and range
$\mathbb C[\psi]_{H^2}$.

5. $\Res^{2,\infty}_0=\Res^{2,2}_0$ is $2b_1(\Sigma)+2$-dimensional.
The range of the map $\mathcal L_X:\Res^{2,2}_0\to \Res^2_0$ is equal
to $d\mathcal C_\psi$.

6. $\Res^3_0=d\alpha\wedge\Res^1_0$ is $2b_1(\Sigma)$-dimensional
and consists of closed forms. The map $\pi_3$ has kernel
$d\alpha\wedge\mathcal C$ and its range is a codimension 1 subspace
of $H^3(M;\mathbb C)$ not containing $[\pi_\Sigma^*d\vol_g]_{H^3}$.

7. The map $\pi_{\Sigma*}^{}$ annihilates $d\alpha \wedge \mathcal{C}$ and is an isomorphism from $d\alpha \wedge \mathcal{C}_\psi$ onto the space of harmonic $1$-forms on $\Sigma$.
\end{theo}
Theorem~\ref{t:hyperbolic} together with Lemma~\ref{l:0-forms}
and~\eqref{e:alternator} give part~1 of Theorem~\ref{thm:main}:
\begin{corr}
  \label{l:hyperbolic-corr}
Under the assumptions of Theorem~\ref{t:hyperbolic},
the algebraic multiplicities of 0 as a resonance of the operators $P_{k,0}$ are
\begin{equation}
  \label{e:hyperbolic-alg}
m_{0,0}(0)=m_{4,0}(0)=1,\quad
m_{1,0}(0)=m_{3,0}(0)=2b_1(\Sigma),\quad
m_{2,0}(0)=2b_1(\Sigma)+2
\end{equation}
and the order of vanishing of the Ruelle zeta function $\zeta_{\mathrm R}$ at~0 is equal to
$$
m_{\mathrm R}(0)=2m_{0,0}(0)-2m_{1,0}(0)+m_{2,0}(0)=4-2b_1(\Sigma).
$$
\end{corr}
Previously~\eqref{e:hyperbolic-alg} was proved in~\cite[Proposition 7.7]{dang-guillarmou-riviere-shen-20} using different methods. Here we give a more refined description: we construct the resonant forms, prove pairing formulas, and study the existence of Jordan blocks. We emphasize that these properties are of crucial importance for the perturbation arguments in \S \ref{s:general-perturb} and were not known prior to this work.

This section is structured as follows: in~\S\ref{s:hyp-3-basics} we review the geometric
features of hyperbolic 3-manifolds used here. In~\S\ref{s:hyp-psi} we construct the smooth
invariant 2-form $\psi$ and study its properties, proving part~1 of Theorem~\ref{t:hyperbolic}. In~\S\ref{s:hyp-1-forms} we study the resonant 1-forms and 3-forms, proving parts~2, 3, and~6 of Theorem~\ref{t:hyperbolic}. In~\S\ref{s:hyp-2-forms} we study the resonant
2-forms, proving parts~4 and~5 of Theorem~\ref{t:hyperbolic}. Finally, in~\S\ref{s:hyp-harmonic} we show that the pushforward operator $\pi_{\Sigma*}^{}$
maps elements of $\Res^3_0$ to harmonic 1-forms on~$(\Sigma,g)$, proving part 7 of Theorem~\ref{t:hyperbolic}.

\subsection{Hyperbolic 3-manifolds}
\label{s:hyp-3-basics}

We first review the geometry of hyperbolic 3-manifolds, following~\cite[\S3]{dyatlov-faure-guillarmou-15}.
We define a hyperbolic 3-manifold to be a nonempty compact connected oriented 3-dimensional Riemannian manifold~$\Sigma$
with constant sectional curvature~$-1$. Each such manifold can be written as a quotient
$$
\Sigma=\Gamma\backslash \mathbb H^3
$$
where $\mathbb H^3$ is the 3-dimensional hyperbolic space and $\Gamma\subset \SO_+(1,3)$ is a discrete torsion-free co-compact subgroup.
We will use the \emph{hyperboloid model}
$$
\begin{gathered}
\mathbb H^3=\{x\in\mathbb R^{1,3}\mid \langle x,x\rangle_{1,3}=1,\ x_0>0\}
\end{gathered}
$$
where $\mathbb R^{1,3}=\mathbb R^4$ is the Minkowski space, with points denoted by $x=(x_0,x_1,x_2,x_3)$ and the Lorentzian
inner product
$$
\langle x,x\rangle_{1,3}:=x_0^2-x_1^2-x_2^2-x_3^2.
$$
The group $\SO_+(1,3)$ is the group of linear transformations on $\mathbb R^{1,3}$ (that is, $4\times 4$ real matrices) which preserve the inner product $\langle\bullet,\bullet\rangle_{1,3}$, have determinant~1, and preserve the sign of $x_0$
on elements of $\mathbb H^3$.
The Riemannian metric on $\mathbb H^3$ is the restriction of $-\langle\bullet,\bullet\rangle_{1,3}$;
the group $\SO_+(1,3)$ acts on $\mathbb H^3$ by isometries, so the metric descends to the quotient $\Sigma$. Note that we may write $\mathbb{H}^3 \simeq \SO_+(1,3)/\SO(3)$ as a homogeneous space for the $\SO_+(1,3)$-action, since $\SO(3)$ is the stabilizer of the point $(1, 0, 0, 0) \in \mathbb{H}^3$.

\subsubsection{Geodesic flow}
We now study the geodesic flow on $\Sigma$, using the notation
of~\S\ref{s:geodesic-flows}. The sphere bundle $S\Sigma$ is the quotient
\begin{equation}
  \label{e:S-Sigma-Gamma}
S\Sigma=\Gamma\backslash S\mathbb H^3
\end{equation}
where the sphere bundle $S\mathbb H^3\subset \mathbb R^{1,3}\times \mathbb R^{1,3}$ has the form
$$
S\mathbb H^3=\{(x,v) \in \mathbb R^{1,3}\times \mathbb R^{1,3} \mid \langle x,x\rangle_{1,3}=1,\ \langle v,v\rangle_{1,3}=-1,\ \langle x,v\rangle_{1,3}=0\}.
$$
Note that we may write $S\mathbb{H}^3 \simeq \SO_+(1,3)/\SO(2)$ as a homogeneous space for the $\SO_+(1, 3)$-action, since $\SO(2)$ is the stabilizer of the point $(1, 0, 0, 0, 0, 1, 0, 0) \in S\mathbb{H}^3$. The contact form $\alpha$, defined in~\eqref{e:geodesic-alpha}, and the generator
$X$ of the geodesic flow are
\begin{equation}
  \label{e:X-hyperbolic}
\alpha=-\langle v,dx\rangle_{1,3},\quad
X=v\cdot \partial_x+x\cdot\partial_v
\end{equation}
where `$\cdot$' denotes the (positive definite) Euclidean inner product on $\mathbb R^{1,3}$.
The geodesic flow is then given by
$$
\varphi_t(x,v)=(x\cosh t+v\sinh t,x\sinh t+v\cosh t).
$$
As a corollary, the distance function on $\mathbb H^3$ with respect to the hyperbolic metric is given by
\begin{equation}
  \label{e:dist-h3}
\cosh d_{\mathbb H^3}(x,y)=\langle x,y\rangle_{1,3}\quad\text{for all}\quad x,y\in\mathbb H^3.
\end{equation}
The tangent space $T_{(x,v)}(S\mathbb H^3)$ consists of vectors $(\xi_x,\xi_v)\in \mathbb R^{1,3}\oplus \mathbb R^{1,3}$ such that
$$
\langle x,\xi_x\rangle_{1,3}=\langle v,\xi_v\rangle_{1,3}=\langle x,\xi_v\rangle_{1,3}+\langle v,\xi_x\rangle_{1,3}=0.
$$
The connection map \eqref{e:connection-map} is given by
$$
\mathcal K(x,v)(\xi_x,\xi_v)=\xi_v-\langle x,\xi_v\rangle_{1,3} \,x
=\xi_v+\langle v,\xi_x\rangle_{1,3} x.
$$
Here and throughout we note that the addition of points $x$ and vectors $\xi_v$ (or $\xi_x$) has to be understood in $\mathbb{R}^{1, 3}$. The horizontal and vertical spaces $\mathbf H(x,v),\mathbf V(x,v)\subset T_{(x,v)}(S\mathbb H^3)$ are then
$$
\begin{aligned}
\mathbf H(x,v)&=\{(\xi_x,\xi_v)\mid \langle x,\xi_x\rangle_{1,3}=0,\ \xi_v=-\langle v,\xi_x\rangle_{1,3}\, x\},\\
\mathbf V(x,v)&=\{(0,\xi_v)\mid\langle x,\xi_v\rangle_{1,3}=\langle v,\xi_v\rangle_{1,3}=0\}
\end{aligned}
$$
and the horizontal-vertical splitting map~\eqref{e:hv-map} takes for $\xi = (\xi_x,\xi_v) \in T_{(x, v)}(S\mathbb{H}^3) \subset \mathbb R^{1,3}\oplus \mathbb R^{1,3}$ the form
$$
\xi_H=\xi_x,\quad
\xi_V=\xi_v+\langle v,\xi_x\rangle_{1,3}\,x.
$$
The Sasaki metric \eqref{e:sasaki-metric} is for $\xi, \eta \in T_{(x, v)}(S\mathbb{H}^3)$ given by
$$
\langle \xi,\eta\rangle_S=-\langle\xi_x,\eta_x\rangle_{1,3}-\langle\xi_v,\eta_v\rangle_{1,3}+\langle v,\xi_x\rangle_{1,3}\langle v,\eta_x\rangle_{1,3}.
$$
The unstable/stable subspaces $E_u,E_s$ from~\eqref{e:Anosov-split} on $S\mathbb H^3$ are given by
\begin{equation}
  \label{e:stun-identify}
\begin{aligned}
E_u(x,v)&=\{(w,w)\mid w\in \mathbb R^{1,3},\ \langle w,x\rangle_{1,3}=\langle w,v\rangle_{1,3}=0\},\\
E_s(x,v)&=\{(w,-w)\mid w\in \mathbb R^{1,3},\ \langle w,x\rangle_{1,3}=\langle w,v\rangle_{1,3}=0\}.
\end{aligned}
\end{equation}
In terms of the horizontal-vertical splitting~\eqref{e:hv-map} they can be characterized as follows:
\begin{equation}
  \label{e:stun-hv}
E_u=\{\xi_V=\xi_H\},\quad
E_s=\{\xi_V=-\xi_H\}.
\end{equation}
A distinguished feature of hyperbolic manifolds is that the restriction of the differential of the geodesic flow
to the unstable/stable spaces is conformal with respect to the Sasaki metric:
\begin{equation}
  \label{e:sasaki-conformal}
|d\varphi_t(x,v)\xi|_S=\begin{cases} e^t|\xi|_S,& \xi\in E_u(x,v);\\
e^{-t}|\xi|_S,& \xi\in E_s(x,v).\end{cases}
\end{equation}
The objects discussed above are invariant under the action of $\SO_+(1,3)$ and thus
descend naturally to the quotients $\Sigma,S\Sigma$.

\subsubsection{The frame bundle and canonical vector fields}
\label{s:canonical-vf}

A convenient tool for computations on $M=S\Sigma$ is the \emph{frame bundle}
$\mathcal F\Sigma$, consisting of quadruples $(x,v_1,v_2,v_3)$ where
$x\in \Sigma$ and $v_1,v_2,v_3\in T_x\Sigma$ form a positively oriented orthonormal basis.
We have
$$
\mathcal F\Sigma=\Gamma\backslash \mathcal F\mathbb H^3,\quad
\mathcal F\mathbb H^3\simeq \SO_+(1,3)
$$
where the frame bundle $\mathcal F\mathbb H^3$ is identified with the group
$\SO_+(1,3)$ by the following map (where $e_0=(1,0,0,0),e_1=(0,1,0,0),\dots$)
\begin{equation}
  \label{e:frame-identifier}
\gamma\in \SO_+(1,3)\mapsto (\gamma e_0,\gamma e_1,\gamma e_2,\gamma e_3).
\end{equation}
Under this identification, the action of $\SO_+(1,3)$ on $\mathcal F\mathbb H^3$ corresponds
to the action of this group on itself by left multiplications. Therefore, $\SO_+(1,3)$-invariant 
vector fields on $\mathcal F\mathbb H^3$ correspond to left-invariant vector fields on the group $\SO_+(1,3)$,
that is to elements of its Lie algebra $\so(1,3)$. We define the basis of left-invariant vector fields on $\SO_+(1,3)$
corresponding to the following matrices in $\so(1,3)$:
\begin{align*}
X&=\begin{pmatrix}
0 & 1 & 0 & 0\\
1 & 0 & 0 & 0\\
0 & 0 & 0 & 0\\
0 & 0 & 0 & 0\\
\end{pmatrix},& R&=\begin{pmatrix}
0 & 0 & 0 & 0\\
0 & 0 & 0 & 0\\
0 & 0 & 0 & 1\\
0 & 0 & -1 & 0\\
\end{pmatrix},& U^+_1&=\begin{pmatrix}
0 & 0 & -1 & 0\\
0 & 0 & -1 & 0\\
-1 & 1 & 0 & 0\\
0 & 0 & 0 & 0\\
\end{pmatrix},
\end{align*}\begin{align*} 
U^+_2&=\begin{pmatrix}
0 & 0 & 0 & -1\\
0 & 0 & 0 & -1\\
0 & 0 & 0 & 0\\
-1 & 1 & 0 & 0\\
\end{pmatrix},& U^-_1&=\begin{pmatrix}
0 & 0 & -1 & 0\\
0 & 0 & 1 & 0\\
-1 & -1 & 0 & 0\\
0 & 0 & 0 & 0\\
\end{pmatrix},& U^-_2&=\begin{pmatrix}
0 & 0 & 0 & -1\\
0 & 0 & 0 & 1\\
0 & 0 & 0 & 0\\
-1 & -1 & 0 & 0\\
\end{pmatrix}.
\end{align*} 
Under the identification~\eqref{e:frame-identifier}, and considering $\mathcal F\mathbb H^3$
as a submanifold of~$(\mathbb R^{1,3})^4$, we can write using coordinates $(x, v_1, v_2, v_3) \in (\mathbb{R}^{1, 3})^4$ and writing `$\cdot$' for the Euclidean inner product
$$
\begin{aligned}
X=v_1\cdot\partial_x+x\cdot \partial_{v_1},&\quad
R=v_2\cdot\partial_{v_3}-v_3\cdot\partial_{v_2},\\
U_1^\pm=-v_2\cdot \partial_x-x\cdot \partial_{v_2}\pm(v_2\cdot \partial_{v_1}-v_1\cdot \partial_{v_2}),
&\quad
U_2^\pm=-v_3\cdot \partial_x-x\cdot \partial_{v_3}\pm(v_3\cdot \partial_{v_1}-v_1\cdot \partial_{v_3}).
\end{aligned}
$$
Since the vector fields above are invariant under the action of $\SO_+(1,3)$, they descend to the frame
bundle of the quotient, $\mathcal F\Sigma$.

The commutation relations between these fields are
(as can be seen by computing the commutators of the corresponding matrices, or by using the explicit formulas above)
\begin{equation}
\begin{alignedat}{6}\label{commutation_relations_XUR0}
[X,U_i^\pm]&=\pm U_i^\pm, &\qquad [U_i^+,U_i^-]&=2X, &\qquad [U_1^\pm,U_2^\mp]&=2R, \\
[X,R]&=[U_1^\pm,U_2^\pm]=0, &[R,U_1^\pm]&=-U_2^\pm, & [R,U_2^\pm]&=U_1^\pm.
\end{alignedat}
\end{equation}
The map
$$
\pi_{\mathcal F}:(x,v_1,v_2,v_3)\in\mathcal F\Sigma\mapsto (x,v_1)\in S\Sigma
$$
is a submersion, with one-dimensional fibers whose tangent spaces are spanned by the field~$R$.
Thus, if a vector field on $\mathcal F\Sigma$ commutes with $R$ then this vector field
descends to the sphere bundle $S\Sigma$.
In particular, the vector field~$X$
descends to the generator of the geodesic flow (which we also denote by~$X$).

The vector fields $U_i^\pm$ do not commute with $R$ and thus do not descend to $S\Sigma$.
However, the vector space $\Span(U_1^+,U_2^+)$ is $R$-invariant and descends to the
stable space $E_s$ on $S\Sigma$. Similarly, the space $\Span(U_1^-,U_2^-)$ descends to $E_u$.
Because of this we think of $U_1^+,U_2^+$ as \emph{stable vector fields}
and $U_1^-,U_2^-$ as \emph{unstable vector fields}.

\subsubsection{Canonical differential forms}
\label{s:canonical-forms}

We next introduce the
frame of \emph{canonical differential 1-forms} on $\mathcal F\Sigma$
$$
\alpha,\ R^*,\ U_1^{\pm*},\ U_2^{\pm*}
$$
which is defined as a dual frame for the vector fields $X,R,U_1^\mp,U_2^\mp$, in the sense compatible with the definition of the dual stable/unstable bundles \eqref{e:anosov-dual}, as follows:
\begin{equation}
  \label{e:forms-dual}
\langle \alpha,X\rangle=\langle R^*,R\rangle=\langle U_1^{\pm *},U_1^{\mp}\rangle=\langle U_2^{\pm *},U_2^\mp\rangle=1
\end{equation}
and all the other pairings between the 1-forms and the vector fields in question are equal to~0.
In particular, $\langle U_i^{\pm*},U_i^\pm\rangle=0$.

Using the following identity valid for any 1-form $\beta$ and any two vector fields $Y,Z$
\begin{equation}
  \label{e:d-form}
d\beta(Y,Z)=Y\beta(Z)-Z\beta(Y)-\beta([Y,Z]),
\end{equation}
the commutation relations~\eqref{commutation_relations_XUR0}, and the duality relations~\eqref{e:forms-dual}, we compute the differentials of the canonical forms:
\begin{equation}
  \label{e:canonical-diff}
\begin{aligned}
d\alpha=2(U_1^{+*}\wedge U_1^{-*}+U_2^{+*}\wedge U_2^{-*}),&\quad
dR^*=2(U_2^{-*}\wedge U_1^{+*}+U_2^{+*}\wedge U_1^{-*}),\\
dU^{\pm*}_1=\pm \alpha\wedge U^{\pm*}_1-R^*\wedge U^{\pm*}_2,&\quad
dU^{\pm*}_2=\pm \alpha\wedge U^{\pm*}_2+R^*\wedge U^{\pm*}_1.
\end{aligned}
\end{equation}
It follows that
\begin{equation}
  \label{e:canonical-Lx}
\mathcal L_X U^{\pm*}_j=\pm U^{\pm*}_j,\quad
\mathcal L_R U^{\pm*}_1=-U^{\pm*}_2,\quad
\mathcal L_R U^{\pm*}_2=U^{\pm*}_1.
\end{equation}
If $\omega$ is a differential form on $\mathcal F\Sigma$, then $\omega$
descends to $S\Sigma$ (i.e. it is a pullback by~$\pi_{\mathcal F}$ of a form on~$S\Sigma$) if and only
if $\iota_R\omega=0$, $\mathcal L_R\omega=0$.
In particular the form $\alpha$ on $\mathcal F\Sigma$ descends to the contact form
on $S\Sigma$, which we also denote by~$\alpha$.

\subsubsection{Conformal infinity}

Following~\cite[\S3.4]{dyatlov-faure-guillarmou-15} we consider the maps
\begin{equation}
  \label{e:B-pm}
\Phi_\pm:S\mathbb H^3\to (0,\infty),\quad
B_\pm:S\mathbb H^3\to \mathbb S^2,
\end{equation}
where $\mathbb S^2$ is the unit sphere in $\mathbb R^3$, defined by the identities
\begin{equation}
  \label{e:B-pm-def}
x\pm v=\Phi_\pm(x,v)(1,B_\pm(x,v))\quad\text{for all}\quad (x,v)\in S\mathbb H^3.
\end{equation}
Note that $B_\pm(x,v)$ is the limit as $t\to\pm\infty$ of the projection to $\mathbb H^3$ of the geodesic $\varphi_t(x,v)$
in the compactification of the Poincar\'e ball model of $\mathbb H^3$. Let
$$
(\mathbb S^2\times\mathbb S^2)_-:=
\{(\nu_-,\nu_+)\in\mathbb S^2\times \mathbb S^2\mid \nu_-\neq \nu_+\}.
$$
In fact, the maps $B_\pm$ yield the following diffeomorphism of $S\mathbb{H}^3$ (see~\cite[(3.24)]{dyatlov-faure-guillarmou-15}):
\begin{equation}
  \label{e:Xi-def}
\begin{gathered}
\Xi:S\mathbb H^3 \ni (y, v) \mapsto (\nu_-, \nu_+, t) \in (\mathbb S^2\times\mathbb S^2)_-\times\mathbb R\\
\text{with}\quad
\nu_\pm=B_\pm(y,v),\quad
t={1\over 2}\log\Big({\Phi_+(y,v)\over\Phi_-(y,v)}\Big).
\end{gathered}
\end{equation}
The geometric interpretation of $\Xi$ is as follows: $\nu_\pm$ are the limits on the conformal boundary~$\mathbb S^2$ of the geodesic $\varphi_s(y,v)$ as $s\to \pm\infty$ and $t$ is chosen so that $\varphi_{-t}(y,v)$ is the closest point to $e_0$ on that
geodesic (as can be seen from~\eqref{e:Xi-2} below and noting that $Xt=1$ by~\eqref{e:X-Phi-pm}).
 
We have the identity~\cite[(3.23)]{dyatlov-faure-guillarmou-15}
\begin{equation}
  \label{e:Phi-pm-B-pm-id}
\Phi_-(x,v)\Phi_+(x,v)\big|B_-(x,v)-B_+(x,v)\big|^2=4
\end{equation}
where $|\bullet|$ denotes the Euclidean distance on $\mathbb R^3\supset\mathbb S^2$.

We also introduce the Poisson kernel
\begin{equation}
  \label{e:Poisson-def}
P(x,\nu)=\big(\langle x,(1,\nu)\rangle_{1,3}\big)^{-1}>0,\quad
x\in\mathbb H^3,\quad
\nu\in\mathbb S^2\subset\mathbb R^3.
\end{equation}
The following relations hold~\cite[(3.21)]{dyatlov-faure-guillarmou-15}:
\begin{equation}\label{eq:Phi_pmPoisson}
	\Phi_\pm(x, v) = P(x, B_\pm(x,v)).
\end{equation}
If we fix $x\in\mathbb H^3$, then the maps $v\mapsto B_\pm(x,v)$ are diffeomorphisms
from the fiber $S_x\mathbb H^3$ onto~$\mathbb S^2$. The inverse maps are given by
$\nu\mapsto v_\pm(x,\nu)$ where~\cite[(3.20)]{dyatlov-faure-guillarmou-15}
\begin{equation}
\label{eq:xi_pmdef}
	v_\pm(x, \nu) = \mp x \pm P(x, \nu) (1, \nu) \in S_x \mathbb{H}^3,\quad
	B_\pm(x,v_\pm(x,\nu))=\nu.
\end{equation}
The diffeomorphisms $v \mapsto B_\pm(x, v)$ are conformal
with respect to the induced metric on~$S_x\mathbb H^3$ and the canonical metric
$|\bullet|_{\mathbb S^2}$: by~\cite[(3.22)]{dyatlov-faure-guillarmou-15}) we have
\begin{equation}\label{eq:jacobianxi_pm}
|\partial_v B_\pm(x,v) \eta|_{\mathbb S^2}={|\eta|_{g}\over \Phi_\pm(x,v)}
\quad\text{for all}\quad
\eta\in T_v(S_x\mathbb H^3).
\end{equation}

Next, we have
by~\eqref{e:X-hyperbolic} and~\eqref{e:stun-identify}
\begin{equation}
  \label{e:X-Phi-pm}
X\Phi_\pm=\pm\Phi_\pm,\quad
d\Phi_-|_{E_u}=d\Phi_+|_{E_s}=0.
\end{equation}
The maps $B_\pm$ are submersions with connected fibers, the tangent spaces to which
are described in terms of the stable/unstable decomposition~\eqref{e:Anosov-split} as follows:
for each $\nu\in\mathbb S^2$ 
\begin{equation}
  \label{e:B-pm-fibers}
T (B_+^{-1}(\nu))=(E_0\oplus E_s)|_{B_+^{-1}(\nu)},\quad
T (B_-^{-1}(\nu))=(E_0\oplus E_u)|_{B_-^{-1}(\nu)}.
\end{equation}
This can be checked using~\eqref{e:stun-identify}, see~\cite[(3.25)]{dyatlov-faure-guillarmou-15}.
The action of the differential $dB_+$ on~$E_u$, and of $dB_-$ on~$E_s$, can be described as follows:
for any $(x,v)\in S\mathbb H^3$ and $w\in \mathbb R^{1,3}$ such that $\langle x,w\rangle_{1,3}=\langle v,w\rangle_{1,3}=0$,
\begin{equation}
  \label{e:B-pm-diffs}
dB_\pm(x,v)(w,\pm w)={2(w'-w_0B_\pm(x,v))\over \Phi_\pm(x,v)}\quad\text{where}\quad w=(w_0,w').
\end{equation}

We next briefly discuss the action of the group $\SO_+(1,3)$ on the conformal infinity $\mathbb S^2$, referring to~\cite[\S3.5]{dyatlov-faure-guillarmou-15} for details. For any $\gamma\in\SO_+(1,3)$, define
$$
N_\gamma:\mathbb S^2\to (0,\infty),\quad
L_\gamma:\mathbb S^2\to \mathbb S^2
$$
by the identity (where on the left is the linear action of $\gamma$ on $(1,\nu)\in\mathbb R^{1,3}$)
$$
\gamma\cdot(1,\nu)=N_\gamma(\nu)(1,L_\gamma(\nu))\quad\text{for all}\quad \nu\in\mathbb S^2.
$$
The maps $L_\gamma$ define an action of $\SO_+(1,3)$ on $\mathbb S^2$. This action is transitive and the stabilizer of $e_1\in\mathbb S^2$ is the group of matrices $A\in\SO_+(1,3)$ such that
$A(1,1,0,0)^T=\tau (1,1,0,0)^T$ for some $\tau>0$, which may be shown to be isomorphic to the group of similarities of the plane $\mathrm{Sim}(2)$, giving $\mathbb{S}^2 \simeq \SO_+(1, 3)/\mathrm{Sim}(2)$ the structure of a homogeneous space.

This action is by orientation preserving conformal transformations, more precisely
\begin{equation}
  \label{e:gamma-s2-conformal}
|dL_\gamma(\nu)\zeta|_{\mathbb S^2}={|\zeta|_{\mathbb S^2}\over N_\gamma(\nu)}\quad\text{for all}\quad
(\nu,\zeta)\in T\mathbb S^2.
\end{equation}
Moreover, the maps $B_\pm$ have the equivariance property
\begin{equation}
  \label{e:B-pm-equivariant}
B_\pm(\gamma\cdot(x,v))=L_\gamma(B_\pm(x,v))\quad\text{for all}\quad
(x,v)\in S\mathbb H^3.
\end{equation}

We finally use the maps $B_\pm$ to describe a special class of differential forms on $S\Sigma$ defined as follows (c.f.\ \cite{kuster-weich, dyatlov-faure-guillarmou-15}):
\begin{defi}
\label{d:totally-stun}
We call a $k$-form $u\in\mathcal D'(S\Sigma;\Omega^k_0)$ \textbf{stable}
if it is a section of $\wedge^k E_s^*\subset \Omega^k_0$ where $E_s^*\subset T^*(S\Sigma)$ is the annihilator
of $E_0\oplus E_s$ (see~\eqref{e:anosov-dual}). We call $u$ \textbf{unstable}
if it is a section of $\wedge^k E_u^*$ where $E_u^*$ is the annihilator of $E_0\oplus E_u$.

We call a form $u$ \textbf{totally (un)stable} if both $u$ and $du$ are (un)stable.
\end{defi}
The lemma below (see also \cite[\S\S2.3--2.4]{kuster-weich}) shows that totally (un)stable $k$-forms on~$S\Sigma$, $\Sigma=\Gamma\backslash\mathbb H^3$,
correspond to $\Gamma$-invariant $k$-forms on $\mathbb S^2$. Denote by $\pi_\Gamma:S\mathbb H^3\to S\Sigma$ the
covering map.
\begin{lemm}
  \label{l:lifted-forms}
Let $u\in \mathcal D'(S\Sigma;\Omega^k_0)$ be totally stable. Then the lift $\pi_\Gamma^* u$ has the form
\begin{equation}
  \label{e:lifted-forms-1}
\pi_\Gamma^*u=B_+^* w\quad\text{where}\quad w\in \mathcal D'(\mathbb S^2;\Omega^k),\quad
L_\gamma^*w=w\quad\text{for all}\quad \gamma\in\Gamma.
\end{equation}
Conversely, each form $B_+^*w$, where $w$ satisfies~\eqref{e:lifted-forms-1}, is the lift
of a totally stable $k$-form on~$S\Sigma$.
A similar statement holds for totally unstable forms, with $B_+$ replaced by $B_-$.
\end{lemm}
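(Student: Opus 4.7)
The plan is to lift $u$ to the universal cover and then recognize the lifted form as the pullback under $B_+$ of a distribution on $\mathbb S^2$. First I would pull $u$ back by the covering map $\pi_\Gamma$ to obtain $\tilde u := \pi_\Gamma^* u \in \mathcal D'(S\mathbb H^3; \Omega^k_0)$, which is $\Gamma$-invariant by construction. Since $\pi_\Gamma$ is a local diffeomorphism sending the flow/stable/unstable decomposition of $S\mathbb H^3$ onto that of $S\Sigma$, the form $\tilde u$ inherits total stability from $u$: both $\tilde u$ and $d\tilde u = \pi_\Gamma^* du$ are sections of $\wedge^k E_s^*$ and $\wedge^{k+1} E_s^*$ respectively.

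Next I would verify that $\tilde u$ is a basic form for the submersion $B_+ : S\mathbb H^3 \to \mathbb S^2$, meaning that $\iota_Y \tilde u = 0$ and $\mathcal L_Y \tilde u = 0$ for every vector field $Y$ tangent to the fibers of $B_+$. By~\eqref{e:B-pm-fibers} the vertical bundle for $B_+$ is exactly $E_0 \oplus E_s$. Since $\tilde u$ is a section of $\wedge^k E_s^* = \wedge^k (E_0 \oplus E_s)^\perp$, we have $\iota_Y \tilde u = 0$ for such $Y$, and similarly $\iota_Y d\tilde u = 0$ because $d\tilde u$ is stable. Cartan's formula $\mathcal L_Y = d\iota_Y + \iota_Y d$ then yields $\mathcal L_Y \tilde u = 0$.

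I would then invoke the standard fact that for a surjective submersion with connected fibers, a distributional form on the total space is a pullback of a distributional form on the base if and only if it is basic. The fibers of $B_+$ are connected (they are the preimages of points under the explicit parametrization $\nu \mapsto v_+(x,\nu)$ of~\eqref{eq:xi_pmdef} combined with the flow), and the pullback $B_+^*$ on distributions is injective. This produces a unique $w \in \mathcal D'(\mathbb S^2; \Omega^k)$ with $\tilde u = B_+^* w$. The $\Gamma$-invariance of $w$ then follows: the identity $\gamma^* \tilde u = \tilde u$ combined with the equivariance~\eqref{e:B-pm-equivariant} gives $B_+^*(L_\gamma^* w) = B_+^* w$, so injectivity of $B_+^*$ yields $L_\gamma^* w = w$.

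The converse is a direct verification. Given $w$ satisfying~\eqref{e:lifted-forms-1}, the form $B_+^* w$ on $S\mathbb H^3$ is $\Gamma$-invariant by~\eqref{e:B-pm-equivariant} and thus descends to $u \in \mathcal D'(S\Sigma; \Omega^k)$. Both $B_+^* w$ and $d(B_+^* w) = B_+^* dw$ annihilate vectors tangent to fibers of $B_+$, i.e.\ to $E_0 \oplus E_s$, so they are sections of $\wedge^k E_s^*$ and $\wedge^{k+1} E_s^*$ respectively, giving total stability of $u$. The totally unstable case is identical with $B_-$ replacing $B_+$ and $E_u$ replacing $E_s$, using the other half of~\eqref{e:B-pm-fibers} and~\eqref{e:B-pm-equivariant}. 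The main technical ingredient, which I would treat as a black box, is the distributional version of the basic-form descent theorem: locally a submersion is a projection $U \times V \to V$, and a distribution on $U \times V$ killed by contraction with and Lie derivative along every vertical vector field is the pullback of a distribution on $V$; this is a routine consequence of the Schwartz kernel theorem, and it globalizes to $B_+$ using connectedness of the fibers.
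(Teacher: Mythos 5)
Your argument is correct and follows essentially the same route as the paper: lift to $S\mathbb H^3$, use~\eqref{e:B-pm-fibers} to identify the totally stable condition with being basic for the submersion $B_+$ (via Cartan's formula), invoke descent along a submersion with connected fibers, and use~\eqref{e:B-pm-equivariant} to translate $\Gamma$-invariance of $\pi_\Gamma^*u$ into $L_\gamma$-invariance of $w$. You spell out the distributional descent step in more detail than the paper, but the logic and the ingredients are the same.
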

\begin{proof}
We only consider the case of totally stable forms, with totally unstable forms handled similarly.
First of all, note that lifts of totally stable $k$-forms on $S\Sigma$ are exactly the $\Gamma$-invariant totally stable $k$-forms
on $S\mathbb H^3$.
Next, by~\eqref{e:B-pm-fibers}, a $k$-form $\zeta\in\mathcal D'(S\mathbb H^3;\Omega^k)$ is totally stable if and only if 
$\iota_Y \zeta=0$, $\mathcal L_Y \zeta=0$ for any vector field $Y$ tangent to the fibers
of the map~$B_+$, which is equivalent to saying that $\zeta=B_+^* w$
for some $w\in\mathcal D'(\mathbb S^2;\Omega^k)$. Finally, by~\eqref{e:B-pm-equivariant},
$\Gamma$-invariance of~$\zeta$ is equivalent to $\Gamma$-invariance of $w$.
\end{proof}
Lemma~\ref{l:lifted-forms} implies that
\begin{equation}
  \label{e:totally-wf}
\begin{aligned}
\text{every totally stable }u\in\mathcal D'(S\Sigma;\Omega^k_0)&\quad \text{lies in }\mathcal D'_{E_s^*}(S\Sigma;\Omega^k_0),\\
\text{every totally unstable }u\in\mathcal D'(S\Sigma;\Omega^k_0)&\quad \text{lies in }\mathcal D'_{E_u^*}(S\Sigma;\Omega^k_0).
\end{aligned}
\end{equation}
Indeed, assume that $u$ is totally stable. Write $\pi_{\Gamma}^*u = B_+^*w$ for some $w \in \mathcal{D}'(\mathbb{S}^2; \Omega^k)$, then we have $\WF(\pi_\Gamma^*u) = \pi_\Gamma^*\WF(u)$ (as $\pi_\Gamma$ is a local diffeomorphism).
From the behavior of wavefront sets under pullbacks~\cite[Theorem~8.2.4]{hoermander-03},
we know that $\WF(\pi_\Gamma^*u)$ is contained in the conormal bundle of the fibers
of the submersion~$B_+$. From~\eqref{e:B-pm-fibers} and~\eqref{e:anosov-dual}
we then have $\WF(u)\subset E_s^*$.
A similar argument works for the totally unstable case.

\subsection{Additional invariant 2-form}
  \label{s:hyp-psi}

The space of smooth flow invariant $2$-forms on $S\Sigma$ is known to be $2$-dimensional, see Lemma~\ref{l:inv2forms} below, \cite[Claim 3.3]{kanai-93} or \cite{hamenstaedt-95}, thus there exists a smooth invariant 2-form which
is not a multiple of $d\alpha$. In this section we introduce such a 2-form $\psi$ and study its properties; these are crucial for the study of Pollicott--Ruelle resonances at zero in~\S\S\ref{s:hyp-1-forms}--\ref{s:hyp-2-forms} below.

\subsubsection{A rotation on $E_u\oplus E_s$}
\label{s:hyp-psi-1}

Let $x\in\Sigma$. For any two $v,w\in T_x\Sigma$, we may define their \emph{cross product}
$v\times w\in T_x\Sigma$, which is uniquely determined by the following properties:
$v\times w$ is $g$-orthogonal to $v$ and $w$; the length of $v\times w$ is the area
of the parallelogram spanned by $v,w$ in $T_x\Sigma$; and $v,w,v\times w$
is a positively oriented basis of $T_x\Sigma$ whenever $v\times w\neq 0$.

For future use we record here an identity true for any $v,w_1,w_2,w_3,w_4\in T_x\Sigma$
such that $|v|_g=1$ and $w_1,w_2,w_3,w_4$ are $g$-orthogonal to~$v$:
\begin{equation}
  \label{e:cp-identity}
\langle v\times w_1,w_2\rangle_g\langle v\times w_3,w_4\rangle_g=
\langle w_1,w_3\rangle_g\langle w_2,w_4\rangle_g-\langle w_2,w_3\rangle_g\langle w_1,w_4\rangle_g.
\end{equation}
Using the horizontal/vertical decomposition~\eqref{e:hv-map},
we define the bundle homomorphism
\begin{equation}
  \label{e:I-def}
\mathcal I:TS\Sigma\to TS\Sigma,\quad
\mathcal I(x,v)(\xi_H,\xi_V)=(v\times \xi_V,v\times \xi_H).
\end{equation}
From~\eqref{e:X-hv} and~\eqref{e:stun-hv} we see that $\mathcal I$ preserves the flow/stable/unstable decomposition~\eqref{e:Anosov-split}.
Moreover, it annihilates $E_0=\mathbb RX$ and 
it is a rotation by $\pi/2$ on $E_u$ and on $E_s$ (with respect to the Sasaki metric), so in particular it satisfies $\mathcal{I}^2 = -\Id$ on $\ker \alpha = E_u \oplus E_s$;
however, the direction of the rotation is opposite on $E_u$ and on $E_s$ if we identify them by~\eqref{e:stun-identify}.

The map $\mathcal I$ is invariant under the geodesic flow $\varphi_t=e^{tX}$:
\begin{equation}
  \label{e:I-flow-invariant}
\mathcal L_X\mathcal I=0.
\end{equation}
This follows from the conformal property of the geodesic flow~\eqref{e:sasaki-conformal}
and the description of the action of $\mathcal I$ on $E_0,E_u,E_s$ in the previous paragraph.

For any point $(x,v_1,v_2,v_3)$ in the frame bundle $\mathcal F\Sigma$, we have
(using the horizontal/vertical decomposition)
\begin{equation}
  \label{e:rot-frame}
\mathcal I(x,v_1)(v_2,\pm v_2)=\pm (v_3,\pm v_3),\quad
\mathcal I(x,v_1)(v_3,\pm v_3)=\mp (v_2,\pm v_2).
\end{equation}
It follows that (see~\S\ref{s:canonical-vf} for the definition of the vector fields $U_i^\pm$ on $\mathcal F\Sigma$)
\begin{equation}
  \label{e:canonical-rot}
\begin{aligned}
\mathcal I(x,v_1)(d\pi_{\mathcal F}U^\pm_1(x,v_1,v_2,v_3))&=\mp d\pi_{\mathcal F}U^\pm_2(x,v_1,v_2,v_3),\\
\mathcal I(x,v_1)(d\pi_{\mathcal F}U^\pm_2(x,v_1,v_2,v_3))&=\pm d\pi_{\mathcal F}U^\pm_1(x,v_1,v_2,v_3).
\end{aligned}
\end{equation}

\subsubsection{Relation to conformal infinity}

The homomorphism $\mathcal I$ lifts to $TS\mathbb H^3$. If
$B_\pm:S\mathbb H^3\to \mathbb S^2$ are the maps defined in~\eqref{e:B-pm}
and `$\times$' denotes the cross product on $\mathbb R^3$, then for all $(x,v)\in S\mathbb H^3$
and $\xi\in T_{(x,v)}S\mathbb H^3$ we have
\begin{equation}
  \label{e:B-pm-I}
dB_\pm(x,v)(\mathcal I(x,v)\xi)=B_\pm(x,v)\times dB_\pm(x,v)(\xi).
\end{equation}
To see this, we use~\eqref{e:B-pm-fibers}, and the fact that $\mathcal I$ preserves
the flow/stable/unstable decomposition, to reduce to the case
$\xi=(w,\pm w)$, where $x,v,w$ is an orthonormal set in $\mathbb R^{1,3}$.
By the equivariance~\eqref{e:B-pm-equivariant} of $B_\pm$ under $\SO_+(1,3)$, the fact that
the action $L_\gamma$ of any $\gamma\in \SO_+(1,3)$ on $\mathbb S^2$ is by orientation preserving
conformal maps, and the equivariance of $\mathcal I$ under $\SO_+(1,3)$
we can reduce to the case $x=e_0,v=e_1,w=e_2$, where $e_0,e_1,e_2,e_3$ is the canonical
basis of $\mathbb R^{1,3}$. In the latter case~\eqref{e:B-pm-I} is verified directly
using~\eqref{e:B-pm-diffs} and~\eqref{e:rot-frame}.

Let $\star$ be the Hodge star operator on 1-forms on the round sphere $\mathbb S^2$. It may be expressed as follows: for any $w\in C^\infty(\mathbb S^2;\Omega^1)$ and $(\nu,\zeta)\in T\mathbb S^2$ we have
$$
\langle(\star w)(\nu),\zeta\rangle=-\langle w(\nu),\nu\times\zeta\rangle.
$$
From~\eqref{e:B-pm-I} we get the following relation of $\mathcal I$ to~$\star$:
for any 1-form $w$ on $\mathbb S^2$ we have
\begin{equation}
  \label{e:hodge-star}
(B_\pm^*w)\circ\mathcal I=-B_\pm^*(\star w)
\end{equation}
where for any 1-form $\beta$ on $S\mathbb H^3$
the 1-form $\beta\circ\mathcal I$ on $S\mathbb H^3$ is defined by
\begin{equation}
  \label{e:I-circ-def}
\langle (\beta\circ\mathcal I)(x,v),\xi\rangle=\langle\beta(x,v),\mathcal I(x,v)\xi\rangle.
\end{equation}

\subsubsection{The new invariant 2-form}
\label{s:hyp-psi-2}

We next define the 2-form $\psi\in C^\infty(S\Sigma;\Omega^2)$ as follows:
for all $(x,v)\in S\Sigma$ and $\xi,\eta\in T_{(x,v)}S\Sigma$,
\begin{equation}
  \label{e:psi-def}
\psi(x,v)(\xi,\eta)=d\alpha(x,v)(\mathcal I(x,v)\xi,\eta).
\end{equation}
To see that $\psi$ is indeed an antisymmetric form, we may use~\eqref{e:alpha-hv} and \eqref{e:I-def} to write
it in terms of the horizontal/vertical decomposition of $\xi,\eta$:
\begin{equation}
  \label{e:psi-hv}
\psi(x,v)(\xi,\eta)=\langle v\times\xi_H,\eta_H\rangle_g-\langle v\times\xi_V,\eta_V\rangle_g.
\end{equation}
Using~\eqref{e:canonical-diff}, \eqref{e:canonical-rot} we may also compute the lift of $\psi$ to the frame bundle $\mathcal F\Sigma$, which we still denote by $\psi$: 
\begin{equation}
  \label{e:canonical-psi}
\psi=2(U_1^{+*}\wedge U_2^{-*}+U_1^{-*}\wedge U_2^{+*}).
\end{equation}
We have
\begin{equation}
  \label{e:psi-flow-invariant}
\iota_X\psi=0,\quad
\mathcal L_X\psi=0.
\end{equation}
The first of these statements is checked directly using~\eqref{e:X-hv}. The second statement can be verified using~\eqref{e:canonical-Lx}
and~\eqref{e:canonical-psi}, or using that $\mathcal L_X \mathcal I=0$ and $\mathcal L_Xd\alpha=0$.

We now establish several properties of the form $\psi$. We will use the following corollaries
of~\eqref{e:alpha-hv}, \eqref{e:psi-hv}:
\begin{equation}
  \label{e:alpha-psi-zero}
d\alpha|_{\mathbf H\times\mathbf H}=0,\quad
d\alpha|_{\mathbf V\times\mathbf V}=0,\quad
\psi|_{\mathbf H\times\mathbf V}=0
\end{equation}
where the horizontal/vertical spaces $\mathbf H,\mathbf V$ are defined in~\S\ref{s:hv-spaces}.
\begin{lemm}
We have
\begin{align}
  \label{e:psi-basic-1}
d\psi&=0,\\
  \label{e:psi-basic-2}
\psi\wedge\psi&=d\alpha\wedge d\alpha,\\
  \label{e:psi-basic-3}
d(\alpha\wedge\psi)&=0.
\end{align}
\end{lemm}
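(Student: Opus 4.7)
The plan is to lift both sides of each identity to the frame bundle $\mathcal F\Sigma$ and verify them directly using the explicit coframe $\alpha, R^*, U_1^{\pm*}, U_2^{\pm*}$ introduced in~\S\ref{s:canonical-forms}. On $\mathcal F\Sigma$ we have the expressions $\psi = 2(U_1^{+*}\wedge U_2^{-*} + U_1^{-*}\wedge U_2^{+*})$ from~\eqref{e:canonical-psi} and $d\alpha = 2(U_1^{+*}\wedge U_1^{-*} + U_2^{+*}\wedge U_2^{-*})$ from~\eqref{e:canonical-diff}, together with the corresponding formulas for the four differentials $dU_i^{\pm *}$. Since the three claimed equalities involve only pullbacks of forms from $S\Sigma$, it suffices to check them on the frame bundle, where they reduce to manipulations in the exterior algebra of a six-dimensional coframe.

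For $d\psi = 0$, I would apply Leibniz to the expression for $\psi$ and substitute the four formulas for $dU_i^{\pm *}$ from~\eqref{e:canonical-diff}. The resulting terms split into those containing $\alpha$, which cancel in pairs because $U_i^{+*}$ and $U_i^{-*}$ enter with opposite signs, and those containing $R^*$, which regroup into a sum of the form $R^*\wedge(U_1^{+*}\wedge U_1^{-*} - U_2^{+*}\wedge U_2^{-*}) + R^*\wedge(U_2^{+*}\wedge U_2^{-*} - U_1^{+*}\wedge U_1^{-*})$ and hence vanish. For $\psi\wedge\psi = d\alpha\wedge d\alpha$, the square-type summands on each side contain a repeated 1-form factor and drop out, while the cross terms on both sides reduce, after a short sign computation, to $8\,U_1^{+*}\wedge U_1^{-*}\wedge U_2^{+*}\wedge U_2^{-*}$.

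For $d(\alpha\wedge\psi) = 0$, Leibniz gives $d(\alpha\wedge\psi) = d\alpha\wedge\psi - \alpha\wedge d\psi$. The second term vanishes by the identity just established, and the first term vanishes because every one of the four products obtained by distributing $d\alpha\wedge\psi$ contains one of the forms $U_1^{\pm*}, U_2^{\pm*}$ twice. No serious obstacle is anticipated; all three identities reduce to elementary exterior-algebra computations on the frame bundle, with the only subtlety being the systematic sign tracking in the wedge products.
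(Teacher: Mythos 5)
Your proof is correct. In fact the paper explicitly lists it as an alternative at the end of its own proof: ``Alternatively, \eqref{e:psi-basic-1}--\eqref{e:psi-basic-3} can be checked by lifting to the frame bundle $\mathcal F\Sigma$ and using~\eqref{e:canonical-diff} and~\eqref{e:canonical-psi}.'' I checked your sign bookkeeping: the $\alpha$-terms in $d\psi$ do cancel in the two pairs, the $R^*$-terms regroup and cancel as you describe, and both $\psi\wedge\psi$ and $d\alpha\wedge d\alpha$ reduce to $8\,U_1^{+*}\wedge U_1^{-*}\wedge U_2^{+*}\wedge U_2^{-*}$, so all three identities hold.

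The paper's primary argument is a hybrid. For $d\psi=0$ it avoids explicit computation entirely: from $\iota_X\psi=0$ and $\mathcal L_X\psi=0$ one gets $\iota_X d\psi=0$ and $\mathcal L_X d\psi=0$; the first kills any evaluation with one argument in $E_0$, and flow-invariance together with the conformal contraction rates~\eqref{e:sasaki-conformal} and the fact that a $3$-tuple of arguments from $E_u\cup E_s$ must be asymmetrically distributed (three is odd) forces the remaining evaluations to vanish. For~\eqref{e:psi-basic-2} and~\eqref{e:psi-basic-3} it evaluates both sides on a horizontal/vertical basis via~\eqref{e:alpha-hv}, \eqref{e:psi-hv}, and~\eqref{e:cp-identity}. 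The payoff of the paper's route is that the $d\psi=0$ argument illustrates a structural principle (invariance plus conformal expansion/contraction plus parity) that recurs elsewhere in the paper; the payoff of your frame-bundle route is that all three identities collapse to one uniform exterior-algebra calculation in a fixed coframe, with no case analysis and no appeal to the dynamics beyond the structure constants already encoded in~\eqref{e:canonical-diff}.
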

\begin{proof}
By~\eqref{e:psi-flow-invariant} we have $\iota_X d\psi=0$.
Therefore, $d\psi(x,v)(\xi_1,\xi_2,\xi_3)=0$ for $\xi_1,\xi_2,\xi_3\in T_{(x,v)}S\Sigma$
such that one of these vectors lies in $E_0$. Next, $\mathcal L_X d\psi=0$, that is $d\psi$
is invariant under the geodesic flow.
Using this invariance for time $t\to\pm\infty$ together with~\eqref{e:sasaki-conformal} and the
fact that 3 is an odd number, we see that
$d\psi(x,v)(\xi_1,\xi_2,\xi_3)=0$ also when each of the vectors $\xi_1,\xi_2,\xi_3$
lies in either $E_u(x,v)$ or $E_s(x,v)$. It follows that~\eqref{e:psi-basic-1} holds.

To check~\eqref{e:psi-basic-2}, we first note that $\iota_X$ of both sides is zero.
Thus it suffices to check that
\begin{equation}
  \label{e:psib-int-1}
\psi\wedge\psi(x,v)(\xi_1,\xi_2,\xi_3,\xi_4)=d\alpha\wedge d\alpha(x,v)(\xi_1,\xi_2,\xi_3,\xi_4)
\end{equation}
for some choice of basis $\xi_1,\xi_2,\xi_3,\xi_4\in T_{(x,v)}S\Sigma$ of the kernel of~$\alpha$.
We take
$$
\xi_1=(w_1,0),\quad \xi_2=(w_2,0),\quad \xi_3=(0,w_3),\quad \xi_4=(0,w_4)
$$
under the horizontal/vertical decomposition~\eqref{e:hv-map}, where each $w_j\in T_x\Sigma$ is orthogonal to~$v$.
By~\eqref{e:alpha-hv}, \eqref{e:psi-hv}
$$
\begin{aligned}
\psi\wedge\psi(x,v)(\xi_1,\xi_2,\xi_3,\xi_4)&=-2\langle v\times w_1,w_2\rangle_g\langle v\times w_3,w_4\rangle_g,\\
d\alpha\wedge d\alpha(x,v)(\xi_1,\xi_2,\xi_3,\xi_4)&=2(\langle w_2,w_3\rangle_g\langle w_1,w_4\rangle_g
-\langle w_1,w_3\rangle_g\langle w_2,w_4\rangle_g)
\end{aligned}
$$
and~\eqref{e:psib-int-1} follows from~\eqref{e:cp-identity}.

Finally, to show~\eqref{e:psi-basic-3} it suffices to prove that $d\alpha\wedge \psi=0$. To show this we
may argue similarly to the proof of~\eqref{e:psi-basic-2} above, using~\eqref{e:alpha-psi-zero}.

Alternatively, \eqref{e:psi-basic-1}--\eqref{e:psi-basic-3}
can be checked by lifting to the frame bundle $\mathcal F\Sigma$ and using~\eqref{e:canonical-diff} and~\eqref{e:canonical-psi}.
\end{proof}
The next lemma studies the relation of $\psi$ to the de Rham cohomology of $M=S\Sigma$; in particular, its first item and \eqref{e:psi-flow-invariant} give the first item of Theorem \ref{t:hyperbolic}.
Recall the pullback and pushforward operators $\pi_\Sigma^*,\pi_{\Sigma*}^{}$ defined
in~\S\ref{s:de-rham-sphere} and denote by $d\vol_g$
the volume 3-form on $\Sigma$ induced by~$g$ and the choice of orientation.
\begin{lemm}
\label{l:psi-de-rham}
We have:

1. $\pi_{\Sigma*}^{}(\psi)=-4\pi$. In particular, $[\psi]_{H^2}\neq 0$.

2. $\pi_{\Sigma*}^{}(\alpha\wedge\psi)=0$.

3. $\pi_{\Sigma*}^{}(\alpha\wedge d\alpha)=0$.

4. $\alpha \wedge d\alpha \wedge d\alpha = 2 \psi \wedge \pi_\Sigma^*(d\vol_g)$.

5. $[\alpha \wedge \psi]_{H^3} = 2[\pi_\Sigma^*(d\vol_g)]_{H^3}$.
\end{lemm}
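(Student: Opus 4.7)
The plan is to handle the five parts separately. Parts 1--3 reduce to fiber integrations over $S_x\Sigma\cong S^2$; part 4 reduces to a pointwise linear-algebra computation at one arbitrary point of $S\Sigma$; and part 5 I will prove by Poincar\'e duality on the closed oriented 5-manifold $M$, using the Gysin sequence~\eqref{e:exaseq-1} to reduce the verification to two specific test classes in $H^2(M;\mathbb C)$. The workhorses throughout will be the horizontal/vertical formulas~\eqref{e:alpha-hv} and~\eqref{e:psi-hv}, the cross-product identity~\eqref{e:cp-identity}, and the pushforward-pullback compatibility~\eqref{e:pfm-2}.

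For \textbf{parts 1--3}, each pushforward is computed fiberwise. In part 1, the restriction of $\psi$ to a pair of vertical vectors at $(x,v)$ equals $-\langle v\times\xi_V,\eta_V\rangle_g$ by~\eqref{e:psi-hv}, i.e.\ minus the standard area form on $S_x\Sigma$, giving $\pi_{\Sigma*}\psi(x)=-4\pi$; non-exactness of $[\psi]_{H^2}$ follows by Stokes on the closed 2-cycle $S_x\Sigma$ since $\int_{S_x\Sigma}\psi=-4\pi\neq 0$. In part 2, contracting $\alpha\wedge\psi$ with the horizontal lift of any $X_1\in T_x\Sigma$ and restricting to the fiber yields $\langle X_1,v\rangle_g$ times the area form (the term containing $\iota_{\widetilde X_1}\psi$ drops because $\alpha|_{\mathbf V}=0$), whose fiber integral vanishes by the odd symmetry $v\mapsto -v$. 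In part 3, both Leibniz terms of $\iota_{\widetilde X_1}(\alpha\wedge d\alpha)$ vanish identically on $\mathbf V\times\mathbf V$ by~\eqref{e:alpha-psi-zero} together with $\alpha|_{\mathbf V}=0$.

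For \textbf{part 4}, fix $(x,v)\in S\Sigma$, complete $v$ to a positively oriented orthonormal basis $v,v_2,v_3$ of $T_x\Sigma$, and use the basis $\xi_1=(v,0)$, $\xi_2=(v_2,0)$, $\xi_3=(v_3,0)$, $\xi_4=(0,v_2)$, $\xi_5=(0,v_3)$ of $T_{(x,v)}M$ under the splitting~\eqref{e:hv-map}. Applying~\eqref{e:alpha-hv}, \eqref{e:psi-hv}, and~\eqref{e:cp-identity} reads off $\alpha=\xi_1^*$, $d\alpha=-\xi_2^*\wedge\xi_4^*-\xi_3^*\wedge\xi_5^*$, $\psi=\xi_2^*\wedge\xi_3^*-\xi_4^*\wedge\xi_5^*$, $\pi_\Sigma^*d\vol_g=\xi_1^*\wedge\xi_2^*\wedge\xi_3^*$, and a short wedge expansion shows both $\alpha\wedge d\alpha\wedge d\alpha$ and $2\psi\wedge\pi_\Sigma^*d\vol_g$ equal $-2\,\xi_1^*\wedge\xi_2^*\wedge\xi_3^*\wedge\xi_4^*\wedge\xi_5^*$.

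\textbf{Part 5} is the main obstacle; I propose to prove it via Poincar\'e duality rather than by constructing an explicit primitive. By~\eqref{e:exaseq-1} together with $\pi_{\Sigma*}[\psi]=-4\pi\neq 0$ from part 1, $H^2(M;\mathbb C)$ is spanned by $\pi_\Sigma^*H^2(\Sigma;\mathbb C)$ and the single class $[\psi]$. Since both $\alpha\wedge\psi$ and $\pi_\Sigma^*d\vol_g$ are closed (using~\eqref{e:psi-basic-3}), it suffices by Poincar\'e duality to verify that $\int_M(\alpha\wedge\psi-2\pi_\Sigma^*d\vol_g)\wedge\omega=0$ for $\omega$ in each of these two types. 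For $\omega=\pi_\Sigma^*\eta$ with $\eta$ a closed 2-form on $\Sigma$, both contributions vanish: $\int_M\alpha\wedge\psi\wedge\pi_\Sigma^*\eta=-\int_\Sigma\pi_{\Sigma*}(\alpha\wedge\psi)\wedge\eta=0$ by~\eqref{e:pfm-2} and part 2, while $\int_M\pi_\Sigma^*(d\vol_g\wedge\eta)=0$ because $d\vol_g\wedge\eta$ is a 5-form on the 3-manifold $\Sigma$. For $\omega=\psi$, using $\psi\wedge\psi=d\alpha\wedge d\alpha$ from~\eqref{e:psi-basic-2}, the volume identity~\eqref{e:pushforward-volume}, compatibility~\eqref{e:pfm-2}, and part 1, one computes $\int_M\alpha\wedge\psi\wedge\psi=\int_M d\vol_\alpha=8\pi\vol(\Sigma)$ and $\int_M 2\pi_\Sigma^*d\vol_g\wedge\psi=-2\int_\Sigma d\vol_g\wedge\pi_{\Sigma*}\psi=8\pi\vol(\Sigma)$, so the difference is zero. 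Poincar\'e duality then gives $[\alpha\wedge\psi]_{H^3}=2[\pi_\Sigma^*d\vol_g]_{H^3}$.
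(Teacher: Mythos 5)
Your proof is correct. Parts 1--4 follow the paper's own proof almost verbatim: fiberwise computations via the horizontal/vertical formulas~\eqref{e:alpha-hv}, \eqref{e:psi-hv} for parts 1--3, and a pointwise basis computation for part 4 (the paper evaluates both sides on the single tuple $X,\xi_2,\xi_3,v_2,v_3$ rather than writing the full dual-basis expansion, but the content is identical). The only genuine divergence is in part 5. The paper applies the Gysin exact sequence~\eqref{e:exaseq-2} for $H^3(M)$: since $\pi_{\Sigma*}[\alpha\wedge\psi]=0$, the class $[\alpha\wedge\psi]$ already lies in the one-dimensional image $\pi_\Sigma^* H^3(\Sigma;\mathbb C)$, so $[\alpha\wedge\psi]=c[\pi_\Sigma^*d\vol_g]$ automatically, and it only remains to fix $c$ by a single integral against $\psi$. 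You instead use the Gysin sequence~\eqref{e:exaseq-1} for $H^2(M)$ to produce the spanning set $\{\pi_\Sigma^*H^2(\Sigma),[\psi]\}$ and then invoke Poincar\'e duality on $M$, which requires you to check two pairings. These are dual formulations of the same argument (both ultimately reduce to $\pi_{\Sigma*}(\alpha\wedge\psi)=0$ and the integral $\int_M\alpha\wedge d\alpha\wedge d\alpha\neq 0$); the paper's route saves one verification because the $H^3$ Gysin sequence gives the proportionality for free, whereas yours makes the role of Poincar\'e duality explicit without relying on that second exact sequence. Both are complete and correct.
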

\begin{proof}
1. Let $(x,v)\in S\Sigma$ and $v_2,v_3$ be a positively oriented $g$-orthonormal basis
of the tangent space to the fiber $T_v(S_x\Sigma)$.
We consider $v_2,v_3$ as vertical vectors in $T_{(x,v)}S\Sigma$,
as well as vectors in $T_x\Sigma$.
The triple $v,v_2,v_3$
is a positively oriented $g$-orthonormal basis of $T_x\Sigma$, so by~\eqref{e:psi-hv}
$$
\psi(x,v)(v_2,v_3)=-\langle v\times v_2,v_3\rangle_g=-1.
$$
Thus the restriction of $\psi$ to each fiber $S_x\Sigma$ is $-1$ times the standard volume form
on $S_x\Sigma\simeq \mathbb S^2$, which implies that $\pi_{\Sigma*}^{}(\psi)=-4\pi$. It now follows from \eqref{e:pfm-1} that $[\psi]_{H^2} \neq 0$.

2. Fix $x\in\Sigma$, $v_1\in T_x\Sigma$. Let $v\in S_x\Sigma$ and $v_2,v_3$ be a positively oriented
$g$-orthonormal basis of the tangent space $T_v(S_x\Sigma)$ as in part~1 of this proof. Let
$\xi_1=(v_1,0)$ be the horizontal lift of $v_1$ to $T_{(x,v)}(S\Sigma)$.
By~\eqref{e:alpha-hv} and~\eqref{e:psi-hv} we compute
$$
\alpha\wedge \psi(x,v)(\xi_1,v_2,v_3)=-\langle v_1,v\rangle_g\langle v\times v_2,v_3\rangle_g=-\langle v_1,v\rangle_g.
$$
Since $v\mapsto \langle v_1,v\rangle_g$ is an odd function on $S_x\Sigma$, we have
$$
(\pi_{\Sigma*}^{}(\alpha\wedge\psi))(x)(v_1)=\int_{S_x\Sigma} -\langle v_1,v\rangle_g \,d\vol(v)=0.
$$

3. If $\xi_1,\xi_2,\xi_3\in T_{(x,v)}(S\Sigma)$ and $\xi_2,\xi_3$ are vertical, then by~\eqref{e:alpha-hv}
we have
$$
\alpha\wedge d\alpha(x,v)(\xi_1,\xi_2,\xi_3)=0
$$
which implies that $\pi_{\Sigma*}^{}(\alpha\wedge d\alpha)=0$.

4. Let $x\in\Sigma$ and $v,v_2,v_3$ be a positively oriented $g$-orthonormal basis of
$T_x\Sigma$. Let $\xi=X(x,v),\xi_2,\xi_3$ be the horizontal lifts of $v,v_2,v_3$
to $T_{(x,v)}S\Sigma$; we treat $v_2,v_3$ as vertical vectors in $T_{(x,v)}S\Sigma$.
Using~\eqref{e:alpha-hv} and~\eqref{e:psi-hv}, we compute
$$
\alpha\wedge d\alpha\wedge d\alpha(x,v)(\xi,\xi_2,\xi_3,v_2,v_3)=-2
=2\psi\wedge \pi_\Sigma^*(d\vol_g)(x,v)(\xi,\xi_2,\xi_3,v_2,v_3).
$$

5. Using the exact sequence~\eqref{e:exaseq-2} and the fact that $\pi_{\Sigma*}^{}(\alpha\wedge\psi)=0$,
we see that
$$
[\alpha\wedge\psi]_{H^3}=c[\pi_{\Sigma}^*(d\vol_g)]_{H^3}
$$
for some constant $c$. To determine $c$, note that $\alpha\wedge\psi\wedge\psi$ has the same integral over
$S\Sigma$ as $c\psi\wedge \pi_{\Sigma}^*(d\vol_g)$. Since $\alpha\wedge\psi\wedge\psi=\alpha\wedge d\alpha\wedge d\alpha
=2\psi\wedge\pi_\Sigma^*(d\vol_g)$,
we get $c=2$.
\end{proof}
We also have the following identity relating the operators $d\alpha\wedge$ and $\psi\wedge$
on 1-forms in $\Omega^1_0$:
\begin{lemm}
  \label{l:dalpha-psi-1}
For any 1-form $\beta$ on $S\Sigma$ such that $\iota_X\beta=0$, we have
\begin{equation}
  \label{e:dalpha-psi-1}
d\alpha\wedge\beta=\psi\wedge(\beta\circ \mathcal I)
\end{equation}
where the 1-form $\beta\circ\mathcal I$ is defined by~\eqref{e:I-circ-def}.
\end{lemm}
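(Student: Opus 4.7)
The identity is $C^\infty(S\Sigma)$-linear in $\beta$ and purely local, so it suffices to verify it pointwise. Both sides are $3$-forms that are annihilated by $\iota_X$: indeed $\iota_X d\alpha = \iota_X \psi = 0$ by \eqref{e:psi-flow-invariant} and \eqref{e:reeb-field}, the hypothesis gives $\iota_X \beta = 0$, and $\iota_X(\beta \circ \mathcal I) = \beta(\mathcal I X) = 0$ because $\mathcal I$ annihilates $E_0 = \mathbb R X$ (as noted after \eqref{e:I-def}). Hence both sides are sections of $\Omega^3_0$, and we may check equality after lifting to the frame bundle $\mathcal F\Sigma$, where the canonical coframe of \S\ref{s:canonical-forms} is available.

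On $\mathcal F\Sigma$, the lift of $\beta$ has no $\alpha$ component (since $\iota_X \beta = 0$) and no $R^*$ component (since $\beta$ descends from $S\Sigma$, forcing $\iota_R \beta = 0$), so we may write
\[
\beta = a_1 U_1^{+*} + a_2 U_2^{+*} + b_1 U_1^{-*} + b_2 U_2^{-*}
\]
for smooth functions $a_i, b_j$ on $\mathcal F\Sigma$. From \eqref{e:canonical-rot} the action of $\mathcal I$ on the bundle vectors $U_i^\pm$ is $\mathcal I U_1^\pm = \mp U_2^\pm$ and $\mathcal I U_2^\pm = \pm U_1^\pm$, and dualizing via the pairings \eqref{e:forms-dual} yields
\[
\beta \circ \mathcal I = a_2 U_1^{+*} - a_1 U_2^{+*} - b_2 U_1^{-*} + b_1 U_2^{-*}.
\]

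Plugging in the explicit expressions \eqref{e:canonical-diff} for $d\alpha$ and \eqref{e:canonical-psi} for $\psi$, one expands both sides of \eqref{e:dalpha-psi-1} in the four-dimensional basis of $3$-forms obtained by omitting one factor from $\{U_1^{+*}, U_2^{+*}, U_1^{-*}, U_2^{-*}\}$. This reduces the identity to matching a small number of scalar coefficients in $a_i, b_j$, and direct inspection shows the two expansions coincide.

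The only obstacle is bookkeeping; there is no analytic subtlety. Conceptually, the identity reflects the fact that on the symplectic $4$-plane $E_u \oplus E_s$ (where $d\alpha$ is nondegenerate), $\mathcal I$ restricts to an almost complex structure satisfying $\mathcal I^2 = -I$ and compatible with $d\alpha$, while $\psi = d\alpha(\mathcal I\,\cdot\,,\,\cdot)$ is its canonical twist. As an alternative, one could verify \eqref{e:dalpha-psi-1} by evaluating both sides on triples of horizontal and vertical vectors using \eqref{e:alpha-hv}, \eqref{e:psi-hv}, the definition \eqref{e:I-def} of $\mathcal I$, and the cross-product identity \eqref{e:cp-identity}, but the frame-bundle calculation is slightly cleaner.
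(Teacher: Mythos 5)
Your proof is correct, and it is precisely the ``alternative'' computation that the paper itself sketches in the last sentence of its proof of Lemma~\ref{l:dalpha-psi-1}; the paper's primary argument instead works directly on $S\Sigma$, evaluating both sides on triples of horizontal/vertical vectors and invoking the cross-product identity~\eqref{e:cp-identity}. The two routes are essentially dual: the horizontal/vertical computation is intrinsic to $S\Sigma$ but requires case-splitting by the number of horizontal versus vertical arguments, while your frame-bundle version avoids the cases at the cost of remembering that in the dual frame~\eqref{e:forms-dual} $U_i^{\pm*}$ pairs with $U_i^{\mp}$, not $U_i^{\pm}$ — a subtlety you handled correctly, since your formula $\beta\circ\mathcal I = a_2 U_1^{+*} - a_1 U_2^{+*} - b_2 U_1^{-*} + b_1 U_2^{-*}$ checks out, and both sides of \eqref{e:dalpha-psi-1} do expand to $2\big(-a_2\,U_1^{+*}\wedge U_2^{+*}\wedge U_1^{-*} + a_1\, U_1^{+*}\wedge U_2^{+*}\wedge U_2^{-*} + b_2\, U_1^{+*}\wedge U_1^{-*}\wedge U_2^{-*} - b_1\, U_2^{+*}\wedge U_1^{-*}\wedge U_2^{-*}\big)$. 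One small inaccuracy in your closing remark: $\mathcal I$ is not compatible with $d\alpha$ on $E_u\oplus E_s$ but \emph{anti}-compatible, $d\alpha(\mathcal I\xi,\mathcal I\eta) = -d\alpha(\xi,\eta)$ (equivalently, $\mathcal I$ is $d\alpha$-self-adjoint); this is exactly why $\psi = d\alpha(\mathcal I\,\cdot\,,\,\cdot)$ comes out antisymmetric, hence a $2$-form, rather than a symmetric bilinear form. This does not affect the validity of the proof, which stands as written.
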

\begin{proof}
It is easy to see that $\iota_X$ of both sides of~\eqref{e:dalpha-psi-1} is equal to~0.
It is thus enough to check that
\begin{equation}
  \label{e:dalpha-psint-1}
d\alpha\wedge\beta(x,v)(\xi_1,\xi_2,\xi_3)=\psi\wedge (\beta\circ\mathcal I)(x,v)(\xi_1,\xi_2,\xi_3)
\end{equation}
for any three vectors $\xi_1,\xi_2,\xi_3$, each of which is either horizontal or vertical under the decomposition~\eqref{e:hv-map}.
Moreover, we may assume that the horizontal components of these vectors lie in the orthogonal complement $\{v\}^\perp$ to $v$ in $T_x\Sigma$.
It suffices to consider the following two cases:

\textbf{Case 1:} $\beta(x,v)(\xi)=\langle\xi_H,w_4\rangle_g$ for some $w_4\in\{v\}^\perp$. By~\eqref{e:I-def} and~\eqref{e:alpha-psi-zero}, both sides of~\eqref{e:dalpha-psint-1} are equal to~0 unless two of $\xi_1,\xi_2,\xi_3$ are horizontal and one is vertical; we write
$$
\xi_1=(w_1,0),\quad \xi_2=(w_2,0),\quad \xi_3=(0,w_3)
$$
where $w_j\in \{v\}^\perp$. We compute using~\eqref{e:alpha-hv}, \eqref{e:I-def}, and~\eqref{e:psi-hv}
$$
\begin{aligned}
d\alpha\wedge\beta(x,v)(\xi_1,\xi_2,\xi_3)&=\langle w_1,w_3\rangle_g\langle w_2,w_4\rangle_g-\langle w_2,w_3\rangle_g \langle w_1,w_4\rangle_g,\\
\psi\wedge(\beta\circ\mathcal I)(x,v)(\xi_1,\xi_2,\xi_3)&=\langle v\times w_1,w_2\rangle_g\langle v\times w_3,w_4\rangle_g
\end{aligned}
$$
and~\eqref{e:dalpha-psint-1} follows from~\eqref{e:cp-identity}.

\textbf{Case 2:} $\beta(x,v)(\xi)=\langle\xi_V,w_4\rangle_g$ for some $w_4\in \{v\}^\perp$. By~\eqref{e:I-def} and~\eqref{e:alpha-psi-zero}, both sides of~\eqref{e:dalpha-psint-1}
are equal to~0 unless two of $\xi_1,\xi_2,\xi_3$ are vertical and one is horizontal; we write
$$
\xi_1=(0,w_1),\quad
\xi_2=(0,w_2),\quad
\xi_3=(w_3,0)
$$
where $w_j\in \{v\}^\perp$. We compute using~\eqref{e:alpha-hv}, \eqref{e:I-def}, and~\eqref{e:psi-hv}
$$
\begin{aligned}
d\alpha\wedge\beta(x,v)(\xi_1,\xi_2,\xi_3)&=\langle w_2,w_3\rangle_g \langle w_1,w_4\rangle_g-\langle w_1,w_3\rangle_g\langle w_2,w_4\rangle_g,\\
\psi\wedge(\beta\circ\mathcal I)(x,v)(\xi_1,\xi_2,\xi_3)&=-\langle v\times w_1,w_2\rangle_g\langle v\times w_3,w_4\rangle_g
\end{aligned}
$$
and~\eqref{e:dalpha-psint-1} again follows from~\eqref{e:cp-identity}.

Alternatively, we may lift both sides of~\eqref{e:dalpha-psi-1} to the frame bundle $\mathcal F\Sigma$:
it suffices to consider the cases when $\beta$ is replaced by one of the forms $U^{\pm*}_i$,
in which case~\eqref{e:dalpha-psi-1} is checked by a direct calculation using~\eqref{e:canonical-diff}, \eqref{e:canonical-rot}, and~\eqref{e:canonical-psi}.
\end{proof}

\subsubsection{Characterization of all smooth flow-invariant 2-forms}

We finally give
\begin{lemm}
  \label{l:inv2forms}
Assume that $u\in C^\infty(S\Sigma;\Omega^2)$ satisfies $\mathcal L_Xu=0$.
Then $u$ is a linear combination of $d\alpha$ and~$\psi$.
\end{lemm}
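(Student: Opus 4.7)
The plan is to lift $u$ to the frame bundle $\mathcal F\Sigma$ and reduce it in two stages: first, using flow invariance and the Sasaki-conformal scaling~\eqref{e:sasaki-conformal}, restrict the lift to a weight-$0$ element of a small finite-dimensional subspace of 2-forms; then, using the Lie algebra structure of $\so(1,3)$ together with the compactness of $\mathcal F\Sigma$, show that only the $d\alpha$ and $\psi$ directions survive. As a preliminary, I would use $\varphi_t^* u = u$ and the exponential scaling to show $\iota_X u = 0$ and $u|_{E_u\times E_u} = u|_{E_s\times E_s} = 0$: for $\xi,\eta \in E_s(\rho)$, the identity $u(\rho)(\xi,\eta) = u(\varphi_t\rho)(d\varphi_t\xi, d\varphi_t\eta)$ yields $|u(\rho)(\xi,\eta)| \leq e^{-2t}\|u\|_\infty|\xi|_S|\eta|_S \to 0$ as $t\to+\infty$. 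Lifting to $\mathcal F\Sigma$ and expanding in the canonical coframe of~\S\ref{s:canonical-forms}, the conditions $\iota_X u = \iota_R u = 0$ (the latter because $u$ descends from $S\Sigma$) leave only the six basis $2$-forms $U_i^{\pm*}\wedge U_j^{\pm*}$. These carry $\mathcal L_X$-weights $\pm 2$ (pure pairs) and $0$ (mixed pairs). Since any smooth $g$ on the compact manifold $\mathcal F\Sigma$ with $Xg = cg$, $c\neq 0$, must vanish (else $|g\circ\varphi_t|=e^{ct}|g|$ is unbounded), the condition $\mathcal L_X u = 0$ forces the weight-$\pm 2$ coefficients to zero.

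Using~\eqref{e:canonical-diff} and~\eqref{e:canonical-psi} I would then rewrite the weight-$0$ remainder as
$$u = \tfrac{p}{4}\,d\alpha + \tfrac{q}{4}\,\psi + \tfrac{r}{2}\,A + \tfrac{s}{2}\,B,$$
where $A := U_1^{+*}\wedge U_1^{-*} - U_2^{+*}\wedge U_2^{-*}$ and $B := U_1^{+*}\wedge U_2^{-*} + U_2^{+*}\wedge U_1^{-*}$, with $Xp = Xq = Xr = Xs = 0$ inherited from the weight analysis. A direct calculation using~\eqref{e:canonical-Lx} translates $\mathcal L_R u = 0$ (the descent condition) into $Rp = Rq = 0$ and $Rr = -2s$, $Rs = 2r$. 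The heart of the argument is then to show $r = s = 0$ and that $p,q$ are constants. Setting $h := r + is$, one has $Xh = 0$ and $Rh = 2ih$; the relations~\eqref{commutation_relations_XUR0}, in particular $[X, U_i^\pm] = \pm U_i^\pm$, yield $X(U_i^\pm h) = \pm U_i^\pm h$, so the boundedness argument above forces $U_i^\pm h = 0$ for $i=1,2$. Lifting to $\mathcal F\mathbb H^3 \cong \SO_+(1,3)$, the $\Gamma$-invariant lift $\tilde h$ is annihilated by left-invariant vector fields whose iterated Lie brackets (notably $[U_1^+, U_2^-] = 2R$) span the full Lie algebra $\so(1,3)$; hence $\tilde h$ is constant on the connected group $\SO_+(1,3)$, and combined with $Rh = 2ih$ this forces $h \equiv 0$. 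The identical argument applied to $p$ and $q$ shows $U_i^\pm p = U_i^\pm q = 0$, and combined with $Rp = Rq = 0$ this upgrades them to constants, completing the proof.

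The main obstacle, I expect, is precisely this last step: the weight-$0$ directions $A$ and $B$ survive every pointwise algebraic constraint imposed by $\iota_X$, $\iota_R$, $\mathcal L_X$, and $\mathcal L_R$, and can only be eliminated by exploiting the fact that the stable and unstable vector fields together with $X$ generate the full semisimple Lie algebra $\so(1,3)$. This is where the hyperbolicity of $\Sigma$ (rather than mere negative curvature) enters the proof in an essential way.
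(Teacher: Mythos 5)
Your proof is correct, and it takes a genuinely different route from the one in the paper. You lift $u$ to the frame bundle $\mathcal F\Sigma$, expand in the canonical coframe, use the $\mathcal L_X$-weight decomposition to kill the weight-$\pm 2$ components, and then exploit the Lie algebra structure of $\so(1,3)$: since $[X,U_i^\pm]=\pm U_i^\pm$, the derivatives $U_i^\pm h$ of the weight-$0$ coefficients satisfy $X(U_i^\pm h)=\pm U_i^\pm h$ and hence vanish by compactness, after which $Rh=\tfrac12[U_1^+,U_2^-]h=0$ collides with the descent relation $Rh=2ih$ to give $h=0$, and the same mechanism shows the $d\alpha$ and $\psi$ coefficients are annihilated by all six basis vector fields, hence constant. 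The paper's proof is quite different: it first normalizes $u$ by subtracting a linear combination of $d\alpha$ and $\psi$ so that $\int\alpha\wedge d\alpha\wedge u=\int\alpha\wedge\psi\wedge u=0$, uses Lemma~\ref{l:0-forms} to turn these integral conditions into pointwise ones, encodes the remaining part $u|_{E_s\times E_u}$ as a bundle endomorphism $A:E_s\to E_s$ satisfying $\tr A=0$, $A^T=A$, $\det A=c$, and rules out $c\neq 0$ by observing that a nonzero $A$ would produce a continuous eigenline field on $E_s$, and hence (after projecting to the vertical bundle) a continuous line field on $S^2$, which is impossible. Both arguments use constant curvature in an essential way, but in different places: the paper through the conformal rotation $\mathcal I$ (which yields $\psi$ and the symmetry $A^T=A$) plus a topological obstruction, you through the semisimplicity of $\so(1,3)$. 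Your route is closer in spirit to the representation-theoretic treatment of~\cite{dang-guillarmou-riviere-shen-20} and of Kanai and Hamenst\"adt, and has the virtue of being completely pointwise and algebraic once the weight decomposition is set up; the paper's route avoids the full Lie algebra machinery at the cost of invoking the hairy ball theorem. One small inaccuracy in your prose: you say $A$ and $B$ ``survive every pointwise algebraic constraint imposed by \dots $\mathcal L_R$'', but in fact $\mathcal L_R u=0$ does impose the coupled ODEs $Rr=-2s$, $Rs=2r$; what survives is the possibility that $(r,s)\neq(0,0)$ at a point, and it is the combination of these relations with the bracket identity that kills them. This does not affect correctness.
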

\begin{proof}
Without loss of generality we assume that $u$ is real valued.
Since $d\alpha\wedge\psi=0$ and $\psi\wedge\psi=d\alpha\wedge d\alpha$ by~\eqref{e:psi-basic-2}--\eqref{e:psi-basic-3}, we may subtract from $u$ a linear combination of $d\alpha$ and~$\psi$ to make
\begin{equation}
  \label{e:inv2fi-0}
\int_M \alpha\wedge d\alpha\wedge u=\int_M \alpha\wedge\psi\wedge u=0.
\end{equation}
We will show that under the condition~\eqref{e:inv2fi-0} we have $u=0$.

Since $\alpha\wedge d\alpha\wedge u,\alpha\wedge \psi\wedge u,\alpha\wedge u\wedge u$ are smooth 5-forms on $S\Sigma$ invariant under the geodesic flow, by Lemma~\ref{l:0-forms} (we identify $\Omega^0$ and $\Omega^5$ via the volume form $d\vol_{\alpha}$) we have
\begin{equation}
  \label{e:inv2fi-1}
\alpha\wedge d\alpha\wedge u=\alpha\wedge \psi\wedge u=0,\quad
\alpha\wedge u\wedge u=c\,d\vol_\alpha
\end{equation}
for some constant $c\in\mathbb R$.

Next, $\iota_X u\in C^\infty(S\Sigma;\Omega^1_0)$ and $\mathcal L_X\iota_X u=0$, so by~\eqref{e:Anosov} (similarly to the last
step of the proof of Lemma~\ref{l:pairing-k-1}) we get $\iota_X u = 0$.
Also by~\eqref{e:Anosov} we obtain $u|_{E_u \times E_u} = 0$ and $u|_{E_s \times E_s} = 0$.
Therefore, it is enough to show that $u|_{E_s\times E_u}=0$.

Since $d\alpha$ is nondegenerate on~$E_s\times E_u$ (as follows for instance from~\eqref{e:alpha-hv} and~\eqref{e:stun-hv}), there exists unique smooth bundle homomorphism $A:E_s\to E_s$ such that
$$
u(x,v)(\xi,\eta)=d\alpha(A(x,v)\xi,\eta)\quad\text{for all}\quad
(x,v)\in S\Sigma,\
\xi\in E_s(x,v),\
\eta\in E_u(x,v).
$$
It remains to show that $A=0$.

Take any $(x,v)\in S\Sigma$, assume
that $v,w_1,w_2$ is a positively oriented orthonormal basis of~$T_x\Sigma$, and
define using the horizontal/vertical decomposition and~\eqref{e:stun-hv}
$$
\xi_j=(w_j,-w_j)\in E_s(x,v),\quad \eta_j=(w_j,w_j)\in E_u(x,v),\quad j=1,2.
$$
Applying~\eqref{e:inv2fi-1} to
the vectors $X(x,v),\xi_1,\xi_2,\eta_1,\eta_2$ and using~\eqref{e:alpha-hv}, \eqref{e:rot-frame}, and~\eqref{e:psi-def}, we get
\begin{equation}
  \label{e:inv2fi-2}
\tr A(x,v)=0,\quad
A(x,v)^T=A(x,v),\quad
\det A(x,v)=c
\end{equation}
where the transpose is with respect to the restriction of the Sasaki metric to $E_s(x,v)$.

If $c=0$, then~\eqref{e:inv2fi-2} implies that $A=0$. Assume that $c\neq 0$, then by~\eqref{e:inv2fi-2}
we have $c<0$ and $A$ has eigenvalues $\pm \sqrt{-c}$. The eigenspace of $A(x,v)$ corresponding to the eigenvalue $\sqrt{-c}$
is a one-dimensional subspace of $E_s(x,v)$ depending continuously on $(x,v)$. This is impossible since by restricting
to a single fiber $S_x\Sigma\subset S\Sigma$ and projecting $E_s$ onto the vertical space $\mathbf V$ we would obtain
a continuous one-dimensional subbundle of the tangent space to the 2-sphere.
\end{proof}

\subsection{Resonant 1-forms and 3-forms}
  \label{s:hyp-1-forms}

In this section we apply the properties of the $2$-form $\psi$ defined in~\eqref{e:psi-def}
to determine the precise structure of resonant $1$-forms on $M=S\Sigma$. Let us introduce some notation for (co-)resonant $1$-forms
(see~\eqref{e:I-circ-def} for the definition of $u\circ\mathcal I$)
\[
\mathcal{C}_{(*)} := \Res_{0(*)}^1 \cap \ker d, \quad \mathcal{C}_{\psi(*)} := \{u\circ\mathcal I\mid u\in \mathcal C_{(*)}\}
\]
where the subscript $(*)$ means we either suppress the star or we include it, respectively corresponding to resonances or co-resonances; we apply this convention to other notions appearing in this section. We remark that the use of subscript $\psi$ in $\mathcal{C}_\psi$ is motivated by the property $d\alpha\wedge\mathcal C_\psi=\psi\wedge\mathcal C$ demonstrated in \eqref{e:psi-wedge-def} below; in fact we initially used this relation as the definition of $\mathcal{C}_\psi$, before coming to the interpretation via the map $\mathcal{I}$.

Since $\mathcal I$ is invariant under the geodesic flow by~\eqref{e:I-flow-invariant} and annihilates $X$, we have
$$
\mathcal C_{\psi(*)}\subset \Res_{0(*)}^1.
$$
By Lemma~\ref{l:1-forms-ish} and~\eqref{e:bettiGysin} we have
\begin{equation}
  \label{e:c-psi-dim}
\dim \mathcal{C}_{(*)}=\dim\mathcal C_{\psi(*)} = b_1(\Sigma).
\end{equation}
We next show that \emph{all} resonant 1-forms lie in the direct sum $\mathcal C\oplus \mathcal C_\psi$.
This is done in Lemma~\ref{l:c-psi-2} below but first we need
\begin{lemm}
  \label{l:c-psi-1.5}
Assume that $u\in \Res^1_0$. Then $u$ is totally unstable in the sense of Definition~\ref{d:totally-stun}.
Similarly, if $u\in\Res^1_{0*}$, then $u$ is totally stable.
\end{lemm}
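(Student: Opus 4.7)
The plan is to establish the two parts of ``totally unstable'' separately: first that $u$ is a distributional section of $E_u^*$, and then that $du$ is a section of $\wedge^2 E_u^*$. Both steps will be driven by the observation that $\lambda=i$ is not a Pollicott--Ruelle resonance of $P_{0,0}$ or $P_{1,0}$: for $P_{0,0}$ this follows from the $L^2$ self-adjointness provided by the flow-invariant volume $d\vol_\alpha$, and for $P_{1,0}$ on a hyperbolic 3-manifold from the known location of its resonances in $\{0\}\cup\{\Im\lambda<0\}$. In either case, the identity~\eqref{e:invertor} forces any element of $\mathcal D'_{E_u^*}$ annihilated by $(P_{k,0}-i)$ to vanish.

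For the first step, I would use the conformality~\eqref{e:sasaki-conformal} of the hyperbolic geodesic flow on $E_u$ to produce, in a flow-box neighborhood of an arbitrary point, a smooth local frame $Y_1,Y_2$ of $E_u$ satisfying $[X,Y_i]=-Y_i+c_iX$ for some smooth functions $c_i$; this amounts to solving a linear first-order ODE along each flowline for the $\mathcal L_X$-connection on $E_u$, which is solvable from any smooth transverse initial data. Setting $f_i:=\iota_{Y_i}u\in\mathcal D'_{E_u^*}(M;\mathbb C)$ and combining $[\mathcal L_X,\iota_{Y_i}]=\iota_{[X,Y_i]}$ with $\iota_Xu=0$ and $\mathcal L_Xu=0$ yields
\[
Xf_i=\iota_{[X,Y_i]}u=-\iota_{Y_i}u+c_i\iota_Xu=-f_i,
\]
so $(P_{0,0}-i)f_i=0$, and then~\eqref{e:invertor} applied at $\lambda=i$ forces $f_i=0$. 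Since $\iota_{gY}u=g\iota_Yu$ is $C^\infty$-linear in $Y$, this extends to every smooth section of $E_u$, and $u$ is a section of $E_u^*$.

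For the second step, the identity $\iota_Yu=0$ obtained in step~1 makes $Y\mapsto\mathcal L_Yu$ tensorial via $\mathcal L_{gY}u=g\mathcal L_Yu+dg\wedge\iota_Yu=g\mathcal L_Yu$, and Cartan's formula yields $\iota_Ydu=\mathcal L_Yu-d\iota_Yu=\mathcal L_Yu$; it thus suffices to show $v_i:=\mathcal L_{Y_i}u=0$ for the same local frame. A direct computation, using $[\mathcal L_X,\mathcal L_{Y_i}]=\mathcal L_{[X,Y_i]}$ and $\mathcal L_{c_iX}u=c_i\mathcal L_Xu+dc_i\wedge\iota_Xu=0$, gives $\iota_Xv_i=\iota_{[X,Y_i]}u=0$, $\WF(v_i)\subset E_u^*$, and $\mathcal L_Xv_i=-v_i$, so $(P_{1,0}-i)v_i=0$; the non-resonance of $\lambda=i$ for $P_{1,0}$ then forces $v_i=0$. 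The coresonant claim follows from the identical argument with $E_u\leftrightarrow E_s$ and the commutator sign flipped to $[X,Y_i]=+Y_i+c_iX$, using the dimension identity~\eqref{e:same-dim} to convert the non-resonance at $\lambda=i$ of $P_{0,0}$ and $P_{1,0}$ into non-existence of the corresponding coresonant states. The main technical obstacle will be producing the local eigensection frame $Y_1,Y_2$ for $\mathcal L_X$, which rests on the smoothness and conformality of the hyperbolic Anosov splitting; once that is in place the proof is a clean combination of Cartan calculus and the resolvent identity~\eqref{e:invertor} at the non-resonant value $\lambda=i$.
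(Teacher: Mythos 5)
Your approach is genuinely different from the paper's, but it has two gaps. First, the eigenframe $Y_1,Y_2$ of $E_u$ with $[X,Y_i]=-Y_i+c_iX$ (and hence the scalars $f_i=\iota_{Y_i}u$ and the $1$-forms $v_i=\mathcal L_{Y_i}u$) exist only in a flow-box, whereas the resolvent identity~\eqref{e:invertor} and any non-resonance argument require globally defined elements of $\mathcal D'_{E_u^*}(M;\,\cdot\,)$. Such an eigenframe does exist globally on the frame bundle $\mathcal F\Sigma$ (where $[X,U_i^-]=-U_i^-$), but the $U_i^-$ do not commute with $R$ and hence do not descend to $M=S\Sigma$, which is where the Pollicott--Ruelle theory is set up. The paper sidesteps this entirely: it works with an \emph{arbitrary} smooth section $Y\in C^\infty(M;E_u)$ (no eigenvector property needed), reads the $e^{-t}$ decay off the resolvent integral~\eqref{e:Rk-uhp} directly via~\eqref{e:sasaki-conformal}, and concludes $\iota_Y\Pi_{1,0}=0$.

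Second, the claim that $\lambda=i$ is not a resonance of $P_{1,0}$ is neither cited nor proved, and it is exactly the delicate point. Your $v_i$ lives in the sub-bundle $E_u^*$, on which $\mathcal L_X$ has natural eigenvalue $-1$ (on $\mathcal F\Sigma$ one has $\mathcal L_X U_j^{-*}=-U_j^{-*}$); so $\lambda=i$ is precisely the critical value at which the resolvent of $P_{1,0}$ restricted to $E_u^*$-valued sections may develop a pole, and ruling this out requires nontrivial band-structure input, not a soft argument. Note also that Lemma~\ref{l:regres} does not apply to $(P_{1,0}-i)v_i=0$ since $\mathcal L_Xv_i=-v_i$ is not a priori smooth. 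The paper's route for showing $du|_{E_u\times E_s}=0$ avoids both difficulties by treating this restriction as a section of $E_s^*\otimes E_u^*$, a bundle on which the conformal weights $e^{\mp t}$ of~\eqref{e:sasaki-conformal} cancel so that $-i\mathcal L_X$ is formally self-adjoint; Lemma~\ref{l:regres} then applies at $\lambda_0=0$ to give smoothness, after which Lemma~\ref{l:inv2forms} forces the resulting smooth flow-invariant $2$-form to be $a\,d\alpha+b\,\psi$ and two integral identities show $a=b=0$. That balance between the two factors is the key structural point your $E_u^*$-only reduction loses.
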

\Remark Lemma~\ref{l:c-psi-1.5} was previously proved by K\"uster--Weich~\cite[\S2.6]{kuster-weich}.
\begin{proof}
We consider the case $u\in\Res^1_0$, with the case $u\in\Res^1_{0*}$ handled in the same way.

We first show that $u$ is unstable in the sense of Definition~\ref{d:totally-stun}.
For that it is enough to prove that $u(Y)=0$ for any $Y\in C^\infty(M;E_0\oplus E_u)$.
Since $\iota_X u=0$, we may assume that $Y\in C^\infty(M;E_u)$. By the integral formula~\eqref{e:Rk-uhp}
for the Pollicott--Ruelle resolvent $R_{k,0}(\lambda)$, we have for $\Im\lambda\gg 1$ and any
$w\in C^\infty(M;\Omega^1_0)$, $\rho\in M$
$$
\langle R_{1,0}(\lambda)w,Y\rangle(\rho)=i\int_0^\infty e^{i\lambda t}\langle w(\varphi_{-t}(\rho)),d\varphi_{-t}(\rho)Y(\rho)\rangle\,dt.
$$
Since $Y$ is a section of the unstable bundle, by~\eqref{e:sasaki-conformal} we have
$|\langle w(\varphi_{-t}(\rho)),d\varphi_{-t}(\rho)Y(\rho)\rangle|\leq Ce^{-t}$ for some constant $C$ and all~$t\geq 0$,
$\rho\in M$.
Therefore, the integral above converges uniformly in~$\rho$ for $\Im\lambda>-1$, which implies that $\lambda\mapsto \langle R_{1,0}(\lambda)w,Y\rangle$ is holomorphic in $\Im\lambda>-1$. If $\Pi_{1,0}$ is the projector appearing in the Laurent expansion of~$R_{1,0}(\lambda)$
at $\lambda=0$, defined in~\eqref{e:merex}, then $\iota_Y\Pi_{1,0}=0$. Since $\Res^1_0$ is contained in the range
of~$\Pi_{1,0}$, we get $u(Y)=0$ as needed.

We now analyze $du$. First of all, $\iota_Xdu=0$ since $u\in \Res^1_0$. Next, we have $du|_{E_u\times E_u}=0$.
This can be seen by following the argument above, or using that $u(Y)=0$ for any $Y\in C^\infty(M;E_0\oplus E_u)$,
the identity~\eqref{e:d-form}, and the fact that the class $C^\infty(M;E_0\oplus E_u)$ is closed under Lie brackets
(as follows from~\eqref{e:B-pm-fibers}).

It remains to show that $du|_{E_u\times E_s}=0$.
Let $\zeta$ be the restriction of $du$ to $E_u\times E_s$,
considered as a section in $\mathcal D'_{E_u^*}(M;E_s^*\otimes E_u^*)$. 
(Here $E_s^*,E_u^*$ are dual to $E_u,E_s$ as in~\eqref{e:anosov-dual}.)
We endow
$E_s^*\otimes E_u^*$ with the inner product which is the tensor product of the dual Sasaski metrics
on $E_s^*$ and $E_u^*$. The operator
$$
P:=-i\mathcal L_X:C^\infty(M;E_s^*\otimes E_u^*)\to C^\infty(M;E_s^*\otimes E_u^*)
$$
is formally self-adjoint as follows from~\eqref{e:sasaki-conformal}, and $P\zeta=0$.
Then by~\cite[Lemma~2.3]{dyatlov-zworski-17} the section $\zeta$ is in~$C^\infty$.

Let us now consider $\zeta=du|_{E_u\times E_s}$ as a smooth 2-form on $M$
(i.e. $\iota_X \zeta=0$, $\zeta|_{E_u\times E_u}=\zeta|_{E_s\times E_s}=0$, and $\zeta|_{E_u\times E_s}=du|_{E_u\times E_s}$), then $\mathcal L_X\zeta=0$ and by Lemma~\ref{l:inv2forms} we see that $\zeta=a\,d\alpha+b\,\psi$ for some constants $a,b$.
We claim that $a=b=0$. This follows from~\eqref{e:psi-basic-2}--\eqref{e:psi-basic-3} and the identities
\begin{align}
  \label{e:roo-1}
\int_M \alpha\wedge d\alpha\wedge \zeta&=\int_M \alpha\wedge d\alpha \wedge du=0,\\
  \label{e:roo-2}
\int_M \alpha\wedge \psi\wedge \zeta&=\int_M \alpha\wedge\psi\wedge du=0.
\end{align}
Here the first identity in each line follows from the fact that $d\alpha|_{E_u\times E_u}=\psi|_{E_u\times E_u}=0$
(which can be verified using~\eqref{e:alpha-hv}, \eqref{e:stun-hv}, and~\eqref{e:psi-def}). More precisely, it suffices to observe that $\alpha \wedge d\alpha \wedge (du - \zeta)$ and $\alpha \wedge d\psi \wedge (du - \zeta)$ are pointwise zero, as $du - \zeta$ is supported on $E_s \times E_s$ by definition.
The second identity in each line follows by integration by parts and the fact that $d\alpha\wedge d\alpha\wedge u=d\alpha\wedge\psi\wedge u=0$
(as $\iota_X$ of both of these 5-forms is equal to~0). Now, $a=b=0$ implies that $\zeta=0$, that is $du|_{E_u\times E_s}=0$ as needed.
\end{proof}
We are now ready to prove
\begin{lemm}
  \label{l:c-psi-2}
We have $\mathcal C_{(*)} \cap  \mathcal C_{\psi(*)} = \{0\}$ and $\Res^1_{0(*)}=\mathcal C_{(*)}\oplus \mathcal C_{\psi(*)}$.
\end{lemm}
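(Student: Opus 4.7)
The strategy is to prove the nontrivial inclusion $\Res^1_{0(*)} \subset \mathcal{C}_{(*)} \oplus \mathcal{C}_{\psi(*)}$. The reverse inclusion is already in hand: Lemma~\ref{l:c-psi-1} gives $\mathcal{C}_{(*)} \cap \mathcal{C}_{\psi(*)} = \{0\}$, and $\mathcal{C}_{\psi(*)} \subset \Res^1_{0(*)}$ because $\mathcal{I}$ commutes with $\mathcal{L}_X$ by~\eqref{e:I-flow-invariant}, annihilates $X$, and acts as a smooth bundle endomorphism so does not enlarge wavefront sets; combined with~\eqref{e:c-psi-dim}, the direct sum is already $2b_1(\Sigma)$-dimensional inside $\Res^1_{0(*)}$.

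I focus on the resonant case, the coresonant case being identical with $B_-$ replaced by $B_+$. Given $u \in \Res^1_0$, Lemma~\ref{l:c-psi-1.5} shows $u$ is totally unstable, and Lemma~\ref{l:lifted-forms} then produces $\pi_\Gamma^* u = B_-^* w$ for a $\Gamma$-invariant $w \in \mathcal{D}'(\mathbb{S}^2; \Omega^1)$. The next step is to invoke distributional Hodge theory on the round sphere: since $H^1(\mathbb{S}^2;\mathbb{C}) = 0$, elliptic regularity for $\Delta$ yields a decomposition $w = d f_1 + \star d f_2$ with $f_1, f_2 \in \mathcal{D}'(\mathbb{S}^2)$ unique modulo additive constants. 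The decisive observation is that the Hodge star on 1-forms in dimension 2 depends only on the conformal class and the orientation, and $\Gamma$ acts on $\mathbb{S}^2$ by orientation-preserving conformal diffeomorphisms via the maps $L_\gamma$ of~\eqref{e:gamma-s2-conformal}, so its action commutes with $\star$ on 1-forms. By uniqueness of the Hodge decomposition applied to $w = L_\gamma^*w$, it follows that $L_\gamma^* f_j - f_j = c_j(\gamma) \in \mathbb{C}$ for every $\gamma \in \Gamma$ and $j = 1,2$.

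To descend to $S\Sigma$, I pull back by $B_-$: the equivariance~\eqref{e:B-pm-equivariant} gives $\gamma^*(B_-^* f_j) - B_-^* f_j = c_j(\gamma)$, so the exact 1-forms $d(B_-^* f_j)$ are $\Gamma$-invariant and descend to 1-forms $u_j$ on $S\Sigma$. Each $u_j$ lies in $\mathcal{C}$: it is closed by construction; $\iota_X u_j = X(B_-^* f_j) = 0$ because $X$ is tangent to the fibers of $B_-$ by~\eqref{e:B-pm-fibers}; and its wavefront set is contained in the conormal to those fibers, which is exactly $E_u^*$. Applying identity~\eqref{e:hodge-star} to $w = df_2$ gives $B_-^*(\star df_2) = -d(B_-^* f_2) \circ \mathcal{I}$, and consequently
\begin{equation*}
\pi_\Gamma^* u = d(B_-^* f_1) - d(B_-^* f_2) \circ \mathcal{I} = \pi_\Gamma^*\bigl(u_1 + (-u_2) \circ \mathcal{I}\bigr),
\end{equation*}
yielding $u = u_1 + (-u_2) \circ \mathcal{I} \in \mathcal{C} \oplus \mathcal{C}_\psi$, as required.

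The main obstacle is making the distributional Hodge decomposition on $\mathbb{S}^2$ behave equivariantly under the merely conformal (not isometric) action of $\Gamma$; this is precisely where the two-dimensional conformal invariance of $\star$ on 1-forms is essential. The residual cohomological ambiguity in the potentials $f_j$ manifests as a $\mathbb{C}$-valued 1-cocycle on $\Gamma$, but this is killed by applying $d$ before descending to $S\Sigma$, which is why the construction produces honest closed elements of $\mathcal{C}$ rather than global potentials.
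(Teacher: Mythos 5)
Your proof is correct and follows essentially the same route as the paper: lift $u$ via Lemma~\ref{l:c-psi-1.5} and Lemma~\ref{l:lifted-forms} to a $\Gamma$-invariant $w$ on $\mathbb S^2$, split via Hodge theory using the conformal invariance of $\star$ in two dimensions, and translate back through~\eqref{e:hodge-star}. The only cosmetic difference is that you phrase the Hodge decomposition through scalar potentials $f_j$ (which forces you to track a $\Gamma$-cocycle of constants, killed by applying $d$), whereas the paper works directly with the closed 1-forms $w_1,\star w_2$, for which $\Gamma$-invariance holds on the nose and no cocycle appears.
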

\begin{proof}
We consider the case of~$\Res^1_0$, with $\Res^1_{0*}$ handled similarly. We need to prove that each $u\in\Res^1_0$ can be expressed uniquely as a sum of elements in $\mathcal C$ and $\mathcal C_\psi$. By Lemma~\ref{l:c-psi-1.5}, $u$ is totally unstable.
By Lemma~\ref{l:lifted-forms}, the lift of $u$ to $S\mathbb H^3$ has the form
$$
\pi_\Gamma^*u=B_-^*w\quad\text{for some $\Gamma$-invariant}\quad w\in \mathcal D'(\mathbb S^2;\Omega^1),
$$
where $\Gamma\subset \SO_+(1,3)$ is the discrete subgroup such that $\Sigma=\Gamma\backslash\mathbb H^3$.
Take the Hodge decomposition of $w$:
\begin{equation}
  \label{e:cpsi2-dec}
w=w_1+\star w_2\quad\text{where}\quad w_1,w_2\in\mathcal D'(\mathbb S^2;\Omega^1),\quad
dw_1=dw_2=0.
\end{equation}
Since $\Gamma$ acts on $\mathbb S^2$ by orientation preserving conformal transformations $L_\gamma$ (see~\eqref{e:gamma-s2-conformal}),
its action commutes with the Hodge star $\star$. Since $H^1(\mathbb S^2)=0$, the Hodge decomposition above is unique,
which implies that $w_1,w_2$ are $\Gamma$-invariant. Applying Lemma~\ref{l:lifted-forms} again
and using~\eqref{e:totally-wf}, we see that
$$
B_-^*w_j=\pi_\Gamma^*u_j\quad\text{for some}\quad u_1,u_2\in \mathcal D'_{E_u^*}(M;\Omega^1_0).
$$
Since $dw_j=0$, we have $du_j=0$, which together with the fact that $\iota_X u_j=0$ shows that
$u_1,u_2\in \mathcal C$. Finally, by~\eqref{e:hodge-star} and~\eqref{e:cpsi2-dec}
we may express $u$ uniquely as
$$
u=u_1-u_2\circ\mathcal I,\quad
u_1\in\mathcal C,\quad
u_2\circ\mathcal I\in \mathcal C_\psi,
$$
finishing the proof.
\end{proof}
The next lemma establishes semisimplicity on resonant 1-forms:
\begin{lemm}
  \label{l:c-psi-3}
The semisimplicity condition~\eqref{e:semi-simple} holds at $\lambda_0=0$ for the operators~$P_{1,0}$ and~$P_{3,0}$.
\end{lemm}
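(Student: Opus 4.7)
The plan is to reduce semisimplicity to a pairing nondegeneracy, and then to prove this nondegeneracy by showing that the pairing matrix becomes block-diagonal with nondegenerate diagonal blocks. First, the case of $P_{3,0}$ follows from that of $P_{1,0}$ via the isomorphism $d\alpha\wedge:\Res^{1,\ell}_0\to\Res^{3,\ell}_0$ from~\eqref{e:isom-1}. For $P_{1,0}$, Lemma~\ref{l:ss-cores} together with the coresonant isomorphism~\eqref{e:isom-1*} reduces semisimplicity at $\lambda_0=0$ to the nondegeneracy of the bilinear pairing
\[
B(u,u_*) := \int_M \alpha\wedge d\alpha\wedge u\wedge u_* \quad\text{on}\quad \Res^1_0\times\Res^1_{0*}.
\]
Using the splittings $\Res^1_{0(*)} = \mathcal{C}_{(*)}\oplus\mathcal{C}_{\psi(*)}$ from Lemma~\ref{l:c-psi-2}, the strategy is to show $B$ is block-diagonal in this decomposition.

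Two structural identities for $B$ follow from Lemma~\ref{l:dalpha-psi-1} together with the relation $I^2 = -\mathrm{id}$ on $\Omega^1_0$ (where $I\beta := \beta\circ\mathcal{I}$):
\[
B(Iu,u_*) = B(u,Iu_*) = -\int_M \alpha\wedge\psi\wedge u\wedge u_*, \qquad B(Iu,Iu_*) = -B(u,u_*).
\]
The second identity shows $B|_{\mathcal{C}_\psi\times\mathcal{C}_{\psi*}} = -B|_{\mathcal{C}\times\mathcal{C}_*}$, and since the latter is the nondegenerate pairing of Lemma~\ref{l:pairing-k-1}, both diagonal blocks are nondegenerate.

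The crucial step will be to show that the off-diagonal blocks $B|_{\mathcal{C}\times\mathcal{C}_{\psi*}}$ and $B|_{\mathcal{C}_\psi\times\mathcal{C}_*}$ vanish. By the first identity above, it suffices to prove that $\int_M \alpha\wedge\psi\wedge u\wedge u_{0*} = 0$ for all $u\in\mathcal{C}$ and $u_{0*}\in\mathcal{C}_*$. The decisive input is that $\alpha\wedge\psi$ is a closed 3-form: $d(\alpha\wedge\psi) = d\alpha\wedge\psi - \alpha\wedge d\psi = 0$ by~\eqref{e:psi-basic-3} and~\eqref{e:psi-basic-1}. Using the isomorphism $\pi_\Sigma^*:H^1(\Sigma;\mathbb{C})\to H^1(M;\mathbb{C})$ of~\eqref{e:Gysin-iso} together with Lemma~\ref{l:1-forms-ish}, I will write
\[
u = \pi_\Sigma^*\eta + df, \qquad u_{0*} = \pi_\Sigma^*\eta_{0*} + dg,
\]
with closed smooth 1-forms $\eta,\eta_{0*}$ on $\Sigma$ and distributions $f\in\mathcal{D}'_{E_u^*}(M;\mathbb{C})$, $g\in\mathcal{D}'_{E_s^*}(M;\mathbb{C})$. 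Expanding the integrand into four terms, each factor involving $df$ or $dg$ will become the distributional exterior derivative of a well-defined distributional 4-form on $M$ (the products make sense because $E_u^*\cap E_s^* = \{0\}$) and vanish upon integration by Stokes' theorem. The only surviving contribution equals $\int_\Sigma \pi_{\Sigma*}(\alpha\wedge\psi)\wedge\eta\wedge\eta_{0*}$ by the projection formula~\eqref{e:pfm-2}, which is zero since $\pi_{\Sigma*}(\alpha\wedge\psi) = 0$ by Lemma~\ref{l:psi-de-rham}.

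I expect the main obstacle to be the careful distributional bookkeeping—checking that all wedge products and exterior derivatives are well-defined under the wavefront-set constraints, and that Stokes' theorem applies to the exact 4-forms produced in each step. Conceptually, however, the argument pivots entirely on two features of the flow-invariant 2-form $\psi$ that is special to the hyperbolic case: the closedness of $\alpha\wedge\psi$ and the vanishing of its fiber integral $\pi_{\Sigma*}(\alpha\wedge\psi) = 0$.
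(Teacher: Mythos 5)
Your proposal is correct and follows essentially the same route as the paper's proof: the paper establishes the block-diagonal structure via the three identities~\eqref{e:cpsint3-1}--\eqref{e:cpsint3-3}, which are precisely your two structural identities $B(\mathcal Iu,u_*)=B(u,\mathcal Iu_*)=-\int_M\alpha\wedge\psi\wedge u\wedge u_*$ and $B(\mathcal Iu,\mathcal Iu_*)=-B(u,u_*)$ obtained from Lemma~\ref{l:dalpha-psi-1}, and the off-diagonal vanishing is proved by the same expansion $u=\pi_\Sigma^*\eta+df$, $u_{0*}=\pi_\Sigma^*\eta_{0*}+dg$, integration by parts against the closed form $\alpha\wedge\psi$, and the fiber-integral vanishing $\pi_{\Sigma*}(\alpha\wedge\psi)=0$ from Lemma~\ref{l:psi-de-rham}. (Minor note: the decomposition $u=\pi_\Sigma^*\eta+df$ comes directly from Lemma~\ref{l:hodge} and~\eqref{e:Gysin-iso}, not Lemma~\ref{l:1-forms-ish}, but this is cosmetic.)
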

\begin{proof}
By~\eqref{e:isom-1} it suffices to establish semisimplicity for~$P_{1,0}$.
By Lemma~\ref{l:ss-cores} it suffices to show that the pairing $\llangle\bullet,\bullet\rrangle$
on $\Res^1_0\times \Res^3_{0*}$ is nondegenerate. Recall from~\eqref{e:isom-1*} that $\Res^3_{0*}=d\alpha\wedge\Res^1_{0*}$.
By Lemma~\ref{l:pairing-k-1}
the pairing $\llangle \bullet,\bullet\rrangle$ is nondegenerate on $\mathcal C\times (d\alpha\wedge\mathcal C_*)$.
Therefore, it suffices to show the following diagonal structure of the pairing with respect to the decompositions
$\Res^1_{0(*)}=\mathcal C_{(*)}\oplus\mathcal C_{\psi(*)}$ established in Lemma~\ref{l:c-psi-2}:
\begin{align}
  \label{e:cpsint3-1}
\llangle u,d\alpha\wedge u_*\rrangle=0&\quad\text{for all}\quad u\in\mathcal C,\ u_*\in\mathcal C_{\psi *}\\
  \label{e:cpsint3-2}
\llangle u,d\alpha\wedge u_*\rrangle=0&\quad\text{for all}\quad u\in\mathcal C_{\psi},\ u_*\in\mathcal C_*\\
  \label{e:cpsint3-3}
\llangle u,d\alpha\wedge u_*\rrangle=-\llangle u\circ\mathcal I,d\alpha\wedge(u_*\circ\mathcal I)\rrangle&\quad\text{for all}\quad
u\in\mathcal C,\ u_*\in\mathcal C_*.
\end{align}
We first show~\eqref{e:cpsint3-1}. By Lemma~\ref{l:hodge} and~\eqref{e:Gysin-iso}
we may write
$$
\begin{aligned}
u=\pi_\Sigma^*w+df &\quad\text{for some}\quad w\in C^\infty(\Sigma;\Omega^1),\ dw=0,\ f\in\mathcal D'_{E_u^*}(M;\mathbb C),\\
u_*\circ\mathcal I=\pi_\Sigma^*w_*+df_* &\quad\text{for some}\quad w_*\in C^\infty(\Sigma;\Omega^1),\ dw_*=0,\ f_*\in\mathcal D'_{E_s^*}(M;\mathbb C).
\end{aligned}
$$
We now compute
$$
\begin{aligned}
\llangle u,d\alpha\wedge u_*\rrangle&=\llangle u,\psi\wedge (u_*\circ\mathcal I)\rrangle
=\int_M \alpha\wedge\psi\wedge (\pi_\Sigma^*w+df)\wedge (\pi_\Sigma^* w_*+df_*)\\
&=\int_M \alpha\wedge\psi\wedge \pi_\Sigma^*(w\wedge w_*)
=-\int_\Sigma \pi_{\Sigma*}^{}(\alpha\wedge\psi)\wedge w\wedge w_*=0.
\end{aligned}
$$
Here the first equality used Lemma~\ref{l:dalpha-psi-1}.
The third equality used integration by parts and~\eqref{e:psi-basic-3}.
The fourth equality used~\eqref{e:pushforward-integral} and~\eqref{e:pfm-2},
with the negative sign explained in the paragraph following~\eqref{e:pushforward-integral}.
The fifth equality used part~2 of Lemma~\ref{l:psi-de-rham}.
A similar argument proves~\eqref{e:cpsint3-2}.

Finally, to show~\eqref{e:cpsint3-3} we compute
$$
\llangle u,d\alpha\wedge u_*\rrangle
=\llangle u,\psi\wedge (u_*\circ\mathcal I)\rrangle
=\llangle \psi\wedge u,u_*\circ\mathcal I\rrangle
=-\llangle d\alpha\wedge (u\circ\mathcal I),u_*\circ\mathcal I\rrangle
$$
using Lemma~\ref{l:dalpha-psi-1} and the fact that $u\circ\mathcal I\circ\mathcal I=-u$.
\end{proof} 

We finally discuss the properties of the maps $\pi_{3(*)}:\Res^3_{0(*)}\to H^3(M;\mathbb C)$; as explained at the top of \S \ref{s:hyp-1-forms}, recall that the subscript $(*)$ denotes the corresponding resonance or co-resonance space, so we can include both in the discussion. Recall that all forms in $\Res^3_{0(*)}$ are closed by Lemma~\ref{l:res-3-closed}
and $\Res^3_{0(*)}=d\alpha\wedge\Res^1_{0(*)}$ by~\eqref{e:isom-1},
\eqref{e:isom-1*}.
Moreover, by Lemma~\ref{l:dalpha-psi-1} and the definition of~$\mathcal C_{\psi(*)}$
\begin{equation}\label{e:psi-wedge-def}
d\alpha\wedge \mathcal C_{\psi(*)}=\psi\wedge\mathcal C_{(*)}.
\end{equation}
We have
$\pi_{3(*)}(d\alpha\wedge\mathcal C_{(*)})=0$. Assume now that $u\in\mathcal C_\psi$, then $u\circ\mathcal I\in\mathcal C$, and by Lemma~\ref{l:hodge} and~\eqref{e:Gysin-iso}
we may write
$$
u\circ\mathcal I=\pi_\Sigma^* w+df\quad\text{for some}\quad
w\in C^\infty(\Sigma;\Omega^1),\
dw=0,\
f\in\mathcal D'_{E_u^*}(M;\mathbb C).
$$
Wedging with $\psi$, taking $\pi_{\Sigma*}^{}$, and using~\eqref{e:pfm-1}--\eqref{e:pfm-2}, part~1 of Lemma~\ref{l:psi-de-rham}, and Lemma~\ref{l:dalpha-psi-1} we get
$$
\pi_{\Sigma*}^{}\pi_3(d\alpha \wedge u) = \pi_{\Sigma*}^{} (\psi \wedge \pi_{\Sigma}^*w) = -4\pi w,
$$
which (together with a similar argument for coresonant states) immediately shows that
\begin{equation}
  \label{e:pi-3-first}
\pi_{\Sigma*}^{}\pi_{3(*)}:d\alpha\wedge \mathcal C_{\psi(*)}\to H^1(\Sigma;\mathbb C)\text{ is an isomorphism}.
\end{equation}
This implies that
\begin{equation}
  \label{e:pi-3-ker}
\ker\pi_{3(*)}=d\alpha\wedge \mathcal C_{(*)}
\end{equation}
and so by \eqref{e:exaseq-2} the range of $\pi_{3(*)}$ is a codimension 1 subspace of $H^3(M;\mathbb C)$
which does not contain $[\pi_\Sigma^*d\vol_g]_{H^3}$.

Summarizing the contents of \S \ref{s:hyp-1-forms}, we note that the second item of Theorem \ref{t:hyperbolic} follows from \eqref{e:psi-wedge-def}, Lemma \ref{l:c-psi-2},
Lemma~\ref{l:1-forms-ish}, and~\eqref{e:bettiGysin}, the third item by Lemma \ref{l:c-psi-3}, and the sixth item by the discussion in the preceding paragraph.

\subsection{Resonant 2-forms}
\label{s:hyp-2-forms}

We next study resonant 2-forms. We start with
\begin{lemm}
  \label{l:2-forms-1}
We have $d(\Res^2_{0(*)})=0$ and $\ker\pi_{2(*)}=\mathbb C d\alpha\oplus d\mathcal C_{\psi(*)}$
has dimension $b_1(\Sigma)+1$.
\end{lemm}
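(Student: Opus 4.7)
The plan is to prove the substantive claim $d(\Res^2_0)=0$ first, after which the characterization of $\ker\pi_2$ follows from Lemma~\ref{l:ker-pi-k} combined with a short pairing argument.

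Given $\zeta\in\Res^2_0$, note that $d\zeta\in\Res^3_0$, so by the isomorphism~\eqref{e:isom-1} I can write $d\zeta=d\alpha\wedge u$ for a unique $u\in\Res^1_0$, and Lemma~\ref{l:c-psi-2} splits $u=u_1+u_2$ with $u_1\in\mathcal C$ and $u_2\in\mathcal C_\psi$. To eliminate the closed component $u_1$, pair $d\zeta$ with an arbitrary $u_*\in\mathcal C_*$: integration by parts yields $\llangle d\zeta,u_*\rrangle=-\llangle\zeta,du_*\rrangle=0$ since $du_*=0$. On the other hand, reshuffling the wedge product identifies $\llangle d\zeta,u_*\rrangle=\llangle u,d\alpha\wedge u_*\rrangle$, whose $\mathcal C_\psi$ cross-term vanishes by~\eqref{e:cpsint3-2}. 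Nondegeneracy of the pairing on $\mathcal C\times d\alpha\wedge\mathcal C_*$ (Lemma~\ref{l:pairing-k-1}) then forces $u_1=0$.

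The main obstacle is killing the remaining term $u_2\in\mathcal C_\psi$. Writing $u_2=v\circ\mathcal I$ for some $v\in\mathcal C$, and using Lemma~\ref{l:dalpha-psi-1} together with the fact that $\mathcal I\circ\mathcal I=-\mathrm{id}$ on $\Omega^1_0$, the identity $d\zeta=d\alpha\wedge u_2=-\psi\wedge v$ exhibits $\psi\wedge v$ as exact in $\mathcal D'_{E_u^*}$. By Lemma~\ref{l:hodge} decompose $v=v_s+df$ with $v_s\in C^\infty(M;\Omega^1)$ closed and $f\in\mathcal D'_{E_u^*}(M;\mathbb C)$; since $d\psi=0$ we get $\psi\wedge v=\psi\wedge v_s+d(f\psi)$, so the smooth closed form $\psi\wedge v_s$ is itself exact in $\mathcal D'_{E_u^*}$, which by another application of Lemma~\ref{l:hodge} forces $[\psi\wedge v_s]_{H^3(M;\mathbb C)}=0$. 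Via the Gysin isomorphism~\eqref{e:Gysin-iso} choose a smooth closed representative with $[v_s]_{H^1(M)}=\pi_\Sigma^*[w']$, so $v_s=\pi_\Sigma^*w'+d\phi$; then the projection formula~\eqref{e:pfm-2} together with part~1 of Lemma~\ref{l:psi-de-rham} yield $\pi_{\Sigma*}[\psi\wedge v_s]_{H^3}=-4\pi[w']_{H^1(\Sigma)}$, so $[w']=0$. Hence $v_s$ is smoothly exact, $v$ is exact in $\mathcal D'_{E_u^*}$, and Lemma~\ref{l:1-forms-ish} gives $v=0$.

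For the kernel of $\pi_2$, Lemma~\ref{l:ker-pi-k} sandwiches $d(\Res^1_0)\subset\ker\pi_2\subset d(\Res^{1,\infty})$, where $d(\Res^1_0)=d\mathcal C_\psi$ since $d\mathcal C=0$, while~\eqref{e:res-0-split} combined with semisimplicity from Lemma~\ref{l:0-forms} and Lemma~\ref{l:c-psi-3} gives $\Res^{1,\infty}=\Res^1_0\oplus\mathbb C\alpha$ and therefore $d(\Res^{1,\infty})=d\mathcal C_\psi+\mathbb Cd\alpha$. Since $d\alpha$ is exact and hence lies in $\ker\pi_2$, both inclusions become equalities. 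To see the sum is direct, suppose $c\,d\alpha+du=0$ with $u=v\circ\mathcal I\in\mathcal C_\psi$; pair with $d\alpha\in\Res^2_{0*}$, noting that integration by parts gives $\llangle du,d\alpha\rrangle=\int_M d\alpha\wedge u\wedge d\alpha$, and the identity $d\alpha\wedge u=-\psi\wedge v$ together with $\psi\wedge d\alpha=0$ makes this integral vanish. Thus $c\,\llangle d\alpha,d\alpha\rrangle=c\int_Md\vol_\alpha=0$ forces $c=0$ and hence $u=0$. Combined with injectivity of $d$ on $\mathcal C_\psi$ (Lemma~\ref{l:c-psi-1}), this yields $\dim\ker\pi_2=1+b_1(\Sigma)$ as claimed.
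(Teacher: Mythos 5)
Your proof is correct, but the first part ($d(\Res^2_0)=0$) takes a longer route than the paper. You begin by writing $d\zeta = d\alpha\wedge u$ with $u\in\Res^1_0$ via the isomorphism~\eqref{e:isom-1}, and then must eliminate both components $u_1\in\mathcal C$ and $u_2\in\mathcal C_\psi$ separately. The paper instead observes that $d\zeta$, being exact, lies in $\ker\pi_3$, and invokes the already-established characterization $\ker\pi_3=d\alpha\wedge\mathcal C$ from~\eqref{e:pi-3-ker}; this puts $u\in\mathcal C$ from the outset, so only your single pairing argument (the $u_1$-elimination step, via Lemma~\ref{l:pairing-k-1}) is needed. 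Your Gysin/pushforward argument to kill $u_2$ --- showing that $\psi\wedge v$ being exact in $\mathcal D'_{E_u^*}$ forces $v=0$ because $\pi_{\Sigma*}(\psi)=-4\pi$ detects the cohomology class --- is essentially a re-derivation of what is packaged into~\eqref{e:pi-3-first}/\eqref{e:pi-3-ker}. Both proofs are sound; the paper's is shorter because it front-loads that cohomological detection into the description of $\ker\pi_3$. Your argument for $\ker\pi_2=\mathbb Cd\alpha\oplus d\mathcal C_\psi$ matches the paper's, including the directness-of-sum check: you pair $c\,d\alpha+du$ with $d\alpha\in\Res^2_{0*}$, while the paper notes directly that $\int_M\alpha\wedge d\alpha\wedge du=0$ for $u\in\mathcal C_\psi$; these are the same computation.
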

\begin{proof}
We consider the case of resonant 2-forms, with the case of coresonant 2-forms handled similarly.
We first show that $d(\Res^2_0)=0$, arguing similarly to the proof of Lemma~\ref{l:1-means-2}.
Take $\zeta\in \Res^2_0$, then by the definition \eqref{e:pi-k-def} of $\pi_3$ we have $d\zeta\in\ker\pi_3$.
Thus by~\eqref{e:pi-3-ker}, $d\zeta=d\alpha\wedge u$ for some $u\in\mathcal C$.
Take arbitrary $u_*\in\mathcal C_*$, then precisely as in ~\eqref{e:mclean}
$$
\llangle u,d\alpha\wedge u_*\rrangle=\int_M\alpha\wedge d\zeta\wedge u_*=\int_M d\alpha\wedge\zeta\wedge u_*=0.
$$
Now Lemma~\ref{l:pairing-k-1} implies that $u=0$ and thus $d\zeta=0$ as needed.

Next, if $u\in\mathcal C_\psi$, then using the same argument of integration by parts as in~\eqref{e:roo-1} yields
$$
\int_M \alpha\wedge d\alpha\wedge du=0.
$$
Therefore, $du$ cannot be a nonzero multiple of $d\alpha$, which means that $\mathbb Cd\alpha\cap d\mathcal C_\psi=\{0\}$.
We have $d\alpha\in \ker\pi_2$ and by Lemma~\ref{l:ker-pi-k} we have $d\mathcal C_\psi\subset\ker \pi_2$ as well.

It remains to show that $\ker\pi_2\subset \mathbb Cd\alpha\oplus d\mathcal C_\psi$. By Lemma~\ref{l:ker-pi-k},
$\ker\pi_2$ is contained in $d(\Res^{1,\infty})$. By~\eqref{e:res-0-split} and Lemmas~\ref{l:0-forms}, \ref{l:c-psi-2},
and~\ref{l:c-psi-3}, we have $\Res^{1,\infty}=\mathbb C\alpha\oplus \mathcal C\oplus\mathcal C_\psi$.
Then $d(\Res^{1,\infty})=\mathbb Cd\alpha\oplus d\mathcal C_\psi$, which finishes the proof.
\end{proof}
We next establish the following auxiliary result:
\begin{lemm}
  \label{l:2-forms-aux}
Assume that $\eta\in C^\infty(\Sigma;\Omega^2)$, $d\eta=0$, and $w\in \mathcal D'_{E_u^*}(M;\Omega^1)$
satisfy
\begin{equation}
  \label{e:2-forms-aux}
\iota_X(\pi_\Sigma^*\eta+dw)=0.
\end{equation}
Then $\eta$ is exact.
\end{lemm}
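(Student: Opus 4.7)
The plan is to prove $\eta$ is exact by first showing that the distributional 4-form $u \wedge \psi$ vanishes identically on $M$, where $u := \pi_\Sigma^*\eta + dw$, and then pushing this vanishing identity forward to $\Sigma$. Note that $u \in \Res^2_0 \cap \ker d$: indeed $\iota_X u = 0$ is the hypothesis, $du = \pi_\Sigma^* d\eta = 0$ since $\eta$ is closed, $\mathcal{L}_X u = d\iota_X u + \iota_X du = 0$, and $\WF(u) \subset E_u^*$ since $\pi_\Sigma^*\eta$ is smooth and $\WF(dw) \subset \WF(w) \subset E_u^*$.

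To establish $u \wedge \psi = 0$, I first observe that $\iota_X(u \wedge \psi) = 0$, so $u \wedge \psi$ is a distributional section of the rank-one bundle $\Omega^4_0$, which is trivialized by $d\alpha \wedge d\alpha$. Hence $u \wedge \psi = h \cdot d\alpha \wedge d\alpha$ for some $h \in \mathcal D'_{E_u^*}(M;\mathbb C)$. The identity $d(u \wedge \psi) = 0$ (from $du = 0$ and $d\psi = 0$) forces $dh \wedge d\alpha \wedge d\alpha = (Xh)\,d\vol_\alpha = 0$, so $Xh = 0$. Thus $h \in \Res^0_0$, and Lemma~\ref{l:0-forms} gives that $h$ is a constant. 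This constant is determined by $h\cdot\mathrm{vol}(M, d\vol_\alpha) = \int_M \alpha \wedge u \wedge \psi = \llangle u, \psi \rrangle$, which I evaluate by substituting $u = \pi_\Sigma^*\eta + dw$. The contribution $\int_M \alpha \wedge \pi_\Sigma^*\eta \wedge \psi$ equals $-\int_\Sigma \pi_{\Sigma*}(\alpha \wedge \psi) \wedge \eta$ via~\eqref{e:pushforward-integral} and~\eqref{e:pfm-2}, which vanishes by part~2 of Lemma~\ref{l:psi-de-rham}. The contribution $\int_M \alpha \wedge dw \wedge \psi$ equals $\int_M d\alpha \wedge w \wedge \psi$ after integration by parts (using $d\psi = 0$); but the 5-form $d\alpha \wedge w \wedge \psi$ satisfies $\iota_X(d\alpha \wedge w \wedge \psi) = (\iota_X w)\,d\alpha \wedge \psi = 0$, using $\iota_X d\alpha = \iota_X \psi = 0$ and the identity $d\alpha \wedge \psi = 0$ established in the proof of~\eqref{e:psi-basic-3}. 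Since any top-degree form on $M$ killed by $\iota_X$ vanishes, this contribution is also zero, yielding $h = 0$.

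With $u \wedge \psi = 0$ in hand, I rewrite $\pi_\Sigma^*\eta \wedge \psi = -dw \wedge \psi = -d(w \wedge \psi)$ (again using $d\psi = 0$). Applying $\pi_{\Sigma*}$ and using~\eqref{e:pfm-1},~\eqref{e:pfm-2}, together with $\pi_{\Sigma*}\psi = -4\pi$ from part~1 of Lemma~\ref{l:psi-de-rham}, I obtain $-4\pi\eta = -d\pi_{\Sigma*}(w \wedge \psi)$, so $\eta = \tfrac{1}{4\pi}\,d\pi_{\Sigma*}(w \wedge \psi)$ as distributional forms on $\Sigma$. Since $\eta$ is smooth and a smooth form that is distributionally exact on a compact manifold is smoothly exact (by the analog of Lemma~\ref{l:hodge} on $\Sigma$, or standard Hodge theory), $\eta$ is exact. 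The central insight, and the main obstacle to identify, is that $\psi$ is precisely the test form whose wedge with $u$ collapses $u \wedge \psi$ into the one-dimensional space of flow-invariant sections of $\Omega^4_0$ (forcing a vanishing via the triviality of $\Res^0_0$ modulo constants) while simultaneously capturing $\eta$ under pushforward through its nontrivial fiber integral $-4\pi$.
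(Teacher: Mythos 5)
Your proof is correct, but it follows a genuinely different route from the paper's. The paper takes an arbitrary closed $\eta_*\in C^\infty(\Sigma;\Omega^1)$, lifts it to a coresonant 1-form $u_*=\pi_{1*}^{-1}([\pi_\Sigma^*\eta_*]_{H^1})\in\mathcal C_*$, observes that the 5-form $\psi\wedge\zeta\wedge u_*$ is killed by $\iota_X$ and hence vanishes, and then integrates by parts to obtain $4\pi\int_\Sigma\eta\wedge\eta_*=0$ for all closed $\eta_*$, which forces $[\eta]=0$ by Poincar\'e duality. Your proof bypasses coresonant states entirely: you prove the pointwise distributional identity $u\wedge\psi=0$ (a strictly stronger conclusion than the paper's vanishing of a single pairing) via the observation that $u\wedge\psi$ is a flow-invariant section of the line bundle $\Omega^4_0$, hence a constant multiple of $d\alpha\wedge d\alpha$, and that the constant is computed to be zero. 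You then push forward $\pi_\Sigma^*\eta\wedge\psi=-d(w\wedge\psi)$ to exhibit $\tfrac{1}{4\pi}\pi_{\Sigma*}(w\wedge\psi)$ as an explicit primitive of $\eta$. What your approach buys is a constructive primitive and independence from the theory of coresonant states and the isomorphism $\pi_{1*}$; what the paper's approach buys is brevity, since the vanishing of the test 5-form is a one-line $\iota_X$ argument, whereas your route requires the two-step $h$-constant reduction (flow-invariance via Lemma~\ref{l:0-forms}, then integration using both parts~1 and~2 of Lemma~\ref{l:psi-de-rham} and $d\alpha\wedge\psi=0$). Both proofs hinge on the same ingredients: the closed invariant 2-form $\psi$, its nontrivial fiber integral $\pi_{\Sigma*}\psi=-4\pi$, and the structure of $\Omega^\bullet_0$.
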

\Remark
The proof of Lemma~\ref{l:2-forms-aux} uses the 2-form $\psi$ which
is only available in the case of constant curvature. By contrast, Lemma~\ref{l:2-forms-aux}
is false if $\Res^1_{0*}$ consists of closed forms and $b_1(\Sigma)>0$; in fact the equation~\eqref{e:2-forms-aux}
then has a solution $w\in\mathcal D'_{E_u^*}(M;\Omega^1_0)$ for any closed~$\eta$. Indeed,
in this case $\llangle \iota_X\pi_\Sigma^*\eta,d\alpha\wedge u_*\rrangle=\int_M\pi_\Sigma^*\eta\wedge d\alpha\wedge u_*=0$
for any $u_*\in\Res^1_{0*}$ by integration by parts, and the existence of $w$ now follows from Lemma~\ref{l:solver}.
\begin{proof}
Put $\zeta:=\pi_\Sigma^*\eta+dw$, then $\iota_X\zeta=0$. Take arbitrary
closed $\eta_*\in C^\infty(\Sigma;\Omega^1)$ and put $u_*:=\pi_{1*}^{-1}([\pi_\Sigma^*\eta_*]_{H^1})\in\mathcal C_*$.
Then $u_*=\pi_\Sigma^*\eta_*+dw_*$ for some $w_*\in\mathcal D'_{E_s^*}(M;\mathbb C)$.
We compute
$$
\begin{aligned}
0&=\int_M \psi\wedge \zeta\wedge u_*=\int_M \psi\wedge\pi_\Sigma^*\eta \wedge\pi_\Sigma^* \eta_*\\
&=-\int_\Sigma (\pi_{\Sigma*}^{}\psi) \eta\wedge \eta_*=4\pi\int_\Sigma \eta\wedge \eta_*.
\end{aligned}
$$
Here the first equality follows since the 5-form under the integral lies in the kernel of $\iota_X$.
The second equality follows by integration by parts, using that $\psi,\eta,\eta_*$ are closed.
The third equality follows from~\eqref{e:pushforward-integral} and~\eqref{e:pfm-2}. The fourth
equality follows from part~1 of Lemma~\ref{l:psi-de-rham}.

We see that $\eta\wedge \eta_*$ integrates to~0 on $\Sigma$ for any closed smooth 1-form $\eta_*$. This implies that
$\eta$ is exact; indeed, we can reduce to the case when $\eta$ is harmonic and take $\eta_*$ to be the Hodge star of $\eta$ (we note that this final argument is just a form of Poincar\'e duality).
\end{proof}
We now describe the space of resonant 2-forms (recalling the convention $(*)$ at the top of~\S \ref{s:hyp-1-forms}):
\begin{lemm}
  \label{l:2-forms-2}
The range of $\pi_{2(*)}$ is equal to $\mathbb C[\psi]_{H^2}$, and
$\Res^2_{0(*)}=\mathbb C d\alpha\oplus \mathbb C\psi\oplus d\mathcal C_{\psi(*)}$.
In particular, $\dim\Res^2_{0(*)}=b_1(\Sigma)+2$.
\end{lemm}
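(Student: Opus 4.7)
The plan: the kernel of $\pi_2$ and its dimension are already pinned down by Lemma~\ref{l:2-forms-1}, and Lemma~\ref{l:2-forms-aux} is exactly the tool needed to bound the range of~$\pi_2$ from above. So the work splits into identifying one explicit element mapping to a nonzero class and then showing no further classes are hit.

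First, I would verify that $\psi$ itself sits in $\Res^2_0$ and has nonzero image under~$\pi_2$. Indeed $\psi\in C^\infty(M;\Omega^2_0)$ has empty wavefront set, and by part~1 of Theorem~\ref{t:hyperbolic} satisfies $\iota_X\psi=0$ and $\mathcal L_X\psi=0$; by~\eqref{e:psi-basic-1} it is closed, so $\psi\in\Res^2_0\cap\ker d$ and $\pi_2(\psi)=[\psi]_{H^2}$. Part~1 of Lemma~\ref{l:psi-de-rham} gives $\pi_{\Sigma*}\psi=-4\pi\neq 0$, hence $[\psi]_{H^2}\neq 0$. Exactly the same argument shows $\psi\in\Res^2_{0*}$ with $\pi_{2*}(\psi)\neq 0$.

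Next, I would close off the range using the Gysin sequence~\eqref{e:exaseq-1}. Since $\pi_{\Sigma*}[\psi]_{H^2}$ generates $H^0(\Sigma;\mathbb C)$, every class in $H^2(M;\mathbb C)$ admits a unique decomposition $c[\psi]_{H^2}+\pi_\Sigma^*[\eta]_{H^2}$ with $c\in\mathbb C$ and $[\eta]_{H^2}\in H^2(\Sigma;\mathbb C)$. Given $u\in\Res^2_0$, subtract $c\psi$ so that $\pi_2(u)=\pi_\Sigma^*[\eta]_{H^2}$; by the definition~\eqref{e:pi-k-def} of $\pi_2$ we may write
$$
u=\pi_\Sigma^*\eta+dw,\qquad w\in\mathcal D'_{E_u^*}(M;\Omega^1),\qquad d\eta=0.
$$
The condition $\iota_Xu=0$ is then precisely the hypothesis of Lemma~\ref{l:2-forms-aux}, which forces $\eta$ to be exact, i.e. $\pi_2(u)=0$. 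Combined with Step~1 this gives $\mathrm{range}(\pi_2)=\mathbb C[\psi]_{H^2}$.

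Finally, since $\psi\notin\ker\pi_2$, the subspace $\mathbb C\psi$ is complementary to $\ker\pi_2$ in $\Res^2_0$, so
$$
\Res^2_0=\mathbb C\psi\oplus\ker\pi_2=\mathbb C d\alpha\oplus\mathbb C\psi\oplus d\mathcal C_\psi,
$$
of total dimension $1+(b_1(\Sigma)+1)=b_1(\Sigma)+2$ by Lemma~\ref{l:2-forms-1}. The coresonant case is literally the same argument: $\psi$ is smooth so it lies in $\Res^2_{0*}$ as well, the Gysin computation is identical, and Lemma~\ref{l:2-forms-aux}, whose proof pairs the candidate class against elements of $\mathcal C_*$, adapts by pairing instead against elements of $\mathcal C$ (so one takes $w\in\mathcal D'_{E_s^*}(M;\Omega^1)$ and uses the representative $u=\pi_1^{-1}([\pi_\Sigma^*\eta_*]_{H^1})\in\mathcal C$). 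The only thing one has to watch is that this symmetric version of Lemma~\ref{l:2-forms-aux} is genuinely available; this is immediate since its proof uses only $\pi_{\Sigma*}\psi\neq 0$, Hodge decomposition on $\Sigma$, and the nondegeneracy of $\pi_{1(*)}$, all of which are symmetric in the two regularity classes.
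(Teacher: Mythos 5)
Your proof is correct and follows essentially the same route as the paper: showing $\psi\in\Res^2_0$ with nonzero image under $\pi_2$, using the Gysin sequence~\eqref{e:exaseq-1} and part~1 of Lemma~\ref{l:psi-de-rham} to split $H^2(M;\mathbb C)=\pi_\Sigma^*H^2(\Sigma;\mathbb C)\oplus\mathbb C[\psi]_{H^2}$, invoking Lemma~\ref{l:2-forms-aux} to kill the $\pi_\Sigma^*H^2(\Sigma;\mathbb C)$ component, and then reading off the decomposition from Lemma~\ref{l:2-forms-1}. The one place you go beyond the paper is in spelling out why the coresonant case of Lemma~\ref{l:2-forms-aux} is available (swapping $E_u^*\leftrightarrow E_s^*$ and $\mathcal C\leftrightarrow\mathcal C_*$ in its proof), which the paper dispatches with the phrase ``handled similarly''; this is a worthwhile sanity check and your justification is right.
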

\begin{proof}
We consider the case of resonant 2-forms, with the case of coresonant 2-forms handled similarly.
First of all, $\psi\in\Res^2_0$, thus $[\psi]_{H^2}=\pi_2(\psi)$ is in the range of $\pi_2$.
Next, by~\eqref{e:exaseq-1} and part~1 of Lemma~\ref{l:psi-de-rham} we have
$$
H^2(M;\mathbb C)=\pi_\Sigma^*H^2(\Sigma;\mathbb C)\oplus \mathbb C[\psi]_{H^2}.
$$
To show that the range of $\pi_2$ is equal to $\mathbb C[\psi]_{H^2}$, it remains
to prove that the intersection of this range with $\pi_\Sigma^*H^2(\Sigma;\mathbb C)$
is trivial. Take $u\in \Res^2_0$ and assume that $\pi_2(u)=[\pi_\Sigma^* \eta]_{H^2}$
for some $\eta\in C^\infty(\Sigma;\Omega^2)$, $d\eta=0$. Then $u=\pi_\Sigma^*\eta+dw$
for some $w\in\mathcal D'_{E_u^*}(M;\Omega^1)$. Since $\iota_Xu=0$, Lemma~\ref{l:2-forms-aux}
implies that $\eta$ is exact, that is $\pi_2(u)=0$ as needed.

Finally, the statement that $\Res^2_0=\mathbb Cd\alpha\oplus \mathbb C\psi\oplus d\mathcal C_\psi$
follows from the first statement of this lemma together with Lemma~\ref{l:2-forms-1}.
\end{proof}
The next lemma describes the space of generalized resonant
states $\Res^{2,2}_0$ (see~\eqref{e:res-k-l} and~\S\ref{s:omega-k-0}). It implies in particular
that the operator $P_{2,0}$ does not satisfy the semisimplicity condition~\eqref{e:semi-simple},
assuming that $b_1(\Sigma)>0$:
\begin{lemm}
\label{l:res-2-pairing}
1. The pairing $\llangle\bullet,\bullet\rrangle$ on $\Res^2_0\times \Res^2_{0*}$
has the following form in the decomposition of Lemma~\ref{l:2-forms-2}:
\begin{gather}
  \label{e:res2p-1}
\llangle d\alpha,d\alpha\rrangle=\llangle \psi,\psi\rrangle=\vol_\alpha(M)>0,\quad
\llangle d\alpha,\psi\rrangle=\llangle\psi,d\alpha\rrangle=0,\\
  \label{e:res2p-2}
\llangle \zeta,\zeta_*\rrangle=0\quad\text{for all}\quad \zeta\in d\mathcal C_\psi,\ \zeta_*\in \Res^2_{0*},\\
  \label{e:res2p-3}
\llangle \zeta,\zeta_*\rrangle=0\quad\text{for all}\quad \zeta\in \Res^2_0,\ \zeta_*\in d\mathcal C_{\psi*}.
\end{gather}

2. The range of the map
\begin{equation}
  \label{e:L-x-res2}
\mathcal L_X:\Res^{2,2}_{0(*)}\to \Res^2_{0(*)}
\end{equation}
is equal to $d\mathcal C_{\psi(*)}$. We have $\dim\Res^{2,2}_{0(*)}=2b_1(\Sigma)+2$.
\end{lemm}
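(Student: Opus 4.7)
For Part~1, the identities in~\eqref{e:res2p-1} are essentially algebraic: $\llangle d\alpha,d\alpha\rrangle=\int_M d\vol_\alpha=\vol_\alpha(M)$, while $\llangle\psi,\psi\rrangle=\vol_\alpha(M)$ follows from $\psi\wedge\psi=d\alpha\wedge d\alpha$ of~\eqref{e:psi-basic-2}, and the off-diagonal entries vanish because $d\alpha\wedge\psi=0$, as established in the proof of~\eqref{e:psi-basic-3}. For~\eqref{e:res2p-2}, I decompose $\zeta_*\in\Res^2_{0*}=\mathbb Cd\alpha\oplus\mathbb C\psi\oplus d\mathcal C_{\psi*}$ via Lemma~\ref{l:2-forms-2} and treat the three cases by integration by parts. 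For $\zeta=du$ with $u\in\mathcal C_\psi$, the pairing $\llangle du,d\alpha\rrangle$ reduces to $\int_M d\alpha\wedge d\alpha\wedge u=0$ (the integrand is a 5-form annihilated by $\iota_X$ on a 5-manifold, hence identically zero), while $\llangle du,\psi\rrangle$ reduces to $\int_M d\alpha\wedge u\wedge\psi=0$ because $d\alpha\wedge\psi=0$. The crucial case is $\zeta_*=du_*$ with $u_*\in\mathcal C_{\psi*}$: one integration by parts gives $\int_M\alpha\wedge du\wedge du_*=\int_M d\alpha\wedge u\wedge du_*$, then Lemma~\ref{l:dalpha-psi-1} combined with $\mathcal I^2=-\mathrm{Id}$ on $\ker\alpha$ replaces $d\alpha\wedge u$ by $-\psi\wedge v$ where $v:=-u\circ\mathcal I\in\mathcal C$ is closed, and a second integration by parts yields zero since $d(\psi\wedge v)=0$. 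Identity~\eqref{e:res2p-3} follows by the symmetric argument, swapping the roles of $u$ and $u_*$.

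For Part~2, the inclusion $\mathcal L_X(\Res^{2,2}_{0(*)})\subset d\mathcal C_{\psi(*)}$ comes from transposing: for $w\in\Res^{2,2}_0$ and $u_*\in\Res^2_{0*}$ we have $\llangle\mathcal L_X w,u_*\rrangle=-\llangle w,\mathcal L_X u_*\rrangle=0$, so $\mathcal L_X w$ annihilates $\Res^2_{0*}$ under $\llangle\cdot,\cdot\rrangle$. By Part~1, in the bases $(d\alpha,\psi,d\mathcal C_\psi)\times(d\alpha,\psi,d\mathcal C_{\psi*})$ the pairing matrix is block diagonal with blocks $\vol_\alpha(M)\cdot\mathrm{Id}_2$ and a zero block of size $b_1(\Sigma)$, so this annihilator inside $\Res^2_0$ is exactly $d\mathcal C_\psi$. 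For the converse, given $u\in\mathcal C_\psi$, Lemma~\ref{l:solver} applied to the equation $P_{2,0}w=du$ (whose solvability condition $\llangle du,u_*\rrangle=0$ for all $u_*\in\Res^2_{0*}$ is precisely~\eqref{e:res2p-2}) produces $w\in\mathcal D'_{E_u^*}(M;\Omega^2_0)$; since $\mathcal L_X(du)=d(\mathcal L_X u)=0$, the solution satisfies $\mathcal L_X^2 w=0$ and so $w\in\Res^{2,2}_0$, while $\mathcal L_X w$ is a nonzero scalar multiple of $du$. Rank-nullity gives $\dim\Res^{2,2}_0=\dim\Res^2_0+\dim d\mathcal C_\psi=(b_1(\Sigma)+2)+b_1(\Sigma)=2b_1(\Sigma)+2$, where $\dim d\mathcal C_\psi=b_1(\Sigma)$ because $d$ is injective on $\mathcal C_\psi$ in view of $\mathcal C_\psi\cap\mathcal C=\{0\}$ from Lemma~\ref{l:c-psi-1}. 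The coresonant version is identical.

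The main obstacle I expect is the case $\zeta_*\in d\mathcal C_{\psi*}$ of~\eqref{e:res2p-2}: the argument must weave together the rotation identity of Lemma~\ref{l:dalpha-psi-1}, the closedness of $\psi$ and of the elements of $\mathcal C$, and Stokes' theorem for products of distributional forms with disjoint wavefront sets, legitimized by~\cite[Theorem~8.2.10]{hoermander-03}. Once the pairing structure of Part~1 is in hand, the structural argument in Part~2 via Lemma~\ref{l:solver} is essentially forced by abstract linear algebra.
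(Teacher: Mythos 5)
Your proof is correct and follows the same overall strategy as the paper: Part~1 is established by integration by parts together with the algebraic identities~\eqref{e:psi-basic-2}--\eqref{e:psi-basic-3}, and Part~2 by transposing $\mathcal L_X$ to get the inclusion of the range in $d\mathcal C_{\psi}$ and then applying Lemma~\ref{l:solver} for the converse. The one substantive difference is a self-inflicted complication in Part~1: you decompose $\zeta_*\in\Res^2_{0*}$ into the three summands of Lemma~\ref{l:2-forms-2} and single out the case $\zeta_*\in d\mathcal C_{\psi*}$ as \emph{crucial}, invoking the rotation identity of Lemma~\ref{l:dalpha-psi-1} and a second integration by parts. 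None of that is needed: for \emph{any} $\zeta_*\in\Res^2_{0*}$, one integration by parts, using only $d\zeta_*=0$ (Lemma~\ref{l:2-forms-1}), gives
$\llangle du,\zeta_*\rrangle=\int_M d\alpha\wedge u\wedge\zeta_*$,
and this $5$-form already satisfies $\iota_X(d\alpha\wedge u\wedge\zeta_*)=0$ since $\iota_X d\alpha=\iota_X u=\iota_X\zeta_*=0$, so it vanishes identically; this is the same observation you apply in your $\zeta_*=d\alpha$ case, and it disposes of all of~\eqref{e:res2p-2} (and by symmetry~\eqref{e:res2p-3}) in two lines. Everything else in your Part~2 argument matches the paper's reasoning.
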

\begin{proof}
1. The identities~\eqref{e:res2p-1} follow immediately from~\eqref{e:psi-basic-2} and~\eqref{e:psi-basic-3}.
We next show~\eqref{e:res2p-2}, with~\eqref{e:res2p-3} proved similarly. Let $\zeta=du$ where $u\in\mathcal C_\psi$.
We compute
$$
\llangle \zeta,\zeta_*\rrangle=\int_M d\alpha\wedge u\wedge \zeta_*=0
$$
Here in the first equality we integrate by parts and use that $d\zeta_*=0$ by Lemma~\ref{l:2-forms-1}.
The second equality follows from the fact that $\iota_X(d\alpha\wedge u\wedge\zeta_*)=0$.

\noindent 2. We consider generalized resonant states, with generalized coresonant states handled similarly.
First, assume that $\zeta\in\Res^{2,2}_0$, then $\mathcal L_X\zeta\in\Res^2_0$. Moreover,
since the transpose of $\mathcal L_X$ is equal to $-\mathcal L_X$ (see~\S\ref{s:transposes})
we compute
\begin{equation}
  \label{e:jambon}
\llangle \mathcal L_X\zeta,\zeta_*\rrangle=-\llangle \zeta,\mathcal L_X\zeta_*\rrangle=0\quad\text{for all}\quad
\zeta_*\in\Res^2_{0*}.
\end{equation}
Using this for $\zeta_*=d\alpha$ and $\zeta_*=\psi$ together with~\eqref{e:res2p-1}--\eqref{e:res2p-2}, we see that 
$\mathcal L_X\zeta\in d\mathcal C_\psi$. That is, the range of the map~\eqref{e:L-x-res2} is contained in $d\mathcal C_\psi$.

Now, take arbitrary $\eta\in d\mathcal C_\psi$. By~\eqref{e:res2p-2}, we have $\llangle\eta,\zeta_*\rrangle=0$
for all $\zeta_*\in\Res^2_{0*}$. Then by Lemma~\ref{l:solver} there exists $\zeta\in\mathcal D'_{E_u^*}(M;\Omega^2_0)$
such that $\mathcal L_X\zeta=\eta$. Since $\eta\in\Res^2_0$, we see that $\zeta\in \Res^{2,2}_0$.
This shows that the range of the map~\eqref{e:L-x-res2} contains $d\mathcal C_\psi$.

Finally, the equality $\dim\Res^{2,2}_{0}=2b_1(\Sigma)+2$ follows from Lemma~\ref{l:2-forms-2}
and the fact that the kernel of the map~\eqref{e:L-x-res2} is given by~$\Res^2_0$.
\end{proof}
We finally show that there are no higher degree Jordan blocks, completing the analysis
of the generalized resonant states of $P_{2,0}$ at~0:
\begin{lemm}
We have $\Res^{2,\infty}_{0(*)}=\Res^{2,2}_{0(*)}$.
\end{lemm}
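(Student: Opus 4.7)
The plan is to prove $\Res^{2,3}_0 = \Res^{2,2}_0$; iteration of the inclusion then yields $\Res^{2,\infty}_0 = \Res^{2,2}_0$, and the coresonant identity follows immediately from the dimension equality~\eqref{e:same-dim}. Arguing by contradiction, suppose $\zeta\in\Res^{2,3}_0$ with $\mathcal L_X^2\zeta\neq 0$. Applying Lemma~\ref{l:res-2-pairing}(2) to $\mathcal L_X\zeta\in\Res^{2,2}_0$ shows $\mathcal L_X^2\zeta\in d\mathcal C_\psi$, and since $d:\mathcal C_\psi\to d\mathcal C_\psi$ is injective (Lemma~\ref{l:c-psi-1}), we may write
\[
\mathcal L_X^2\zeta = du_0,\qquad u_0\in\mathcal C_\psi,\ u_0\neq 0.
\]

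The main idea is to push the problem up one degree by introducing the auxiliary 3-form
\[
\mu := d\zeta - \alpha\wedge\mathcal L_X\zeta \ \in\ \mathcal D'_{E_u^*}(M;\Omega^3_0),
\]
for which a direct Leibniz computation gives $\iota_X\mu=0$ and $\mathcal L_X\mu = d\mathcal L_X\zeta - \alpha\wedge du_0$. Using that $d\alpha\wedge u_0\in\Res^3_0$ is closed (Lemma~\ref{l:res-3-closed}), so $d\alpha\wedge du_0=0$, together with $d(du_0)=0$ and $\mathcal L_X u_0=0$, one checks that $d(\mathcal L_X\mu)=0$ and $\mathcal L_X(\mathcal L_X\mu)=0$; hence $\mathcal L_X\mu\in\Res^3_0$, i.e.\ $\mu\in\Res^{3,2}_0$. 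The crucial input is the semisimplicity of $P_{3,0}$ at $\lambda_0=0$, already known from Lemma~\ref{l:c-psi-3} combined with the isomorphism~\eqref{e:isom-1}: it gives $\Res^{3,2}_0=\Res^3_0$, so $\mathcal L_X\mu=0$ and we obtain the rigid identity
\[
d\mathcal L_X\zeta = \alpha\wedge du_0.
\]

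Applying the fiber pushforward $\pi_{\Sigma*}$ and using $\pi_{\Sigma*}d=d\pi_{\Sigma*}$ from~\eqref{e:pfm-1} on the left, together with the formula~\eqref{e:cpsi-int-1} on the right, where $u_0\circ\mathcal I=\pi_\Sigma^*w_0+df_0$ is the decomposition provided by Lemma~\ref{l:hodge} and the Gysin isomorphism~\eqref{e:Gysin-iso} with $w_0\in C^\infty(\Sigma;\Omega^1)$ closed, we arrive at
\[
-4\pi w_0 = d\bigl[\pi_{\Sigma*}(\mathcal L_X\zeta) - \pi_{\Sigma*}(f_0\psi - \alpha\wedge u_0)\bigr].
\]
Thus $w_0$ is exact in the distributional sense on $\Sigma$; by standard Hodge theory for functions on the closed manifold $\Sigma$ it is in fact smoothly exact, so $[w_0]_{H^1(\Sigma;\mathbb C)}=0$. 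Consequently $\pi_1(u_0\circ\mathcal I)=\pi_\Sigma^*[w_0]=0$, and since $\pi_1:\mathcal C\to H^1(M;\mathbb C)$ is an isomorphism (Lemma~\ref{l:1-forms-ish}), we conclude $u_0\circ\mathcal I=0$. Since $\mathcal I^2=-\mathrm{id}$ on $\Omega^1_0$, applying $\circ\,\mathcal I$ once more forces $u_0=0$, contradicting our assumption.

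The principal obstacle is arranging this bootstrap: the existence of a would-be length-$3$ Jordan chain on $2$-forms has to be upgraded to a concrete pointwise identity, and the auxiliary $\mu$ is the mechanism that transports the problem into a degree where semisimplicity is already available. Once this is achieved, the remaining cohomological chain of reasoning is essentially forced by the invertibility of $\pi_1$ and the explicit formula~\eqref{e:cpsi-int-1} that already appeared in the analysis of $\mathcal C_\psi$.
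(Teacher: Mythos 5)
Your proof is correct, and it takes a genuinely different route from the paper's. The paper sets $\omega := d(\mathcal L_X\eta + \alpha\wedge u_0)\in\mathcal D'_{E_u^*}(M;\Omega^3_0)$ (with $\eta\in\Res^{2,3}_0$ and $\mathcal L_X^2\eta = du_0$), observes that $\omega$ is exact and hence lies in $\ker\pi_3 = d\alpha\wedge\mathcal C$ by~\eqref{e:pi-3-ker}, and then kills $u_0$ and $\omega$ simultaneously by the pairing computation $0=\llangle \mathcal L_X\eta,du_*\rrangle = \llangle u_0,d\alpha\wedge u_*\rrangle - \llangle\omega,u_*\rrangle$ together with the block-diagonal structure~\eqref{e:cpsint3-1}--\eqref{e:cpsint3-3} and Lemma~\ref{l:pairing-k-1}. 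You instead isolate the $\Omega^3_0$-component $\mu := d\zeta - \alpha\wedge\mathcal L_X\zeta$ of $d\zeta$ via~\eqref{e:0-split}, check $\mu\in\Res^{3,2}_0$, and invoke the semisimplicity of $P_{3,0}$ (Lemma~\ref{l:c-psi-3} plus~\eqref{e:isom-1}) to obtain the rigid identity $d\mathcal L_X\zeta = \alpha\wedge du_0$; you then push forward to $\Sigma$ and reuse the Gysin computation~\eqref{e:cpsi-int-1} from the proof of Lemma~\ref{l:c-psi-1} to force $[w_0]_{H^1(\Sigma)}=0$ and hence $u_0=0$. Your route leans explicitly on semisimplicity at degree $3$, which the paper's proof avoids invoking; in exchange you replace the pairing bookkeeping of~\eqref{e:cpsint3-1}--\eqref{e:cpsint3-3} with a single cohomological pushforward, making transparent that the obstruction lives in $H^1(\Sigma)$. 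It is worth noting that $\omega = \mathcal L_X\mu + d\alpha\wedge u_0$, so once you know $\mathcal L_X\mu=0$ you could combine it with the paper's observation $\omega\in d\alpha\wedge\mathcal C$ to read off $u_0\in\mathcal C\cap\mathcal C_\psi = \{0\}$ (Lemma~\ref{l:c-psi-1}) directly, which would shortcut your final pushforward step.
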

\begin{proof}
We consider the case of generalized resonant states, with generalized coresonant
states handled similarly. It suffices to prove that $\Res^{2,3}_0\subset\Res^{2,2}_0$.
Take $\eta\in\Res^{2,3}_0$ and put $\zeta:=\mathcal L_X\eta\in\Res^{2,2}_0$.
Exactly as in ~\eqref{e:jambon}, the pairing of $\zeta$ with any element
of $\Res^2_{0*}$ is equal to~$0$. In particular
$$
\llangle \zeta,du_*\rrangle=0\quad\text{for all}\quad u_*\in\Res^1_{0*}.
$$
By part~2 of Lemma~\ref{l:res-2-pairing}, we have $\mathcal L_X\zeta=du$ for some~$u\in\mathcal C_\psi$. Put
$$
\omega:=d(\zeta+\alpha\wedge u)\in\mathcal D'_{E_u^*}(M;\Omega^3).
$$
Then $\iota_X\omega=\iota_Xd\zeta-du=0$. Since $\omega$ is exact we have $\mathcal{L}_X \omega = 0$ and moreover that $\omega\in\ker\pi_3\subset\Res^3_0$. By~\eqref{e:pi-3-ker},
we then have $\omega\in d\alpha\wedge\mathcal C$.

We now compute
$$
\begin{aligned}
0&=\llangle \zeta,du_*\rrangle=-\int_M\alpha\wedge d\zeta\wedge u_*
=\llangle u,d\alpha\wedge u_*\rrangle-\llangle \omega,u_*\rrangle.
\end{aligned}
$$
Here in the second equality we integrated by parts and used that the 5-form
$d\alpha\wedge\zeta\wedge u_*$ lies in the kernel of $\iota_X$ and thus equals 0.
Using the identities~\eqref{e:cpsint3-1}--\eqref{e:cpsint3-3} and Lemma~\ref{l:pairing-k-1}, recalling
that $u\in\mathcal C_\psi$, $\omega\in d\alpha\wedge\mathcal C$, and using that $u_*$
can be chosen as an arbitrary element of $\mathcal C_*$ or $\mathcal C_{\psi*}$,
we see that $u=0$ and $\omega=0$. Just using that $u = 0$ implies $\mathcal L_X^2\eta=\mathcal L_X\zeta=0$,
that is $\eta\in\Res^{2,2}_0$ as needed.
\end{proof}

\subsection{Relation to harmonic forms}
\label{s:hyp-harmonic}

In this section we show that pushforwards of
elements of $\Res^3_0=d\alpha\wedge (\mathcal C\oplus \mathcal C_\psi)$ to the
base $\Sigma$ are harmonic 1-forms. Recall that a $1$-form $h$ is called harmonic if $dh = 0$ and $d\star h = 0$, where $\star$ is the Hodge star on $(\Sigma,g)$. We will denote the set of such forms as $\mathcal{H}^1(\Sigma)$.
We start with the following identity:
\begin{lemm}
\label{l:hodgy-podgy}
Assume that $u\in\mathcal D'_{E_u^*}(M;\Omega^1_0)$ is unstable
in the sense of Definition~\ref{d:totally-stun}
and $\beta\in C^\infty(\Sigma;\Omega^1)$. Then
\begin{align}
\label{e:hodgy-podgy-1}
\psi\wedge u\wedge \pi_\Sigma^*(\star\beta)&=-\alpha\wedge d\alpha\wedge u\wedge\pi_\Sigma^*\beta,\\
\label{e:hodgy-podgy-2}
d\alpha\wedge u\wedge\pi_\Sigma^*(\star\beta)&=\alpha\wedge\psi\wedge u\wedge\pi_\Sigma^*\beta.
\end{align}
\end{lemm}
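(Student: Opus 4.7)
The plan is to prove \eqref{e:hodgy-podgy-1} by a direct pointwise computation in a convenient basis, and then to deduce \eqref{e:hodgy-podgy-2} from it using Lemma~\ref{l:dalpha-psi-1} together with the identity $\mathcal I^2 = -\mathrm{Id}$ on $E_u \oplus E_s$. Both sides of each identity are pointwise-linear in $u$ (a $1$-form) and $\beta$ (a $1$-form on $\Sigma$), and involve only wedge products of a distributional form with smooth forms, so the distributional identities reduce to pointwise ones at an arbitrary $(x,v) \in M$.

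For the reduction, since $\iota_X u = 0$, Lemma~\ref{l:dalpha-psi-1} yields $d\alpha \wedge u = \psi \wedge (u \circ \mathcal I)$. Because $\mathcal I$ preserves the Anosov splitting and annihilates $X$, the $1$-form $u \circ \mathcal I$ is again unstable, so \eqref{e:hodgy-podgy-1} applies to it. Together with a second application of Lemma~\ref{l:dalpha-psi-1},
\begin{align*}
d\alpha \wedge u \wedge \pi_\Sigma^*(\star\beta)
&= \psi \wedge (u\circ\mathcal I) \wedge \pi_\Sigma^*(\star\beta)
= -\alpha \wedge d\alpha \wedge (u\circ\mathcal I) \wedge \pi_\Sigma^*\beta \\
&= -\alpha \wedge \psi \wedge (u\circ\mathcal I^2) \wedge \pi_\Sigma^*\beta
= \alpha \wedge \psi \wedge u \wedge \pi_\Sigma^*\beta,
\end{align*}
using that $\mathcal I^2 = -\mathrm{Id}$ on $E_u \oplus E_s$ (which follows from the elementary identity $v \times (v \times w) = -w$ for $|v|_g=1$ and $w \perp v$), combined with $\iota_X u = 0$ to handle the $X$ direction. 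This is \eqref{e:hodgy-podgy-2}.

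To prove \eqref{e:hodgy-podgy-1}, I fix $(x,v) \in M$ and a positively oriented $g$-orthonormal basis $v, v_2, v_3$ of $T_x\Sigma$. The horizontal-vertical decomposition \eqref{e:hv-map} provides a dual basis $\alpha, h_2, h_3, w_2, w_3$ of $T^*_{(x,v)}M$, with $h_i(\xi) := \langle \xi_H, v_i\rangle_g$ and $w_i(\xi) := \langle \xi_V, v_i\rangle_g$. Formulas \eqref{e:alpha-hv} and \eqref{e:psi-hv} then give
$$
d\alpha = w_2 \wedge h_2 + w_3 \wedge h_3, \qquad \psi = h_2 \wedge h_3 - w_2 \wedge w_3.
$$
An unstable $1$-form at $(x,v)$ annihilates $X$ and $E_u = \{\xi_V = \xi_H\}$, hence is a linear combination of $\tau_i := h_i - w_i$, $i=2,3$. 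Writing $u = c_2 \tau_2 + c_3 \tau_3$ and, with $e^1, e^2, e^3$ the $g$-dual basis to $v, v_2, v_3$, $\beta = \beta_1 e^1 + \beta_2 e^2 + \beta_3 e^3$, one obtains $\pi_\Sigma^*\beta = \beta_1 \alpha + \beta_2 h_2 + \beta_3 h_3$ and $\pi_\Sigma^*(\star\beta) = \beta_1\, h_2 \wedge h_3 - \beta_2\, \alpha \wedge h_3 + \beta_3\, \alpha \wedge h_2$. A direct wedge expansion then shows that both sides of \eqref{e:hodgy-podgy-1} evaluate to $-(c_2\beta_2 + c_3\beta_3)\, \alpha \wedge h_2 \wedge h_3 \wedge w_2 \wedge w_3$.

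The only real obstacle is careful sign-tracking in these $5$-form expansions; most of the nine cross-terms $(c_i,\beta_j)$ vanish because of a repeated basis covector, leaving only the diagonal contribution $c_2\beta_2 + c_3\beta_3$. As an alternative one could lift to the frame bundle and work with the canonical basis $U_1^{\pm*}, U_2^{\pm*}$ together with \eqref{e:canonical-diff} and \eqref{e:canonical-psi}, which makes the rotational action of $\mathcal I$ manifest at the cost of analogous bookkeeping.
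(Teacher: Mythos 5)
Your proof is correct and takes essentially the same route as the paper: a direct pointwise computation of \eqref{e:hodgy-podgy-1} in a horizontal/vertical basis (the paper evaluates both sides on the frame $X,(w_1,0),(w_2,0),(0,w_1),(0,w_2)$ and uses $u((w_i,0)+(0,w_i))=0$; you dualize and expand in the covectors $h_i,w_i$, which is the same computation in disguise), and the deduction of \eqref{e:hodgy-podgy-2} via Lemma~\ref{l:dalpha-psi-1} and $u\circ\mathcal I\circ\mathcal I=-u$ is precisely the alternative the paper suggests. The only point worth making explicit is that $u\circ\mathcal I$ is again unstable because $\mathcal I$ maps $E_0\oplus E_u$ into itself; you do note this, so the argument is complete.
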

\begin{proof}
We first show~\eqref{e:hodgy-podgy-1}. Take arbitrary
$(x,v)\in M=S\Sigma$ and assume that $v,w_1,w_2$ is a positively oriented $g$-orthonormal basis
of~$T_x\Sigma$. It suffices to prove that
\begin{equation}
  \label{e:hodgy-podgy-int-1}
(\psi\wedge u\wedge \pi_\Sigma^*(\star\beta))(x,v)(X,\xi_1,\xi_2,\xi_3,\xi_4)=
-(\alpha\wedge d\alpha\wedge u\wedge\pi_\Sigma^*\beta)(x,v)(X,\xi_1,\xi_2,\xi_3,\xi_4)
\end{equation}
where we write in terms of the horizontal/vertical decomposition~\eqref{e:hv-map}
$$
X=(v,0),\quad
\xi_1=(w_1,0),\quad
\xi_2=(w_2,0),\quad
\xi_3=(0,w_1),\quad
\xi_4=(0,w_2).
$$
Using~\eqref{e:psi-hv}, \eqref{e:alpha-psi-zero}, the fact that $d\pi_\Sigma(x,v)(\xi_H,\xi_V)=\xi_H$, the condition $\iota_X u = 0$, and the identities
$$
(\star\beta)(x)(v,w_1)=\beta(x)(w_2),\quad
(\star\beta)(x)(v,w_2)=-\beta(x)(w_1)
$$
we see that the left-hand side of~\eqref{e:hodgy-podgy-int-1}
is equal to
$$
-u(x,v)(\xi_1)\beta(x)(w_1)-u(x,v)(\xi_2)\beta(x)(w_2).
$$
Using~\eqref{e:alpha-hv}, we next see that the right-hand side of~\eqref{e:hodgy-podgy-int-1} is equal to
$$
u(x,v)(\xi_3)\beta(x)(w_1)+u(x,v)(\xi_4)\beta(x)(w_2).
$$
It remains to note that by~\eqref{e:stun-hv} the vectors
$\xi_1+\xi_3$ and $\xi_2+\xi_4$ lie in $E_u(x,v)$ and thus
$u(x,v)(\xi_1+\xi_3)=u(x,v)(\xi_2+\xi_4)=0$ since $u$ is unstable.

The identity~\eqref{e:hodgy-podgy-2} is verified by a similar calculation,
or simply by applying~\eqref{e:hodgy-podgy-1} to $u\circ\mathcal I$
and using Lemma~\ref{l:dalpha-psi-1} and the fact that $u\circ\mathcal I\circ\mathcal I=-u$.
\end{proof}
We can now prove item 7 of Theorem \ref{t:hyperbolic}:
\begin{lemm}
\label{l:harmonic-push}
The map $\pi_{\Sigma*}^{}$ annihilates $d\alpha\wedge\mathcal C_{(*)}$ and it
is an isomorphism from $d\alpha\wedge \mathcal C_{\psi(*)}$
onto the space $\mathcal H^1(\Sigma)$. In particular,
by Lemma~\ref{l:c-psi-2} we have $\pi_{\Sigma*}^{}:d\alpha\wedge\Res^1_{0(*)}\to\mathcal H^1(\Sigma)$.
\end{lemm}
\begin{proof}
We consider the case of resonant 3-forms, with coresonant 3-forms handled similarly
(using a version of Lemma~\ref{l:hodgy-podgy} for stable 1-forms).
We first show that for any $u\in\mathcal C$,
the push-forwards to~$\Sigma$ of $d\alpha\wedge u$ and $\psi\wedge u$ are coclosed, that is
\begin{equation}
  \label{e:harmopu-1}
d\star \pi_{\Sigma*}^{}(d\alpha\wedge u)=0,\quad d\star\pi_{\Sigma*}^{}(\psi\wedge u)=0.
\end{equation}
To show the first equality in~\eqref{e:harmopu-1}, it suffices to prove that
$$
\int_\Sigma \pi_{\Sigma*}^{}(d\alpha\wedge u)\wedge \star df=0\quad\text{for all}\quad
f\in C^\infty(\Sigma;\mathbb C).
$$
Using~\eqref{e:pushforward-integral} and~\eqref{e:pfm-2}, we compute this integral
as
$$
-\int_M d\alpha\wedge u\wedge \pi_\Sigma^*(\star df)=-\int_M \alpha\wedge\psi\wedge u\wedge d(\pi_\Sigma^*f)=\int_M \pi_\Sigma^*f d\alpha\wedge\psi\wedge u=0
$$
Here in the first equality we used~\eqref{e:hodgy-podgy-2},
where $u$ is unstable by Lemma~\ref{l:c-psi-1.5}.
In the second equality we integrated by parts and used that
$d\psi=0$ and $du=0$. In the third equality we used that
$\iota_X$ of the 5-form under the integral is equal to~0.
The second equality in~\eqref{e:harmopu-1} is proved similarly,
using~\eqref{e:hodgy-podgy-1} instead of~\eqref{e:hodgy-podgy-2}.

Next, by~\eqref{e:pfm-1}, since all forms in $d\alpha\wedge\mathcal C$
are exact, their pushforwards to~$\Sigma$ are exact as well. Since these pushforwards
are also coclosed, we get $\pi_{\Sigma*}^{}(d\alpha\wedge\mathcal C)=0$.
Similarly, all forms in $d\alpha\wedge\mathcal C_\psi=\psi\wedge\mathcal C$
are closed, so their pushforwards are closed as well; since these pushforwards
are also coclosed, we get $\pi_{\Sigma*}^{}(d\alpha\wedge\mathcal C_\psi)\subset\mathcal H^1(\Sigma)$.

Finally, by~\eqref{e:pi-3-first} we see that $\pi_{\Sigma*}^{}$ is an isomorphism
from $d\alpha\wedge\mathcal C_\psi$ onto $\mathcal H^1(\Sigma)$.
\end{proof}
We finally remark that for any 1-form $u\in\mathcal D'(M;\Omega^1)$ we have
\begin{equation}
  \label{e:pushforward-nm}
\pi_{\Sigma*}^{}(\alpha\wedge u)=0.
\end{equation}
Indeed, by~\eqref{e:alpha-hv} we see that $\alpha$, and thus $\alpha\wedge u$,
vanish when restricted to the tangent spaces of the fibers $S_x\Sigma$.
From~\eqref{e:pushforward-nm} and~\eqref{e:pfm-1} we get
for any $u\in\mathcal D'(M;\Omega^1)$
\begin{equation}
  \label{e:pushforward-nm-2}
\pi_{\Sigma*}^{}(d\alpha\wedge u)=\pi_{\Sigma*}^{}(\alpha\wedge du).
\end{equation}

\section{Contact perturbations of geodesic flows on hyperbolic 3-manifolds}
\label{s:general-perturb}

Let $M=S\Sigma$ where $(\Sigma,g)$ is a hyperbolic 3-manifold
and $\alpha_0$ be the contact form on~$M$ corresponding to the geodesic
flow on $\Sigma$, see~\S\S\ref{s:geodesic-flows},\ref{s:hyp-3-basics}. In this section we study Pollicott--Ruelle resonances at~$\lambda=0$ for perturbations of $\alpha_0$. Ultimately, we will study perturbations of the metric, but via perturbations of the contact form. In particular, we give the proof of Theorem~\ref{thm:main} in~\S\ref{s:main-proof} below,
relying on Theorem~\ref{t:main-identity} (in~\S\ref{s:main-identity})
and Proposition~\ref{prop:harmoniconeforms} proved later.

Let
$$
\alpha_\tau\in C^\infty(M;T^*M),\quad
\tau\in(-\varepsilon,\varepsilon)
$$
be a family of 1-forms depending smoothly on $\tau$.
We may shrink $\varepsilon>0$ so that each $\alpha_\tau$ is a contact form on $M$
and the corresponding Reeb vector field
$$
X_\tau\in C^\infty(M;TM)
$$
is Anosov; the latter
follows from stability of the Anosov condition under perturbations
(see for instance~\cite[Corollary 5.1.12]{Fisher-Hasselblatt-19} or~\cite[Corollary~6.4.7]{Katok-Hasselblatt}
for the related case of Anosov diffeomorphisms).

We will use first variation methods, introducing the 1-form
$$
\beta:=\partial_\tau\alpha_\tau|_{\tau=0}\in C^\infty(M;\Omega^1).
$$

We use the subscript or superscript $(\tau)$ to refer to the objects
corresponding to the contact manifold $(M,\alpha_\tau)$ and the flow
$\varphi_t^{(\tau)}:=e^{tX_\tau}$. For example,
we use the operators (see~\S\ref{section:PRresonances})
$$
P_k^{(\tau)}=-i\mathcal L_{X_\tau},\quad P_{k,0}^{(\tau)},\quad
R_k^{(\tau)}(\lambda),\quad
\Pi_k^{(\tau)}:=\Pi_k^{(\tau)}(0),
$$
the spaces of (generalized) resonant states at~$\lambda=0$
$$
\Res^{k,\ell}_{(\tau)},\quad
\Res^{k,\ell}_{0(\tau)},\quad
\Res^{k}_{(\tau)},\quad
\Res^{k}_{0(\tau)},
$$
and the algebraic multiplicities of~0 as a resonance of the operators $P_k^{(\tau)}$,
$P_{k,0}^{(\tau)}$
$$
m_k^{(\tau)}(0),\quad
m_{k,0}^{(\tau)}(0).
$$
When we omit $\tau$ this means that we are considering the unperturbed hyperbolic
case $\tau=0$, that is
\begin{equation}
  \label{e:tau-0}
\alpha:=\alpha_0,\quad
P_k:=P_k^{(0)},\quad
R_k:=R_k^{(0)},\quad
\Res^{k,\ell}:=\Res^{k,\ell}_{(0)},\quad
\Pi_k:=\Pi_k^{(0)},\dots
\end{equation}
The first result of this section, proved in~\S\ref{s:general-perturber} below, is the following
theorem. (Here the maps $\pi_k^{(\tau)}:\Res^{k}_{0(\tau)}\cap\ker d\to H^k(M;\mathbb C)$
are defined in~\eqref{e:pi-k-def}.)
\begin{theo}
  \label{t:general-perturber}
Let the assumptions above in this section hold. Assume moreover
the following nondegeneracy condition:
\begin{equation}
\label{e:nondegeneracy-assumption}
\llangle\iota_X\beta\,\bullet,\bullet\rrangle\quad\text{defines a nondegenerate pairing on}\quad
d(\Res^1_0)\times d(\Res^1_{0*}).
\end{equation}
Then there exists $\varepsilon_0>0$ such that for all $\tau$ with $0<|\tau|<\varepsilon_0$ we have:

1. $d(\Res^{1}_{0(\tau)})=0$ and thus by Lemma~\ref{l:1-forms-ish} and~\eqref{e:bettiGysin} we have $\dim\Res^{1}_{0(\tau)}=b_1(\Sigma)$.

2. $d(\Res^{2}_{0(\tau)})=0$, $\dim\Res^{2}_{0(\tau)}=b_1(\Sigma)+2$, and
the map $\pi_2^{(\tau)}$ is onto and has kernel $\mathbb C d\alpha_\tau$.

3. $d(\Res^{3}_{0(\tau)})=0$ and the map $\pi_3^{(\tau)}$ is equal to~0.

4. The semisimplicity condition~\eqref{e:semi-simple} holds at $\lambda_0=0$ for the operators
$P_{k,0}^{(\tau)}$ for all $k=0,1,2,3,4$.
\end{theo}
Theorem~\ref{t:general-perturber} together with Lemma~\ref{l:0-forms}
and~\eqref{e:alternator} give the following
\begin{corr}
  \label{l:general-perturber-corr}
Under the assumptions of Theorem~\ref{t:general-perturber} we have for $0<|\tau|<\varepsilon_0$
$$
m_{0,0}^{(\tau)}(0)=m_{4,0}^{(\tau)}(0)=1,\quad
m_{1,0}^{(\tau)}(0)=m_{3,0}^{(\tau)}(0)=b_1(\Sigma),\quad
m_{2,0}^{(\tau)}(0)=b_1(\Sigma)+2
$$
and the order of vanishing of the Ruelle zeta function $\zeta_{\mathrm R}$ at~0 is
$$
m_{\mathrm R}(0)=2m_{0,0}^{(\tau)}(0)-2m_{1,0}^{(\tau)}(0)+m_{2,0}^{(\tau)}(0)=4-b_1(\Sigma).
$$
\end{corr}
Corollary~\ref{l:general-perturber-corr}
is in contrast with the hyperbolic case $\tau=0$, where
Corollary~\ref{l:hyperbolic-corr} gives the
order of vanishing~$4-2b_1(\Sigma)$.

To give an application of Theorem~\ref{t:general-perturber}
which is simpler to prove than Theorem~\ref{thm:main},
we show in~\S\S\ref{s:pert-supp}--\ref{s:pert-endgame} below
that the nondegeneracy condition~\eqref{e:nondegeneracy-assumption}
holds for a large set of conformal perturbations of the contact form $\alpha$:%
\footnote{By the Gray Stability Theorem (see \cite[Theorem 2.2.2]{Geiges-08}), any perturbation of a contact
form is a conformal perturbation up to pullback by a diffeomorphism.}
\begin{theo}
  \label{t:pertnondeg-contact}
Let $M=S\Sigma$ where $(\Sigma,g)$ is a hyperbolic
3-manifold. Fix a nonempty open set $\mathscr U\subset M$,
and denote by $\CIc(\mathscr U;\mathbb R)$ the
space of all smooth real-valued functions
on $M$ with support inside $\mathscr U$, with the topology inherited
from $C^\infty(M;\mathbb R)$.

Then there exists an open dense subset of $\CIc(\mathscr U;\mathbb R)$ such that for any $\mathbf a$ in this set, the 1-form $\beta:=\mathbf a\alpha$
satisfies the condition~\eqref{e:nondegeneracy-assumption}.
It follows that for $\tau\neq 0$ small enough depending on $\mathbf a$
the contact flow on $M$ corresponding to the contact form $\alpha_\tau:=e^{\tau \mathbf a}\alpha$
satisfies the conclusions of Theorem~\ref{t:general-perturber},
in particular the Ruelle zeta function has order of vanishing $4-b_1(\Sigma)$
at~0.
\end{theo}

\subsection{Proof of Theorem~\ref{t:general-perturber}}
\label{s:general-perturber}

We first prove an identity relating the action of the vector field
\begin{equation}
  \label{e:Y-def}
Y:=\partial_\tau X_\tau|_{\tau=0}\in C^\infty(M;TM)
\end{equation}
on resonant and coresonant 1-forms to the bilinear form featured in~\eqref{e:nondegeneracy-assumption}. It reformulates the pairing \eqref{e:nondegeneracy-assumption} and will subsequently (see Lemma \ref{l:perturb-1-forms}) be used to show that the non-closed $1$-forms may be perturbed away.
\begin{lemm}
  \label{l:derivative-computed}
For all $u\in\Res^1_0$ and $u_*\in\Res^1_{0*}$, we have
\begin{equation}
  \label{e:derivative-computed}
\llangle{\Pi_1 \mathcal{L}_Y \Pi_1 u, d\alpha \wedge u_*}\rrangle = \llangle{\mathcal{L}_Y u, d\alpha \wedge u_*}\rrangle = \llangle (\iota_X\beta)du,du_*\rrangle.
\end{equation}
\end{lemm}
\begin{proof}
1. To show the first equality in~\eqref{e:derivative-computed},
we note that by the decomposition~\eqref{e:Pi-k-split} and Lemma~\ref{l:0-forms}
we have for all $w\in\mathcal D'_{E_u^*}(M;\Omega^1)$
$$
\Pi_1w=\Pi_{1,0}(w-(\iota_X w)\alpha)+{1\over\vol_\alpha(M)}\bigg(\int_M \iota_X w\,d\vol_\alpha\bigg)\alpha.
$$
We now compute
$$
\begin{aligned}
\int_M \alpha\wedge d\alpha\wedge (\Pi_1\mathcal L_Y\Pi_1u)\wedge u_*
&=\llangle \Pi_{1,0}(\mathcal L_Y u-(\iota_X \mathcal L_Y u)\alpha),d\alpha\wedge u_*\rrangle
\\&=\llangle \mathcal L_Y u-(\iota_X\mathcal L_Yu)\alpha,d\alpha\wedge u_*\rrangle
\\&=\int_M\alpha\wedge d\alpha\wedge \mathcal L_Y u\wedge u_*.
\end{aligned}
$$
Here in the first equality we used that $u\in\Res^1_0$ and thus $\Pi_1 u=u$.
In the second equality we used that $d\alpha\wedge u_*\in\Res^3_{0*}$ and thus
$(\Pi_{1,0})^T (d\alpha\wedge u_*)=d\alpha\wedge u_*$ (see~\S\ref{s:transposes}). This proves
the first equality in~\eqref{e:derivative-computed}.

\noindent 2. We now show the second equality in~\eqref{e:derivative-computed}. Differentiating the relations $\iota_{X_\tau} \alpha_\tau = 1$ and $\iota_{X_\tau} d\alpha_{\tau} = 0$ (see~\eqref{e:reeb-field}) at $\tau=0$, we get
\begin{equation}\label{e:derivativeatzero}
	\iota_Y \alpha = - \iota_X \beta, \quad \iota_Y d\alpha = -\iota_X d\beta.
\end{equation}
Note also that
\begin{equation}
  \label{e:dcom-0}
\alpha\wedge d\alpha\wedge du=\alpha\wedge d\alpha\wedge du_*=0
\end{equation}
as follows from Lemma~\ref{l:0-forms} as the 5-forms above are in $\Res^0_0d\vol_\alpha$, respectively $\Res^0_{0*}d\vol_\alpha$, and integrate to~0 on~$M$ using integration by parts
(since the 5-forms $d\alpha\wedge d\alpha\wedge u$, $d\alpha\wedge d\alpha\wedge u_*$
lie in the kernel of $\iota_X$ and thus are equal to~0).

We have
\begin{equation}
  \label{e:dcom-1}
\int_M\alpha\wedge d\alpha\wedge \mathcal L_Y u\wedge u_*=
\int_M \alpha\wedge d\alpha\wedge \iota_Y du\wedge u_*+
\int_M \alpha\wedge d\alpha\wedge d\iota_Y u\wedge u_*.
\end{equation}
We first compute
\begin{equation}
  \label{e:dcom-2}
\begin{aligned}
\int_M \alpha\wedge d\alpha\wedge\iota_Y du\wedge u_*&=
-\int_M \alpha\wedge \iota_Y d\alpha\wedge du\wedge u_*-\int_M(\iota_Y u_*)\alpha\wedge d\alpha\wedge du\\
&=\int_M \alpha\wedge\iota_X d\beta\wedge du\wedge u_*
=\int_M d\beta\wedge du\wedge u_*\\
&=\int_M \beta\wedge du\wedge du_*=\int_M (\iota_X\beta)\alpha\wedge du\wedge du_*.
\end{aligned}
\end{equation}
Here in the first equality we used that the 5-form
$d\alpha\wedge du\wedge u_*$ lies in the kernel of $\iota_X$ and is thus equal to~0,
implying
$\iota_Y(d\alpha\wedge du\wedge u_*)=0$.
In the second equality we used the identities~\eqref{e:derivativeatzero} and~\eqref{e:dcom-0}.
In the third equality we used that $\alpha\wedge \iota_X d\beta\wedge du\wedge u_* = d\beta \wedge du \wedge u_*$ as the difference of the two forms belongs to $\ker \iota_X$, by $\iota_Xdu=0$ and $\iota_X u_*=0$. In the fourth equality we integrated by parts,
and in the fifth equality we used that $\iota_X$ of the integrated 5-forms are equal.

We next compute
\begin{equation}
  \label{e:dcom-3}
\int_M \alpha\wedge d\alpha\wedge d\iota_Y u\wedge u_*=\int_M \iota_Y u(d\alpha\wedge d\alpha\wedge u_*-\alpha\wedge d\alpha\wedge du_*)=0.
\end{equation}
Here in the first equality we integrated by parts and in the second one we used~\eqref{e:dcom-0} and the fact that $d\alpha\wedge d\alpha\wedge u_*=0$
(as $\iota_X$ of this 5-form is equal to~0).

Plugging~\eqref{e:dcom-2}--\eqref{e:dcom-3} into~\eqref{e:dcom-1}, we get the second equality in~\eqref{e:derivative-computed}.
\end{proof}
The pairing in~\eqref{e:derivative-computed} controls how the resonance
at~0 for the operator $P_{1,0}^{(\tau)}$ moves as we perturb $\tau$ from~0,
and the nondegeneracy condition~\eqref{e:nondegeneracy-assumption}
roughly speaking means that the multiplicity of $0$ as a resonance of $P_{1,0}^{(\tau)}$
drops by $\dim d(\Res^1_0)=b_1(\Sigma)$. This observation
is made precise in Lemma~\ref{l:perturb-1-forms} below, but first we need to review perturbation theory of Pollicott--Ruelle resonances. It will
be more convenient for us to use the operators~$P_k^{(\tau)}$ rather than~$P_{k,0}^{(\tau)}$ since the latter act
on the $\tau$-dependent space of $k$-forms annihilated by $\iota_{X_\tau}$.
In the rest of this section we assume that $\varepsilon_0>0$
is chosen small, with the precise value varying from line to line.

We will use the perturbation theory developed in~\cite{guedes-bonthonneau-20}.
For an alternative approach,
see~\cite[\S6]{dang-guillarmou-riviere-shen-20}. Since we are interested in the resonance
at~0, we may restrict ourselves to the strip $\{\Im\lambda >-1\}$. Following the notation of~\cite[\S6.1]{cekic-paternain-19}, we consider the $\tau$-independent anisotropic Sobolev spaces
\begin{equation}
  \label{e:anisotropic-sobolev}
\mathcal H_{rG,s}(M;\Omega^k):=e^{-r\Op(G)}H^s(M;\Omega^k),\quad
r\geq 0,\quad s\in\mathbb R.
\end{equation}
Here $\Op$ is a quantization procedure on~$M$, $G(\rho,\xi)=m(\rho,\xi)\log (1+|\xi|)$
is a logarithmically growing symbol on the cotangent bundle $T^*M$, $|\xi|$ denotes an appropriately chosen norm on the fibers of $T^*M$, and the function $m(\rho,\xi)$,
homogeneous of order~0 in~$\xi$, satisfies certain conditions~\cite[(4)]{guedes-bonthonneau-20} with respect to the
vector field $X_\tau$ for all~$\tau\in(-\varepsilon_0,\varepsilon_0)$. The space $H^s$
is the usual Sobolev space of order~$s$. Denote the domain of $P^{(\tau)}_k$
on $\mathcal H_{rG,s}$ by
$$
\mathcal D^{(\tau)}_{rG,s}(M;\Omega^k):=\{u\in \mathcal H_{rG,s}(M;\Omega^k)\mid P_k^{(\tau)}u\in \mathcal H_{rG,s}(M;\Omega^k)\}.
$$
The following lemma summarizes the perturbation theory used here.
For details see for example~\cite[Theorem~1 and Corollary~2]{guedes-bonthonneau-20}
or~\cite[Lemma~6.1 and~\S6.2]{cekic-paternain-19}.
\begin{lemm}
There exists a constant $C_0$ such that for $r>C_0+|s|$ and~$\tau\in(-\varepsilon_0,\varepsilon_0)$, the operator
\begin{equation}
  \label{e:Fredholm}
P_k^{(\tau)}-\lambda:\mathcal D^{(\tau)}_{rG,s}(M;\Omega^k)\to\mathcal H_{rG,s}(M;\Omega^k),\quad
\Im\lambda>-1
\end{equation}
is Fredholm and its inverse (assuming $\lambda$ is not
a resonance) is given by $R_k^{(\tau)}(\lambda)$. Moreover, the set of pairs $(\tau,\lambda)$ such that $\lambda$ is a resonance of $P_k^{(\tau)}$ is closed and the resolvent
$R_k^{(\tau)}(\lambda):\mathcal H_{rG,s}\to \mathcal H_{rG,s}$
is bounded locally uniformly in $\tau,\lambda$ outside of this set.
\end{lemm}

Since $R_k^{(\tau)}(\lambda)$ is the inverse of $P_k^{(\tau)}-\lambda$
on anisotropic Sobolev spaces, we have the resolvent identity for all~$\tau,\tau'\in(-\varepsilon_0,\varepsilon_0)$
\begin{equation}
  \label{e:res-tau-id}
R_k^{(\tau)}(\lambda)-R_k^{(\tau')}(\lambda)=R_k^{(\tau)}(\lambda)(P_k^{(\tau')}-P_k^{(\tau)})R_k^{(\tau')}(\lambda),\quad
\Im\lambda>-1.
\end{equation}
Here the right-hand side is well-defined since for $r>C_0+|s|+1$ the operator
$R_k^{(\tau')}(\lambda)$ maps $\mathcal H_{rG,s}$ to itself,
$P_k^{(\tau)}$ and $P_k^{(\tau')}$ map $\mathcal H_{rG,s}$ to $\mathcal H_{rG,s-1}$,
and $R_k^{(\tau)}(\lambda)$ maps $\mathcal H_{rG,s-1}$ to itself.
Using~\eqref{e:res-tau-id} we see that for $r>C_0+|s|+1$ the family
$R_k^{(\tau)}(\lambda):\mathcal H_{rG,s}\to\mathcal H_{rG,s-1}$ is locally Lipschitz continuous in~$\tau$. Next, recalling~\eqref{e:Y-def} and that
$P_k^{(\tau)}=-i\mathcal L_{X_\tau}$, we have by~\eqref{e:res-tau-id}
\begin{equation}
  \label{e:R-k-diff}
\partial_\tau R_k^{(\tau)}(\lambda)|_{\tau=0}=iR_k(\lambda)\mathcal L_Y R_k(\lambda)
\end{equation}
as operators $\mathcal H_{rG,s}\to\mathcal H_{rG,s-2}$ when $r>C_0+|s|+2$.

Fix a contour $\gamma$ in the complex plane which encloses~0
but no other resonances of the unperturbed operators $P_k=P_k^{(0)}$.
For $|\tau|<\varepsilon_0$, no resonances of $P_k^{(\tau)}$ lie on
the contour $\gamma$, so we may
define the operators
$$
\widetilde\Pi_k^{(\tau)}:=-{1\over 2\pi i}\oint_\gamma R_k^{(\tau)}(\lambda)\,d\lambda.
$$
Unlike the spectral projectors $\Pi_k^{(\tau)}$ corresponding to the resonance at~0, the operators $\widetilde\Pi_k^{(\tau)}$ depend continuously on~$\tau$, since $R_k^{(\tau)}(\lambda)$ is continuous in~$\tau$.  Moreover, the rank of $\widetilde\Pi_k^{(\tau)}$ is constant in $\tau\in(-\varepsilon_0,\varepsilon_0)$, see~\cite[Lemma~6.2]{cekic-paternain-19}. By~\eqref{e:merex} we have
$$
\widetilde\Pi_k^{(0)}=\Pi_k:=\Pi_k(0)
$$
so the rank of $\widetilde\Pi_k^{(\tau)}$ can be computed using the algebraic multiplicities
of~0 as a resonance in the unperturbed case $\tau=0$ (using~\eqref{e:res-0-split}):
\begin{equation}
  \label{e:perprez-1}
\rank\widetilde\Pi_{k}^{(\tau)}=m_k(0)=m_{k,0}(0)+m_{k-1,0}(0).
\end{equation}
By~\eqref{e:merex}, we also have
\begin{equation}
  \label{e:tilde-Pi-tau-sum}
\widetilde\Pi_k^{(\tau)}=\sum_{\lambda\in\Upsilon^k_\tau} \Pi_k^{(\tau)}(\lambda)
\end{equation}
where $\Upsilon^k_\tau$ is the set of resonances of the operator $P_k^{(\tau)}$ which are enclosed by the contour~$\gamma$. Note that by~\eqref{e:tilde-Pi-tau-sum} and \eqref{e:Pi-k-idem}
\begin{equation}
  \label{e:tilde-Pi-tau-idem}
\widetilde\Pi_k^{(\tau)}\Pi_k^{(\tau)}(\lambda)=\Pi_k^{(\tau)}(\lambda)\quad\text{for all}\quad \lambda\in\Upsilon^k_\tau
\end{equation}
and the range of $\widetilde\Pi_k^{(\tau)}$ is the direct sum
of the ranges $\Res^{k,\infty}_{(\tau)}(\lambda)$ of $\Pi_k^{(\tau)}(\lambda)$ over $\lambda\in\Upsilon^k_\tau$. In particular, using~\eqref{e:res-0-split} we get
\begin{equation}
  \label{e:tilde-Pi-tau-mult}
\rank\widetilde\Pi_k^{(\tau)}=\sum_{\lambda\in\Upsilon^k_\tau} \big(m_{k,0}^{(\tau)}(\lambda)+m_{k-1,0}^{(\tau)}(\lambda)\big).
\end{equation}
Together with~\eqref{e:perprez-1} and induction on~$k$ this implies for $|\tau|<\varepsilon_0$
\begin{equation}
  \label{e:perprez-2}
\sum_{\lambda\in\Upsilon^k_\tau} m_{k,0}^{(\tau)}(\lambda)=m_{k,0}(0).
\end{equation}
We are now ready to show that under the condition~\eqref{e:nondegeneracy-assumption}
the space $\Res^1_{0(\tau)}$ of resonant 1-forms at~0 for the perturbed
operator $P^{(\tau)}_{1,0}$, $\tau\neq 0$, consists of closed forms:
\begin{lemm}
  \label{l:perturb-1-forms}
Under the assumptions of Theorem~\ref{t:general-perturber}, there exists
$\varepsilon_0>0$ such that for $0<|\tau|<\varepsilon_0$ we have
$d(\Res^1_{0(\tau)})=0$.
\end{lemm}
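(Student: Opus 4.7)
The plan is to reduce the statement to finite-dimensional linear algebra via the spectral projector $\widetilde\Pi_1^{(\tau)}$, using the fact that the operator $P_1$ vanishes identically on $\Res^{1,\infty}=\mathbb C\alpha\oplus\mathcal C\oplus\mathcal C_\psi$ (by Lemmas~\ref{l:0-forms}, \ref{l:c-psi-2} and~\ref{l:c-psi-3}). For small $\tau$, the map $T(\tau):=\widetilde\Pi_1^{(\tau)}|_{\Res^{1,\infty}}$ is an isomorphism onto $\operatorname{range}(\widetilde\Pi_1^{(\tau)})$ with $T(0)=\mathrm{Id}$, and I would introduce the conjugated operator $\widetilde B(\tau):=T(\tau)^{-1}P_1^{(\tau)}T(\tau)$ on the fixed space $\Res^{1,\infty}$. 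Using $P_1\Pi_1=\Pi_1 P_1=0$ and~\eqref{e:R-k-diff}, one obtains $\widetilde B(0)=0$ and
$$
\widetilde B'(0)=-i\,\Pi_1\mathcal L_Y\Pi_1.
$$

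The next step is to bound $\dim\ker\widetilde B'(0)\le b_1(\Sigma)+1$. By Lemma~\ref{l:ss-cores} and semisimplicity (Lemma~\ref{l:c-psi-3}), the pairing $\llangle\cdot,\,d\alpha\wedge\cdot\rrangle$ is nondegenerate on $\Res^1_0\times\Res^1_{0*}$ and annihilates the summand $\mathbb C\alpha\subset\Res^{1,\infty}$. Lemma~\ref{l:derivative-computed} converts matrix elements of $\widetilde B'(0)$ into the bilinear form of~\eqref{e:nondegeneracy-assumption}:
$$
\llangle \widetilde B'(0)u,\,d\alpha\wedge u_*\rrangle=-i\,\llangle(\iota_X\beta)\,du,\,du_*\rrangle,
$$
which vanishes whenever $u\in\mathcal C$ or $u_*\in\mathcal C_*$. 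Combined with the block-diagonal structure~\eqref{e:cpsint3-1}--\eqref{e:cpsint3-2} on $(\mathcal C\oplus\mathcal C_\psi)\times(\mathcal C_*\oplus\mathcal C_{\psi*})$ and with the injectivity of $d$ on $\mathcal C_\psi$ and $\mathcal C_{\psi*}$ (which follows from $\mathcal C\cap\mathcal C_\psi=\{0\}$, Lemma~\ref{l:c-psi-1}), the assumption~\eqref{e:nondegeneracy-assumption} becomes a nondegenerate pairing of $\mathcal C_\psi$ with $\mathcal C_{\psi*}$, forcing $\widetilde B'(0)|_{\mathcal C_\psi}$ to be injective. Hence $\operatorname{rank}\widetilde B'(0)\ge b_1(\Sigma)$, so $\dim\ker\widetilde B'(0)\le b_1(\Sigma)+1$.

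To conclude, I would invoke lower semicontinuity of rank. Since $\widetilde B(\tau)=\tau\widetilde B'(0)+o(\tau)$, the matrix family $\tau^{-1}\widetilde B(\tau)$ extends continuously to $\widetilde B'(0)$ at $\tau=0$, so for all sufficiently small $\tau\neq 0$,
$$
\dim\ker\widetilde B(\tau)=\dim\ker\bigl(\tau^{-1}\widetilde B(\tau)\bigr)\le\dim\ker\widetilde B'(0)\le b_1(\Sigma)+1.
$$
Conjugating back through $T(\tau)$ identifies this kernel with $\Res^1_{(\tau)}$, and the decomposition $\Res^1_{(\tau)}=\Res^1_{0(\tau)}\oplus\mathbb C\alpha_\tau$ from~\eqref{e:res-0-split} yields $\dim\Res^1_{0(\tau)}\le b_1(\Sigma)$. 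But Lemma~\ref{l:1-forms-ish} applied to the perturbed contact form and~\eqref{e:bettiGysin} always provide $b_1(\Sigma)$ linearly independent closed resonant 1-forms, so the inequality is in fact an equality and $\Res^1_{0(\tau)}\subseteq\ker d$, i.e.\ $d(\Res^1_{0(\tau)})=0$.

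The main obstacle is the rigorous justification of the expansion $\widetilde B(\tau)=\tau\widetilde B'(0)+o(\tau)$ in a suitable operator-theoretic framework, which requires the continuity/differentiability of $\widetilde\Pi_1^{(\tau)}$ in $\tau$ on the anisotropic Sobolev spaces $\mathcal H_{rG,s}$ (so that $T(\tau)$, $T(\tau)^{-1}$, and ultimately $\widetilde B(\tau)$ have the expected regularity in~$\tau$). Once this setup from~\cite{guedes-bonthonneau-20} and~\cite[\S6]{cekic-paternain-19} is in place, the remainder of the argument is purely finite-dimensional linear algebra, driven by Lemma~\ref{l:derivative-computed} and the nondegeneracy hypothesis~\eqref{e:nondegeneracy-assumption}.
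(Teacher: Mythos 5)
Your proof is correct and takes essentially the same approach as the paper. The only packaging difference is that the paper works directly with $Z(\tau):=P_1^{(\tau)}\widetilde\Pi_1^{(\tau)}$ and its rank (using the contour formula $Z(\tau)=-{1\over 2\pi i}\oint_\gamma\lambda R_1^{(\tau)}(\lambda)\,d\lambda$ together with~\eqref{e:R-k-diff} to get $\partial_\tau Z(0)=-i\Pi_1\mathcal L_Y\Pi_1$, which avoids inverting $T(\tau)$ and thus sidesteps the regularity issue you flag at the end), whereas you conjugate to the fixed space $\Res^{1,\infty}$ and track kernel dimension; all the substantive ingredients — Lemma~\ref{l:derivative-computed}, the nondegeneracy~\eqref{e:nondegeneracy-assumption} restricted to $\mathcal C_\psi\times\mathcal C_{\psi*}$, lower semicontinuity of rank at $\tau=0$, and the dimension count from Lemma~\ref{l:1-forms-ish} — coincide with the paper's.
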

\begin{proof}
1. Define the operator
$$
Z(\tau):=P_1^{(\tau)}\widetilde\Pi_1^{(\tau)}.
$$
Roughly speaking this operator contains information about the nonzero resonances
of $P_1^{(\tau)}$ enclosed by~$\gamma$; in particular,
each of the corresponding spaces of generalized resonant states is in the range of $Z(\tau)$
as can be seen from~\eqref{e:tilde-Pi-tau-idem}.

In the hyperbolic case $\tau=0$, the semisimplicity condition~\eqref{e:semi-simple}
holds for the operator $P_1$ at $\lambda=0$, as follows from Lemmas~\ref{l:0-forms}
and~\ref{l:c-psi-3} together with~\eqref{e:res-0-split}.
Therefore, the range of $\widetilde\Pi_1^{(0)}=\Pi_1$ is contained in $\Res^1$, implying that
\begin{equation}
  \label{e:Pi-1-initial}
Z(0)=0.
\end{equation}
By~\eqref{e:perprez-1},
the rank of $\widetilde\Pi_1^{(\tau)}$ can be computed using
the algebraic multiplicities of~0 as a resonance in the hyperbolic case $\tau=0$,
which are known by~\eqref{e:hyperbolic-alg}:
\begin{equation}
  \label{e:Pi-1-tilde-rank}
\rank \widetilde\Pi_1^{(\tau)}=2b_1(\Sigma)+1.
\end{equation}
The intersection of the range of $\widetilde\Pi_1^{(\tau)}$ with the kernel of $P_1^{(\tau)}$ is equal to~$\Res^1_{(\tau)}$. By~\eqref{e:res-0-split} and Lemma~\ref{l:0-forms} we have $\Res^1_{(\tau)}=\Res^1_{0(\tau)}\oplus \mathbb C\alpha_\tau$. Next, by Lemma~\ref{l:1-forms-ish} and~\eqref{e:bettiGysin} we have $\dim\Res^1_{0(\tau)}=b_1(\Sigma)+\dim d(\Res^1_{0(\tau)})$. Therefore
$$
\dim\Res^1_{(\tau)}=b_1(\Sigma)+1+\dim d(\Res^1_{0(\tau)}).
$$
By the Rank-Nullity Theorem and~\eqref{e:Pi-1-tilde-rank} we then have
\begin{equation}
  \label{e:Pi-1-tilde-rank-2}
\rank Z(\tau)= b_1(\Sigma)-\dim d(\Res^1_{0(\tau)}).
\end{equation}

\noindent 2.
Since $(P_1^{(\tau)}-\lambda) R_1^{(\tau)}(\lambda)$ is the identity operator, we
have for all~$\tau$
$$
Z(\tau)=-{1\over 2\pi i}\oint_\gamma \lambda R_1^{(\tau)}(\lambda)\,d\lambda.
$$
Using~\eqref{e:R-k-diff} we now compute the derivative
$$
\partial_\tau Z(0)
=-{1\over 2\pi}\oint_\gamma \lambda R_1(\lambda)\mathcal L_Y R_1(\lambda)\,d\lambda=-i\Pi_1\mathcal L_Y\Pi_1.
$$
Here in the second equality we used the Laurent expansion~\eqref{e:merex}
for $R_1(\lambda)$ at $\lambda_0=0$ (recalling that $J_1(0)=1$ by semisimplicity).

By Lemma~\ref{l:derivative-computed}, for any $u\in\Res^1_0$, $u_*\in\Res^1_{0*}$
we have
\begin{equation}
  \label{e:nondegeneracy-exhibit}
\int_M \alpha\wedge d\alpha\wedge \big(\partial_\tau Z(0)u\big)\wedge u_*=-i\llangle (\iota_X \beta)du,du_*\rrangle.
\end{equation}
By the nondegeneracy assumption~\eqref{e:nondegeneracy-assumption} the bilinear form~\eqref{e:nondegeneracy-exhibit}
is nondegenerate on $u\in\mathcal C_\psi$,
$u_*\in\mathcal C_{\psi*}$. This implies that
\begin{equation}
  \label{e:Pi-1-der-rank}
\rank\partial_{\tau}Z(0)\geq \dim\mathcal C_\psi=b_1(\Sigma).
\end{equation}
Together~\eqref{e:Pi-1-initial} and~\eqref{e:Pi-1-der-rank}
show that for $0<|\tau|<\varepsilon_0$
$$
\rank Z(\tau)\geq b_1(\Sigma).
$$
Then by~\eqref{e:Pi-1-tilde-rank-2} we have $\dim d(\Res^1_{0(\tau)})=0$ for $0 < |\tau| < \varepsilon_0$ which finishes the proof.
\end{proof}
\Remark Lemma \ref{l:perturb-1-forms} holds more generally whenever $P_{1, 0}$ is semisimple. If for all contact perturbations $(\alpha_\tau)_\tau$ we would have that \eqref{e:nondegeneracy-assumption} is trivial, this would imply that $du \wedge du_* = 0$ for all $u \in \Res_0^1$ and $u_* \in \Res_{0*}^1$. When $(\Sigma, g)$ is hyperbolic, we will show in \S \ref{s:pert-supp} that this is impossible, while for general $(\Sigma, g)$ proving such a statement seems out of reach for now.

Together with Lemma~\ref{l:0-forms}, Lemma~\ref{l:res-3-closed}, Lemma~\ref{l:1-means-2},
and~\eqref{e:bettiGysin} Lemma~\ref{l:perturb-1-forms}
gives all the conclusions of Theorem~\ref{t:general-perturber} except
semisimplicity on 2-forms.
In particular we have for $0<|\tau|<\varepsilon_0$
(using~\eqref{e:res-0-split})
\begin{align}
  \label{e:alth}
\dim\Res^2_{0(\tau)}&=b_1(\Sigma)+2,\\
  \label{e:alth2}
d(\Res^{1,\infty}_{(\tau)})&=\mathbb Cd\alpha_\tau.
\end{align}
To finish the proof of Theorem~\ref{t:general-perturber} it remains
to establish semisimplicity on 2-forms:
\begin{lemm}
  \label{l:perturb-2-forms}
Under the assumptions of Theorem~\ref{t:general-perturber}, there exists
$\varepsilon_0>0$ such that for $0<|\tau|<\varepsilon_0$ the
semisimplicity condition~\eqref{e:semi-simple} holds at $\lambda_0=0$
for the operator $P^{(\tau)}_{2,0}$.
\end{lemm}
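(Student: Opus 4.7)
The plan is to reduce semisimplicity of $P_{2,0}^{(\tau)}$ at $0$ to a dimension count over nearby nonzero resonances. By~\eqref{e:alth} and the inclusion $\Res^2_{0(\tau)}\subset\Res^{2,\infty}_{0(\tau)}$ one has $m_{2,0}^{(\tau)}(0)\geq b_1(\Sigma)+2$, while~\eqref{e:perprez-2} combined with Corollary~\ref{l:hyperbolic-corr} gives $\sum_{\lambda\in\Upsilon^2_\tau}m_{2,0}^{(\tau)}(\lambda)=2b_1(\Sigma)+2$. Thus it suffices to show that a full $b_1(\Sigma)$ worth of generalized resonant $2$-forms has migrated away from $0$. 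I will pipe this from the $1$-form side: part~1 of the theorem together with Lemma~\ref{l:1-means-2} already gives $m_{1,0}^{(\tau)}(0)=b_1(\Sigma)$, so $\sum_{\lambda\neq 0}m_{1,0}^{(\tau)}(\lambda)=b_1(\Sigma)$ by~\eqref{e:perprez-2} and Corollary~\ref{l:hyperbolic-corr}.

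The key construction, for each nonzero $\lambda\in\mathbb C$, is the linear map $\phi_\lambda\colon \mathcal D'_{E_u^*}(M;\Omega^1_0)\to\mathcal D'_{E_u^*}(M;\Omega^2_0)$ defined by
$$\phi_\lambda(u):=du-\alpha_\tau\wedge\iota_{X_\tau}du.$$
By construction $\iota_{X_\tau}\phi_\lambda(u)=0$ and the wavefront condition is preserved. Using $dP_1^{(\tau)}=P_2^{(\tau)}d$, $\iota_{X_\tau}P_2^{(\tau)}=P_1^{(\tau)}\iota_{X_\tau}$, and $P_2^{(\tau)}(\alpha_\tau\wedge\beta)=\alpha_\tau\wedge P_1^{(\tau)}\beta$ for any $1$-form $\beta$ (which relies on $\mathcal L_{X_\tau}\alpha_\tau=0$), I obtain the intertwining
$$(P_{2,0}^{(\tau)}-\lambda)\,\phi_\lambda=\phi_\lambda\,(P_{1,0}^{(\tau)}-\lambda),$$
whence by iteration $\phi_\lambda$ restricts to a map $\Res^{1,\ell}_{0(\tau)}(\lambda)\to\Res^{2,\ell}_{0(\tau)}(\lambda)$ for every $\ell\geq 1$, and hence $\Res^{1,\infty}_{0(\tau)}(\lambda)\to\Res^{2,\infty}_{0(\tau)}(\lambda)$. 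Injectivity of $\phi_\lambda$ for $\lambda\neq 0$ follows from a short computation: $\phi_\lambda(u)=0$ yields $du=i\alpha_\tau\wedge P_1^{(\tau)}u$ (using $\mathcal L_{X_\tau}=iP_1^{(\tau)}$ and $\iota_{X_\tau}u=0$); applying $d$ and using $\alpha_\tau\wedge\alpha_\tau=0$ collapses the resulting identity to $d\alpha_\tau\wedge P_1^{(\tau)}u=0$; since $d\alpha_\tau\wedge\colon\Omega^1_{0(\tau)}\to\Omega^3_{0(\tau)}$ is a bundle isomorphism by~\eqref{e:d-alpha-wedge}, one deduces $P_1^{(\tau)}u=0$, which combined with $u\in\Res^{1,\infty}_{0(\tau)}(\lambda)$ and $\lambda\neq 0$ forces $u=0$.

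With the injection established, $m_{2,0}^{(\tau)}(\lambda)\geq m_{1,0}^{(\tau)}(\lambda)$ for every $\lambda\neq 0$, so $\sum_{\lambda\neq 0}m_{2,0}^{(\tau)}(\lambda)\geq b_1(\Sigma)$, and consequently $m_{2,0}^{(\tau)}(0)\leq b_1(\Sigma)+2$. Combined with the lower bound this gives $m_{2,0}^{(\tau)}(0)=\dim\Res^2_{0(\tau)}$, which is exactly the semisimplicity condition for $P_{2,0}^{(\tau)}$ at $0$. The main obstacle is the algebraic setup of $\phi_\lambda$: both the intertwining and the injectivity are short but delicate calculations that crucially exploit the contact identity $\alpha_\tau\wedge\alpha_\tau=0$ together with the bijectivity of $d\alpha_\tau\wedge$ on $\Omega^1_{0(\tau)}$, and require keeping careful track of the action of $P_k^{(\tau)}$ on the decomposition $\Omega^\bullet=\Omega^\bullet_0\oplus \alpha_\tau\wedge\Omega^{\bullet-1}_0$.
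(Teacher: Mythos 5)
Your proof is correct and takes a genuinely different route from the paper's. Both arguments reduce semisimplicity of $P_{2,0}^{(\tau)}$ at $0$ to the bound $\sum_{\lambda\in\Upsilon^2_\tau\setminus\{0\}}m_{2,0}^{(\tau)}(\lambda)\geq b_1(\Sigma)$ and then finish via~\eqref{e:perprez-2} and~\eqref{e:alth}. The paper obtains this bound by estimating $\rank(\alpha_\tau\wedge(\widetilde\Pi_2^{(\tau)}-\Pi_2^{(\tau)}))$, using the commutation $d(\widetilde\Pi_1^{(\tau)}-\Pi_1^{(\tau)})=(\widetilde\Pi_2^{(\tau)}-\Pi_2^{(\tau)})d$, continuity of $\widetilde\Pi_1^{(\tau)}$ in $\tau$ (together with lower semicontinuity of rank and the computed rank $\rank(\alpha\wedge d\widetilde\Pi_1^{(0)})=b_1(\Sigma)+1$ in the hyperbolic case), and the rank drop $\rank d\Pi_1^{(\tau)}=1$ coming from~\eqref{e:alth2}. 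You instead introduce the explicit map $\phi(u)=du-\alpha_\tau\wedge\iota_{X_\tau}du$ (the $\Omega^2_{0(\tau)}$-component of $du$ under the decomposition~\eqref{e:0-split}), show it intertwines $P_{1,0}^{(\tau)}$ with $P_{2,0}^{(\tau)}$ using $\mathcal L_{X_\tau}\alpha_\tau=0$, and prove its injectivity on $\Res^{1,\infty}_{0(\tau)}(\lambda)$ for $\lambda\neq 0$ by a short calculation exploiting $\alpha_\tau\wedge\alpha_\tau=0$ and the bijectivity of $d\alpha_\tau\wedge\colon\Omega^1_{0(\tau)}\to\Omega^3_{0(\tau)}$. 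This gives the per-resonance inequality $m_{2,0}^{(\tau)}(\lambda)\geq m_{1,0}^{(\tau)}(\lambda)$ for each $\lambda\neq 0$, which is sharper than the paper's bound on the sum alone, and from it the needed total follows using $\sum_{\lambda\neq 0}m_{1,0}^{(\tau)}(\lambda)=b_1(\Sigma)$. Your argument thus bypasses the continuity and rank-semicontinuity step in favor of a purely algebraic intertwining valid at any nonzero resonance; the paper's projector-level argument is somewhat more compact since it operates globally rather than resonance by resonance.
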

\begin{proof}
We first claim that for $0<|\tau|<\varepsilon_0$
\begin{equation}
  \label{e:p2f0-1}
\rank\big(\alpha_\tau\wedge(\widetilde\Pi_2^{(\tau)}-\Pi_2^{(\tau)})\big)\geq 
\rank\big(\alpha_\tau\wedge d(\widetilde\Pi_1^{(\tau)}-\Pi_1^{(\tau)})\big)\geq
b_1(\Sigma).
\end{equation}
Indeed, by~\eqref{e:commuter-Pi} and~\eqref{e:tilde-Pi-tau-sum}
we have $d(\widetilde\Pi_1^{(\tau)}-\Pi_1^{(\tau)})=(\widetilde\Pi_2^{(\tau)}-\Pi_2^{(\tau)})d$ which implies the first inequality in~\eqref{e:p2f0-1}. Next, we have $\rank(\alpha\wedge d\widetilde\Pi_1^{(0)})=b_1(\Sigma)+1$ as
the range of $d\widetilde\Pi_1^{(0)}$
is equal to $d\Res^1=\mathbb Cd\alpha\oplus d\mathcal C_\psi$.
Since $\widetilde\Pi_1^{(\tau)}$ depends continuously
on $\tau$, we see that $\rank(\alpha_\tau\wedge d\widetilde\Pi_1^{(\tau)})\geq b_1(\Sigma)+1$
for all small enough $\tau$.
On the other hand, for $\tau$ small but nonzero we have
$\rank d\Pi_1^{(\tau)}=1$ by~\eqref{e:alth2}.
Together these imply the second inequality in~\eqref{e:p2f0-1}.

Now, by~\eqref{e:tilde-Pi-tau-sum} and~\eqref{e:res-0-split}
the range of $\alpha_\tau\wedge(\widetilde\Pi_2^{(\tau)}-\Pi_2^{(\tau)})$
is contained in the sum of the spaces
$\alpha_\tau\wedge\Res^{2,\infty}_{0(\tau)}(\lambda)$ over
$\lambda\in\Upsilon^2_\tau\setminus \{0\}$.
Therefore~\eqref{e:p2f0-1} implies that for $0<|\tau|<\varepsilon_0$
\begin{equation}
  \label{e:p2f0-2}
\sum_{\lambda\in\Upsilon^2_\tau\setminus \{0\}}
m_{2,0}^{(\tau)}(\lambda)\geq b_1(\Sigma).
\end{equation}
From~\eqref{e:perprez-2} and~\eqref{e:hyperbolic-alg} we see that
$$
\sum_{\lambda\in\Upsilon^2_\tau}m_{2,0}^{(\tau)}(\lambda)=m_{2,0}(0)=2b_1(\Sigma)+2
$$
therefore by~\eqref{e:p2f0-2} we have $m_{2,0}^{(\tau)}(0)\leq b_1(\Sigma)+2$.
Since $\dim\Res^2_{0(\tau)}=b_1(\Sigma)+2$ by~\eqref{e:alth}, we showed that the algebraic
and geometric multiplicities for~0 as a resonance
of $P^{(\tau)}_{2,0}$ coincide, finishing the proof.
\end{proof}

\subsection{The full support property}
\label{s:pert-supp}

In this section, we prove a full support statement
which will be used in the proof of Theorem~\ref{t:pertnondeg-contact}. In fact, we recall that we need to prove the nondegeneracy assumption~\eqref{e:nondegeneracy-assumption}, that is, that $\llangle{\iota_X \beta \bullet, \bullet}\rrangle$ is nondegenerate on $d\Res_0^1 \times d\Res_{0*}^1$, and the support properties of elements of $d\Res_{0(*)}^1$ will be useful. In~\S\S\ref{s:pert-supp}--\ref{s:main-proof}
we assume that $M=S\Sigma$ where $(\Sigma,g)$ is a hyperbolic 3-manifold
and the contact form $\alpha$ and the spaces of (co-)resonant states
at zero
$\Res^1_0$, $\Res^1_{0*}$ are defined using the geodesic flow on $(\Sigma,g)$.
\begin{prop}\label{prop:main3}
For all $u\in\mathrm{Res}_0^1$, $u_*\in \mathrm{Res}_{0\ast}^1$ with $du\neq0$, $du_*\neq 0$, the distributional $5$-form 
$\alpha\wedge du \wedge du_*$  fulfills 
$\mathrm{supp}(\alpha\wedge du \wedge du_*)= M$.
\end{prop}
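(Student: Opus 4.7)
The strategy is to lift everything to the universal cover $S\mathbb{H}^3$ and use the description of resonant $1$-forms in terms of $\Gamma$-invariant distributional forms on the conformal infinity $\mathbb{S}^2$, combined with the minimality of the $\Gamma$-action on $\mathbb{S}^2$.

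I would first reduce to the case $u\in\mathcal C_\psi$, $u_*\in\mathcal C_{\psi*}$. By Lemma~\ref{l:c-psi-2}, writing $u = u_c + u_\psi$ with $u_c \in \mathcal C$ and $u_\psi \in \mathcal C_\psi$, we have $du = du_\psi$, so $du \neq 0$ forces $u_\psi \neq 0$; moreover $\alpha\wedge du\wedge du_*$ depends on $u$ only through $u_\psi$, so without loss of generality $u\in\mathcal C_\psi$ and similarly $u_*\in\mathcal C_{\psi*}$. By Lemma~\ref{l:c-psi-1.5} the forms $u, u_*$ are totally unstable and totally stable respectively, so Lemma~\ref{l:lifted-forms} gives
\[
\pi_\Gamma^* u = B_-^* w, \qquad \pi_\Gamma^* u_* = B_+^* w_*
\]
for $\Gamma$-invariant $w, w_* \in \mathcal D'(\mathbb{S}^2; \Omega^1)$. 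Hence $\pi_\Gamma^*(\alpha\wedge du\wedge du_*)=\alpha\wedge B_-^*(dw)\wedge B_+^*(dw_*)$, and the assumptions $du\neq 0$, $du_*\neq 0$ translate to $dw\neq 0$, $dw_*\neq 0$ as distributional top-degree $2$-forms on~$\mathbb{S}^2$.

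The next step is to show $\mathrm{supp}(dw)=\mathrm{supp}(dw_*)=\mathbb{S}^2$. The set $\mathrm{supp}(dw)$ is a non-empty closed $\Gamma$-invariant subset of $\mathbb{S}^2$; since $\Gamma\subset\SO_+(1,3)$ is cocompact it is non-elementary, its limit set on $\mathbb{S}^2$ is all of $\mathbb{S}^2$, and the $\Gamma$-action is minimal, so any non-empty closed $\Gamma$-invariant subset of $\mathbb{S}^2$ equals $\mathbb{S}^2$. (This is the hyperbolic incarnation of Weich's full support property~\cite{weich-17} applied to the $2$-resonant states $du$ and $du_*$.) To promote this to full support of the $5$-form, I would consider the joint map $\Phi=(B_-,B_+)\colon S\mathbb{H}^3\to\mathcal O$, where $\mathcal O\subset\mathbb{S}^2\times\mathbb{S}^2$ is the complement of the diagonal; by~\eqref{e:B-pm-fibers}, $\Phi$ is a surjective submersion whose fibers are the geodesic flow lines. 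The $4$-form $p_1^*(dw)\wedge p_2^*(dw_*)$ on $\mathcal O$ is an external exterior product of top-degree distributions whose support equals $(\mathrm{supp}(dw)\times\mathrm{supp}(dw_*))\cap\mathcal O = \mathcal O$, and pullback by the submersion $\Phi$ gives $B_-^*(dw)\wedge B_+^*(dw_*)$ with support $\Phi^{-1}(\mathcal O)=S\mathbb{H}^3$. Since $\iota_X$ annihilates this $4$-form (as $X\in\ker d\Phi$) while $\iota_X\alpha=1$, wedging with the nonvanishing $1$-form $\alpha$ preserves the support, so $\pi_\Gamma^*(\alpha\wedge du\wedge du_*)$ has full support on $S\mathbb{H}^3$, whence $\alpha\wedge du\wedge du_*$ has full support on $M$.

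The main technical point is the support calculus for the distributional $4$-form $B_-^*(dw)\wedge B_+^*(dw_*)$: one has to verify that external products of distributional top-degree forms have support equal to the product of supports, and that pullback by a surjective submersion takes supports to preimages of supports. Both are standard consequences of distribution theory and the products in question are well-defined by the wavefront set calculus, since $\WF(du)\subset E_u^*$ and $\WF(du_*)\subset E_s^*$ meet only at the zero section.
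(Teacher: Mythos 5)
Your proof is correct and, modulo cosmetic differences, has the same skeleton as the paper's: reduce to a product of distributions on $\mathbb S^2\times\mathbb S^2$ coming from the stable and unstable boundary values, observe that each factor has full support, and push forward to $M$ using that the support of an external product is the product of the supports. There is, however, one genuinely different ingredient, and one stylistic difference worth noting.

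The genuine difference is how you establish full support of the boundary distributions. The paper passes from $u,u_*$ to scalar functions $f_\pm$ with $du=f_-\omega_-$, $du_*=f_+\omega_+$ (Lemma~\ref{l:du-dv}), observes that $f_\pm$ are resonant states of $X$ at nonzero resonances, and invokes Weich's general full-support theorem~\cite[Theorem~1]{weich-17} to get $\supp f_\pm=M$, whence $\supp g_\pm=\mathbb S^2$. You instead observe that $\supp(dw)$ is a nonempty, closed, $\Gamma$-invariant subset of $\mathbb S^2$, and since the cocompact Kleinian group $\Gamma$ acts minimally on its limit set $\Lambda(\Gamma)=\mathbb S^2$, this forces $\supp(dw)=\mathbb S^2$. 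This is a real simplification in the constant-curvature case: minimality of the boundary action is a classical, elementary fact, whereas Weich's theorem is a delicate microlocal result about general Anosov flows. In exchange, your argument does not generalize away from the locally symmetric setting, while the paper's invocation of Weich's theorem would. Both arguments then promote full support on $\mathbb S^2$ to full support on $M$ via the non-intersecting pair $(B_-,B_+)$; the paper does this with the explicit diffeomorphism $\Xi$ and the formula $|\nu_--\nu_+|^4g_-(\nu_-)g_+(\nu_+)$, while you use the abstract facts that supports multiply under external tensor product and pull back to preimages under submersions. These are equivalent; the paper's explicit coordinates are also used later in the regularization argument of~\S\ref{s:regularization}, so it is natural for the paper to set them up here.

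Two minor remarks. First, the initial reduction to $u\in\mathcal C_\psi$, $u_*\in\mathcal C_{\psi*}$ is harmless but unnecessary: Lemma~\ref{l:c-psi-1.5} makes every element of $\Res^1_0$ totally unstable, so Lemma~\ref{l:lifted-forms} applies without splitting $u=u_c+u_\psi$. Second, your final step that wedging with $\alpha$ preserves the support because $\iota_X$ annihilates the $4$-form and $\iota_X\alpha=1$ is exactly right and is the clean way to phrase it; the paper encodes the same information in the scalar identity $\alpha\wedge du\wedge du_*=-\tfrac18 f_-f_+\,d\vol_\alpha$.
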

To show Proposition~\ref{prop:main3}, we first study properties
of the 2-forms $du$ and $du_*$. Define the smooth 2-forms
$$
\omega_\pm\in C^\infty(M;\Omega^2_0)
$$
by requiring that $E_0\oplus E_u$ be in the kernel of
$\omega_-$, $E_0\oplus E_s$ be in the kernel of $\omega_+$,
and, using the horizontal/vertical decomposition~\eqref{e:hv-map}
\begin{equation}
  \label{e:omega-pm-1}
\omega_\pm(x,v)\big((w_1,\pm w_1),(w_2,\pm w_2)\big)=
\langle v\times w_1,w_2\rangle_g\quad\text{for all}\quad
w_1,w_2\in \{v\}^\perp\subset T_x\Sigma
\end{equation}
where `$\times$' denotes the cross product on $T_x\Sigma$
defined in~\S\ref{s:hyp-psi-1}. In terms
of the canonical 1-forms on the frame bundle $\mathcal F\Sigma$
defined in~\S\ref{s:canonical-forms} the lifts of $\omega_\pm$
to $\mathcal F\Sigma$ are given by
\begin{equation}
  \label{e:omega-pm-2}
\omega_\pm=U_1^{\pm*}\wedge U_2^{\pm*}.
\end{equation}
One can think of $\omega_\pm$ as canonical volume forms on the stable/unstable spaces.

By~\eqref{e:omega-pm-2} and~\eqref{e:canonical-diff} we compute
\begin{equation}
  \label{e:d-omega-pm}
d\omega_\pm=\pm 2\alpha\wedge\omega_\pm.
\end{equation}
\begin{lemm}
  \label{l:du-dv}
Assume that $u\in\Res^1_0$, $u_*\in\Res^1_{0*}$. Then
\begin{gather}
  \label{e:du-dv-1}
du=f_-\omega_-,\quad
du_*=f_+\omega_+;\\
  \label{e:du-dv-2}
\alpha\wedge du\wedge du_*=-\textstyle{1\over 8}f_-f_+ d\vol_\alpha
\end{gather}
where the distributions $f_-\in\mathcal D'_{E_u^*}(M;\mathbb C)$, $f_+\in\mathcal D'_{E_s^*}(M;\mathbb C)$ satisfy for any vector fields $U_-\in C^\infty(M;E_u)$,
$U_+\in C^\infty(M;E_s)$
\begin{equation}
  \label{e:f-pm-equiv}
(X\pm 2)f_\pm=0,\quad
U_\pm f_\pm=0.
\end{equation}
\end{lemm}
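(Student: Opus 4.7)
My plan is to extract scalar coefficients $f_\pm$ of $du$ and $du_\ast$ against canonical nowhere-vanishing sections of the line bundles $\wedge^2 E_u^\ast$ and $\wedge^2 E_s^\ast$, then read off the transport identities~\eqref{e:f-pm-equiv} from flow- and strong-foliation-invariance of $u,u_\ast$, and finally derive~\eqref{e:du-dv-2} from a single algebraic identity relating $\alpha\wedge\omega_-\wedge\omega_+$ to $d\vol_\alpha$.

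For the first step I invoke Lemma~\ref{l:c-psi-1.5}: $u$ is totally unstable and $u_\ast$ totally stable, so $du$ is a distributional section of $\wedge^2 E_u^\ast$, which is a rank-one subbundle of $\Omega^2_0$ since $\dim E_s=2$. By~\eqref{e:omega-pm-1}, $\omega_-$ annihilates $E_0\oplus E_u$ and is nonzero on $E_s\times E_s$, so it is a smooth nowhere-vanishing section of $\wedge^2 E_u^\ast$; hence $du=f_-\omega_-$ for a unique scalar distribution $f_-$, and because $\omega_-$ is smooth and pointwise invertible we have $\WF(f_-)=\WF(du)\subset E_u^\ast$. The identical argument applied to $u_\ast$ with $\omega_+$ produces $f_+\in\mathcal D'_{E_s^\ast}(M;\mathbb C)$ satisfying $du_\ast=f_+\omega_+$.

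Next I derive~\eqref{e:f-pm-equiv}. Using $\iota_X\omega_\pm=0$ (since $X\in E_0\subset\ker\omega_\pm$), Cartan's magic formula, and~\eqref{e:d-omega-pm}, I get
\[
\mathcal L_X\omega_\pm \;=\; \iota_X d\omega_\pm \;=\; \iota_X(\pm 2\alpha\wedge\omega_\pm) \;=\; \pm 2\omega_\pm.
\]
Because $\mathcal L_X u=0$ forces $\mathcal L_X du=0$, applying Leibniz to $du=f_-\omega_-$ yields $(Xf_-)\omega_--2f_-\omega_-=0$, i.e.\ $(X-2)f_-=0$; analogously $(X+2)f_+=0$. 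For the strong-(un)stable equations, pick $U_-\in C^\infty(M;E_u)$. Since $u$ and $du$ are both sections of $\wedge^\bullet E_u^\ast$, we have $\iota_{U_-}u=\iota_{U_-}du=0$, and combined with $d^2=0$ this gives $\mathcal L_{U_-}du=0$. Likewise $\iota_{U_-}\omega_-=0$, $\iota_{U_-}\alpha=0$, and~\eqref{e:d-omega-pm} give $\mathcal L_{U_-}\omega_-=0$. Leibniz then forces $U_-f_-=0$, and the parallel argument with $U_+\in C^\infty(M;E_s)$ handles $U_+ f_+=0$.

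Finally, to prove~\eqref{e:du-dv-2} it suffices to establish the identity $\alpha\wedge\omega_-\wedge\omega_+=-\tfrac18\,d\vol_\alpha$. Lifting to the frame bundle and expanding $d\alpha\wedge d\alpha$ via~\eqref{e:canonical-diff}, only the cross terms survive and combine to $8\,U_1^{+\ast}\wedge U_1^{-\ast}\wedge U_2^{+\ast}\wedge U_2^{-\ast}$; a single transposition of adjacent 1-forms together with~\eqref{e:omega-pm-2} identifies this with $-8\,\omega_-\wedge\omega_+$. Wedging with $\alpha$ then yields the claimed identity, and multiplying by $f_-f_+$ gives~\eqref{e:du-dv-2}. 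I do not foresee any genuine obstacle: all three steps are routine once the line-bundle picture is in place, with the only point requiring mild care being the sign bookkeeping in the last wedge computation.
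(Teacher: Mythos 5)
Your proposal is correct and follows essentially the same route as the paper's proof: both rest on Lemma~\ref{l:c-psi-1.5} to place $du$ in the rank-one bundle $\wedge^2 E_u^*$, use~\eqref{e:d-omega-pm} to extract the transport equations, and reduce~\eqref{e:du-dv-2} to the algebraic identity $\alpha\wedge\omega_-\wedge\omega_+=-\tfrac18\,d\vol_\alpha$. The only cosmetic difference is that the paper derives~\eqref{e:f-pm-equiv} in one stroke by writing $0=d(f_-\omega_-)=(df_--2f_-\alpha)\wedge\omega_-$ and contracting with $X$ and $U_-$, whereas you split it into two Lie-derivative computations $\mathcal L_X du=0$ and $\mathcal L_{U_-}du=0$ (which amount to the same contractions since $\iota_X du=\iota_{U_-}du=0$); and you spell out the frame-bundle sign bookkeeping that the paper leaves to the reader.
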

\begin{proof}
We consider the case of $du$, with $du_*$ studied similarly.
From Lemma~\ref{l:c-psi-1.5} we know that $u$ is a totally unstable 1-form,
which implies that $du$ is a section of $E_u^*\wedge E_u^*$. The latter is a one-dimensional vector
bundle over~$M$ and $\omega_-$ is a nonvanishing smooth section of it,
so $du=f_-\omega_-$ for some $f_-\in\mathcal D'_{E_u^*}(M;\mathbb C)$.
Using~\eqref{e:d-omega-pm} we compute
$$
0=d(f_-\omega_-)=(df_- - 2f_-\alpha)\wedge\omega_-.
$$
Taking $\iota_X$ and $\iota_{U_-}$ of this identity and using that
$\iota_X\omega_-=\iota_{U_-}\omega_-=\iota_{U_-}\alpha=0$ (recalling the definitions of $U_1^{\pm *}, U_2^{\pm *}$ in \eqref{e:forms-dual} and below), we get~\eqref{e:f-pm-equiv}.

Finally, \eqref{e:du-dv-2} follows from~\eqref{e:du-dv-1} and the following identity
which can be verified using either~\eqref{e:omega-pm-1} and~\eqref{e:alpha-hv}
or~\eqref{e:omega-pm-2} and~\eqref{e:canonical-diff}:
$$
\alpha\wedge\omega_-\wedge\omega_+=-\textstyle{1\over 8}d\vol_\alpha.
$$
\end{proof}
We can now finish the proof of Proposition~\ref{prop:main3}. Given~\eqref{e:du-dv-2}
it suffices to prove that, assuming that $f_-\neq 0$ and $f_+\neq 0$,
\begin{equation}
  \label{e:f-pm-prod-supp}
\supp(f_-f_+)=M.
\end{equation}
Let $\pi_\Gamma:S\mathbb H^3\to S\Sigma=M$ be the covering map corresponding to~\eqref{e:S-Sigma-Gamma} and $\Phi_\pm,B_\pm$ be defined in~\eqref{e:B-pm}. Then by~\eqref{e:X-Phi-pm} and~\eqref{e:f-pm-equiv} we have for any $U_-\in C^\infty(S\mathbb H^3;E_u)$, $U_+\in C^\infty(S\mathbb H^3;E_s)$
$$
X(\Phi_\pm^{2}(f_\pm\circ\pi_\Gamma))=U_\pm(\Phi_\pm^{2}(f_\pm\circ\pi_\Gamma))=0,
$$
that is $\Phi_+^2(f_+\circ\pi_\Gamma)$ is totally stable and $\Phi_-^2(f_-\circ\pi_\Gamma)$ is totally unstable in the sense of Definition~\ref{d:totally-stun}. Similarly to Lemma~\ref{l:lifted-forms} we can then describe
the lifts of $f_\pm$ to $S\mathbb H^3$ in terms of some distributions~$g_\pm$
on the conformal infinity~$\mathbb S^2$:
\begin{equation}
  \label{e:f-pm-shape}
f_\pm\circ\pi_\Gamma=\Phi_\pm^{-2}(g_\pm\circ B_\pm)\quad\text{for some}\quad
g_\pm\in\mathcal D'(\mathbb S^2;\mathbb C).
\end{equation}
Since $f_\pm$ are resonant states of $X$, a result of Weich~\cite[Theorem~1]{weich-17} shows that $\supp f_+=\supp f_-= M$, which from \eqref{e:f-pm-shape} and the facts that $\Phi_\pm > 0$, and that $B_\pm$ are submersions which map $S\mathbb H^3$ onto~$\mathbb S^2$, implies that
\begin{equation}
  \label{e:Weich-useful}
\supp g_+=\supp g_-=\mathbb S^2.
\end{equation}
We will now use the coordinates $(\nu_-, \nu_+, t) \in (\mathbb{S}^2 \times \mathbb{S}^2)_- \times \mathbb{R}$ on $S\mathbb{H}^3$ introduced in \eqref{e:Xi-def}. Then by~\eqref{e:f-pm-shape} and~\eqref{e:Phi-pm-B-pm-id} we can write
in these coordinates
$$
(f_-f_+)\circ\pi_\Gamma=\textstyle{1\over 16}|\nu_--\nu_+|^4 g_-(\nu_-)g_+(\nu_+).
$$
By~\eqref{e:Weich-useful}, we see that the support of the tensor
product $g_-\otimes g_+(\nu_-,\nu_+)=g_-(\nu_-)g_+(\nu_+)$ is equal to the entire $\mathbb S^2\times\mathbb S^2$, which implies that
$\supp(f_-f_+)\circ\pi_\Gamma=S\mathbb H^3$ and thus
$\supp(f_-f_+)=M$. This shows~\eqref{e:f-pm-prod-supp} and finishes the proof.

\subsection{Proof of Theorem~\ref{t:pertnondeg-contact}}
\label{s:pert-endgame}
	
We first remark that in the special case $\dim d(\Res^1_0)=b_1(\Sigma)=1$,
it is straightforward to see that Proposition~\ref{prop:main3} implies
the following simplified version of Theorem~\ref{t:pertnondeg-contact}:
for each nonempty open set $\mathscr U\subset M$ there exists
$\mathbf a\in C^\infty(M;\mathbb R)$ with $\supp\mathbf a\subset \mathscr U$
and such that $\beta:=\mathbf a\alpha$ satisfies~\eqref{e:nondegeneracy-assumption}.
Indeed, it suffices to fix any nonzero $du\in d(\Res^1_0)$, $du_*\in d(\Res^1_{0*})$,
and choose $\mathbf a$ such that $\int_M \mathbf a \alpha\wedge du\wedge du_*\neq 0$. We note that there are examples of hyperbolic $3$-manifolds with $b_1(\Sigma) = 1$, see for instance \cite[Theorem 13.4]{Farb-Margalit-12}.

For the general case, we will use the following basic fact from linear algebra:
\begin{lemm}\label{lem:matrixsubspace}
Denote by $\otimes^2 \mathbb C^n$ the space of complex $n\times n$ matrices.
Assume that $V\subset \otimes^2\mathbb C^n$ is a subspace such that
for each $v_1,v_2\in\mathbb C^n\setminus\{0\}$ there exists $B\in V$ such that $\langle Bv_1,v_2\rangle\neq 0$. (Here $\langle\bullet,\bullet\rangle$ denotes the canonical bilinear
inner product on $\mathbb C^n$.)
Then the set of invertible matrices in~$V$ is dense.
\end{lemm}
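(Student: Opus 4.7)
The plan is to reduce the density claim to the existence of a single invertible matrix in $V$, and then to produce such a matrix by a dimension count on an incidence variety. For the reduction, fix a basis of $V$ and view $\det$ as a polynomial function $\det : V \to \mathbb{C}$. If this polynomial is not identically zero, its vanishing locus is a proper algebraic subvariety of $V$, hence nowhere dense in the Euclidean topology, and its complement -- the invertible matrices in $V$ -- is automatically open and dense. So it is enough to exhibit one invertible element of $V$.

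I would next reformulate the hypothesis. For fixed nonzero $v \in \mathbb{C}^n$, the assumption that for every nonzero $w \in \mathbb{C}^n$ some $B \in V$ satisfies $\langle Bv, w\rangle \neq 0$ says that the image of the evaluation map $\mathrm{ev}_v : V \to \mathbb{C}^n$, $B \mapsto Bv$, is not contained in any hyperplane $w^\perp$, and therefore equals all of $\mathbb{C}^n$. Thus $\mathrm{ev}_v$ is surjective for every nonzero $v$; in particular $\dim V \geq n$ and $\dim \ker\,\mathrm{ev}_v = \dim V - n$ is the same for every $v \in \mathbb{C}^n \setminus \{0\}$.

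Suppose for contradiction that every $B \in V$ is singular, and consider the incidence variety
\[
I := \{(B,[v]) \in V \times \mathbb{P}(\mathbb{C}^n) : Bv = 0\}.
\]
The second projection $\pi_2 : I \to \mathbb{P}(\mathbb{C}^n)$ has fibers $\ker\,\mathrm{ev}_v$ of constant dimension $\dim V - n$ by the previous paragraph, so $I$ is an algebraic vector bundle over $\mathbb{P}(\mathbb{C}^n)$ and is irreducible of dimension $(\dim V - n) + (n-1) = \dim V - 1$. On the other hand, the first projection $\pi_1 : I \to V$ has image $\{B \in V : \det B = 0\}$, which under the contradiction hypothesis equals all of $V$; this forces $\dim V \leq \dim I = \dim V - 1$, which is absurd. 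Hence $V$ contains an invertible matrix, and combined with the reduction step the set of invertible matrices in $V$ is open and dense.

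I do not anticipate any serious obstacle: the only subtlety is that the fibers of $\pi_2$ must have genuinely constant dimension (equivalently, that $\mathrm{ev}_v$ has full rank $n$ for every nonzero $v$), and this is exactly what the hypothesis supplies after the reformulation. The rest is a routine incidence dimension count.
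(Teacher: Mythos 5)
Your proof is correct, but it takes a genuinely different route from the paper's. The paper argues directly in an arbitrary nonempty open set $\mathscr O\subset V$: supposing $\mathscr O$ contains no invertible matrix, it picks $A\in\mathscr O$ of maximal rank $k<n$, chooses bases $e_1,\dots,e_n$ and $e_1^*,\dots,e_n^*$ so that $\langle Ae_j,e_\ell^*\rangle=\delta_{j\ell}$ for $j,\ell\le k$ and vanishes otherwise, invokes the hypothesis to find $B\in V$ with $\langle Be_{k+1},e_{k+1}^*\rangle\ne 0$, and then notes that the $(k+1)\times(k+1)$ minor $b(t)$ of $A+tB$ satisfies $b(0)=0$ and $b'(0)\ne 0$, so $A+tB$ has rank at least $k+1$ for small $t\ne 0$, a contradiction. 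You instead first reduce to exhibiting a single invertible matrix in $V$, using that $\det|_V$ is a polynomial and a nonzero complex polynomial vanishes only on a nowhere dense set, and then rule out $\det|_V\equiv 0$ via a dimension count on the incidence variety $I=\{(B,[v]):Bv=0\}$. The crucial input is the same reformulation in both proofs -- the hypothesis forces each evaluation map $\mathrm{ev}_v:V\to\mathbb C^n$ to be surjective -- but you exploit it to make $I\to\mathbb P^{n-1}$ a vector bundle of constant fiber dimension $\dim V-n$, so $\dim I=\dim V-1<\dim V$, contradicting surjectivity of the projection $I\to V$. Your argument is conceptually clean and scales well, at the price of invoking algebraic-geometry facts (local triviality and irreducibility of the kernel bundle, and $\dim(\mathrm{image})\le\dim(\mathrm{source})$ for morphisms of varieties); the paper's perturbation argument is more elementary and self-contained, and also handles density directly rather than via the reduction to nonvanishing of $\det|_V$.
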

\begin{proof}
Let $\mathscr O$ be a nonempty open subset of $V$. We need to show that $\mathscr O$ contains an invertible matrix. Assume that there are no invertible matrices in $\mathscr O$. Let $A$ be a matrix of maximal rank in $\mathscr O$, then $k:=\rank A<n$ since $A$ cannot be invertible.
There exist bases $e_1,\dots,e_n$ and $e_1^*,\dots,e_n^*$ of $\mathbb C^n$ such that
$$
\langle Ae_j,e^*_{\ell}\rangle=\begin{cases}
1&\text{if }j=\ell\leq k;\\
0&\text{otherwise.}
\end{cases}
$$
By the assumption of the lemma,
there exists $B\in V$ such that $\langle Be_{k+1},e^*_{k+1}\rangle\neq 0$.
Consider the matrix $A_t=A+tB$ which lies in $\mathscr O$
for sufficiently small~$t$, and let $b(t)$ be the determinant of the matrix
$(\langle A_t e_j,e^*_\ell\rangle)_{j,\ell=1}^{k+1}$. Then $b(0)=0$
and $b'(0)=\langle Be_{k+1},e^*_{k+1}\rangle\neq 0$.
Therefore, for small enough $t\neq 0$ we have $b(t)\neq 0$,
which means that $\rank A_t\geq k+1$. This contradicts
the fact that $k$ was the maximal rank of any matrix in $\mathscr O$.
\end{proof}
We are now ready to give the proof of Theorem~\ref{t:pertnondeg-contact}.
For $\mathbf a\in C^\infty(M;\mathbb R)$, define the bilinear form
$$
S_{\mathbf a}:d(\Res^1_0)\times d(\Res^1_{0*})\to\mathbb C,\quad
S_{\mathbf a}(du,du_*)=\int_M\mathbf a
\alpha\wedge du\wedge du_*.
$$
To prove Theorem~\ref{t:pertnondeg-contact}, it then suffices to show
that the set of $\mathbf a\in \CIc(\mathscr U;\mathbb R)$
such that $S_{\mathbf a}$ is nondegenerate is open and dense. Since
nondegeneracy is an open condition, this set is automatically open.
To show that it is dense, consider the finite dimensional vector space
$$
V:=\{S_{\mathbf a}\mid\mathbf a\in \CIc(\mathscr U;\mathbb R)\}.
$$
Choosing bases of the $b_1(\Sigma)$-dimensional spaces $d(\Res^1_0)$
and $d(\Res^1_{0*})$, we can identify $V$ with a subspace
of $\otimes^2\mathbb C^{b_1(\Sigma)}$.
Let $du\in d(\Res^1_0)$, $du_*\in d(\Res^1_{0*})$ be nonzero,
then by Proposition~\ref{prop:main3} we have
$\supp(\alpha\wedge du\wedge du_*)=M$, so there exists
$\mathbf a\in \CIc(\mathscr U;\mathbb R)$ such that
$S_{\mathbf a}(du,du_*)\neq 0$.
Then by Lemma~\ref{lem:matrixsubspace} the set of nondegenerate bilinear forms in~$V$ is dense.

Let $\mathbf U$ be a nonempty open subset of~$\CIc(\mathscr U;\mathbb R)$. Then $\{S_{\mathbf a}\mid \mathbf a\in \mathbf U\}$
is a nonempty open subset of~$V$. Thus there exists
$\mathbf a\in\mathbf U$ such that $S_{\mathbf a}$ is nondegenerate, which 
finishes the proof.

\subsection{Proof of Theorem~\ref{thm:main}}
\label{s:main-proof}
	
We now give the proof of part~2 of Theorem~\ref{thm:main},
relying on Theorem~\ref{t:main-identity} (in~\S\ref{s:main-identity}) and Proposition~\ref{prop:harmoniconeforms} below, combined together in Corollary~\ref{c:key-corr}.
(Part~1 of Theorem~\ref{thm:main} was proved in Corollary~\ref{l:hyperbolic-corr} above.)

We start by computing how a general metric perturbation
affects the contact form for the geodesic flow.
Let $(\Sigma,g)$ be any compact 3-dimensional Riemannian manifold
and the contact form $\alpha$ and the generator $X$ of the geodesic flow
on $S\Sigma$ be defined as in~\S\ref{s:geodesic-flows}.
Let
$$
g_\tau,\quad
\tau\in (-\varepsilon,\varepsilon)
$$
be a family of Riemannian metrics on $\Sigma$ depending smoothly on~$\tau$, such that $g_0=g$. The associated geodesic flows act on the $\tau$-dependent sphere bundles
$$
S^{(\tau)}\Sigma=\{(x,v)\in T\Sigma\colon |v|_{g_\tau}=1\}.
$$
To bring these geodesic flows to $S\Sigma$, we use the diffeomorphisms
$$
\Phi_\tau:S\Sigma\to S^{(\tau)}\Sigma,\quad
\Phi_\tau(x,v)=\bigg(x,{v\over |v|_{g_\tau}}\bigg).
$$
Denote by $\alpha_\tau$ the contact form on $S^{(\tau)}\Sigma$
corresponding to $g_\tau$. Then
$$
\widetilde\alpha_\tau:=\Phi_\tau^*\alpha_\tau
$$
is a contact 1-form on $S\Sigma$ and the corresponding contact
flow is the geodesic flow of $(\Sigma,g_\tau)$ pulled back by $\Phi_\tau$.

Let $\pi^{(\tau)}_\Sigma:S^{(\tau)}\Sigma\to \Sigma$ be the projection map. Using~\eqref{e:geodesic-alpha} and the fact that $\pi^{(\tau)}_\Sigma\circ\Phi_\tau$
is equal to $\pi_\Sigma:=\pi^{(0)}_\Sigma$, we compute for all $(x,v)\in S\Sigma$
and $\xi\in T_{(x,v)}(S\Sigma)$
$$
\langle \widetilde\alpha_\tau(x,v),\xi\rangle={\langle v,d\pi_\Sigma(x,v)\xi\rangle_{g_\tau}\over|v|_{g_\tau}}.
$$
Recalling $d\pi_{\Sigma}(x, v)X(x, v) = v$ (see \eqref{e:X-hv}) and using $g_0(v, v) = 1$, it follows that
\begin{equation}
\iota_X\partial_\tau\widetilde\alpha_\tau|_{\tau=0}(x,v) = \partial_\tau g_\tau(v, v)|_{\tau = 0} - \frac{1}{2} g_0(v, v) \cdot \partial_\tau g_\tau(v, v)|_{\tau = 0} = \partial_\tau|v|_{g_\tau}|_{\tau=0}.
\end{equation}
In particular, if the metric $g_\tau$ is given by a conformal perturbation
$g_\tau=e^{-2\tau \mathbf b}g$, where $\mathbf b\in C^\infty(\Sigma;\mathbb R)$, then
\begin{equation}
  \label{e:X-metric-pert}
\iota_X\partial_\tau\widetilde\alpha_\tau|_{\tau=0}(x,v)=-\mathbf b\circ\pi_\Sigma.
\end{equation}

We are now ready to prove Theorem~\ref{thm:main}. Assume that
$(\Sigma,g)$ is a hyperbolic 3-manifold as defined in~\S\ref{s:hyp-3-basics}
and put $g_\tau:=e^{-2\tau \mathbf b}g$.
By Theorem~\ref{t:general-perturber} applied to the family of contact forms $\widetilde\alpha_\tau$, with $\beta=\partial_\tau\widetilde\alpha_\tau|_{\tau=0}$ satisfying~\eqref{e:X-metric-pert}, it suffices to show that for $\mathbf b$ in an open and dense subset of~$C^\infty(\Sigma;\mathbb R)$ the bilinear form
$$
(du,du_*)\mapsto \int_M (\mathbf b\circ\pi_\Sigma)\alpha\wedge du\wedge du_*
$$
is nondegenerate on $d(\Res^1_0)\times d(\Res^1_{0*})$.

The space $\Res^1_0$ is preserved by complex conjugation
as follows from its definition~\eqref{e:res-k-0};
here we use that for any $u$ we have $\WF(\bar u)=\{(\rho,-\xi)\mid (\rho,\xi)\in\WF(u)\}$.
Denote by $\Res^1_{0\mathbb R}$ the space of real-valued 1-forms in $\Res^1_0$
and let $\mathcal J(x,v)=(x,-v)$ be the map defined in~\eqref{e:J-def}.
By~\eqref{e:J-star-def}, the pullback $\mathcal J^*$ is an isomorphism
from $\Res^1_0$ onto $\Res^1_{0*}$. Thus it suffices to show that
for $\mathbf b$ in an open and dense subset of~$C^\infty(\Sigma;\mathbb R)$ the real bilinear form
$$
\widetilde S_{\mathbf b}(du,du'):=\int_M (\mathbf b\circ\pi_\Sigma) \alpha\wedge du\wedge \mathcal J^*(du')
$$
is nondegenerate on $d(\Res^1_{0\mathbb R})\times d(\Res^1_{0\mathbb R})$.

Since $\mathbf b\circ\pi_\Sigma$ is $\mathcal J$-invariant, $\mathcal J^*\alpha=-\alpha$,
and $\mathcal J$ is an orientation reversing diffeomorphism on~$M$,
we see that $\widetilde S_{\mathbf b}$ is  a symmetric bilinear form. Unlike in the contact perturbation case in \S \ref{s:pert-endgame}, we will not be able to produce for every pair $(du, du') \in d(\Res^1_{0\mathbb R})\times d(\Res^1_{0\mathbb R})$ an element $\mathbf{b} \in C^\infty(\Sigma; \mathbb{R})$ such that $\widetilde S_{\mathbf b}(du,du') \neq 0$. Instead, we will only produce $\mathbf{b}$ such that $\widetilde S_{\mathbf b}(du,du) \neq 0$. Hence, we will need the following variant of Lemma~\ref{lem:matrixsubspace} for
symmetric matrices:
\begin{lemm}\label{lem:matrixsubspace-2}
Denote by $\otimes^2_S \mathbb R^n$ the space of real symmetric $n\times n$ matrices.
Assume that $V\subset \otimes^2_S\mathbb R^n$ is a subspace such that
for each $w\in\mathbb R^n\setminus\{0\}$ there exists $B\in V$ such that $\langle Bw,w\rangle\neq 0$.
Then the set of invertible matrices in~$V$ is dense.
\end{lemm}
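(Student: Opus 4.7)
The plan is to adapt the proof of Lemma~\ref{lem:matrixsubspace} to the symmetric setting, with the only essential change being the choice of the basis in which we expose the obstruction to maximality of the rank. As before, let $\mathscr O$ be a nonempty open subset of $V$, suppose for contradiction that it contains no invertible matrix, and pick $A\in\mathscr O$ of maximal rank $k<n$.

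Since $A$ is real symmetric, the spectral theorem gives an orthonormal basis $e_1,\dots,e_n$ of $\mathbb R^n$ consisting of eigenvectors with eigenvalues $\lambda_1,\dots,\lambda_k\neq 0$ and $\lambda_{k+1}=\dots=\lambda_n=0$. Apply the hypothesis to the vector $w=e_{k+1}\neq 0$ to obtain $B\in V$ with $\langle Be_{k+1},e_{k+1}\rangle\neq 0$. Put $A_t:=A+tB\in V$, which lies in $\mathscr O$ for $|t|$ small, and let $b(t)$ be the determinant of the symmetric $(k+1)\times(k+1)$ matrix $(\langle A_te_j,e_\ell\rangle)_{j,\ell=1}^{k+1}$.

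Then $b(0)=0$ because the $(k{+}1,k{+}1)$ diagonal entry vanishes while the rest of the matrix at $t=0$ is $\mathrm{diag}(\lambda_1,\dots,\lambda_k,0)$. By Jacobi's formula, $b'(0)$ equals the trace of the product of $B$ (restricted to the top-left $(k{+}1)\times(k{+}1)$ block) with the adjugate of $\mathrm{diag}(\lambda_1,\dots,\lambda_k,0)$; the latter adjugate is $\mathrm{diag}(0,\dots,0,\lambda_1\cdots\lambda_k)$, so
\[
b'(0)=\lambda_1\cdots\lambda_k\cdot\langle Be_{k+1},e_{k+1}\rangle\neq 0.
\]
Hence $b(t)\neq 0$ for small $t\neq 0$, which forces $\mathrm{rank}(A_t)\geq k+1$ and contradicts the maximality of $k$.

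No step should pose a real obstacle: the only place where symmetry matters is in securing a diagonalizing \emph{orthonormal} basis so that the same basis serves on both sides of the pairing, which is precisely what allows the weakened (diagonal) hypothesis $\langle Bw,w\rangle\neq 0$ to replace the stronger bilinear hypothesis of Lemma~\ref{lem:matrixsubspace}. The computation of $b'(0)$ is entirely parallel to the previous lemma, so the proof can likely be written in just a few lines by cross-referencing that argument.
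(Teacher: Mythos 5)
Your proof is correct and is essentially identical to the paper's: both diagonalize the maximal-rank matrix $A$ with an orthonormal eigenbasis, perturb by a $B\in V$ with $\langle Be_{k+1},e_{k+1}\rangle\neq 0$, and compute $b'(0)=\lambda_1\cdots\lambda_k\langle Be_{k+1},e_{k+1}\rangle\neq 0$ to contradict maximality of the rank. The only cosmetic difference is that you spell out the $b'(0)$ computation via Jacobi's formula and the adjugate, whereas the paper states it directly.
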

\begin{proof}
Similarly to the proof of Lemma~\ref{lem:matrixsubspace},
assume that $\mathscr O$ is a nonempty open subset of~$V$ which does not
contain any invertible matrices and $A$ is a matrix in $\mathscr O$
of maximal rank $k<n$.
Since $A$ is symmetric, it can be diagonalized, i.e. there exists an orthonormal basis $e_1,\dots,e_n$ of $\mathbb R^n$ such that $Ae_j=\lambda_j e_j$ where $\lambda_j$ are real and, since $\rank A=k$, we may assume that $\lambda_1,\dots,\lambda_k\neq 0$ and~$\lambda_{k+1}=\dots=\lambda_n=0$.

By the assumption of the lemma,
there exists $B\in V$ such that $\langle Be_{k+1},e_{k+1}\rangle\neq 0$.
Consider the matrix $A_t=A+tB$ which lies in $\mathscr O$
for sufficiently small~$t$, and let $b(t)$ be the determinant of the matrix
$(\langle A_t e_i,e_j\rangle)_{i,j=1}^{k+1}$. Then $b(0)=0$
and $b'(0)=\lambda_1\cdots\lambda_k\langle Be_{k+1},e_{k+1}\rangle\neq 0$.
Therefore, for small enough $t\neq 0$ we have $b(t)\neq 0$,
which means that $\rank A_t\geq k+1$. This contradicts
the fact that $k$ was the maximal rank of any matrix in $\mathscr O$.
\end{proof}

Now to show Theorem~\ref{thm:main} it remains to follow the argument at the end of~\S\ref{s:pert-endgame}, with Lemma~\ref{lem:matrixsubspace} replaced by Lemma~\ref{lem:matrixsubspace-2}
and using the following
\begin{prop}
  \label{l:nondeg-metric}
Assume that $u\in\Res^1_{0\mathbb R}$ and $du\neq 0$.
Then there exists $\mathbf b\in C^\infty(\Sigma;\mathbb R)$
such that $\widetilde S_{\mathbf b}(du,du)\neq 0$.
\end{prop}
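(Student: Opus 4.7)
The plan is to argue by contradiction: assume $\widetilde S_{\mathbf b}(du, du) = 0$ for every $\mathbf b \in C^\infty(\Sigma;\mathbb R)$, and derive a contradiction with the hypothesis $du \neq 0$. Pulling $\mathbf b$ out of the integrand via $\pi_\Sigma^*$ and pushing forward to $\Sigma$ using~\eqref{e:pushforward-integral} and~\eqref{e:pfm-2}, the assumption is equivalent to the distributional vanishing $\pi_{\Sigma*}(\alpha \wedge du \wedge \mathcal J^*(du)) = 0$ on $\Sigma$. Writing this pushforward as $F\,d\vol_g$ with $F \in \mathcal D'(\Sigma;\mathbb R)$ (which is real because $u$, $\alpha$, and $\mathcal J^*$ are real), the hypothesis becomes $F = 0$.

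Next I would identify a nonzero harmonic 1-form on $\Sigma$ naturally attached to $u$. Set $\sigma := \pi_{\Sigma*}(d\alpha \wedge u) \in \mathcal D'(\Sigma; T^*\Sigma)$. By Lemma~\ref{l:c-psi-2}, decompose $u = u_c + u_\psi$ with $u_c \in \mathcal C = \Res^1_0 \cap \ker d$ and $u_\psi \in \mathcal C_\psi$; both summands may be taken real since $\mathcal C$ and $\mathcal C_\psi$ are conjugation-invariant. Because $du_c = 0$, the hypothesis $du \neq 0$ forces $u_\psi \neq 0$. By Lemma~\ref{l:harmonic-push}, $\pi_{\Sigma*}$ annihilates $d\alpha \wedge \mathcal C$ and restricts to an isomorphism from $d\alpha \wedge \mathcal C_\psi$ onto the space $\mathcal H^1(\Sigma, g)$ of harmonic 1-forms, so $\sigma$ is a nonzero real harmonic 1-form on $(\Sigma,g)$.

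The decisive step is to apply Theorem~\ref{t:main-identity}, which yields the identity
\[
Q_4 F = -\tfrac{1}{6}\Delta_g |\sigma|_g^2
\]
for a naturally defined smoothing operator $Q_4$. Since $F = 0$, this forces $\Delta_g |\sigma|_g^2 = 0$ on the compact connected manifold $\Sigma$, so $|\sigma|_g^2$ is a constant. As $\sigma \not\equiv 0$, this constant is strictly positive, contradicting the result in Appendix~\ref{sec:appC} that hyperbolic 3-manifolds admit no nonzero harmonic 1-forms of constant length.

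The hard part will be establishing the identity in Theorem~\ref{t:main-identity}: although $F$ is only distributional, its image under $Q_4$ must be shown to coincide with a universal smooth expression in $\sigma$. This will require the change-of-variables and regularization argument of \S\ref{s:main-identity}, combined with the explicit geometric description of $\Res^1_0$ developed in \S\ref{sec:reshyp} (in particular, the fact that every $u \in \Res^1_0$ is totally unstable and admits a presentation via the boundary maps $B_\pm$ on conformal infinity). Once that identity is in hand, the argument above finishes the proposition in just a few lines.
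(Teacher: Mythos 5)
Your proposal is correct and follows essentially the same route as the paper: reduce the nonvanishing of $\widetilde S_{\mathbf b}(du,du)$ to $F\neq 0$, invoke Theorem~\ref{t:main-identity} with $u_*=\mathcal J^*u$ (so that $\sigma_-\cdot\sigma_+=|\sigma|_g^2$), and conclude via the absence of constant-length nonzero harmonic $1$-forms from Appendix~\ref{sec:appC}. Your extra remarks on why $\sigma$ is nonzero (the decomposition $u=u_c+u_\psi$ together with Lemma~\ref{l:harmonic-push}) are precisely the content of the remark the paper cites after Theorem~\ref{t:main-identity}.
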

\begin{proof}
Using the pushforward map $\pi_{\Sigma*}^{}$ defined in~\eqref{e:forms-pf} we compute by~\eqref{e:pushforward-integral} and~\eqref{e:pfm-2}
\begin{equation}
  \label{e:nondeg-metric-comp}
\widetilde S_{\mathbf b}(du,du)=-\int_\Sigma \mathbf b\pi_{\Sigma*}^{}(\alpha\wedge du\wedge \mathcal J^*(du)).
\end{equation}
By Corollary~\ref{c:key-corr} below we have 
$\pi_{\Sigma*}^{}(\alpha\wedge du\wedge\mathcal J^*(du))\neq 0$
which finishes the proof.
\end{proof}

\section{The pushforward identity}
\label{s:main-identity}

In this section we prove an identity, Theorem~\ref{t:main-identity},
used in Proposition~\ref{l:nondeg-metric} above
which is a key component in the proof of our main Theorem~\ref{thm:main}.

We assume throughout this section that $(\Sigma,g)$ is a compact hyperbolic
3-manifold as defined in~\S\ref{s:hyp-3-basics} and write
$\Sigma=\Gamma\backslash\mathbb H^3$ where $\Gamma\subset \SO_+(1,3)$.
For $s>2$, define the operator
\begin{equation}
  \label{e:Q-s-def}
Q_s:\CIc(\mathbb{H}^3)\to C^\infty(\mathbb{H}^3),\quad
Q_s f(x):=\int_{\mathbb H^3} \big(\cosh d_{\mathbb H^3}(x,y)\big)^{-s}\,f(y)\,d\vol_g(y).
\end{equation}
As shown in~\S\ref{s:Q-s-prop} below, the operator $Q_s$
can be extended to $\Gamma$-invariant distributions on~$\mathbb H^3$
and it is smoothing, so it descends to an operator
\begin{equation}
  \label{e:Q-s-intro}
Q_s:\mathcal D'(\Sigma;\mathbb C)\to C^\infty(\Sigma;\mathbb C).
\end{equation}
Let $\Delta_g$ be the (nonpositive) Laplace--Beltrami operator
on $(\Sigma,g)$. Recall the pushforward map on forms $\pi_{\Sigma*}^{}$ defined in~\eqref{e:forms-pf}
and the spaces of (co-)resonant $k$-forms $\Res^k_0,\Res^k_{0*}$ on $M=S\Sigma$ associated to
the geodesic flow on $(\Sigma,g)$, see~\S\S\ref{s:geodesic-flows}--\ref{section:PRresonances}.

The main result of this section is the following
\begin{theo}
  \label{t:main-identity}
Assume that $u\in\Res^1_0$, $u_*\in\Res^1_{0*}$. Define the pushforwards
\begin{equation}
  \label{e:omega-u-def}
\sigma_-:=\pi_{\Sigma*}^{}(d\alpha\wedge u),\quad
\sigma_+:=\pi_{\Sigma*}^{}(d\alpha\wedge u_*)
\end{equation}
which are harmonic 1-forms on $\Sigma$
by Lemma~\ref{l:harmonic-push}. Define $F\in \mathcal D'(\Sigma;\mathbb C)$ by
\begin{equation}
  \label{e:F-def}
\pi_{\Sigma*}^{}(\alpha\wedge du\wedge du_*)=F\,d\vol_g.
\end{equation}
Then we have
\begin{equation}
  \label{e:main-identity}
Q_4F=-\textstyle{1\over 6}\Delta_g(\sigma_-\cdot\sigma_+)
\end{equation}
where the inner product $\sigma_-\cdot\sigma_+$ is the function on~$\Sigma$ defined by
$\sigma_-\cdot\sigma_+(x)=\langle\sigma_-(x),\sigma_+(x)\rangle_g$.
\end{theo}
\Remark By~\eqref{e:nondeg-metric-comp} and since $Q_4$ is self-adjoint we can rewrite~\eqref{e:main-identity}
as follows: for each~$\mathbf b\in \mathcal D'(\Sigma)$,
\begin{equation}
  \label{e:PaSa}
{1\over 6}\int_\Sigma \mathbf b\,\Delta_g(\sigma_-\cdot\sigma_+)\,d\vol_g
=\int_{S\Sigma}(\pi_\Sigma^*Q_4\mathbf b)\alpha\wedge du\wedge du_*.
\end{equation}
One can think of the right-hand side of~\eqref{e:PaSa}
as the integral of $\pi_\Sigma^*Q_4\mathbf b$ against
a \emph{Patterson--Sullivan distribution} $\alpha\wedge du\wedge du_*$ (note that
this distribution is invariant under the geodesic flow) and
the left-hand side of~\eqref{e:PaSa} as a \emph{topological} quantity because
it features harmonic 1-forms. Then~\eqref{e:PaSa} bears some similarity
to the result of Anantharaman--Zelditch~\cite[Theorem~1.1]{Anantharaman-Zelditch-07}
for the symbol $a:=\pi_\Sigma^*\mathbf b$;
the latter is in the setting when $\Sigma$ is a surface and the left-hand side
there has a \emph{spectral} interpretation because it features an eigenfunction of the Laplacian.
However, the operator $L_r$ used in~\cite{Anantharaman-Zelditch-07} is different
in nature from the operator $Q_4$ featured in~\eqref{e:PaSa}:
for our application is crucial that the right-hand side of~\eqref{e:PaSa}
depends only on the pushforward of $\alpha\wedge du\wedge du_*$ to~$\Sigma$ and
that does not seem to typically be the case for the right-hand side of~\cite[Theorem~1.1]{Anantharaman-Zelditch-07}.
See also the work of Hansen--Hilgert--Schr\"oder~\cite{Hansen-Hilgert-Schroder} giving an asymptotic statement
for higher dimensional situations.

The formula~\eqref{e:PaSa} in the special case $\mathbf b\equiv 1$
(which is trivial in our situation because both sides are equal to~0) also has some similarity to the pairing formulas of Dyatlov--Faure--Guillarmou~\cite[Lemma~5.10]{dyatlov-faure-guillarmou-15} and Guillarmou--Hilgert--Weich~\cite[Theorem~5]{guillarmou-hilgert-weich-18}.
In this vague analogy between Theorem~\ref{t:main-identity}
and the results of~\cite{Anantharaman-Zelditch-07,dyatlov-faure-guillarmou-15,guillarmou-hilgert-weich-18} our setting would correspond to an exceptional value of the spectral parameter:
comparing~\eqref{e:mi-lhs-2} with~\cite[(1.3)]{Anantharaman-Zelditch-07} gives the value $\mathbf s=-2$ (in the notation
of~\cite{Anantharaman-Zelditch-07}).

\medskip

Together with Proposition~\ref{prop:harmoniconeforms}, Theorem~\ref{t:main-identity} gives
the following statement which is used in the proof of Proposition~\ref{l:nondeg-metric}.
Recall the map $\mathcal J(x,v)=(x,-v)$ defined in~\eqref{e:J-def}.
\begin{corr}
  \label{c:key-corr}
Assume that $u\in\Res^1_0$ is real-valued and $du\neq 0$. Then
$\pi_{\Sigma*}^{}(\alpha\wedge du\wedge \mathcal J^*(du))\neq 0$.
\end{corr}
\begin{proof}
Put $u_*=\mathcal J^*u\in\Res^1_{0*}$. 
By~\eqref{eq:J-prop} and~\eqref{e:J-push-forward} we have $\sigma_+=\sigma_-$
where the 1-forms $\sigma_\pm$ are defined in~\eqref{e:omega-u-def}.
By Lemma~\ref{l:harmonic-push},
$\sigma=\sigma_+=\sigma_-$
is a real-valued harmonic 1-form on~$\Sigma$,
and $du\neq 0$ implies that $\sigma\neq 0$.

Let $F$ be defined in~\eqref{e:F-def}, then
by Theorem~\ref{t:main-identity} we have
\begin{equation}
  \label{e:endie-2}
Q_4F=-\textstyle{1\over 6}\Delta_g|\sigma|_g^2.
\end{equation}
Now, by Proposition~\ref{prop:harmoniconeforms} we see that $|\sigma|_g^2$ is not constant, that is $\Delta_g|\sigma|_g^2\neq 0$. Therefore, $Q_4F\neq 0$ which implies that $F\neq 0$.
\end{proof}

\subsection{Preliminary steps}

We first prove several preliminary statements. We will
use the hyperboloid model of~\S\ref{s:hyp-3-basics}.

\subsubsection{Hyperbolic Laplacian}

We first write the Laplacian $\Delta_g$ of the hyperbolic metric on~$\mathbb H^3$
using the hyperboloid model. Consider the open cone
$$
\mathcal C_+:=\{(\tilde x_0,\tilde x')\in \mathbb R^{1,3}\colon \tilde x_0>|\tilde x'|\}.
$$
Each point $\tilde x\in \mathcal C_+$ can be written in polar coordinates as
$$
\tilde x=rx,\quad
r>0,\quad
x\in\mathbb H^3.
$$
Define the d'Alembert operator on $\mathcal C_+$ as $\Box=\partial_{\tilde x_0}^2-\partial_{\tilde x_1}^2-
\partial_{\tilde x_2}^2-\partial_{\tilde x_3}^2$. In polar coordinates it can be written as
\begin{equation}
  \label{e:wave-radial}
\Box=r^{-2}\big((r\partial_r)^2+2r\partial_r-\Delta_g\big)
\end{equation}
where the hyperbolic Laplacian $\Delta_g$ acts in the~$x$ variable.

Using~\eqref{e:wave-radial}, we derive the following
useful identity: for any $\psi\in C^\infty((0,\infty))$ and $y\in\mathbb H^3$
\begin{equation}
  \label{e:laplace-conv}
-\Delta_g\psi(\langle x,y\rangle_{1,3})=\widetilde\psi(\langle x,y\rangle_{1,3})\quad\text{where}\quad
\widetilde\psi(\rho):=(1-\rho^2)\psi''(\rho)-3\rho\psi'(\rho)
\end{equation}
and the operator $\Delta_g$ acts in the $x$ variable (note that $\widetilde{\psi}(\rho)$ is given by the radial part of $-\Delta_g$ applied to $\psi(\rho)$ by \eqref{e:dist-h3}).
Indeed, it suffices to apply~\eqref{e:wave-radial} to the
function $f(\tilde x):=\psi(\langle \tilde x,y\rangle_{1,3})$,
$\tilde x\in\mathcal C_+$, and use that
$\Box f(\tilde x)=\psi''(\langle\tilde x,y\rangle_{1,3})$.
Taking in particular $\psi(\rho)=\rho^{-s}$ where $s\in\mathbb C$, we get
\begin{equation}
  \label{e:laplace-s}
\big(-\Delta_g-s(2-s)\big)\langle x,y\rangle_{1,3}^{-s}=s(s+1)\langle x,y\rangle_{1,3}^{-s-2}.
\end{equation}
Similarly, if $\nu_-,\nu_+\in\mathbb S^2\subset\mathbb R^3$, then by applying~\eqref{e:wave-radial} to the function
$$
f_{\nu_-,\nu_+}(\tilde x)=\big(\langle \tilde x,(1,\nu_-)\rangle_{1,3}\,\langle \tilde x,(1,\nu_+)\rangle_{1,3}\big)^{-1},\quad\tilde x\in \mathcal C_+
$$
and using that $\Box f_{\nu_-,\nu_+}=2(1-\nu_-\cdot\nu_+)f_{\nu_-,\nu_+}^2$, where we recall `$\cdot$' denotes the Euclidean inner product, we get
\begin{equation}
  \label{e:laplace-poisson}
-\Delta_g\big( P(x,\nu_-)P(x,\nu_+)\big)=2(1-\nu_-\cdot\nu_+)\big(P(x,\nu_-)P(x,\nu_+)\big)^2
\end{equation}
where the Poisson kernel $P(x,\nu)$ is defined in~\eqref{e:Poisson-def}
and the Laplacian $\Delta_g$ acts in the $x$ variable.

\subsubsection{Properties of the operators $Q_s$}
  \label{s:Q-s-prop}

Let $Q_s:\CIc(\mathbb H^3)\to C^\infty(\mathbb H^3)$ be the operator
defined in~\eqref{e:Q-s-def}. Using~\eqref{e:dist-h3} we can rewrite it as
\begin{equation}
  \label{e:Q-s-other-form}
Q_s f(x)=\int_{\mathbb H^3} \langle x,y\rangle_{1,3}^{-s}f(y)\,d\vol_g(y).
\end{equation}
Note that the operator $Q_s$ is equivariant under the action of the group
$\SO_+(1,3)$:
\begin{equation}
  \label{e:Q-s-equiv}
Q_s (\gamma^*f)=\gamma^* (Q_sf)\quad\text{for all}\quad
\gamma\in \SO_+(1,3).
\end{equation}
For $s>2$, the function $y\mapsto \langle x,y\rangle_{1,3}^{-s}$
lies in $L^1(\mathbb H^3;d\vol_g)$ and its $L^1$ norm is independent
of~$x$; indeed, using the $\SO_+(1,3)$-invariance we may reduce
to the case $x=(1,0,0,0)$, which can be handled by an explicit computation. Therefore,
$Q_s:L^\infty(\mathbb H^3)\to L^\infty(\mathbb H^3)$.

The space $L^\infty(\Sigma)$ is isomorphic to the space of $\Gamma$-invariant functions
in~$L^\infty(\mathbb H^3)$. Using~\eqref{e:Q-s-equiv}, we see that
$Q_s$ descends to the quotient $\Sigma=\Gamma\backslash\mathbb H^3$ as an operator
\begin{equation}
  \label{e:Q-s-Sigma-1}
Q_s:L^\infty(\Sigma)\to L^\infty(\Sigma),\quad
s>2.
\end{equation}
Next, using~\eqref{e:laplace-s}, we get the following identity relating the operators $Q_s$ with the hyperbolic Laplacian $\Delta_g$ on~$\Sigma$:
\begin{equation}
  \label{e:Q-s-Sigma-2}
(-\Delta_g-s(2-s))Q_s=Q_s(-\Delta_g-s(2-s))=s(s+1)Q_{s+2}.
\end{equation}
Putting together~\eqref{e:Q-s-Sigma-1} and~\eqref{e:Q-s-Sigma-2}
and using elliptic regularity, we see that for any $s>2$, $Q_s$ in fact extends to a smoothing operator $\mathcal D'(\Sigma)\to C^\infty(\Sigma)$, proving~\eqref{e:Q-s-intro}.

We now show that for $f\in\mathcal D'(\Sigma)$ one can obtain $Q_sf$ as
a limit of cutoff integrals:
\begin{lemm}
  \label{l:Q-s-regularization}
Fix a cutoff function $\chi(\rho)\in\CIc(\mathbb R)$ such that $\chi=1$ near~$0$.
For $\varepsilon>0$ and $s>2$, define the operator
$$
Q_{s,\chi,\varepsilon}:\mathcal D'(\mathbb H^3)\to C^\infty(\mathbb H^3),\quad
Q_{s,\chi,\varepsilon}f(x)=\int_{\mathbb H^3}\chi(\varepsilon\langle x,y\rangle_{1,3})
\langle x,y\rangle_{1,3}^{-s}f(y)\,d\vol_g(y).
$$
Note that $Q_{s,\chi,\varepsilon}$ satisfies the equivariance relation~\eqref{e:Q-s-equiv}
and thus descends to an operator $\mathcal D'(\Sigma)\to C^\infty(\Sigma)$. Then we have
for all $f\in\mathcal D'(\Sigma)$
\begin{equation}
  \label{e:Q-s-regularization}
Q_{s,\chi,\varepsilon}f\to Q_s f\quad\text{in}\quad C^\infty(\Sigma)\quad\text{as}\quad
\varepsilon\to +0.
\end{equation}
\end{lemm}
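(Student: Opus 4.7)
The strategy is to represent both $Q_s$ and $Q_{s,\chi,\varepsilon}$ as integral operators on $\Sigma$ with smooth kernels, and to reduce~\eqref{e:Q-s-regularization} to convergence of the kernels in $C^\infty(\Sigma\times\Sigma)$. Since $\langle x,y\rangle_M=\cosh d_{\mathbb{H}^3}(x,y)\geq 1$ throughout the hyperboloid, the kernel $(x,y)\mapsto\langle x,y\rangle_M^{-s}$ is smooth on all of $\mathbb{H}^3\times\mathbb{H}^3$ (not merely off the diagonal), so $Q_s$ on $\Sigma$ is represented by the $\Gamma$-periodized kernel
\[
K_s^\Sigma(x,y)=\sum_{\gamma\in\Gamma}\langle\tilde x,\gamma\tilde y\rangle_M^{-s},
\]
where $\tilde x,\tilde y\in\mathbb{H}^3$ are arbitrary lifts; analogously, $K_{s,\chi,\varepsilon}^\Sigma$ is obtained by inserting the cutoff $\chi(\varepsilon\langle\tilde x,\gamma\tilde y\rangle_M)$, giving a locally finite sum thanks to compact support of $\chi$.

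The analytic heart of the proof consists of two uniform estimates. First, the derivative bound
\[
|\partial^\alpha_{x,y}\langle x,y\rangle_M^{-s}|\leq C_{\alpha,s}\,\langle x,y\rangle_M^{-s}
\]
in any fixed local coordinate chart on $\mathbb{H}^3\times\mathbb{H}^3$. To derive it, I would first check that $|\partial^\alpha_{x,y}\langle x,y\rangle_M|\leq C_\alpha\langle x,y\rangle_M$ on the hyperboloid: in the $y$-variable, a coordinate derivative produces $\langle \tilde x,\eta\rangle_M$ for $\eta$ tangent to $\mathbb H^3$ at $\gamma\tilde y$, and the orthogonal decomposition $\tilde x=\langle\tilde x,\gamma\tilde y\rangle_M\,\gamma\tilde y+\tilde x_\perp$ with $|\tilde x_\perp|_g=\sinh d_{\mathbb{H}^3}(\tilde x,\gamma\tilde y)$ yields $|\langle\tilde x,\eta\rangle_M|\leq\langle\tilde x,\gamma\tilde y\rangle_M\cdot|\eta|_g$, with the isometry action of $\gamma$ absorbing the orbit variable; the $x$-derivatives are similar. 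The outer chain rule applied to $\rho\mapsto\rho^{-s}$ then absorbs each extra factor of $\langle x,y\rangle_M$ into an additional $\langle x,y\rangle_M^{-1}$. Second, the orbit-counting bound
\[
\sum_{\gamma\in\Gamma\colon\langle\tilde x,\gamma\tilde y\rangle_M\geq R}\langle\tilde x,\gamma\tilde y\rangle_M^{-s}\leq C_s R^{2-s}\qquad (R\geq 1),
\]
uniform in $\tilde x,\tilde y$ in a fundamental domain, a standard consequence of the hyperbolic volume growth $\mathrm{vol}\bigl(B_r(\tilde x)\bigr)\sim e^{2r}$ and compactness of $\Sigma$; together with the first estimate, this already gives that $K_s^\Sigma\in C^\infty(\Sigma\times\Sigma)$ for $s>2$.

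Applying the two estimates to the difference $K_s^\Sigma-K_{s,\chi,\varepsilon}^\Sigma$, which is supported on terms with $\langle\tilde x,\gamma\tilde y\rangle_M\geq R_0/\varepsilon$ (where $\chi\equiv 1$ on $[-R_0,R_0]$), and observing that each derivative falling on the cutoff contributes $\varepsilon\cdot O(\langle\cdot,\cdot\rangle_M)=O(1)$ on the support of $\chi'$, I conclude
\[
\|K_s^\Sigma-K_{s,\chi,\varepsilon}^\Sigma\|_{C^N(\Sigma\times\Sigma)}=O(\varepsilon^{s-2})\xrightarrow{\varepsilon\to 0}0\qquad\text{for every }N\geq 0.
\]
To finish, the distributional extension of $Q_s$ from~\eqref{e:Q-s-intro} coincides with $(Q_sf)(x)=\langle f,K_s^\Sigma(x,\cdot)\rangle$ for $f\in\mathcal{D}'(\Sigma)$ (both sides are smoothing operators agreeing on $L^\infty$ with the pointwise integral definition, and are uniquely determined through the symmetric pairing against $C^\infty(\Sigma)$), and likewise for $Q_{s,\chi,\varepsilon}$. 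Since every distribution on the compact manifold $\Sigma$ has finite order, the $C^\infty(\Sigma\times\Sigma)$ convergence of the kernels yields $C^\infty(\Sigma)$ convergence of $Q_{s,\chi,\varepsilon}f$ to $Q_sf$. The main point requiring care is organizing the derivative bound uniformly in $\gamma\in\Gamma$ and $\varepsilon$; once the isometry action of $\Gamma$ is exploited as above, the remaining steps are routine.
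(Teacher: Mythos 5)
Your proof is correct, but it takes a genuinely different route from the paper's. You work on $\Sigma\times\Sigma$ with the $\Gamma$-periodized kernel $K_s^\Sigma(x,y)=\sum_{\gamma}\langle\tilde x,\gamma\tilde y\rangle_M^{-s}$ and establish $C^\infty(\Sigma\times\Sigma)$ convergence of kernels via two geometric inputs: pointwise bounds $|\partial^\alpha\langle x,y\rangle_M^{-s}|\leq C_{\alpha,s}\langle x,y\rangle_M^{-s}$ (exploiting the decomposition $\tilde x=\langle\tilde x,\gamma\tilde y\rangle_M\,\gamma\tilde y+\tilde x_\perp$ with $|\tilde x_\perp|_g=\sinh d$), and the lattice-point bound $\sum_{\rho_\gamma\geq R}\rho_\gamma^{-s}\lesssim R^{2-s}$ coming from the hyperbolic volume growth. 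This gives the sharp rate $O(\varepsilon^{s-2})$. The paper instead avoids periodization entirely: it shows $\|\Delta_g^n(Q_s-Q_{s,\chi,\varepsilon})\Delta_g^n\|_{L^\infty\to L^\infty}\to 0$ directly on the universal cover, using the radial-Laplacian identity~\eqref{e:laplace-conv} to convert the conjugation by $\Delta_g^n$ into an explicit ordinary differential operator $T_s^{2n}$ acting on the profile $1-\chi(\varepsilon\rho)$, and then bounds everything by an $L^\infty_xL^1_y$ Schur estimate with a $\rho^{-\delta}$ loss, yielding $O(\varepsilon^\delta)$ for any $\delta<s-2$. Your approach buys concreteness and a cleaner rate, at the cost of having to prove the orbit-counting estimate; the paper's approach is slicker in that the invariance of the kernel collapses everything to a one-variable ODE problem, and no periodization or geometry of $\Gamma$ enters. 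One small point to spell out in your version: the uniformity in $\gamma$ of the derivative bound requires that the bounds on the coordinate vector fields and their derivatives are taken only over a fixed fundamental domain in each slot, so that the only growth comes from the Minkowski pairings you've estimated; you flag this at the end, and it does work, but a written-up version should track exactly where compactness of $\Sigma$ enters both this and the orbit-counting estimate.
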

\begin{proof}
It suffices to show that for all $n\geq 0$,
$$
\|\Delta_g^n (Q_s-Q_{s,\chi,\varepsilon})\Delta_g^n\|_{L^\infty(\Sigma)\to L^\infty(\Sigma)}
\to 0\quad\text{as}\quad \varepsilon\to +0.
$$
By~\eqref{e:laplace-conv} with $\psi(\rho):=\rho^{-s}(1-\chi(\varepsilon\rho))$ we have
(with each instance of $\Delta_g$ in $\Delta_g^{2n}$ below acting in either $x$ or $y$)
$$
\Delta_g^{2n}\big(\langle x,y\rangle_{1,3}^{-s}(1-\chi(\varepsilon\langle x,y\rangle_{1,3}))\big)
=\langle x,y\rangle_{1,3}^{-s}\psi_{s,\chi,\varepsilon}^{(n)}(\langle x,y\rangle_{1,3})
$$
where, putting $T_s:=\rho^s\big((1-\rho^2)\partial_\rho^2-3\rho\partial_\rho)\rho^{-s}$,
\begin{equation}
  \label{e:QSR-1}
\psi_{s,\chi,\varepsilon}^{(n)}(\rho):=T_s^{2n}(1-\chi(\varepsilon\bullet))(\rho).
\end{equation}
For any $f\in L^\infty(\mathbb H^3)$ we have (integrating by parts in~$y$ and using the fact that $\Delta_g$ is formally self-adjoint)
$$
\Delta_g^n (Q_s-Q_{s,\chi,\varepsilon})\Delta_g^n f(x)
=\int_{\mathbb H^3} \langle x,y\rangle_{1,3}^{-s}\psi_{s,\chi,\varepsilon}^{(n)}(\langle x,y\rangle_{1,3})f(y)\,d\vol_g(y).
$$
Estimating the $L^\infty_xL^1_y$ norm of the integral kernel of the latter operator we get
for any $\delta\in (0,s-2)$ (we will use that $\delta > 0$ at the end of the proof) and
for some $C_{s,\delta}>0$ depending only on~$s,\delta$
\begin{equation}
  \label{e:QSR-2}
\|\Delta_g^n (Q_s-Q_{s,\chi,\varepsilon})\Delta_g^n\|_{L^\infty(\Sigma)\to L^\infty(\Sigma)}
\leq C_{s,\delta}\sup_{\rho\geq 1}|\rho^{-\delta}\psi_{s,\chi,\varepsilon}^{(n)}(\rho)|.
\end{equation}
For $k\in\mathbb N_0$ and $\psi\in C^\infty((0,\infty))$, define the seminorm
$$
\|\psi\|_{\delta,k}:=\max_{0\leq j\leq k}\sup_{\rho\geq 1}|\rho^{-\delta}(\rho\partial_\rho)^j \psi(\rho)|.
$$
We have $\|T_s\psi\|_{\delta,k}\leq C_{s,\delta,k}\|\psi\|_{\delta,k+2}$. Therefore
\begin{equation}
\sup_{\rho\geq 1}|\rho^{-\delta}\psi_{s,\chi,\varepsilon}^{(n)}(\rho)|
\leq C_{s,\delta,n}\|1-\chi(\varepsilon \rho)\|_{\delta,4n}
=\mathcal O(\varepsilon^\delta),
\end{equation}
which finishes the proof.
\end{proof}

\subsubsection{Spherical convolution operators}

Let $\kappa\in C^\infty([0,4])$. Define the smoothing operator
\begin{equation}
  \label{e:A-psi-def}
A_\kappa:\mathcal D'(\mathbb S^2)\to C^\infty(\mathbb S^2),\quad
A_\kappa f(\nu)=\int_{\mathbb S^2}\kappa(|\nu-\nu'|^2)f(\nu')\,dS(\nu').
\end{equation}
Here $|\nu-\nu'|$ denotes the Euclidean distance between the points $\nu,\nu'\in\mathbb S^2\subset\mathbb R^3$.

In this section we prove an estimate on the norm of $A_\kappa$ between
Sobolev spaces, Lemma~\ref{l:A-psi-bound}, which is used in the regularization argument in~\S\ref{s:regularization} below.
Before we state this estimate, we establish a few basic properties of $A_\kappa$:
\begin{lemm}
  \label{l:A-psi-L2}
We have
$$
\|A_\kappa\|_{L^2(\mathbb S^2)\to L^2(\mathbb S^2)}
\leq \pi \|\kappa\|_{L^1([0,4])}.
$$  
\end{lemm}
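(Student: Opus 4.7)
The plan is to apply Schur's test, exploiting the fact that the integral kernel $K(\nu,\nu') := \kappa(|\nu-\nu'|^2)$ is zonal, i.e.\ depends only on the inner product $\nu\cdot\nu' = 1 - \tfrac12|\nu-\nu'|^2$, and hence its row/column $L^1$ norms are manifestly rotation-invariant.

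More precisely, I would first note that by Schur's test (applied symmetrically since $K(\nu,\nu')=K(\nu',\nu)$), one has
\[
\|A_\kappa\|_{L^2(\mathbb S^2)\to L^2(\mathbb S^2)}\leq \sup_{\nu\in\mathbb S^2}\int_{\mathbb S^2}|\kappa(|\nu-\nu'|^2)|\,dS(\nu').
\]
By the $\SO(3)$-invariance of the surface measure and of the Euclidean distance on $\mathbb S^2$, the right-hand side is independent of $\nu$, so it suffices to evaluate it at the north pole $\nu=(0,0,1)$.

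Then I would compute this integral in spherical coordinates. Writing $\nu'=(\sin\theta\cos\phi,\sin\theta\sin\phi,\cos\theta)$ gives $|\nu-\nu'|^2 = 2-2\cos\theta$, so
\[
\int_{\mathbb S^2}|\kappa(|\nu-\nu'|^2)|\,dS(\nu')=\int_0^{2\pi}\!\!\int_0^\pi |\kappa(2-2\cos\theta)|\sin\theta\,d\theta\,d\phi.
\]
Substituting $u=2-2\cos\theta$, $du=2\sin\theta\,d\theta$, with $u$ ranging over $[0,4]$, turns this into
\[
2\pi\int_0^4|\kappa(u)|\,\frac{du}{2}=\pi\|\kappa\|_{L^1([0,4])},
\]
which together with Schur's test yields the claimed bound.

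There is no real obstacle here: the proof is essentially one application of Schur's test followed by a change of variables. The only thing to double-check is the Jacobian in the substitution, but this is immediate from $du=2\sin\theta\,d\theta$ and the fact that the polar integration range $[0,\pi]$ for $\theta$ corresponds bijectively to $[0,4]$ for $u$.
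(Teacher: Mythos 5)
Your proof is correct and follows essentially the same route as the paper: Schur's test for the symmetric zonal kernel, $\SO(3)$-invariance to reduce to a single reference point, and the substitution in the polar integral. The only cosmetic difference is that the paper fixes $\nu'=(0,0,-1)$ and integrates over $\nu$ (so the kernel becomes $\kappa(2+2\cos\theta)$), while you fix $\nu=(0,0,1)$ and integrate over $\nu'$ (giving $\kappa(2-2\cos\theta)$); the two choices are equivalent by symmetry.
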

\begin{proof}
By Schur's lemma we have
$$
\|A_\kappa\|_{L^2(\mathbb S^2)\to L^2(\mathbb S^2)}
\leq\sup_{\nu'\in\mathbb S^2}\int_{\mathbb S^2}\big|\kappa(|\nu-\nu'|^2)\big|\,dS(\nu).
$$
By $\SO(3)$-invariance we see that the integral above is independent of~$\nu'$.
Choose $\nu'=(0,0,-1)$ and use spherical coordinates
$\nu=(\sin\theta\cos\varphi,\sin\theta\sin\varphi,\cos\theta)$ to compute
$$
\int_{\mathbb S^2}\big|\kappa(|\nu-\nu'|^2)\big|\,dS(\nu)
=2\pi\int_0^{\pi}\big|\kappa(2+2\cos\theta)\big|\sin\theta\,d\theta
=\pi\int_0^4 |\kappa(r)|\,dr
$$
which finishes the proof.
\end{proof}
%
\begin{lemm}
  \label{l:A-psi-laplace}
Denote by $\Delta_{\mathbb S^2}$ the (nonpositive) Laplace--Beltrami operator
on $\mathbb S^2$. Then
\begin{equation}
\label{eq:commuteA_F}
A_\kappa\Delta_{\mathbb S^2}=\Delta_{\mathbb S^2}A_\kappa=A_{\tilde\kappa},\quad
\tilde\kappa(r):=(4-r)r\kappa''(r)+(4-2r)\kappa'(r).
\end{equation}
\end{lemm}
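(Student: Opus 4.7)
The plan is to prove both identities in~\eqref{eq:commuteA_F} by showing that the kernel $(\nu,\nu')\mapsto\kappa(|\nu-\nu'|^2)$ is Laplace-symmetric in its two arguments and by computing $\Delta_{\mathbb S^2}$ of this kernel explicitly. The kernel depends only on $t:=\nu\cdot\nu'=1-\tfrac12|\nu-\nu'|^2$, so it is symmetric in $(\nu,\nu')$ and the same Laplacian formula applies in either variable. Integration by parts on the sphere then yields $A_\kappa\Delta_{\mathbb S^2}=\Delta_{\mathbb S^2}A_\kappa$, and both equal $A_{\tilde\kappa}$ once we verify $\Delta_{\mathbb S^2}\big(\kappa(|\nu-\nu'|^2)\big)=\tilde\kappa(|\nu-\nu'|^2)$.

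To carry out the key computation, fix $\nu'\in\mathbb S^2$ and use spherical coordinates $(\theta,\varphi)$ on $\mathbb S^2$ with pole $\nu'$, so that $\nu\cdot\nu'=\cos\theta$ and $|\nu-\nu'|^2=2-2\cos\theta=:r\in[0,4]$. Writing $\phi(\nu)=\kappa(r)$, the function $\phi$ depends on $\theta$ only, so
\[
\Delta_{\mathbb S^2}\phi=\frac{1}{\sin\theta}\partial_\theta\big(\sin\theta\,\partial_\theta\phi\big).
\]
Since $\partial_\theta r=2\sin\theta$, one computes $\partial_\theta\phi=2\sin\theta\,\kappa'(r)$ and hence
\[
\Delta_{\mathbb S^2}\phi=4\cos\theta\,\kappa'(r)+4\sin^2\theta\,\kappa''(r).
\]
Substituting $\cos\theta=(2-r)/2$ and $\sin^2\theta=1-\cos^2\theta=r(4-r)/4$ gives
\[
\Delta_{\mathbb S^2}\phi=(4-2r)\kappa'(r)+r(4-r)\kappa''(r)=\tilde\kappa(r),
\]
exactly as required.

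With this pointwise formula in hand, the operator identities follow immediately. For $\Delta_{\mathbb S^2}A_\kappa$, differentiating under the integral sign in the $\nu$ variable gives $\Delta_{\mathbb S^2}A_\kappa f(\nu)=\int_{\mathbb S^2}\tilde\kappa(|\nu-\nu'|^2)f(\nu')\,dS(\nu')=A_{\tilde\kappa}f(\nu)$, valid first for $f\in C^\infty(\mathbb S^2)$ and then extended to $f\in\mathcal D'(\mathbb S^2)$ by continuity (or directly by duality, since $A_\kappa$ is smoothing). For $A_\kappa\Delta_{\mathbb S^2}$, we integrate by parts and use the symmetry of the kernel in $(\nu,\nu')$: the same formula applied in the $\nu'$ variable yields $\Delta_{\mathbb S^2,\nu'}\big(\kappa(|\nu-\nu'|^2)\big)=\tilde\kappa(|\nu-\nu'|^2)$, hence
\[
A_\kappa\Delta_{\mathbb S^2}f(\nu)=\int_{\mathbb S^2}\kappa(|\nu-\nu'|^2)\Delta_{\mathbb S^2}f(\nu')\,dS(\nu')=\int_{\mathbb S^2}\tilde\kappa(|\nu-\nu'|^2)f(\nu')\,dS(\nu')=A_{\tilde\kappa}f(\nu).
\]

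There is no serious obstacle here; the only point requiring a modest amount of care is the bookkeeping of the change of variables $r=2-2\cos\theta$ and the algebraic identity $(4-2r)\kappa'+r(4-r)\kappa''=\tilde\kappa$. The extension from smooth to distributional $f$ is automatic because $A_\kappa$ is smoothing (its kernel is smooth on $\mathbb S^2\times\mathbb S^2$ since $\kappa\in C^\infty([0,4])$) and $\Delta_{\mathbb S^2}$ is a differential operator, so both compositions act continuously between the relevant spaces.
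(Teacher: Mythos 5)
Your proof is correct and follows essentially the same route as the paper: reduce to the pointwise identity $\Delta_{\mathbb S^2}\big(\kappa(|\nu-\nu'|^2)\big)=\tilde\kappa(|\nu-\nu'|^2)$ by rotation-invariance, compute it in spherical coordinates using $|\nu-\nu'|^2=2\mp 2\cos\theta$, and deduce both operator identities from self-adjointness of $\Delta_{\mathbb S^2}$ and the symmetric kernel. The only difference is the (immaterial) choice of whether the pole of the spherical coordinates sits at $\nu'$ or at $-\nu'$; you have also spelled out the integration-by-parts and the extension to distributions that the paper leaves implicit.
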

\begin{proof}
It is enough to show that, with $\Delta_{\mathbb S^2}$ acting in the $\nu$ variable,
$$
\Delta_{\mathbb S^2}(\kappa(|\nu-\nu'|^2))=\tilde\kappa(|\nu-\nu'|^2).
$$
Similarly to the proof of Lemma~\ref{l:A-psi-L2}, by $\SO(3)$-invariance we may reduce to the case $\nu'=(0,0,-1)$ and take spherical coordinates $(\theta,\varphi)$ for $\nu$, in which the Laplace operator is $\Delta_{\mathbb S^2}=(\sin\theta)^{-1}\partial_\theta\sin\theta\partial_\theta+(\sin\theta)^{-2}\partial_\varphi^2$ and $|\nu-\nu'|^2=2+2\cos\theta$. Then we compute
$$
\begin{aligned}
\Delta_{\mathbb S^2}(\kappa(|\nu-\nu'|^2))&={1\over\sin\theta}\partial_\theta\sin\theta\partial_\theta \kappa(2+2\cos\theta)\\
&=4\sin^2\theta\kappa''(2+2\cos\theta)-4\cos\theta\kappa'(2+2\cos\theta)\\
&=\tilde\kappa(2+2\cos\theta)
\end{aligned}
$$
which finishes the proof.
\end{proof}
We can now give
\begin{lemm}
  \label{l:A-psi-bound}
Assume that $s_1,s_2\in\mathbb R$ and $s_2-s_1=2\ell$ for some $\ell\in\mathbb N_0$.
Then there exists a constant $C$ depending only on $s_1,s_2$ such that
for all $\kappa\in C^\infty([0,4])$
\begin{equation}
  \label{e:A-psi-bound}
\|A_\kappa\|_{H^{s_1}(\mathbb S^2)\to H^{s_2}(\mathbb S^2)}\leq C\sum_{j=0}^{2\ell}\|r^{\max(j-\ell,0)}\partial_r^j\kappa(r)\|_{L^1([0,4])}.
\end{equation}
\end{lemm}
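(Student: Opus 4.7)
The plan is to reduce everything to the $L^2$-bound of Lemma~\ref{l:A-psi-L2} by exploiting the commutation relation from Lemma~\ref{l:A-psi-laplace}. Since $\Delta_{\mathbb S^2} A_\kappa = A_\kappa \Delta_{\mathbb S^2}$, the operator $A_\kappa$ commutes with any function of $\Delta_{\mathbb S^2}$, in particular with the Bessel potentials $(1-\Delta_{\mathbb S^2})^{t/2}$. This immediately gives the $\ell = 0$ case: for any $s \in \mathbb R$,
$$
\|A_\kappa\|_{H^s(\mathbb S^2) \to H^s(\mathbb S^2)} = \|A_\kappa\|_{L^2(\mathbb S^2) \to L^2(\mathbb S^2)} \leq \pi \|\kappa\|_{L^1([0,4])},
$$
covering the term $j=0$ on the right-hand side of~\eqref{e:A-psi-bound}.

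For $\ell \geq 1$, I would iterate Lemma~\ref{l:A-psi-laplace} to obtain $(1-\Delta_{\mathbb S^2})^\ell A_\kappa = A_{\kappa_\ell}$, where $\kappa_\ell := (I - T)^\ell \kappa$ and $T$ is the differential operator
$$
T\kappa(r) = (4-r)r\kappa''(r) + (4-2r)\kappa'(r) = \partial_r[r(4-r)\kappa'(r)].
$$
Since $(1-\Delta_{\mathbb S^2})^{-\ell}\colon H^{s_1}(\mathbb S^2) \to H^{s_2}(\mathbb S^2)$ is bounded, combining with the $\ell = 0$ case yields
$$
\|A_\kappa\|_{H^{s_1} \to H^{s_2}} \lesssim \|A_{\kappa_\ell}\|_{H^{s_1} \to H^{s_1}} \leq \pi\|\kappa_\ell\|_{L^1([0,4])}.
$$
It then remains to control $\|\kappa_\ell\|_{L^1}$ by the weighted seminorm on the right of~\eqref{e:A-psi-bound}. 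Expanding $(I-T)^\ell$ binomially reduces this to showing, for each $0 \leq m \leq \ell$,
\begin{equation}
\label{e:plan-target}
\|T^m \kappa\|_{L^1([0,4])} \leq C_m \sum_{j=m}^{2m} \|r^{\max(j-m,0)} \partial_r^j\kappa\|_{L^1([0,4])};
\end{equation}
the full estimate \eqref{e:A-psi-bound} then follows because $r^{\max(j-m,0)} \leq 4^{\ell - m} r^{\max(j-\ell,0)}$ on $[0,4]$ whenever $m \leq \ell$.

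The main computational obstacle, and the core of the argument, is \eqref{e:plan-target}, which requires tracking the polynomial coefficients produced under iteration of $T$. I would prove by induction on $m$ that
$$
T^m \kappa = \sum_{j=m}^{2m} P_{m,j}(r)\, \partial_r^j\kappa(r),
$$
where each $P_{m,j}$ is a polynomial vanishing at $r = 0$ to order at least $\max(j-m,0)$; this gives $|P_{m,j}(r)| \leq C_m r^{\max(j-m,0)}$ on the compact interval $[0,4]$, whence \eqref{e:plan-target}. The base case $m=1$ is immediate from $P_{1,1}(r) = 4-2r$ and $P_{1,2}(r) = r(4-r)$. For the induction step, write $T = a\partial_r^2 + b\partial_r$ with $a(r) = r(4-r)$ (of vanishing order $1$ at $r=0$) and $b(r) = 4-2r$ (of vanishing order $0$), and use the Leibniz-type identity
$$
T(P\cdot\partial_r^j\kappa) = (TP)\,\partial_r^j\kappa + (2aP' + bP)\,\partial_r^{j+1}\kappa + aP\,\partial_r^{j+2}\kappa
$$
to express $P_{m+1,k}$ as a sum of contributions coming from $P_{m,k},\,P_{m,k-1},\,P_{m,k-2}$. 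A short case-check — using that multiplication by $a$ raises the vanishing order by $1$, while differentiation lowers it by at most $1$ (and never below $0$) — shows that each contribution vanishes at $r=0$ to order at least $\max(k-m-1,0)$, closing the induction. The bookkeeping here is the only delicate point; once \eqref{e:plan-target} is established, assembling the final bound is mechanical.
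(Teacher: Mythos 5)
Your proposal is correct and follows essentially the same route as the paper: reduce to the $L^2\to L^2$ bound using commutation with $(1-\Delta_{\mathbb S^2})^{t/2}$, then control $\|W^\ell\kappa\|_{L^1}$ (your $\kappa_\ell$, with $W=I-T$) by tracking the vanishing order at $r=0$ of the polynomial coefficients of $r^k\partial_r^j$. The only cosmetic difference is that the paper carries out the induction directly on $W^\ell$ and notes that it is a linear combination of $r^k\partial_r^j$ with $k\geq\max(j-\ell,0)$, whereas you expand $(I-T)^\ell$ binomially and induct on $T^m$; the bookkeeping and conclusion are the same.
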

\begin{proof}
Define the differential operator arising from \eqref{eq:commuteA_F} (corresponding to $1 - \Delta_{\mathbb{S}^2}$)
$$
W:=(r-4)r\partial_r^2+(2r-4)\partial_r+1.
$$
Denote by $C$ a constant depending only on $s_1,s_2$, whose precise value may change from line to line. We have
$$
\begin{aligned}
\|A_\kappa\|_{H^{s_1}(\mathbb S^2)\to H^{s_2}(\mathbb S^2)}
&\leq C\|(1-\Delta_{\mathbb S^2})^{s_2/2}A_\kappa (1-\Delta_{\mathbb S^2})^{-s_1/2}\|_{L^2(\mathbb S^2)\to L^2(\mathbb S^2)}\\
&= C\|(1-\Delta_{\mathbb S^2})^\ell A_\kappa\|_{L^2(\mathbb S^2)\to L^2(\mathbb S^2)}\\
&= C\|A_{W^\ell\kappa}\|_{L^2(\mathbb S^2)\to L^2(\mathbb S^2)}\\
&\leq C\|W^\ell\kappa\|_{L^1([0,4])}.
\end{aligned}
$$
Here in the second equality we used that $A_\kappa$ commutes with $\Delta_{\mathbb S^2}$ by Lemma~\ref{l:A-psi-laplace}. In the third inequality we used Lemma~\ref{l:A-psi-laplace}
again. In the last inequality we used Lemma~\ref{l:A-psi-L2}.

By induction in $\ell$ we see that $W^\ell$ is a linear combination
with constant coefficients of the operators $r^k\partial_r^j$ where
$0\leq j\leq 2\ell$ and $k\geq \max(j-\ell,0)$. Therefore,
$\|W^\ell\kappa\|_{L^1([0,4])}$ is bounded by the right-hand side of~\eqref{e:A-psi-bound},
which finishes the proof.
\end{proof}

\subsection{Proof of Theorem~\ref{t:main-identity}}

Here we give the proof of Theorem~\ref{t:main-identity}, proceeding in several steps.
In~\S\ref{s:reduction-to-boundary} we write both sides of~\eqref{e:main-identity}
as integrals featuring some distributions $g_\pm$ on $\mathbb S^2$.
In~\S\ref{s:change-of-variables} we introduce a change of variables
which shows that the two integrals are formally equal. In~\S\ref{s:regularization}
we prove that regularized versions of the two integrals are equal and
show convergence of the regularization to finish the proof.

Denote by $\pi_\Gamma$ the covering maps $\mathbb H^3\to \Sigma$ and $S\mathbb H^3\to M=S\Sigma$ (which one is meant will be clear from the context).
Since we can choose the representation of $\Sigma$ as the quotient $\Gamma\backslash\mathbb H^3$ arbitrarily, for any given $x\in\Sigma$ we may arrange that $\pi_\Gamma(e_0)=x$ where
\begin{equation}
  \label{e:e-0}
e_0:=(1,0,0,0)\in\mathbb H^3.
\end{equation}
Therefore, in order to prove Theorem~\ref{t:main-identity} it suffices to consider the case $x=\pi_\Gamma(e_0)$, i.e. to show that
\begin{equation}
  \label{e:main-identity-2}
\pi_\Gamma^* Q_4 F(e_0)=-\textstyle{1\over 6}\pi_\Gamma^*\Delta_g(\sigma_-\cdot\sigma_+)(e_0).
\end{equation}

\subsubsection{Reduction to the conformal boundary}
\label{s:reduction-to-boundary}

We first express both sides of~\eqref{e:main-identity-2} in terms of some distributions $g_\pm$ on the conformal boundary $\mathbb S^2$. 

Let $u\in\Res^1_0$, $u_*\in\Res^1_{0*}$. By Lemma~\ref{l:du-dv} we have
$$
du=f_-\omega_-,\quad
du_*=f_+\omega_+,\quad
\alpha\wedge du\wedge du_*=-\textstyle{1\over 8}f_-f_+ d\vol_\alpha,
$$
where by~\eqref{e:f-pm-shape}, the lifts of $f_-\in\mathcal D'_{E_u^*}(M;\mathbb C)$,
$f_+\in\mathcal D'_{E_s^*}(M;\mathbb C)$ to the covering
space $S\mathbb H^3$ have the form
(recalling the definitions~\eqref{e:B-pm} of $\Phi_\pm$, $B_\pm$) 
\begin{equation}
  \label{e:f-pm-again}
\pi_\Gamma^*f_\pm=\Phi_\pm^{-2} (g_\pm\circ B_\pm)\quad\text{for some}\quad
g_\pm\in\mathcal D'(\mathbb S^2;\mathbb C).
\end{equation}
Arguing similarly to~\eqref{e:pushforward-volume}, we see that
the distribution $F\in\mathcal D'(\Sigma;\mathbb C)$ defined in~\eqref{e:F-def}
can be written as the pushforward
$$
F(x)={1\over 4}\int_{S_x\Sigma} f_-(x,v)f_+(x,v)\,dS(v),\quad
x\in\Sigma
$$
where $dS$ is the canonical volume form on the spherical fiber $S_x\Sigma$.
Therefore, the lift of~$F$ to~$\mathbb H^3$ has the form
\begin{equation}
  \label{e:F-lift}
\pi_\Gamma^*F(x)={1\over 4}\int_{S_x\mathbb H^3}
\big(\Phi_-(x,v)\Phi_+(x,v)\big)^{-2}g_-(B_-(x,v))g_+(B_+(x,v))\,dS(v).
\end{equation}
We next express the harmonic 1-forms $\sigma_\pm$ defined in~\eqref{e:omega-u-def}
in terms of the distributions~$g_\pm$:	
\begin{lemm}
  \label{l:omega-u-form}
Using the hyperbolic metric, identify the pullbacks $\pi_\Gamma^*\sigma_\pm$
with vector fields on~$\mathbb H^3$. Then for any $x\in\mathbb H^3$
$$
\pi_\Gamma^*\sigma_\pm(x)={1\over 4}\int_{\mathbb S^2} g_\pm(\nu)v_\pm(x,\nu)\,dS(\nu)
$$
where $v_\pm(x,\nu)\in S_x\mathbb H^3\subset T_x\mathbb H^3$ is defined in~\eqref{eq:xi_pmdef}.
\end{lemm}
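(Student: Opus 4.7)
The plan is to reduce both sides to integrals over the fibers $S_x\mathbb{H}^3$ and then change variables to the conformal boundary. The starting point is~\eqref{e:pushforward-nm-2}, which (lifted to $\mathbb{H}^3$) gives $\pi_\Gamma^*\sigma_\pm = \pi_{\Gamma*}(\alpha\wedge d\tilde u_\pm)$, where $\tilde u_- := \pi_\Gamma^* u$ and $\tilde u_+ := \pi_\Gamma^* u_*$. By Lemma~\ref{l:du-dv} we have $d\tilde u_\pm = f_\pm \omega_\pm$, so the integrand on the cover is $f_\pm\,\alpha\wedge\omega_\pm$, with $f_\pm$ given explicitly by~\eqref{e:f-pm-again}.

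Next I would compute the restriction of $\iota_{\tilde\eta}(\alpha\wedge\omega_\pm)$ to the fiber, where $\tilde\eta = (\eta,0)$ is the horizontal lift of $\eta\in T_x\mathbb{H}^3$. By~\eqref{e:alpha-hv}, $\alpha(\tilde\eta) = \langle v,\eta\rangle_g$, and since $\alpha$ annihilates vertical vectors, for purely vertical $\xi_2 = (0,w_2)$, $\xi_3 = (0,w_3)$ with $w_2,w_3\in\{v\}^\perp$ the wedge reduces to
\[
(\alpha\wedge\omega_\pm)(\tilde\eta,\xi_2,\xi_3) = \langle v,\eta\rangle_g\,\omega_\pm(\xi_2,\xi_3).
\]
Now write $(0,w_j) = \tfrac{1}{2}(w_j,w_j) + \tfrac{1}{2}(-w_j,w_j)$, decomposing into $E_u\oplus E_s$ at $(x,v)$; only the appropriate component contributes to $\omega_\pm$ by~\eqref{e:omega-pm-1}, and a direct computation gives $\omega_\pm((0,w_2),(0,w_3)) = \tfrac{1}{4}\langle v\times w_2,w_3\rangle_g$ for both signs. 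With the orientation convention on $S_x\mathbb{H}^3$ stated after~\eqref{e:forms-pf}, this identifies the restriction of $\iota_{\tilde\eta}(\alpha\wedge\omega_\pm)$ to $S_x\mathbb{H}^3$ with $\tfrac{1}{4}\langle v,\eta\rangle_g\,dS(v)$, yielding
\[
\pi_\Gamma^*\sigma_\pm(x)(\eta) = \tfrac{1}{4}\int_{S_x\mathbb{H}^3} f_\pm(x,v)\,\langle v,\eta\rangle_g\,dS(v).
\]

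To conclude, I would change variables $\nu = B_\pm(x,v)$, $v = v_\pm(x,\nu)$. By~\eqref{eq:jacobianxi_pm} and~\eqref{eq:Phi_pmPoisson} the Jacobian is $P(x,\nu)^2 = \Phi_\pm(x,v_\pm(x,\nu))^2$, exactly cancelling the $\Phi_\pm^{-2}$ prefactor in~\eqref{e:f-pm-again}. Identifying the 1-form $\sigma_\pm$ with a vector field via the hyperbolic metric then gives the stated formula, since $\langle v_\pm(x,\nu),\eta\rangle_g$ is the metric pairing. The only delicate step is bookkeeping of signs and orientations: verifying that both $+$ and $-$ cases produce the same factor $\tfrac{1}{4}$ despite the opposite orientations encoded by $\omega_\pm$, and confirming that the fiber orientation used in $\pi_{\Gamma\ast}$ matches the one implicit in the computation above. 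Once these are pinned down, the argument is a direct computation and the change of variables is smooth because $f_\pm\in\mathcal{D}'_{E_{u/s}^*}$ while the change-of-variable map $B_\pm|_{S_x\mathbb{H}^3}$ is a diffeomorphism, so no regularization issue arises at this stage.
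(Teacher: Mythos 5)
Your proof matches the paper's argument almost exactly: identify $\sigma_\pm$ as the fiber push-forward of $f_\pm\,\alpha\wedge\omega_\pm$ via~\eqref{e:pushforward-nm-2}, compute the fiber restriction to get $\tfrac14\langle v,\eta\rangle_g\,dS(v)$, and then change variables $\nu=B_\pm(x,v)$ using~\eqref{eq:jacobianxi_pm}. The $E_u\oplus E_s$ decomposition of $(0,w_j)$ that you use to evaluate $\omega_\pm$ on vertical vectors is just an explicit unpacking of the evaluation the paper attributes directly to~\eqref{e:omega-pm-1}, and the sign/orientation bookkeeping you flag does work out to the common factor $\tfrac14$ for both choices of sign.
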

\begin{proof}
By~\eqref{e:pushforward-nm-2} and since $du=f_-\omega_-$, $du_*=f_+\omega_+$ we have
$$
\sigma_\pm=\pi_{\Sigma*}^{}(f_\pm\alpha\wedge\omega_\pm).
$$
Recall the horizontal/vertical decomposition~\eqref{e:hv-map}.
For any $(x,v)\in M=S\Sigma$, $\xi = (\xi_H, \xi_V) \in T_{(x,v)}M$, and a positively oriented $g$-orthonormal basis $v,v_1,v_2\in T_x\Sigma$ we compute by~\eqref{e:alpha-hv} and~\eqref{e:omega-pm-1}
$$
(\alpha\wedge\omega_\pm)(x,v)(\xi,(0,v_1),(0,v_2))=\textstyle{1\over 4}\langle \xi_H,v\rangle_g.
$$
Using the metric~$g$, we identify $\sigma_\pm$ with a vector field on~$\Sigma$.
Then 
$$
\sigma_\pm(x)={1\over 4}\int_{S_x\Sigma}f_\pm(x,v)v\,dS(v),\quad
x\in\Sigma.
$$
It follows that for each $x\in\mathbb H^3$
$$
\begin{aligned}
\pi_\Gamma^*\sigma_\pm(x)&={1\over 4}\int_{S_x\mathbb H^3}
\Phi_\pm(x,v)^{-2}g_\pm(B_\pm(x,v)) v\,dS(v)\\
&={1\over 4}\int_{\mathbb S^2} g_\pm(\nu)v_\pm(x,\nu)\,dS(\nu).
\end{aligned}
$$
Here in the first equality we used~\eqref{e:f-pm-again}.
In the second equality we made the change of variables
$\nu=B_\pm(x,v)$ and used~\eqref{eq:jacobianxi_pm}.
\end{proof}
We note that by the preceding lemma $v_\pm(x, \nu)$ define vector-valued Poisson kernels in the sense of \cite{olbrichdiss,kuster-weich2}. From Lemma~\ref{l:omega-u-form} we get the following
formula for the right-hand side of~\eqref{e:main-identity-2}
in terms of the distributions~$g_\pm$:
\begin{lemm}
  \label{l:omega-u-form-2}
We have (here $e_0$ is defined in~\eqref{e:e-0})
\begin{equation}
  \label{e:omega-u-form-2}
-\pi_\Gamma^*\Delta_g(\sigma_-\cdot\sigma_+)(e_0)
={1\over 8}\int_{\mathbb S^2\times\mathbb S^2}
(1-\nu_-\cdot\nu_+)^2g_-(\nu_-)g_+(\nu_+)\,dS(\nu_-)dS(\nu_+).
\end{equation}
\end{lemm}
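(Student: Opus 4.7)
The plan is a direct computation combining the formula from Lemma~\ref{l:omega-u-form} with the Laplacian identity~\eqref{e:laplace-poisson}. Since $\pi_\Gamma$ is a local isometry, $\pi_\Gamma^*$ commutes with $\Delta_g$, so it suffices to compute $-\Delta_g$ of
$$
h(x) := \pi_\Gamma^*(\sigma_-\cdot\sigma_+)(x) = \frac{1}{16}\iint_{\mathbb S^2\times\mathbb S^2} g_-(\nu_-)\, g_+(\nu_+)\,\langle v_-(x,\nu_-), v_+(x,\nu_+)\rangle_g \, dS(\nu_-)\,dS(\nu_+),
$$
and then evaluate the result at $x=e_0$. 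For fixed $x$ the integrand is smooth in $(\nu_-,\nu_+)$, so the double integral is interpreted as the distributional pairing of $g_-\otimes g_+$ against a smooth test function depending smoothly on~$x$; this automatically justifies differentiating under the pairing.

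First I would compute the hyperbolic inner product of $v_\pm(x,\nu_\pm) = \mp x \pm P(x,\nu_\pm)(1,\nu_\pm)$ directly from~\eqref{eq:xi_pmdef}. Using $\langle x,x\rangle_M = 1$, the orthogonality $\langle v_\pm,x\rangle_M = 0$, the Minkowski identity $\langle (1,\nu_-),(1,\nu_+)\rangle_M = 1-\nu_-\cdot\nu_+$, the definition $P(x,\nu) = \langle x,(1,\nu)\rangle_M^{-1}$ from~\eqref{e:Poisson-def}, and the sign convention $\langle\cdot,\cdot\rangle_g = -\langle\cdot,\cdot\rangle_M$ on tangent vectors to~$\mathbb H^3$, a short bilinear expansion yields
$$
\langle v_-(x,\nu_-), v_+(x,\nu_+)\rangle_g = 1 \,-\, (1-\nu_-\cdot\nu_+)\,P(x,\nu_-)P(x,\nu_+).
$$

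Next I would apply $-\Delta_g$ in the $x$-variable. The constant term disappears, and~\eqref{e:laplace-poisson} gives $-\Delta_g(P(x,\nu_-)P(x,\nu_+)) = 2(1-\nu_-\cdot\nu_+)\,P(x,\nu_-)^2 P(x,\nu_+)^2$. This produces
$$
-\Delta_g h(x) = \frac{1}{8}\iint_{\mathbb S^2\times\mathbb S^2} (1-\nu_-\cdot\nu_+)^2 \,P(x,\nu_-)^2 P(x,\nu_+)^2 \,g_-(\nu_-)\,g_+(\nu_+)\,dS(\nu_-)\,dS(\nu_+).
$$
Finally, at $x = e_0 = (1,0,0,0)$ one has $P(e_0,\nu) = \langle e_0,(1,\nu)\rangle_M^{-1} = 1$ for every $\nu\in\mathbb S^2$, so the Poisson factors drop out and~\eqref{e:omega-u-form-2} follows.

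The proof is essentially bookkeeping rather than substantive analysis: the only points needing a word of care are the sign conversion between $\langle\cdot,\cdot\rangle_M$ and $\langle\cdot,\cdot\rangle_g$, and the formal justification that the Laplacian may be moved inside the distributional pairing. The latter is immediate since $v_\pm(x,\nu)$ and $P(x,\nu)$ are jointly smooth on $\mathbb H^3\times\mathbb S^2$, so that for each fixed $x$ the integrand and all its $x$-derivatives are smooth in $(\nu_-,\nu_+)$, and the pairing commutes with differentiation in the parameter~$x$.
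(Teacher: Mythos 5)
Your approach is the same as the paper's: compute $\langle v_-(x,\nu_-),v_+(x,\nu_+)\rangle_g$ directly from~\eqref{eq:xi_pmdef}, apply~\eqref{e:laplace-poisson}, integrate against $g_\pm$ using Lemma~\ref{l:omega-u-form}, and evaluate at $x=e_0$. However, the intermediate inner-product formula you record has the wrong overall sign. Expanding $v_\pm(x,\nu)=\mp x\pm P(x,\nu)(1,\nu)$ bilinearly in the Minkowski form and using $\langle x,x\rangle_M=1$, $\langle x,(1,\nu_\pm)\rangle_M=P(x,\nu_\pm)^{-1}$ and $\langle(1,\nu_-),(1,\nu_+)\rangle_M=1-\nu_-\cdot\nu_+$ gives $\langle v_-,v_+\rangle_M = -1+1+1-P(x,\nu_-)P(x,\nu_+)(1-\nu_-\cdot\nu_+)=1-P(x,\nu_-)P(x,\nu_+)(1-\nu_-\cdot\nu_+)$. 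Since the hyperbolic metric is $-\langle\bullet,\bullet\rangle_M$, this yields $\langle v_-,v_+\rangle_g = P(x,\nu_-)P(x,\nu_+)(1-\nu_-\cdot\nu_+)-1$, not $1-(1-\nu_-\cdot\nu_+)P(x,\nu_-)P(x,\nu_+)$ as you wrote --- what you recorded is in fact $\langle v_-,v_+\rangle_M$ itself. This is not cosmetic: applying $-\Delta_g$ to your stated expression and using $-\Delta_g\big(P(x,\nu_-)P(x,\nu_+)\big)=2(1-\nu_-\cdot\nu_+)\big(P(x,\nu_-)P(x,\nu_+)\big)^2$ gives $-2(1-\nu_-\cdot\nu_+)^2\big(P(x,\nu_-)P(x,\nu_+)\big)^2$, hence the coefficient $-\tfrac18$ rather than the $+\tfrac18$ you assert, so your intermediate steps as written do not actually combine to the claimed answer. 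Correcting the sign of $\langle v_-,v_+\rangle_g$ repairs the derivation and recovers~\eqref{e:omega-u-form-2}. (Your remark justifying the exchange of the Laplacian with the distributional pairing, via smooth $x$-dependence of the test function, is correct and a useful supplement to what the paper leaves implicit.)
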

\begin{proof}
By~\eqref{eq:xi_pmdef} we have for each $\nu_-,\nu_+\in\mathbb S^2$
and $x\in\mathbb H^3$
$$
\langle v_-(x,\nu_-),v_+(x,\nu_+)\rangle_g=-\langle v_-(x,\nu_-),v_+(x,\nu_+)\rangle_{1,3}
=P(x,\nu_-)P(x,\nu_+)(1-\nu_-\cdot\nu_+)-1.
$$
With the hyperbolic Laplacian $\Delta_g$ acting in the~$x$ variable, we then
compute by~\eqref{e:laplace-poisson}
$$
-\Delta_g\langle v_-(x,\nu_-),v_+(x,\nu_+)\rangle_g=2(1-\nu_-\cdot\nu_+)^2\big(P(x,\nu_-)P(x,\nu_+)\big)^2.
$$
Now~\eqref{e:omega-u-form-2} follows from Lemma~\ref{l:omega-u-form} by integration
and using that $P(e_0,\nu_\pm)=1$ by~\eqref{e:Poisson-def}.
\end{proof}

\subsubsection{Change of variables}
\label{s:change-of-variables}

By~\eqref{e:F-lift} and~\eqref{e:Q-s-other-form} we can formally write the left-hand side of~\eqref{e:main-identity-2} as follows:
\begin{equation}
  \label{e:mi-lhs-1}
\begin{gathered}
\pi_\Gamma^*Q_4 F(e_0)\\={1\over 4}\int_{S\mathbb H^3}y_0^{-4}
\big(\Phi_-(y,v)\Phi_+(y,v)\big)^{-2}g_-(B_-(y,v))g_+(B_+(y,v))
dS(v)d\vol_g(y),
\end{gathered}
\end{equation}
where we recall $y = (y_0, y_1, y_2, y_3) \in \mathbb{H}^3$. Note that one has to take care when defining the integral above, as $g_\pm$ are distributions
and $S\mathbb H^3$ is noncompact, see~\S\ref{s:regularization} below.

On the other hand, the right-hand side of~\eqref{e:main-identity-2}
can be expressed using~\eqref{e:omega-u-form-2} as an
integral over $(\nu_-,\nu_+)\in\mathbb S^2\times \mathbb S^2$.
To prove~\eqref{e:main-identity-2} and relate the two integrals we will use
the change of variables $\Xi:(y,v)\mapsto (\nu_-,\nu_+,t)$,
where $t\in\mathbb R$, introduced in \eqref{e:Xi-def}. The basic properties of~$\Xi$ are collected below in
\begin{lemm}
  \label{l:Xi-prop}
1. Let $(\nu_-,\nu_+,t)=\Xi(y,v)$. Then
\begin{align}
\label{e:Xi-1}
\Phi_-(y,v)\Phi_+(y,v)&={4\over |\nu_--\nu_+|^2}={2\over 1-\nu_-\cdot\nu_+},
\\
y_0&={2\cosh t\over |\nu_--\nu_+|}.
\label{e:Xi-2}
\end{align}
(As before, we write elements of $\mathbb H^3$ as $y=(y_0,y_1,y_2,y_3)\in\mathbb R^{1,3}$.)

2. The Jacobian of $\Xi$ at $(y,v)$ with respect to the densities
$d\vol_g(y)dS(v)$ and $dS(\nu_-)dS(\nu_+)dt$ is equal to~$4\big(\Phi_-(y,v)\Phi_+(y,v)\big)^{-2}$.
\end{lemm}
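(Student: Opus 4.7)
The two identities in part 1 follow directly from the definitions. For \eqref{e:Xi-1}, combining $\nu_\pm = B_\pm(y,v)$ with \eqref{e:Phi-pm-B-pm-id} gives $\Phi_-\Phi_+ = 4/|\nu_- - \nu_+|^2$, and since $\nu_-, \nu_+ \in \mathbb{S}^2$ we have $|\nu_- - \nu_+|^2 = 2 - 2\nu_-\cdot\nu_+$, yielding both equalities. For \eqref{e:Xi-2}, the $0$-th coordinate of the defining identity $y \pm v = \Phi_\pm(1,B_\pm)$ from \eqref{e:B-pm-def} gives $y_0 + v_0 = \Phi_+$ and $y_0 - v_0 = \Phi_-$, so $2y_0 = \Phi_- + \Phi_+$. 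The definition of $t$ gives $\Phi_\pm = e^{\pm t}\sqrt{\Phi_-\Phi_+}$, hence $y_0 = \cosh(t)\sqrt{\Phi_-\Phi_+}$, which combined with part 1 is precisely \eqref{e:Xi-2}.

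For the Jacobian in part 2, the plan is to compute $d\Xi$ with respect to the Anosov splitting $T_{(y,v)}(S\mathbb{H}^3) = \mathbb{R}X \oplus E_u \oplus E_s$ on the source and the splitting $T_{\nu_+}\mathbb{S}^2 \oplus T_{\nu_-}\mathbb{S}^2 \oplus \mathbb{R}\partial_t$ on the target. I will use Sasaki-orthonormal frames on the source and the product orthonormal frame on the target, so the Jacobian is just $|\det d\Xi|$; note that the Sasaki volume form on $S\mathbb{H}^3$ equals $d\vol_g(y)\,dS(v)$, since both agree on any orthonormal frame adapted to the horizontal-vertical decomposition. The key structural facts are: (i) $dB_\pm(X) = 0$ by \eqref{e:B-pm-fibers}, and since $X\Phi_\pm = \pm \Phi_\pm$ from \eqref{e:X-Phi-pm} we have $Xt = 1$, hence $dt(X) = 1$; (ii) $dB_\pm$ vanishes on $E_\mp$ by \eqref{e:B-pm-fibers}; (iii) $dB_\pm|_{E_\pm}$ is a conformal isomorphism onto $T_{\nu_\pm}\mathbb{S}^2$ with conformal factor $\sqrt{2}/\Phi_\pm$ relative to the Sasaki metric.

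Granted (i)--(iii), the matrix of $d\Xi$ in these bases is block-triangular with respect to the $X$-direction: its last column (the image of $X$) has $dt(X) = 1$ in the $\partial_t$ row and zero in the $T\mathbb{S}^2$ rows. Expansion along this column gives
\[
|\det d\Xi| \;=\; dt(X) \cdot |\det(dB_+|_{E_u})| \cdot |\det(dB_-|_{E_s})| \;=\; 1 \cdot \left(\frac{\sqrt{2}}{\Phi_+}\right)^2 \cdot \left(\frac{\sqrt{2}}{\Phi_-}\right)^2 \;=\; \frac{4}{(\Phi_-\Phi_+)^2},
\]
where I used that a conformal $2\times 2$ map with factor $c$ has determinant $c^2$ in absolute value.

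The main technical step is establishing the conformal factor $\sqrt{2}/\Phi_\pm$ in (iii). Note that the paper's formula \eqref{eq:jacobianxi_pm} only gives the factor $1/\Phi_\pm$ on \emph{vertical} vectors, but $E_\pm$ are not vertical (they are described by $\xi_V = \pm \xi_H$ in \eqref{e:stun-hv}), which produces the extra $\sqrt{2}$. I plan to derive (iii) directly from the explicit formula \eqref{e:B-pm-diffs}: for $w \in \mathbb{R}^{1,3}$ with $\langle y,w\rangle_M = \langle v,w\rangle_M = 0$, the vector $(w, \pm w) \in E_\pm(y,v)$ has Sasaki norm $\sqrt{2}\,|w|_g$, while its image $dB_\pm(w, \pm w) = 2(w' - w_0 \nu_\pm)/\Phi_\pm$ has squared $\mathbb{S}^2$-norm $(2/\Phi_\pm)^2(|w'|^2 - w_0^2) = (2|w|_g/\Phi_\pm)^2$ (using $w_0 = w'\cdot\nu_\pm$, which follows from $\langle y\pm v, w\rangle_M = 0$, and $|w|_g^2 = |w'|^2 - w_0^2$). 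The ratio $\sqrt{2}/\Phi_\pm$ is then immediate, as is the conformality (by rotation-invariance of the computation in the transverse direction).
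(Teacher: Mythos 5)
Your proof is correct and follows essentially the same route as the paper's: part 1 is the same algebra with the $0$-th coordinate of \eqref{e:B-pm-def}, and part 2 is the same Jacobian computation on the Anosov splitting, organized as a block-triangular determinant with the two conformal $2\times 2$ blocks from $dB_\pm|_{E_u},dB_\mp|_{E_s}$ times $dt(X)=1$. The only real difference is how the key length $|dB_\pm(y,v)(w,\pm w)|_{\mathbb S^2}=2|w|_g/\Phi_\pm$ is obtained: the paper splits $(w,\pm w)=(w,\mp w)\pm 2(0,w)$, kills the first summand by \eqref{e:B-pm-fibers}, and applies \eqref{eq:jacobianxi_pm} to the vertical part, whereas you compute this directly from \eqref{e:B-pm-diffs} together with $w_0=w'\cdot\nu_\pm$ (deduced from $\langle y\pm v,w\rangle_M=0$). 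Both are valid; the paper's variant avoids expanding the square, yours is more self-contained. Your phrasing of the Sasaki-metric bookkeeping (Sasaki volume form $=d\vol_g\,dS$, $|(w,\pm w)|_S=\sqrt2|w|_g$, conformal factor $\sqrt2/\Phi_\pm$) reproduces the paper's basis computation in metric language and correctly yields the factor $4(\Phi_-\Phi_+)^{-2}$.
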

\Remark The identity in part~2 of the above is well-known, see~\cite[Theorem~8.1.1 on p.~131]{Nicholls}.
\begin{proof}
1. The identity~\eqref{e:Xi-1} follows immediately from~\eqref{e:Phi-pm-B-pm-id},
noting that $|\nu_--\nu_+|^2=2(1-\nu_-\cdot\nu_+)$.
To see~\eqref{e:Xi-2}, we compute by~\eqref{e:Xi-1} and~\eqref{e:Xi-def}
$$
\Phi_\pm(y,v)={2e^{\pm t}\over |\nu_--\nu_+|}
$$
which by~\eqref{e:B-pm-def} gives
$$
y_0={\Phi_-(y,v)+\Phi_+(y,v)\over 2}
={2\cosh t\over |\nu_--\nu_+|}.
$$

\noindent 2. Take $(y,v)\in S\mathbb H^3$. Let $w\in T_y\mathbb H^3$
satisfy~$\langle v,w\rangle_{1,3}=0$. Then
\begin{equation}\label{e:B-pm-length}
|dB_\pm(y,v)(w,\pm w)|_{\mathbb S^2}=2|dB_\pm(y,v)(0,w)|_{\mathbb S^2}={2|w|_g\over\Phi_\pm(y,v)}.
\end{equation}
Here in the first equality we write $(w,\pm w)=(w,\mp w)\pm 2(0,w)$ and use
that by~\eqref{e:B-pm-fibers}, $dB_\pm(y,v)(w,\mp w)=0$.
In the second equality we use~\eqref{eq:jacobianxi_pm}. Denoting by $X$
the generator of the geodesic flow and defining~$t$ by~\eqref{e:Xi-def}, we also have by~\eqref{e:X-Phi-pm} and~\eqref{e:B-pm-fibers}
$$
dB_\pm(y,v)(X(y,v))=0,\quad
dt(X(y,v))=1.
$$
Fix a $g$-orthonormal basis $v,v_1,v_2$ of $T_y\mathbb H^3$ and consider
the following basis of $T_{(y,v)}S\mathbb H^3$:
$$
\xi_0=X(y,v),\quad
\xi_1^\pm=(v_1,\pm v_1),\quad
\xi_2^\pm=(v_2,\pm v_2).
$$
Since $\xi_j^-\wedge \xi_j^+=2 (v_j,0)\wedge (0,v_j)$, the value of the density $d\vol_g(y)dS(v)$ on $\xi_0,\xi_1^-,\xi_2^-,\xi_1^+,\xi_2^+$
is equal to~4. On the other hand, writing
$(\eta_-(\xi),\eta_+(\xi),\tau(\xi))=d\Xi(y,v)(\xi)$, we have
$$
\eta_\pm(\xi_j^\mp)=\eta_\pm(\xi_0)=0,\quad
\tau(\xi_0)=1
$$
and the vectors $\eta_\pm(\xi_1^\pm),\eta_\pm(\xi_2^\pm)$
are orthogonal to each other and have length $2\Phi_\pm(y,v)^{-1}$ each by \eqref{e:B-pm-length}.
It follows that the value of the density $dS(\nu_-)dS(\nu_+)dt$
on the images of $\xi_0,\xi_1^-,\xi_2^-,\xi_1^+,\xi_2^+$ under
$d\Xi(y,v)$ is equal to~$16\big(\Phi_-(y,v)\Phi_+(y,v)\big)^{-2}$.
Thus the Jacobian of $\Xi$ at $(y,v)$ is equal to~$4\big(\Phi_-(y,v)\Phi_+(y,v)\big)^{-2}$.
\end{proof}
Using Lemma~\ref{l:Xi-prop} and~\eqref{e:mi-lhs-1}, we can formally write
the left-hand side of~\eqref{e:main-identity-2} as
\begin{equation}
  \label{e:mi-lhs-2}
\pi_\Gamma^*Q_4F(e_0)
={1\over 64}\int\limits_{(\mathbb S^2\times\mathbb S^2)_-\times \mathbb R}
{(1-\nu_-\cdot\nu_+)^2\over\cosh^4t}g_-(\nu_-)g_+(\nu_+)
\,dS(\nu_-)dS(\nu_+)dt.
\end{equation}
Using the change of variables $s=\tanh t$, we compute
\begin{equation}
  \label{e:cosh-integral}
\int_{\mathbb R}{dt\over\cosh^4t}=\int_{-1}^1 (1-s^2)\,ds={4\over 3}.
\end{equation}
Comparing~\eqref{e:mi-lhs-2} with~\eqref{e:omega-u-form-2}, we formally obtain the identity~\eqref{e:main-identity-2}. However, our argument is incomplete since the integrals in~\eqref{e:mi-lhs-1} and~\eqref{e:mi-lhs-2} are over the noncompact manifolds $S\mathbb H^3$, $(\mathbb S^2\times\mathbb S^2)_-\times\mathbb R$ and $g_\pm$ are distributions.
Thus one cannot immediately apply the change of variables formula to get~\eqref{e:mi-lhs-2}
from~\eqref{e:mi-lhs-1}, or Fubini's Theorem to get~\eqref{e:main-identity-2} from~\eqref{e:mi-lhs-2}.
To deal with these issues, we will employ a regularization procedure.

\subsubsection{Regularization and end of the proof}
\label{s:regularization}

Fix a cutoff function
$$
\chi\in \CIc(\mathbb R;[0,1]),\quad
\supp\chi\subset [-2,2],\quad
\chi|_{[-1,1]}=1.
$$
For $\varepsilon>0$, define the integral
$$
I_\varepsilon:=\int_{\mathbb H^3} \chi(\varepsilon y_0) y_0^{-4} \pi_\Gamma^*F(y)\,d\vol_g(y).
$$
(As before, we embed $\mathbb H^3$ into $\mathbb R^{1,3}$ and
we have $y_0=\langle e_0,y\rangle_{1,3}$ where $e_0=(1,0,0,0)$.)
By Lemma~\ref{l:Q-s-regularization} with $x=e_0$, $I_\varepsilon$ converges to
the left-hand side of~\eqref{e:main-identity-2}:
\begin{equation}
  \label{e:approxer-1}
I_\varepsilon\to \pi_\Gamma^*Q_4F(e_0)\quad\text{as}\quad \varepsilon\to +0.
\end{equation}
By~\eqref{e:approxer-1} and~\eqref{e:omega-u-form-2}, the proof
of~\eqref{e:main-identity-2} (and thus of Theorem~\ref{t:main-identity})
is finished once we show that
\begin{equation}
  \label{e:approxer-2}
I_\varepsilon\to {1\over 48}\int\limits_{\mathbb S^2\times\mathbb S^2}
(1-\nu_-\cdot\nu_+)^2g_-(\nu_-)g_+(\nu_+)\,
dS(\nu_-)dS(\nu_+)\quad\text{as}\quad \varepsilon\to +0.
\end{equation}
By~\eqref{e:F-lift} we have the following regularized version of~\eqref{e:mi-lhs-1}:
$$
I_\varepsilon={1\over 4}\int_{S\mathbb H^3}\chi(\varepsilon y_0)y_0^{-4}
\big(\Phi_-(y,v)\Phi_+(y,v)\big)^{-2}
g_-(B_-(y,v))g_+(B_+(y,v))
\,dS(v)d\vol_g(y).
$$
Making the change of variables $(\nu_-,\nu_+,t)=\Xi(y,v)$ and using Lemma~\ref{l:Xi-prop}, we then get the following regularized version of~\eqref{e:mi-lhs-2} (we keep in mind that $g_\pm$ are merely distributions so that all of the integrals around these lines are understood in the distributional sense):
$$
I_\varepsilon={1\over 64}\int\limits_{\mathbb S^2\times\mathbb S^2\times \mathbb R}
\chi\Big({2\varepsilon\cosh t\over |\nu_--\nu_+|}\Big)
{(1-\nu_-\cdot\nu_+)^2\over\cosh^4t}g_-(\nu_-)g_+(\nu_+)
\,dS(\nu_-)dS(\nu_+)dt.
$$
For $r\geq 0$, define the function
\begin{equation}
  \label{e:psi-epsilon}
\psi_\varepsilon(r):={3\over 4}\int_{\mathbb R}\chi\Big({2\varepsilon\cosh t\over \sqrt r}\Big)
\cosh^{-4}t\,dt.
\end{equation}
Note that $\psi_\varepsilon\in C^\infty([0,\infty))$ and $\psi_\varepsilon(r)=0$ for $r\ll\varepsilon^2$.
We now have 
\begin{equation}
  \label{e:I-eps-formula}
I_\varepsilon={1\over 48}\int\limits_{\mathbb S^2\times\mathbb S^2}
\psi_\varepsilon(|\nu_--\nu_+|^2)(1-\nu_-\cdot\nu_+)^2g_-(\nu_-)g_+(\nu_+)\,
dS(\nu_-)dS(\nu_+).
\end{equation}
Recalling that $|\nu_--\nu_+|^2=2(1-\nu_-\cdot\nu_+)$, we see from~\eqref{e:I-eps-formula} that it suffices to prove the following version of~\eqref{e:approxer-2}:
\begin{equation}
  \label{e:approxer-3}
\int\limits_{\mathbb S^2\times\mathbb S^2}\big(1-\psi_\varepsilon(|\nu_--\nu_+|^2)\big)
|\nu_--\nu_+|^4g_-(\nu_-)g_+(\nu_+)\,dS(\nu_-)dS(\nu_+)\to 0\quad\text{as}\quad
\varepsilon\to +0.
\end{equation}
If $g_\pm$ were smooth functions on $\mathbb S^2$, then~\eqref{e:approxer-3}
would follow from the Dominated Convergence Theorem since by~\eqref{e:cosh-integral}
we have $\psi_\varepsilon(r)\to 1$ as $\varepsilon\to +0$ for all $r>0$.
However, $g_\pm$ are merely distributions, so one has to be more careful.
We start by establishing the Sobolev regularity of $g_\pm$
by following the standard proof of the Fredholm property in anisotropic
Sobolev spaces.
(We use the proof in~\cite{dyatlov-zworski-16}; one could alternatively
carefully examine the proof in~\cite{faure-sjostrand-11}.)
See the papers of Adam--Baladi~\cite[\S3.3]{Adam-Baladi},
Guillarmou--Poyferr\'e--Bonthonneau~\cite[Appendix~A]{Guillarmou-Poyferre},
and Dyatlov~\cite{Dyatlov-Nir} for a general discussion of Sobolev regularity thresholds
for the Pollicott--Ruelle resolvent.
\begin{lemm}
  \label{l:g-pm-sobolev}
We have $g_\pm \in H^{-2-\delta}(\mathbb S^2)$ for all $\delta>0$.
\end{lemm}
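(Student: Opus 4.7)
The strategy is to use the identity $\pi_\Gamma^* f_+ = \Phi_+^{-2}(g_+ \circ B_+)$ from~\eqref{e:f-pm-again} in order to transfer the Sobolev regularity of the coresonant state $f_+ \in \mathcal D'(M)$ to $g_+$ by restricting both sides to a single spherical fiber. By the symmetry of the setup under $\mathcal J^*$ from~\eqref{e:J-star-def}, which exchanges the stable and unstable directions, it suffices to establish $g_+ \in H^{-2-\delta}(\mathbb S^2)$ for every $\delta > 0$; the argument for $g_-$ is symmetric.

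The first step is to establish the bound $f_+ \in H^{s_0}(M)$ for all $s_0 < -1/2$. By Lemma~\ref{l:du-dv}, $f_+$ satisfies $(X+2)f_+ = 0$ and $U f_+ = 0$ for every smooth section $U \in C^\infty(M; E_s)$, and $\WF(f_+) \subset E_s^*$ by definition of the coresonant space. Thus $f_+$ is microlocally smooth along the three-dimensional weak stable foliation and singular only in the two unstable directions, which yields the claimed bound via the anisotropic Sobolev space framework from~\eqref{e:anisotropic-sobolev} developed in~\S\ref{s:general-perturber}.

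The second step is to restrict $f_+$ to a single fiber. Fix $x_0 := e_0 = (1,0,0,0) \in \mathbb H^3$ and consider the 2-dimensional submanifold $S_{x_0}\mathbb H^3 \subset S\mathbb H^3$, which descends via $\pi_\Gamma$ to a fiber of $M \to \Sigma$. The conormal bundle of this fiber consists of the horizontal covectors $\mathbf H^*$ in the horizontal/vertical splitting, and using the decompositions~\eqref{e:anosov-dual} and~\eqref{e:stun-hv} one checks directly that $E_s^* \cap \mathbf H^* = \{0\}$ (since $(E_0 \oplus E_s) + \mathbf V$ spans all of $T_{(e_0,v)}S\mathbb H^3$). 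Hence by~\cite[Theorem~8.2.4]{hoermander-03} the restriction $f_+|_{S_{x_0}\mathbb H^3}$ is a well-defined distribution on $\mathbb S^2$, and the wavefront-constrained trace estimate with codimension $3$ loss $3/2$ gives $f_+|_{S_{x_0}\mathbb H^3} \in H^{s_0 - 3/2}(\mathbb S^2)$, hence in $H^{-2-\delta}(\mathbb S^2)$ for every $\delta > 0$. Since $P(e_0,\nu) \equiv 1$ by~\eqref{e:Poisson-def}, \eqref{eq:Phi_pmPoisson} gives $\Phi_+(e_0, \cdot) \equiv 1$, so $f_+(e_0, v) = g_+(B_+(e_0, v))$, and $B_+(e_0, \cdot) : S_{e_0}\mathbb H^3 \to \mathbb S^2$ is an isometry by~\eqref{eq:jacobianxi_pm}. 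This transfers the regularity to $g_+$, completing the proof.

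The main obstacle is the first step: a generic distribution on the 5-manifold $M$ only lies in $H^{-5/2-\epsilon}$, and even the naïve bound from the wavefront condition $\WF(f_+) \subset E_s^*$ alone is insufficient once one subtracts the $3/2$ trace loss. The extra regularity needed comes from the microlocal smoothness of $f_+$ along the full weak stable foliation, not merely from the wavefront inclusion, and invoking the anisotropic Sobolev space structure adapted to the Anosov flow is essential. An alternative route would replace this microlocal input by an explicit spherical harmonic decomposition of $g_+$ combined with the Poisson-type representation of the smooth harmonic 1-form $\sigma_+$ from Lemma~\ref{l:omega-u-form}, using that boundedness of $\pi_\Gamma^*\sigma_+$ on $\mathbb H^3$ constrains the growth of the spherical harmonic coefficients of $g_+$.
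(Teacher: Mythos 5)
Your two key numerical claims are both incorrect, and the errors happen to cancel; the proof as written is not valid.

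First, the claimed bound $f_+ \in H^{s_0}(M)$ for all $s_0 < -1/2$ has no justification. The microlocal radial estimate at the radial set $E_s^*$ (where the wavefront set of $f_+$ lives) gives Sobolev regularity below a threshold governed by the spectral parameter of the resonance and the expansion rate $\frac{H_p|\xi|}{|\xi|}=1$ on that set; for the resonance at $\lambda_0$ with $\Im\lambda_0 = -2$, the threshold computation in the paper, $2 + s\cdot 1 < 0$, gives exactly $s < -2$, not $s<-1/2$. The "microlocal smoothness along the weak stable foliation" you invoke is already what the wavefront condition $\mathrm{WF}(f_+)\subset E_s^*$ encodes, and it is precisely the input to the radial estimate; it does not push the threshold to $-1/2$. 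In fact, writing $\pi_\Gamma^*f_+ = \Phi_+^{-2}(g_+\circ B_+)$ and noting that $B_+$ is a submersion and $\Phi_+$ is smooth and nonvanishing, the local Sobolev regularity of $f_+$ is the \emph{same} as that of $g_+$, so claiming $f_+\in H^{-1/2-\delta}$ would amount to assuming $g_+\in H^{-1/2-\delta}$, which is far stronger than the lemma you are trying to prove.

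Second, the "wavefront-constrained trace estimate with codimension 3 loss $3/2$" is not the right tool and produces the wrong number. The $k/2$ loss in the Sobolev trace theorem arises from integrating out frequencies conormal to the submanifold, and it only applies at positive regularity ($s>k/2$). Here the distribution $g_+\circ B_+$ is constant along the 3-dimensional $B_+$-fibers, which are transverse to the spherical fiber $S_{e_0}\mathbb H^3$; consequently it has no frequency content in the conormal directions to the spherical fiber, and the restriction $f_+|_{S_{e_0}\mathbb H^3} = g_+\circ B_+(e_0,\cdot)$ is, up to the diffeomorphism $B_+(e_0,\cdot):S_{e_0}\mathbb H^3\to\mathbb S^2$, exactly $g_+$. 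The correct loss is $0$, not $3/2$.

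The paper's proof avoids any fiber restriction: since a distribution lies locally in $H^s$ if and only if its pullback by a submersion does, the claim $g_\pm\in H^{-2-\delta}(\mathbb S^2)$ follows directly once one shows $f_\pm\in H^{-2-\delta}(M)$, which is the content of the threshold radial estimate on the anisotropic Sobolev spaces. Your proposal arrives at the correct conclusion only because two wrong estimates ($-1/2$ instead of $-2$, and loss $3/2$ instead of $0$) offset each other.
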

\begin{proof}
We show the regularity of $g_-$, with $g_+$ handled similarly.
Recall that $g_-$ is related to the distribution $f_-\in\mathcal D'_{E_u^*}(M;\mathbb C)$
by~\eqref{e:f-pm-again}. Since $\Phi_-$ is smooth and $B_-$ is a submersion,
it suffices to show that $f_-\in H^{-2-\delta}(M)$.

By Lemma~\ref{l:du-dv}, we have $(X-2)f_-=0$, that is $f_-$ is a Pollicott--Ruelle resonant state for the operator $P=-iX$ corresponding to the resonance $\lambda_0=-2i$, see~\S\ref{s:resonant-states}. Given that Pollicott--Ruelle resonant states are eigenfunctions of $P$ on anisotropic Sobolev spaces (see~\eqref{e:anisotropic-sobolev}), it suffices to show that one can choose the order function $m$ in the definition of the weight~$G(\rho,\xi)=m(\rho,\xi)\log(1+|\xi|)$ such that the Fredholm property~\eqref{e:Fredholm} holds on the anisotropic Sobolev space~$\mathcal H_{G,0}$ for $\Im\lambda\geq -2$ and $\mathcal H_{G,0}\subset H^{-2-\delta}$; the latter is equivalent to requiring that $m\geq -2-\delta$ everywhere.

In~\cite[\S\S3.3--3.4]{dyatlov-zworski-16} the Fredholm property~\eqref{e:Fredholm} is shown
using propagation of singularities and microlocal radial estimates.
Following the proof of~\cite[Proposition~3.4]{dyatlov-zworski-16}, we see that
one only needs to check that the low regularity radial estimate~\cite[Proposition~2.7]{dyatlov-zworski-16} applies to the operator $P-\lambda$ (where $\Im\lambda\geq -2$)
at the radial sink $E_u^*$ (see~\eqref{e:anosov-dual}) in the space $H^{-2-\delta}$.
(The high regularity radial estimate~\cite[Proposition~2.6]{dyatlov-zworski-16}
would apply once $m$ is sufficiently large on $E_s^*$,
which can be arranged.)
The threshold regularity for this estimate is computed in~\cite[Theorem~E.54]{dyatlov-zworski-book}.
In our setting, since the operator $P$ is symmetric on $L^2(M;d\vol_\alpha)$
and it has order~$k=1$, it is enough that
$$
2+(-2-\delta){H_p|\xi|\over |\xi|}<0\quad\text{on}\quad E_u^*
$$
where $p(\rho,\xi)=\langle X(\rho),\xi\rangle$ is the principal symbol of $P$
and its Hamiltonian flow is given by $e^{tH_p}(\rho,\xi)=(\varphi_t(\rho),d\varphi_t^{-T}(\rho)\xi)$, see~\cite[\S3.1]{dyatlov-zworski-16}. Choosing the norm $|\xi|$ induced by the Sasaki metric and using~\eqref{e:sasaki-conformal}, we see that
$$
{H_p|\xi|\over |\xi|}=1\quad\text{on}\quad E_u^*
$$
which means that the threshold regularity condition for the radial estimate is satisfied and the proof is finished.
\end{proof}
Coming back to the proof of~\eqref{e:approxer-3}, we rewrite it as
\begin{equation}
  \label{e:approxer-4}
\langle A_{\kappa_\varepsilon} g_-,\overline{g_+}\rangle_{L^2(\mathbb S^2)}\to 0\quad\text{as}\quad
\varepsilon\to +0
\end{equation}
where the operator $A_{\kappa_\varepsilon}$ is given by~\eqref{e:A-psi-def}:
$$
A_{\kappa_\varepsilon}f(\nu_+)=\int_{\mathbb S^2}\kappa_\varepsilon(|\nu_--\nu_+|^2)f(\nu_-)\,dS(\nu_-)
$$
and the function $\kappa_\varepsilon\in C([0,4])$ is given by
(using~\eqref{e:cosh-integral} and~\eqref{e:psi-epsilon} in the second equality below)
$$
\kappa_\varepsilon(r):={4\over 3}r^2(1-\psi_\varepsilon(r))=r^2\int_{\mathbb R}\bigg(1-\chi\Big({2\varepsilon\cosh t\over\sqrt r}\Big)\bigg)\cosh^{-4} t\,dt.
$$
Using Lemma~\ref{l:g-pm-sobolev}, we have in particular $g_\pm\in H^{-5/2}(\mathbb S^2)$.
Thus to finish the proof of~\eqref{e:approxer-4}, and thus of Theorem~\ref{t:main-identity}, it remains to prove the norm bound
\begin{equation}
  \label{e:approxer-5}
\|A_{\kappa_\varepsilon}\|_{H^{-5/2}(\mathbb S^2)\to H^{5/2}(\mathbb S^2)}\to 0\quad\text{as}\quad\varepsilon\to +0.
\end{equation}
To show~\eqref{e:approxer-5}, we will bound the norms of $A_{\kappa_\varepsilon}$
between Sobolev spaces using Lemma~\ref{l:A-psi-bound}. To do this
we estimate the derivatives of $\kappa_\varepsilon$:
\begin{lemm}
  \label{l:kappa-eps-estimate}
Let $j,k\in\mathbb N_0$. Then there exists $C$ depending only on $j,k$
such that for all $\varepsilon\in (0,1]$
\begin{equation}
  \label{e:L1symbolestimate}
\|r^k\partial^j_r \kappa_\varepsilon(r)\|_{L^1([0,4])}\leq
\begin{cases}
C\varepsilon^4,& k\geq j;\\
C\varepsilon^4\log(1/\varepsilon),& k=j-1;\\
C\varepsilon^{2(3+k-j)},& k\leq j-2.
\end{cases}
\end{equation}
\end{lemm}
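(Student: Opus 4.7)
The plan is to write $\kappa_\varepsilon(r) = r^2\Psi_\varepsilon(r)$ with
\[
\Psi_\varepsilon(r) := \int_{\mathbb R}\bigl(1-\chi(2\varepsilon\cosh t/\sqrt r)\bigr)\cosh^{-4}t\,dt,
\]
and to split the analysis at $r=\varepsilon^2$. On $[0,\varepsilon^2]$, one has $2\varepsilon\cosh t/\sqrt r \ge 2$ for every $t$, so $1-\chi\equiv 1$, $\Psi_\varepsilon$ equals the constant $4/3$, and therefore $\kappa_\varepsilon(r) = 4r^2/3$ on this interval. This gives directly $\int_0^{\varepsilon^2} r^k|\partial_r^j\kappa_\varepsilon|\,dr \le C_{j,k}\,\varepsilon^{2(k+3-j)}$ for $j\le 2$ and $0$ for $j\ge 3$, quantities which are dominated by the claimed bounds in all three cases of~\eqref{e:L1symbolestimate}.

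On $[\varepsilon^2,4]$, the essential calculation is a chain-rule identity. Setting $f:=2\varepsilon\cosh t/\sqrt r$, one has $r\partial_r f = -f/2$, so by induction $(r\partial_r)^j\chi(f) = \sum_{m=1}^{j} c_{j,m}\,f^m\chi^{(m)}(f)$ for $j\ge 1$, and since $r^j\partial_r^j$ is a polynomial in $r\partial_r$,
\[
\partial_r^j\chi(f) = r^{-j}\sum_{m=1}^{j}\tilde c_{j,m}\,f^m\chi^{(m)}(f).
\]
Because $\chi^{(m)}$ for $m\ge 1$ is supported where $f\in[1,2]$, every summand is supported on the $t$-set $E_r:=\{\cosh t\in[\sqrt r/(2\varepsilon),\sqrt r/\varepsilon]\}$. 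The Lebesgue measure of $E_r$ is uniformly bounded in $r\ge\varepsilon^2$ and $\varepsilon$: for $\varepsilon^2\le r\le 4\varepsilon^2$ its length is at most $2\operatorname{arcosh}(2)$, and for $r\ge 4\varepsilon^2$ its length tends to $2\log 2$. Moreover on $E_r$ one has $\cosh^{-4}t\le 16\varepsilon^4/r^2$ uniformly for $r\ge\varepsilon^2$ (for $r\ge 4\varepsilon^2$ because $\cosh t\ge\sqrt r/(2\varepsilon)$, and for $\varepsilon^2\le r\le 4\varepsilon^2$ because then $16\varepsilon^4/r^2\ge 1\ge\cosh^{-4}t$). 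Thus $|\partial_r^j\Psi_\varepsilon(r)|\le C_j\,\varepsilon^4 r^{-j-2}$ for $j\ge 1$, and by the same reasoning $|\Psi_\varepsilon(r)|\le C\varepsilon^4/r^2$. Applying Leibniz to $\kappa_\varepsilon=r^2\Psi_\varepsilon$, each term of
\[
\partial_r^j\kappa_\varepsilon = r^2\partial_r^j\Psi_\varepsilon + 2jr\,\partial_r^{j-1}\Psi_\varepsilon + j(j-1)\partial_r^{j-2}\Psi_\varepsilon
\]
is $O_j(\varepsilon^4 r^{-j})$, so $|\partial_r^j\kappa_\varepsilon(r)|\le C_j\,\varepsilon^4 r^{-j}$ on $[\varepsilon^2,4]$.

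Integration then gives $\int_{\varepsilon^2}^4 r^k|\partial_r^j\kappa_\varepsilon|\,dr \le C_j\varepsilon^4\int_{\varepsilon^2}^4 r^{k-j}\,dr$, which evaluates to $O(\varepsilon^4)$ when $k\ge j$, to $O(\varepsilon^4\log(1/\varepsilon))$ when $k=j-1$, and to $O(\varepsilon^{2(3+k-j)})$ when $k\le j-2$; combining with the $[0,\varepsilon^2]$ estimates yields~\eqref{e:L1symbolestimate}. The main delicate point is obtaining uniform constants across the transition $r\sim\varepsilon^2$: the Euler-derivative identity above produces exactly a factor $r^{-j}$, which combines with the factor $\varepsilon^4/r^2$ from the support/measure analysis to give the single clean bound $\varepsilon^4 r^{-j}$ valid all the way down to $r=\varepsilon^2$, meeting seamlessly with the explicit formula from the first region.
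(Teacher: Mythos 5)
Your proof is correct and essentially the same as the paper's: both split at $r=\varepsilon^2$, exploit the explicit formula $\kappa_\varepsilon=\tfrac43 r^2$ on the inner region, and on the outer region derive the bound $|\partial_r^j\kappa_\varepsilon|\leq C\varepsilon^4 r^{-j}$ from the Euler-operator identity $r\partial_r\,\chi(f)=-\tfrac12 f\chi'(f)$ (equivalently the paper's $\tau\partial_\tau\Phi_G=-\tfrac12\Phi_{s\partial_s G}$) combined with the vanishing of $1-\chi$ near $0$. The paper packages the $t$-integral into an abstract functional $\Phi_G$ with a single clean decay estimate valid for all $j\geq 0$, whereas you do the Leibniz bookkeeping by hand and treat $j=0$ separately; the two are cosmetically but not substantively different.
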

\begin{proof}
Throughout the proof we denote by $C$ a constant depending only on $j,k$ whose
precise value might change from line to line.

\noindent 1. 
For any $G(s)\in C^\infty([0,\infty))$ which is constant near $s=\infty$ define
$$
\Phi_G(\tau):=\int_{\mathbb R}G\Big({2\cosh t\over \sqrt \tau}\Big)\cosh^{-4}t\,dt,\quad
\tau> 0.
$$
We have the identity
\begin{equation}
  \label{e:kapest-1}
\tau\partial_\tau\Phi_G=-\textstyle{1\over 2}\Phi_{s\partial_s G}.
\end{equation}
Moreover, we have the estimate
\begin{equation}
  \label{e:kapest-2}
G|_{[-1,1]}=0\quad\Longrightarrow\quad
|\Phi_G(\tau)|\leq {C\|G\|_{L^\infty}\over 1+\tau^2}
\end{equation}
which can be proved by bounding $|\Phi_G(\tau)|$ by $\|G\|_{L^\infty}$
times the integral of $\cosh^{-4}t\,dt$ over the set of $t$ such that
$\cosh t\geq\sqrt{\tau}/2$ and using that
$\int \cosh^{-4}t\,dt=\tanh t-{1\over 3}\tanh^3 t+C$
and $\sqrt{1-\lambda}-{1\over 3}(1-\lambda)^{3/2}={2\over 3}+\mathcal O(\lambda^2)$
as $\lambda={4\over \tau}\to 0$.

\noindent 2. We have
$$
\kappa_\varepsilon(r)=r^2\Phi_{1-\chi}(\varepsilon^{-2}r).
$$
By~\eqref{e:kapest-1} for each $j\geq 0$
$$
\begin{gathered}
(r\partial_r)^j\kappa_\varepsilon(r)
=r^2(r\partial_r+2)^j\big(\Phi_{1-\chi}(\varepsilon^{-2}r)\big)=
r^2\Phi_{G_j}(\varepsilon^{-2}r)\\
\text{where}\quad
G_j(s):=(2-\textstyle{1\over 2}s\partial_s)^j (1-\chi)(s).
\end{gathered}
$$
Since $\chi|_{[-1,1]}=1$, we have $G_j|_{[-1,1]}=0$. Thus by~\eqref{e:kapest-2}
$$
|(r\partial_r)^j\kappa_\varepsilon(r)|\leq {Cr^2\over 1+\varepsilon^{-4}r^2}.
$$
Writing $r^j\partial_r^j$ as a linear combination of $(r\partial_r)^q$
with $0\leq q\leq j$, we get
$$
|\partial_r^j\kappa_\varepsilon(r)|\leq {Cr^{2-j}\over 1+\varepsilon^{-4}r^2}\leq C\varepsilon^4r^{-j}.
$$
Since $\supp\chi\subset [-2,2]$, we have by~\eqref{e:cosh-integral}
$$
\kappa_\varepsilon(r)=\textstyle{4\over 3}r^2\quad\text{for}\quad 0\leq r\leq \varepsilon^2.
$$
Therefore
$$
\|r^k\partial_r^j\kappa_\varepsilon(r)\|_{L^1([0,4])}\leq
C\int_0^{\varepsilon^2}r^k\partial_r^j(r^2)\,dr
+C\varepsilon^4\int_{\varepsilon^2}^4 r^{k-j}\,dr
$$
which gives~\eqref{e:L1symbolestimate}.
\end{proof}
Combining Lemma~\ref{l:A-psi-bound} and Lemma~\ref{l:kappa-eps-estimate}, we get
$$
\|A_{\kappa_\varepsilon}\|_{H^{-5/2}\to H^{3/2}}\leq C\varepsilon^2,\quad
\|A_{\kappa_\varepsilon}\|_{H^{-5/2}\to H^{7/2}}\leq C.
$$
By interpolation in Sobolev spaces (taking $f\in H^{-5/2}(\mathbb S^2)$
and using that $\|v\|_{H^1(\mathbb S^2)}^2$ is bounded by $\langle (1-\Delta_{\mathbb S^2})v,v\rangle_{L^2(\mathbb S^2)}\leq C\|v\|_{L^2(\mathbb S^2)}\|v\|_{H^2(\mathbb S^2)}$ for $v:=(1-\Delta_{\mathbb S^2})^{3/4}A_{\kappa_\varepsilon}f$) we then have
$$
\|A_{\kappa_\varepsilon}\|_{H^{-5/2}\to H^{5/2}}\leq C\varepsilon.
$$
This gives~\eqref{e:approxer-5} and finishes the proof of Theorem~\ref{t:main-identity}.

\appendix

\section{Harmonic 1-forms of constant length}\label{sec:appC}

The purpose of this appendix is to give an elementary proof of the fact that there are no harmonic 1-forms of constant non-zero length on closed hyperbolic 3-manifolds:
\begin{prop}
\label{prop:harmoniconeforms}
Let $(\Sigma,g)$ be a compact hyperbolic 3-manifold (see~\S\ref{s:hyp-3-basics}).
Assume that $\omega\in C^\infty(\Sigma;T^*\Sigma)$ is a harmonic 1-form
such that its length $|\omega|_g$ is constant. Then $\omega=0$.
\end{prop}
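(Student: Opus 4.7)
The plan is to reduce the problem to a rigid local structure by combining the Bochner--Weitzenb\"ock identity with the constant negative curvature, and then to derive a contradiction from a direct curvature computation in an adapted orthonormal frame.

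First, I would apply the Weitzenb\"ock identity $\nabla^*\nabla\omega = -\mathrm{Ric}_g(\omega)$ for a harmonic 1-form, combined with $\mathrm{Ric}_g = -2g$ on a hyperbolic 3-manifold, to obtain $\nabla^*\nabla\omega = 2\omega$. Expanding the pointwise identity $\frac{1}{2}\Delta_g|\omega|_g^2 = |\nabla\omega|_g^2 - \langle\nabla^*\nabla\omega,\omega\rangle_g$ and using that the left-hand side vanishes by hypothesis, I get $|\nabla\omega|_g^2 = 2|\omega|_g^2$ pointwise. Assuming for contradiction that $|\omega|_g = c > 0$, the form $\omega$ is nowhere vanishing.

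Next I pass to the dual vector field $V := \omega^\sharp$. The closed and co-closed conditions on $\omega$ say exactly that $\nabla V$ is symmetric and traceless as a $(1,1)$-tensor on $T\Sigma$. Constancy of $|V|_g$ together with symmetry of $\nabla V$ forces $\nabla_V V = 0$, so $V/c$ is a nonvanishing unit geodesic vector field. Since the symmetric endomorphism $\nabla V$ annihilates $V$, its restriction to the 2-plane field $V^\perp$ is again symmetric and traceless, and the pointwise identity $|\nabla V|_g^2 = 2c^2$ forces its eigenvalues to be exactly $\pm c$. In particular they are distinct and nonzero, so locally there is a smooth orthonormal frame $(e_1, e_2, e_3)$ with $e_1 = V/c$, $\nabla_{e_1} e_1 = 0$, $\nabla_{e_2} e_1 = e_2$, and $\nabla_{e_3} e_1 = -e_3$.

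The remaining step is essentially bookkeeping: I use metric compatibility to parameterize the remaining Christoffel symbols of the frame in terms of a few free coefficients, and then impose the constant curvature identity
$$R(X,Y)Z = -\langle Y,Z\rangle_g X + \langle X,Z\rangle_g Y$$
on the triples $(e_i, e_j, e_1)$ to pin them down. This produces a completely rigid frame with $\nabla_{e_1} e_i = 0$, $\nabla_{e_2} e_2 = -e_1$, $\nabla_{e_3} e_3 = e_1$, and $\nabla_{e_2} e_3 = \nabla_{e_3} e_2 = 0$. The main obstacle, and source of the contradiction, is then a single final consistency check: computing $R(e_2, e_3)e_2$ directly from this rigid frame (via $\nabla_{e_3}\nabla_{e_2}e_2 = \nabla_{e_3}(-e_1) = e_3$ and vanishing Lie bracket) yields $-e_3$, whereas constant sectional curvature $-1$ requires $+e_3$. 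This forces $c = 0$, i.e.\ $\omega \equiv 0$, completing the argument.
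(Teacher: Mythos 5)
Your proof is correct, and it takes a genuinely different route from the paper's. Both start with the Bochner/Weitzenb\"ock computation showing that the dual vector field $V$ satisfies $\nabla_V V=0$, $\nabla V$ symmetric traceless, and $|\nabla V|^2=2c^2$, so that on $V^\perp$ the eigenvalues are exactly $\pm c$. From there the paper goes global and topological: it rescales a nearby closed form with rational periods to get a fibration $f:\Sigma\to\mathbb S^1$, identifies the fibers as tori because their tangent bundle splits into the two eigenline bundles, and then invokes Preissman's theorem to rule out a $\mathbb Z^2$ in $\pi_1(\Sigma)$. Your argument instead stays entirely local: metric compatibility plus the curvature constraints $R(e_2,e_1)e_1=-e_2$ and $R(e_2,e_3)e_1=0$ force the remaining connection coefficient $r$ (where $\nabla_{e_2}e_2=-e_1+r(e_2)e_3$ etc.) to vanish, after which the sectional curvature of $\operatorname{span}\{e_2,e_3\}$ comes out as $+1$, contradicting $K\equiv-1$. (Indeed the rigid frame you produce is exactly the left-invariant structure on $\mathrm{Sol}^3$, whose sectional curvatures are $-1,-1,+1$, so the contradiction is unavoidable.) Your version buys a strictly stronger and more elementary statement: no open subset of any constant-curvature-$(-1)$ $3$-manifold, compact or not, carries a harmonic $1$-form of constant nonzero length; compactness, orientability, and Preissman are never needed. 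The paper's topological route is perhaps more robust if one wants to relax the constant-curvature hypothesis at the expense of keeping the global structure, but for the proposition at hand the local rigidity argument is shorter and self-contained.
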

\Remark Proposition~\ref{prop:harmoniconeforms} follows directly from the more general work of~\cite{zeghib-93}. The presentation in the appendix borrows from ideas in~\cite{harris-paternain-16}.

To prove Proposition~\ref{prop:harmoniconeforms} we argue by contradiction.
Assume that $\omega\neq 0$; dividing $\omega$ by its length we arrange that, where $\delta = -\star d \star$ is the formal adjoint of $d$ (here $\star$ is the Hodge star)
$$
d\omega=0,\quad
\delta\omega=0,\quad
|\omega|_g=1.
$$
Using the metric~$g$, define the dual vector field to~$\omega$,
$$
W\in C^\infty(\Sigma;T\Sigma),\quad
|W|_g=\omega(W)=1.
$$
\begin{lemm}
There exist one-dimensional smooth subbundles $E_\pm\subset T\Sigma$
such that $T\Sigma=\mathbb RW\oplus E_+\oplus E_-$.
\end{lemm}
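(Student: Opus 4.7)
My plan is to obtain the splitting as the eigenspace decomposition of the $(1,1)$-tensor $A$ on $\Sigma$ defined by $A(V) := \nabla_V W$. First I would record three pointwise properties of $A$: (i) it is symmetric, because $\langle A(V_1), V_2\rangle_g = (\nabla_{V_1}\omega)(V_2)$ and $d\omega = 0$ symmetrizes the right-hand side in $V_1,V_2$; (ii) it is traceless, because $\mathrm{tr}\,A = \mathrm{div}\,W = -\delta\omega = 0$; and (iii) the line $\mathbb R W$ lies in its kernel -- differentiating $|W|_g^2\equiv 1$ gives that the image of $A$ is contained in $W^\perp$, while combining $d\omega = 0$ with $\omega(W)\equiv 1$ yields $(\nabla_W\omega)(U) = (\nabla_U\omega)(W) = U(\omega(W)) - \omega(\nabla_U W) = 0$, hence $\nabla_W W = 0$. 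Consequently $A$ restricts to a symmetric, traceless endomorphism of the rank-two bundle $W^\perp$, whose eigenvalues at each point are $\pm\mu$ for some $\mu\geq 0$.

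The next step is to pin down $\mu$ via the Bochner--Weitzenbock identity for harmonic 1-forms,
\[
\tfrac{1}{2}\Delta_g|\omega|_g^2 = |\nabla\omega|_g^2 + \mathrm{Ric}(\omega,\omega).
\]
Since $(\Sigma,g)$ has constant sectional curvature $-1$, the Ricci tensor equals $-2g$; since $|\omega|_g\equiv 1$, the left-hand side vanishes, forcing $|\nabla\omega|_g^2\equiv 2$ everywhere on $\Sigma$. Translating this back to $A$ via $|\nabla\omega|_g^2 = |A|^2$ (Hilbert--Schmidt norm), and noting that on the three-dimensional fibre $A$ has eigenvalues $0, +\mu, -\mu$ so that $|A|^2 = 2\mu^2$, we conclude $\mu\equiv 1$ on all of $\Sigma$.

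Finally, since $A|_{W^\perp}$ has two distinct simple eigenvalues $+1$ and $-1$ at every point, the eigenspaces
\[
E_\pm := \ker(A\mp \mathrm{Id})\cap W^\perp
\]
are smooth rank-one subbundles of $W^\perp$, and by construction $T\Sigma = \mathbb R W\oplus E_+\oplus E_-$. The main point, and the only place where the geometry of $\Sigma$ genuinely enters, is the Bochner identity together with the exact value $\mathrm{Ric} = -2g$: this is what prevents the two eigenvalues from colliding anywhere on $\Sigma$ and hence is what makes the eigenspace splitting globally smooth rather than merely continuous away from a singular locus. Everything else is linear algebra at each fibre.
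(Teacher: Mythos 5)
Your proof is correct, and the overall strategy is the same as the paper's (decompose $T\Sigma$ into eigenspaces of the symmetric endomorphism $\nabla W$), but the key computation is done by a genuinely different route. Both arguments need to know that $\nabla W$ is symmetric, traceless, has $W$ in its kernel, and has two further eigenvalues $\pm 1$ at every point. The first three items you establish exactly as the paper does. For the last, the paper computes $\tr\bigl((\nabla W)^2\bigr)=2$ directly, by parallel transporting an orthonormal frame along the $W$-geodesics and using the sectional curvature $-1$; you instead invoke the Bochner--Weitzenb\"ock identity $\tfrac12\Delta_g|\omega|_g^2 = |\nabla\omega|_g^2 + \mathrm{Ric}(\omega,\omega)$ together with $|\omega|_g\equiv1$ and $\mathrm{Ric}=-2g$ to get $|\nabla\omega|_g^2=2$, which is the same statement since $|\nabla\omega|_g^2 = \tr\bigl((\nabla W)^2\bigr)$ for a symmetric $\nabla W$. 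The two computations encode the identical curvature input, so neither is ``more general,'' but they trade off differently: your Bochner argument is shorter and cleaner for a reader who has the formula at hand, while the paper's direct frame calculation is more self-contained and makes it transparent exactly where the curvature $-1$ enters. One small expository point: you should state explicitly (as the paper implicitly does) that the eigenvalues $\pm1$ are distinct at \emph{every} point of $\Sigma$, not just generically, which is what guarantees $E_\pm$ are globally smooth line bundles rather than merely continuous away from a singular set; you do flag this, and it is indeed the crux.
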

\begin{proof}
1. The Levi-Civita covariant derivative $\nabla W$ is an endomorphism on the fibers of~$T\Sigma$.
This endomorphism is symmetric with respect to the metric~$g$; indeed we compute
for any two vector fields $Y,Z\in C^\infty(\Sigma;T\Sigma)$
\begin{equation}
  \label{e:h1-0}
\begin{aligned}
0=d\omega(Y,Z)&=Yg(W,Z)-Zg(W,Y)-g(W,[Y,Z])
\\&=g(\nabla_YW,Z)-g(\nabla_ZW,Y).
\end{aligned}
\end{equation}
Taking $Z:=W$ and using that $g(\nabla_YW,W)={1\over 2}Yg(W,W)=0$
we see that the vector field~$W$ is geodesible, that is
\begin{equation}
  \label{e:h1-1}
\nabla_WW=0.
\end{equation}
Since $\delta\omega=0$, the vector field~$W$ is also divergence free; that is,
\begin{equation}
  \label{e:h1-0-trace}
\tr(\nabla W)=0.
\end{equation}

\noindent 2. We next claim that
\begin{equation}
  \label{e:h1-0-more-trace}
\tr((\nabla W)^2)=2.
\end{equation}
To see this, take locally defined vector fields $Y_1,Y_2$
such that $W,Y_1,Y_2$ is a $g$-orthonormal frame
and $\nabla_W Y_j=0$. These can be obtained using parallel transport
along the flow lines of~$W$ (which are geodesics since $\nabla_WW=0$).
We compute
$$
\begin{aligned}
1&=g(\nabla_{W}\nabla_{Y_j}W-\nabla_{Y_j}\nabla_WW+\nabla_{\nabla_{Y_j}W}W-\nabla_{\nabla_WY_j}W,Y_j)
\\&=
Wg(\nabla_{Y_j}W,Y_j)-g(\nabla_{Y_j}W,\nabla_WY_j)+g(\nabla_{\nabla_{Y_j}W}W,Y_j)
-g(\nabla_{\nabla_WY_j}W,Y_j)
\\&=Wg(\nabla_{Y_j}W,Y_j)+g((\nabla W)^2Y_j,Y_j).
\end{aligned}
$$
Here in the first line we used that $\Sigma$ has sectional curvature~$-1$,
in the second line we used~\eqref{e:h1-1}, and in the last line we used
that $\nabla_W Y_j=0$. Summing over $j=1,2$ and using again~\eqref{e:h1-1} we get
$$
2=W\tr(\nabla W)+\tr((\nabla W)^2)
$$
and~\eqref{e:h1-0-more-trace} now follows from~\eqref{e:h1-0-trace}.

3. From~\eqref{e:h1-1}, \eqref{e:h1-0-trace}, and~\eqref{e:h1-0-more-trace} we see
that $\nabla W$ has eigenvalues $0,1,-1$. It remains
to let $E_\pm$ be the eigenspaces of $\nabla W$ with eigenvalues $\pm 1$.
\end{proof}
We are now ready to finish the proof of Proposition~\ref{prop:harmoniconeforms}.
We can approximate the 1-form~$\omega$ by a closed 1-form with rational periods
(integrals over closed curves on $\Sigma$);
indeed, for an appropriate choice of linear isomorphism $H^1(\Sigma;\mathbb C)\simeq \mathbb C^{b_1(\Sigma)}$
the forms with rational periods correspond to points in $\mathbb Q^{b_1(\Sigma)}$. 
In particular, we can find a number $q\in\mathbb N$ and
a closed 1-form
$\widetilde\omega$ with integer periods such that
\begin{equation}
  \label{e:approx-form}
\sup_\Sigma|\omega-q^{-1}\widetilde\omega|_g\leq\textstyle{1\over 2}.
\end{equation}
Since $\widetilde\omega$ has integer periods,
we can write $\widetilde\omega=df$ for some smooth map $f$ from $\Sigma$
to the circle $\mathbb S^1=\mathbb R/\mathbb Z$. Since $\omega(W)=1$,
\eqref{e:approx-form} implies that $Wf=\widetilde\omega(W)>0$ which in turn
gives $df\neq 0$ everywhere, that is $f$ is a fibration.
Next, for each $x\in\Sigma$ define the one-dimensional spaces
$$
\widetilde E_\pm(x):=(\mathbb RW(x)\oplus E_\pm(x))\cap \ker df(x),
$$
then the tangent bundle of each fiber $f^{-1}(c)$ decomposes
into a direct sum $\widetilde E_+\oplus\widetilde E_-$.
Since $\Sigma$ is orientable, so is $f^{-1}(c)$, which implies
that $f^{-1}(c)$ is topologically a torus. Then $\Sigma$ is a torus bundle
over a circle, which gives a contradiction because such bundles do not
admit hyperbolic metrics: by the homotopy long exact sequence of a fibration
the fundamental group of $\Sigma$ contains a subgroup
isomorphic to $\mathbb Z\oplus\mathbb Z$, which is impossible for compact negatively
curved manifolds by Preissman's Theorem~\cite[Theorem~12.19]{Lee-Book}.

\medskip\noindent\textbf{Acknowledgements.}
We would like to thank Bernd Sturmfels and Nathan Ilten-Gee for useful discussions related to Lemma \ref{lem:matrixsubspace},
and Tomasz Mrowka for discussions related to Proposition~\ref{prop:harmoniconeforms}.
We are also grateful to the anonymous referees for many suggestions on improving the article. 
MC and BD have received funding from the European Research Council (ERC) under the European Union's Horizon 2020 research and innovation programme (grant agreement No. 725967). MC is further supported by an Ambizione grant (project number 201806) from the Swiss National Science foundation. BD has received further funding from the Deutsche Forschungsgemeinschaft (German Research Foundation, DFG) through the Priority Programme (SPP) 2026 ``Geometry at Infinity''. SD was supported by the National Science Foundation (NSF) CAREER grant DMS-1749858 and a Sloan Research Fellowship. GPP was supported by the Leverhulme trust and EPSRC grant EP/R001898/1. Part of this project was carried out while the four authors were participating in the Mathematical Sciences Research Institute Program in Microlocal
Analysis in Fall 2019, supported by the NSF grant DMS-1440140.


\bibliographystyle{alpha}
\bibliography{Biblio}

\begin{thebibliography}{{Med}21}

\bibitem[AB18]{Adam-Baladi}
Alexander Adam and Viviane Baladi.
\newblock Horocycle averages on closed manifolds and transfer operators, 2018.
\newblock \arXiv{1809.04062}.

\bibitem[AZ07]{Anantharaman-Zelditch-07}
Nalini Anantharaman and Steve Zelditch.
\newblock Patterson-{S}ullivan distributions and quantum ergodicity.
\newblock {\em Ann. Henri Poincar\'{e}}, 8(2):361--426, 2007.

\bibitem[Bal05]{baladi-05}
Viviane Baladi.
\newblock Anisotropic {S}obolev spaces and dynamical transfer operators:
  {$C^\infty$} foliations.
\newblock In {\em Algebraic and topological dynamics}, volume 385 of {\em
  Contemp. Math.}, pages 123--135. Amer. Math. Soc., Providence, RI, 2005.

\bibitem[Bis11]{Bismut}
Jean-Michel Bismut.
\newblock {\em Hypoelliptic {L}aplacian and orbital integrals}, volume 177 of
  {\em Annals of Mathematics Studies}.
\newblock Princeton University Press, Princeton, NJ, 2011.

\bibitem[BKL02]{BKL_02}
Michael Blank, Gerhard Keller, and Carlangelo Liverani.
\newblock Ruelle--{P}erron--{F}robenius spectrum for {A}nosov maps.
\newblock {\em Nonlinearity}, 15(6):1905--1973, 2002.

\bibitem[BL07]{butterley-car-07}
Oliver Butterley and Carlangelo Liverani.
\newblock Smooth {A}nosov flows: correlation spectra and stability.
\newblock {\em J. Mod. Dyn.}, 1(2):301--322, 2007.

\bibitem[Bon20]{guedes-bonthonneau-20}
Yannick~Guedes Bonthonneau.
\newblock Perturbation of {R}uelle resonances and {F}aure--{S}j\"{o}strand
  anisotropic space.
\newblock {\em Rev. Un. Mat. Argentina}, 61(1):63--72, 2020.

\bibitem[BT82]{bott-tu-82}
Raoul Bott and Loring~W. Tu.
\newblock {\em Differential forms in algebraic topology}, volume~82 of {\em
  Graduate Texts in Mathematics}.
\newblock Springer-Verlag, New York-Berlin, 1982.

\bibitem[BT07]{baladi-tsuji-07}
Viviane Baladi and Masato Tsujii.
\newblock Anisotropic {H}\"{o}lder and {S}obolev spaces for hyperbolic
  diffeomorphisms.
\newblock {\em Ann. Inst. Fourier (Grenoble)}, 57(1):127--154, 2007.

\bibitem[BWS21]{Borns-Weil-Shen-21}
Yonah Borns-Weil and Shu Shen.
\newblock Dynamical zeta functions in the nonorientable case.
\newblock {\em Nonlinearity}, 34(10):7322--7334, 2021.

\bibitem[CD20]{chaubet-dang-19}
Yann Chaubet and Nguyen~Viet Dang.
\newblock Dynamical torsion for contact {A}nosov flows, 2020.
\newblock Preprint, \arXiv{1911.09931}.

\bibitem[CP20]{cekic-paternain-19}
Mihajlo Ceki\'{c} and Gabriel~P. Paternain.
\newblock Resonant spaces for volume-preserving {A}nosov flows.
\newblock {\em Pure Appl. Anal.}, 2(4):795--840, 2020.

\bibitem[dC92]{do-Carmo-92}
Manfredo Perdig\~{a}o do~Carmo.
\newblock {\em Riemannian geometry}.
\newblock Mathematics: Theory \& Applications. Birkh\"{a}user Boston, Inc.,
  Boston, MA, 1992.
\newblock Translated from the second Portuguese edition by Francis Flaherty.

\bibitem[DFG15]{dyatlov-faure-guillarmou-15}
Semyon Dyatlov, Fr\'{e}d\'{e}ric Faure, and Colin Guillarmou.
\newblock Power spectrum of the geodesic flow on hyperbolic manifolds.
\newblock {\em Anal. PDE}, 8(4):923--1000, 2015.

\bibitem[DG16]{dyatlov-guillarmou}
Semyon Dyatlov and Colin Guillarmou.
\newblock Pollicott--{R}uelle resonances for open systems.
\newblock {\em Ann. Henri Poincar\'{e}}, 17(11):3089--3146, 2016.

\bibitem[DGRS20]{dang-guillarmou-riviere-shen-20}
Nguyen~Viet Dang, Colin Guillarmou, Gabriel Rivi\`ere, and Shu Shen.
\newblock The {F}ried conjecture in small dimensions.
\newblock {\em Invent. Math.}, 220(2):525--579, 2020.

\bibitem[DR19]{Dang-Riviere-19}
Nguyen~Viet Dang and Gabriel Rivi\`ere.
\newblock Spectral analysis of {M}orse-{S}male gradient flows.
\newblock {\em Ann. Sci. \'{E}c. Norm. Sup\'{e}r. (4)}, 52(6):1403--1458, 2019.

\bibitem[DR20]{Dang-Riviere-Topology}
Nguyen~Viet Dang and Gabriel Rivi\`ere.
\newblock Topology of {P}ollicott--{R}uelle resonant states.
\newblock {\em Ann. Sc. Norm. Super. Pisa Cl. Sci. (5)}, 21:827--871, 2020.

\bibitem[Dya21]{Dyatlov-Nir}
Semyon Dyatlov.
\newblock Pollicott--{R}uelle resolvent and {S}obolev regularity, 2021.
\newblock To appear in Pure Appl. Funct. Anal.; \arXiv{2108.06611}.

\bibitem[DZ16]{dyatlov-zworski-16}
Semyon Dyatlov and Maciej Zworski.
\newblock Dynamical zeta functions for {A}nosov flows via microlocal analysis.
\newblock {\em Ann. Sci. \'{E}c. Norm. Sup\'{e}r. (4)}, 49(3):543--577, 2016.

\bibitem[DZ17]{dyatlov-zworski-17}
Semyon Dyatlov and Maciej Zworski.
\newblock Ruelle zeta function at zero for surfaces.
\newblock {\em Invent. Math.}, 210(1):211--229, 2017.

\bibitem[DZ19]{dyatlov-zworski-book}
Semyon Dyatlov and Maciej Zworski.
\newblock {\em Mathematical theory of scattering resonances}, volume 200 of
  {\em Graduate Studies in Mathematics}.
\newblock American Mathematical Society, Providence, RI, 2019.

\bibitem[FH19]{Fisher-Hasselblatt-19}
Todd {Fisher} and Boris {Hasselblatt}.
\newblock {\em {Hyperbolic flows}}.
\newblock Berlin: European Mathematical Society (EMS), 2019.

\bibitem[FM12]{Farb-Margalit-12}
Benson Farb and Dan Margalit.
\newblock {\em A primer on mapping class groups}, volume~49 of {\em Princeton
  Mathematical Series}.
\newblock Princeton University Press, Princeton, NJ, 2012.

\bibitem[Fri86a]{Fried86}
David Fried.
\newblock Analytic torsion and closed geodesics on hyperbolic manifolds.
\newblock {\em Invent. Math.}, 84(3):523--540, 1986.

\bibitem[Fri86b]{Fried0}
David Fried.
\newblock Fuchsian groups and {R}eidemeister torsion.
\newblock In {\em The {S}elberg trace formula and related topics ({B}runswick,
  {M}aine, 1984)}, volume~53 of {\em Contemp. Math.}, pages 141--163. Amer.
  Math. Soc., Providence, RI, 1986.

\bibitem[Fri87]{Fried87}
David Fried.
\newblock Lefschetz formulas for flows.
\newblock In {\em The {L}efschetz centennial conference, {P}art {III} ({M}exico
  {C}ity, 1984)}, volume~58 of {\em Contemp. Math.}, pages 19--69. Amer. Math.
  Soc., Providence, RI, 1987.

\bibitem[FRS08]{faure-roy-sjostrand}
Fr\'{e}d\'{e}ric Faure, Nicolas Roy, and Johannes Sj\"{o}strand.
\newblock Semi-classical approach for {A}nosov diffeomorphisms and {R}uelle
  resonances.
\newblock {\em Open Math. J.}, 1:35--81, 2008.

\bibitem[FS11]{faure-sjostrand-11}
Fr\'{e}d\'{e}ric Faure and Johannes Sj\"{o}strand.
\newblock Upper bound on the density of {R}uelle resonances for {A}nosov flows.
\newblock {\em Comm. Math. Phys.}, 308(2):325--364, 2011.

\bibitem[GdP21]{Guillarmou-Poyferre}
Colin Guillarmou and Thibault de~Poyferr{\'{e}}.
\newblock A paradifferential approach for hyperbolic dynamical systems and
  applications, 2021.
\newblock with an appendix by Yannick Guedes Bonthonneau; \arXiv{2103.15397}.

\bibitem[Gei08]{Geiges-08}
Hansj\"{o}rg Geiges.
\newblock {\em An introduction to contact topology}, volume 109 of {\em
  Cambridge Studies in Advanced Mathematics}.
\newblock Cambridge University Press, Cambridge, 2008.

\bibitem[GHW21]{guillarmou-hilgert-weich-18}
Colin Guillarmou, Joachim Hilgert, and Tobias Weich.
\newblock High frequency limits for invariant {R}uelle densities.
\newblock {\em Ann. H. Lebesgue}, 4:81--119, 2021.

\bibitem[GL06]{gou-car-06}
S\'{e}bastien Gou\"{e}zel and Carlangelo Liverani.
\newblock Banach spaces adapted to {A}nosov systems.
\newblock {\em Ergodic Theory Dynam. Systems}, 26(1):189--217, 2006.

\bibitem[GLP13]{giuletti-liverani-pollicott-13}
Paolo Giulietti, Carlangelo Liverani, and Mark Pollicott.
\newblock Anosov flows and dynamical zeta functions.
\newblock {\em Ann. of Math. (2)}, 178(2):687--773, 2013.

\bibitem[Had18]{Hadfield}
Charles Hadfield.
\newblock Zeta function at zero for surfaces with boundary, 2018.
\newblock To appear in J. Eur. Math. Soc.; \arXiv{1803.10982}.

\bibitem[Ham95]{hamenstaedt-95}
Ursula Hamenst\"{a}dt.
\newblock Invariant two-forms for geodesic flows.
\newblock {\em Math. Ann.}, 301(4):677--698, 1995.

\bibitem[HHS12]{Hansen-Hilgert-Schroder}
S\"{o}nke Hansen, Joachim Hilgert, and Michael Schr\"{o}der.
\newblock Patterson-{S}ullivan distributions in higher rank.
\newblock {\em Math. Z.}, 272(1-2):607--643, 2012.

\bibitem[H{\"o}r03]{hoermander-03}
Lars H{\"o}rmander.
\newblock {\em The analysis of linear partial differential operators. {I}
  Distribution theory and Fourier analysis}.
\newblock Classics in Mathematics. Springer-Verlag, Berlin, 2003.

\bibitem[HP16]{harris-paternain-16}
Adam Harris and Gabriel~P. Paternain.
\newblock Conformal great circle flows on the 3-sphere.
\newblock {\em Proc. Amer. Math. Soc.}, 144(4):1725--1734, 2016.

\bibitem[Kan93]{kanai-93}
Masahiko Kanai.
\newblock Differential-geometric studies on dynamics of geodesic and frame
  flows.
\newblock {\em Japan. J. Math. (N.S.)}, 19(1):1--30, 1993.

\bibitem[KH95]{Katok-Hasselblatt}
Anatole Katok and Boris Hasselblatt.
\newblock {\em Introduction to the modern theory of dynamical systems},
  volume~54 of {\em Encyclopedia of Mathematics and its Applications}.
\newblock Cambridge University Press, Cambridge, 1995.
\newblock With a supplementary chapter by Katok and Leonardo Mendoza.

\bibitem[Kli95]{klingenberg-95}
Wilhelm P.~A. Klingenberg.
\newblock {\em Riemannian geometry}, volume~1 of {\em De Gruyter Studies in
  Mathematics}.
\newblock Walter de Gruyter \& Co., Berlin, second edition, 1995.

\bibitem[KW19]{kuster-weich2}
Benjamin K{\"u}ster and Tobias Weich.
\newblock {Quantum-Classical Correspondence on Associated Vector Bundles Over
  Locally Symmetric Spaces}.
\newblock {\em International Mathematics Research Notices}, 04 2019.
\newblock rnz068.

\bibitem[KW20]{kuster-weich}
Benjamin K{\"u}ster and Tobias Weich.
\newblock {Pollicott-Ruelle Resonant States and Betti Numbers}.
\newblock {\em Communications in Mathematical Physics}, 378(2):917--941, 2020.

\bibitem[Lee18]{Lee-Book}
John~M. Lee.
\newblock {\em Introduction to {R}iemannian manifolds}, volume 176 of {\em
  Graduate Texts in Mathematics}.
\newblock Springer, Cham, 2018.

\bibitem[Liv04]{Liverani1}
Carlangelo Liverani.
\newblock On contact {A}nosov flows.
\newblock {\em Ann. of Math. (2)}, 159(3):1275--1312, 2004.

\bibitem[Liv05]{Liverani2}
Carlangelo Liverani.
\newblock Fredholm determinants, {A}nosov maps and {R}uelle resonances.
\newblock {\em Discrete Contin. Dyn. Syst.}, 13(5):1203--1215, 2005.

\bibitem[{Med}21]{Meddane-21}
Antoine {Meddane}.
\newblock {A Morse complex for Axiom A flows}, July 2021.
\newblock preprint; \arXiv{2107.08875}.

\bibitem[MS91]{Moscovici-Stanton}
Henri Moscovici and Robert~J. Stanton.
\newblock {$R$}-torsion and zeta functions for locally symmetric manifolds.
\newblock {\em Invent. Math.}, 105(1):185--216, 1991.

\bibitem[Nic89]{Nicholls}
Peter~J. Nicholls.
\newblock {\em The ergodic theory of discrete groups}, volume 143 of {\em
  London Mathematical Society Lecture Note Series}.
\newblock Cambridge University Press, Cambridge, 1989.

\bibitem[Olb95]{olbrichdiss}
M.~Olbrich.
\newblock {Die Poisson-Transformation f\"ur homogene Vektorb\"undel}.
\newblock Dissertation, Humboldt-Universit\"at zu Berlin, 1995.

\bibitem[Pat99]{paternain-99}
Gabriel~P. Paternain.
\newblock {\em Geodesic flows}, volume 180 of {\em Progress in Mathematics}.
\newblock Birkh\"{a}user Boston, Inc., Boston, MA, 1999.

\bibitem[She18]{Shen-Fried}
Shu Shen.
\newblock Analytic torsion, dynamical zeta functions, and the {F}ried
  conjecture.
\newblock {\em Anal. PDE}, 11(1):1--74, 2018.

\bibitem[Wei17]{weich-17}
Tobias Weich.
\newblock On the support of {P}ollicott-{R}uelle resonant states for {A}nosov
  flows.
\newblock {\em Ann. Henri Poincar\'{e}}, 18(1):37--52, 2017.

\bibitem[Zeg93]{zeghib-93}
Abdelghani Zeghib.
\newblock Sur les feuilletages g\'{e}od\'{e}siques continus des
  vari\'{e}t\'{e}s hyperboliques.
\newblock {\em Invent. Math.}, 114(1):193--206, 1993.

\end{thebibliography}

\end{document}